\definecolor{red}{rgb}{0.7,0.15,0.15}
\definecolor{green}{rgb}{0,0.5,0}
\definecolor{blue}{rgb}{0,0,0.7}
\definecolor{red}{rgb}{0.7,0.15,0.15}
\definecolor{green}{rgb}{0,0.5,0}
\definecolor{blue}{rgb}{0,0,0.7}
\makeatletter \@addtoreset{equation}{section}
\newtheorem{theorem}{Theorem}[section]
\newtheorem{assumption}[theorem]{Assumption}
\newtheorem{example}[theorem]{Example}
\newtheorem{lemma}[theorem]{Lemma}
\newtheorem{proposition}[theorem]{Proposition}
\newtheorem{definition}[theorem]{Definition}
\newtheorem{remark}[theorem]{Remark}
\newcommand{\smalltext}[1]{\text{\fontsize{4}{4}\selectfont$#1$}}
\newcommand{\tinytext}[1]{\text{\fontsize{3}{3}\selectfont$#1$}}
\newcommand{\stinytext}[1]{\text{\fontsize{2}{2}\selectfont$#1$}}
\newcommand\bC{\mathbf C}
\newcommand\cB{\mathcal B}
\newcommand\cC{\mathcal C}
\newcommand\cE{\mathcal E}
\newcommand\cF{\mathcal F}
\newcommand\cG{\mathcal G}
\newcommand\cI{\mathcal I}
\newcommand\cJ{\mathcal J}
\newcommand\cO{\mathcal O}
\newcommand\cP{\mathcal P}
\newcommand\cT{\mathcal T}
\newcommand\cU{\mathcal U}
\newcommand\cV{\mathcal V}
\def \E{\mathbb{E}}
\def \F{\mathbb{F}}
\def \L{\mathbb{L}}
\def \N{\mathbb{N}}
\def \P{\mathbb{P}}
\def \R{\mathbb{R}}
\def\Uc{\mathcal{U}}
\def\eps{\varepsilon}
\def\d{\mathrm{d}}
\DeclareMathOperator*{\argmax}{arg\,max}
\DeclareMathOperator*{\esssup}{ess\,sup}
\DeclareMathOperator{\sgn}{sgn}
\begin{document}

\title{Golden parachutes under the threat of accidents}

\author{Dylan {\sc Possama\"{i}} \footnote{ETH Z\"urich, Department of Mathematics, Switzerland, dylan.possamai@math.ethz.ch. This author gratefully acknowledges partial support by the SNF project MINT 205121-219818.}\and Chiara {\sc Rossato} \footnote{ETH Z\"urich, Department of Mathematics, Switzerland, chiara.rossato@math.ethz.ch.} }

\date{\today}

\maketitle

\begin{abstract}
This paper addresses a continuous-time contracting model that extends the problem introduced by \citeauthor*{sannikov2008continuous} \cite{sannikov2008continuous} and later rigorously analysed by \citeauthor*{possamai2020there} \cite{possamai2020there}. In our model, a principal hires a risk-averse agent to carry out a project. Specifically, the agent can perform two different tasks, namely to increase the instantaneous growth rate of the project's value, and to reduce the likelihood of accidents occurring. In order to compensate for these costly actions, the principal offers a continuous stream of payments throughout the entire duration of a contract, which concludes at a random time, potentially resulting in a lump-sum payment. We examine the consequences stemming from the introduction of accidents, modelled by a compound Poisson process that negatively impact the project's value. Furthermore, we investigate whether certain economic scenarii are still characterised by a golden parachute as in \citeauthor*{sannikov2008continuous}'s model. A golden parachute refers to a situation where the agent stops working and subsequently receives a compensation, which may be either a lump-sum payment leading to termination of the contract or a continuous stream of payments, thereby corresponding to a pension.

\bigskip
\noindent{\bf Key words:} principal--agent problem, random horizon, integro-differential Hamilton--Jacobi--Bellman equation  \bigskip
\end{abstract}


\section{Introduction}

Risk naturally emerges in various interactions across all sectors of the economy and is well-known to be closely linked to the concept of uncertainty. It involves the potential occurrence of unexpected shocks that could lead to harm or loss, thus exerting a significant---and usually negative---influence on the considered economic scenario. Here, one is naturally led to consider situations such as accidents during the production process in a company, injuries in an health insurance context, or the devaluation of financial assets in specific corporations, due to the bankruptcy of a subsidiary. Although various other instances could be mentioned, they all have in common the potential for certain adverse events that can be categorised as accidents. It is therefore natural to study risk prevention in contract theory, specifically within principal--agent problems. 

\medskip
The principal--agent paradigm addresses the optimal contracting problem in the presence of information asymmetry between two economic actors: a principal (`\emph{she}') and an agent (`\emph{he}'). In the typical scenario, the principal designs a contract to hire the agent, who can accept or reject it based on whether it provides sufficiently adequate incentives to ensure that his reservation utility is attained. The primary challenge is that the principal designs this contract to optimise her utility while having imperfect information about the efforts provided by the agent. This problem originated in the 1970s within a discrete-time framework and was subsequently reformulated in a continuous-time model by \citeauthor*{holmstrom1987aggregation} \cite{holmstrom1987aggregation}. This seminal work initiated the study of moral hazard problems in continuous-time, providing important insights into formulating an optimal contract between a principal and an agent whose effort influences the drift of the output process. Following this, numerous results have been established to explore more general models and derive tractable solutions, thereby expanding the scope of this initial work. A comprehensive analysis of the related literature can be found in the survey paper of \citeauthor*{sung2001lectures} \cite{sung2001lectures} or in the book of \citeauthor*{cvitanic2012contract} \cite{cvitanic2012contract}. 

\medskip
It has nowadays been universally acknowledged that \citeauthor*{sannikov2008continuous} \cite{sannikov2008continuous} achieved a substantial breakthrough in the field. In this work, the author studied an infinite horizon principal--agent model, in which the principal provides continuous payments to the agent, who exclusively controls the drift of the output process until a random retirement time. The significance of this work lies in its pioneering approach, which enables the transformation of the original bi-level optimisation problem between the principal and the agent into a standard stochastic control problem, whose solution can be described through an Hamilton–Jacobi–Bellman equation. This approach led to numerous follow-up works that rigorously formalised \citeauthor*{sannikov2008continuous}'s ideas. Particularly noteworthy are the PhD thesis of \citeauthor*{choi2014sannikov} \cite{choi2014sannikov} and, more recently, the contribution of \citeauthor*{possamai2020there} \cite{possamai2020there}, which extended the principal--agent model introduced by \citeauthor*{sannikov2008continuous} by considering potentially different discount rates for both parties. Their primary focus was on technical rigour,  ultimately providing a complete characterisation of the solution to the contracting problem. Another significant extension was introduced by \citeauthor*{cvitanic2018dynamic} \cite{cvitanic2018dynamic}, presenting a comprehensive mathematical approach to solve principal--agent problems that allow control over both the drift and diffusion of the output process. While this analysis focused on a finite deterministic horizon setting, \citeauthor*{lin2022random} \cite{lin2022random} subsequently addressed principal--agent problems over random horizons.

\medskip
It is nonetheless important to point out that the Brownian framework considered in all the previously mentioned works fails to capture sudden and unpredictable events, and consequently it becomes necessary to consider principal--agent problems involving a jump component in the output process. There exists a vast literature that adopts jump processes to replicate negative shocks, with an early influential contribution from \citeauthor*{sung1997corporate} \cite{sung1997corporate}. This work extended \citeauthor*{holmstrom1987aggregation}'s model, addressing a corporate insurance problem driven by a multi-dimensional point process, where each component represents a specific type of accident, less likely to occur when the agent exerts effort. More recently, \citeauthor*{zhang2009dynamic} \cite{zhang2009dynamic} explored efficient allocations in a Mirrleesean economy influenced by persistent shocks that transition according to a continuous-time Markov chain that is beyond the control of the agent. Similarly, \citeauthor*{sannikov2010role} \cite{sannikov2010role} investigated the dynamic interaction among economic actors in a scenario where the information process is characterised by a discontinuous uncontrolled Poisson component associated to bad news. In contrast, \citeauthor*{biais2007environmental} \cite{biais2007environmental,biais2010large} simulated accidents and large and unfrequent losses, respectively, using a Poisson process whose intensity can be influenced by the agent's actions. Additionally, \citeauthor*{pages2014mathematical} \cite{pages2014mathematical}, later extended by \citeauthor*{hernandez2020bank} \cite{hernandez2020bank} to include adverse selection, explored the optimal securitisation of a pool of long-term loans exposed to Poisson default risk, where the default intensity can be reduced through the monitoring activity of a bank. Furthermore, \citeauthor*{martin2023risk} \cite{martin2023risk} modelled a large insurable risk capable of halting production entirely using a single jump process. However, it is worth noting that in existing literature, a jump process is not exclusively associated with negative events. \citeauthor*{el2021optimal} \cite{el2021optimal} linked a Poisson process with market orders to investigate a contract offered by an exchange to a market-maker, aiming to reduce the bid--ask spread for a single risky asset. Subsequently, this model was extended to consider multiple market participants by \citeauthor*{baldacci2021optimal} \cite{baldacci2021optimal} and to enable trading on dark liquidity pools by \citeauthor*{baldacci2023market} \cite{baldacci2023market}.

\medskip
All the previously mentioned works adopt jump processes with a constant jump size to simulate sudden shocks. To the best of our knowledge, the first study to consider accidents of different random sizes is by \citeauthor*{capponi2015dynamic} \cite{capponi2015dynamic}. The authors examined a principal--agent model where the agent can exert both effort and accident prevention, aiming to decrease the likelihood of negative events that are described by an exogenous loss distribution. Later on, \citeauthor*{bensalem2023continuous} \cite{bensalem2023continuous} employed a similar jump--diffusion framework in their investigation of an insurance demand model, enabling the protection buyer to exert prevention effort to reduce his risk exposure. An extension  of all the aforementioned principal--agent models can be found in the work of \citeauthor*{hernandez2022principal} \cite{hernandez2022principal}. Here, the author studied a general contracting problem between the principal and a finite number of agents, where each agent has control over the drift of the output process and the compensator of its associated jump measure. Additionally, \citeauthor*{mastrolia2022agency} \cite{mastrolia2022agency} formulated an energy-optimal demand--response model where the principal hires an infinite number of agents, incorporating accidents by introducing a Lévy process.

\medskip
Principal--agent models with jumps that go beyond simple Poisson process lead to Hamilton--Jacobi--Bellman equations with an additional integral term. Consequently, the notion of viscosity solution introduced by \citeauthor*{crandall1983viscosity} \cite{crandall1983viscosity} and \citeauthor*{lions1983optimal} \cite{lions1983optimal} requires particular attention as the equations become non-local. The first works in this direction are to be attributed to \citeauthor*{soner1986optimal} \cite{soner1986optimal,soner1986optimal2} for bounded measures and \citeauthor*{awatif1991equations} \cite{awatif1991equations,awatif1991equations2} for unbounded measures. Subsequently, \citeauthor*{alvarez1996viscosity} \cite{alvarez1996viscosity} proved a general existence and uniqueness result for viscosity solutions to integro-differential equations characterised by a bounded integral operator, while \citeauthor*{pham1998optimal} \cite{pham1998optimal} achieved a similar result for a special case involving singular measures. In recent years, there has been a surge of interest in extending the theory of viscosity solutions to integro-differential equations. This growth in the literature is largely due to the dependence of the results on the integrability of the singular measure characterising the integral operator. Among these advancements, \citeauthor*{jakobsen2006maximum} \cite{jakobsen2006maximum} presented a first version of a non-local maximum principle, which allowed the deduction of a comparison principle for viscosity sub-quadratic solutions. Subsequently, \citeauthor*{barles2008second} \cite{barles2008second} and later the PhD thesis of \citeauthor*{hollender2016levy} \cite{hollender2016levy} generalised their uniqueness result by considering solutions with arbitrary growth at infinity.

\medskip
In this paper, we examine the weak formulation of the model introduced by \citeauthor{sannikov2008continuous} \cite{sannikov2008continuous} in the presence of accidents, thereby making the model suitable for investigating contracting problems between a principal and an agent in an economic environment that might experience unpredictable and negative shocks of unknown size. Specifically, we explore optimal contracts offered by a risk-neutral principal to a risk-averse agent aimed at executing a project. To maximise the project's value, which is described by an underlying jump--diffusion process, the agent can employ two distinct actions simultaneously, influencing both the drift of the process and the likelihood of accidents occurring. We model the impact of accidents using a compound Poisson process, in line with the approach outlined by \citeauthor*{capponi2015dynamic}. However, there seems to be a subtle issue in the derivation of optimal contracts in the aforementioned work, stemming from an incorrect application of the martingale representation property in \cite[Lemma 3.1]{capponi2015dynamic}. This issue inevitably imposes a restriction on the set of the admissible contracts. Consequently, our contribution can be seen as an extension to this model, not only due to this concern but also because we assume that the principal offers compensation to the agent not solely at the random termination time but consistently throughout the entire duration of the contract.

\medskip
The main objective of this paper is to verify whether the rigorous study by \citeauthor*{possamai2020there} \cite{possamai2020there} regarding \citeauthor{sannikov2008continuous}'s contracting problem remains valid within our framework that incorporates accidents. More specifically, we aim to test the robustness of the economic conclusions drawn by these authors by studying the economic implications resulting from these unpredictable events that negatively impact the principal--agent interaction. Our focus is particularly on examining the concept of a `golden parachute' in the context of accident risk. This term refers to a situation where the agent ceases to work at a random time---or refrains from working completely---and subsequently receives a positive compensation. This compensation may take the form of a lump-sum payment, leading to termination of the contract, or a continuous stream of payments, indicating retirement. Therefore, our investigation aims to determine whether economic conditions exist that could lead to the existence of these two distinct instances of golden parachutes, similarly to what was observed in the problem without accidents. To conduct this analysis, we need to consider different regimes based on the impatience levels of both parties:
\begin{enumerate}[label=$(\roman*)$]
\item\label{Analysis_deg} when the principal is significantly more impatient than the agent, a golden parachute does not exist. This scenario depends only on the impatience of both economic actors and the agent's level of risk aversion, completely mirroring the case without accident risk. Indeed, the problem degenerates because the principal can achieve her maximum reward regardless of the agent's participation level. However, optimal contracts do not exist; 
\item\label{Analysis_NOdeg_deltaSmaller1} when the principal is more impatient than the agent but not excessively, as described in the previous scenario, a golden parachute can correspond to either of the previously described situations. This is in stark contrast with the analysis not involving accidents, where a golden parachute is solely linked to a pension and not to a contract termination with subsequent positive lump-sum payment. This difference is due to the impact of accidents on the economic scenario, leading the principal to prefer terminating the contract over retiring the agent for a sufficiently small continuation utility of the agent. This decision is influenced by the expenses associated with potential future losses outweighing the immediate cost of compensating the agent;
\item\label{Analysis_NOdeg_delta1} when the principal is as impatient as the agent, the analysis closely aligns with that conducted without considering accidents. In other words, we know that a golden parachute is likely to exist, and it corresponds to a lump-sum compensation;
\item\label{Analysis_NOdeg_deltaBigger1} when the agent is strictly more impatient than the principal, a golden parachute can be associated with both distinct compensation schemes, which further depends on the average loss per accident. When the size of potential accidents is not significant, contract termination is never optimal, while if accidents entail high costs, the principal prefers either firing the agent or retiring him, based on the continuation utility of the agent. Hence, we observe that the nature of a golden parachute is deeply linked to the average accident size. In case of large accidents, it translates to either a lump-sum payment or a continuous stream of payments. Conversely, as the accident size decreases, it corresponds to the retirement scenario, mirroring the situation in the absence of accidents.
\end{enumerate}

The motivation for analysing the diverse levels of patience exhibited by both parties stems from the necessity for a comprehensive comparison with the model without jumps in \cite{possamai2020there}. This is because, in most real-life principal-agent interactions, the principal is less impatient than the agent. An example where this situation is reversed is evident in the start-up environment---although it does not perfectly reflect the model outlined in this paper since the project's value is not constrained---especially in the early stages of the start-up process, as explained in \citeauthor*{frank2007significance} \cite{frank2007significance}. In fact, the combination of financial investment and risk exposure often makes owners more impatient than employees. This is because owners typically have a significant financial stake in the start-up that can contribute to their impatience to see the business succeed and quickly turn investments into a profit. As outlined in the description of the various regimes, in addition to the impatient levels of both parties, the economic consequences vary according to the average accident size. Indeed, in the insurance context, for example, if we consider insurance premiums that include an incentive component to implement risk reduction measures, such as premium discounts or policy endorsements, the insured accidents can vary in nature, leading to different consequences. Large risks are associated with insurance policies taken out by companies analysing sensitive national data since cyber attacks, for instance, can result in significant financial losses, or by large manufacturing firms with production processes characterised by environmental risks such as air or water pollution. Catastrophic safety risks and the considerable societal costs linked to accidents are addressed in \citeauthor*{biais2007environmental} \cite{biais2007environmental}, where they investigate insurance policies sold to a firm whose manager is incentivised to implement necessary risk-prevention measures. An example of a model where insurance claims are also sold to individuals is studied by \citeauthor*{cao2022stackelberg} \cite{cao2022stackelberg}. The authors examine insurance terms and premiums in a spectrally negative Lévy framework, where both the buyer and the seller of the insurance policy face ambiguity regarding the intensity and severity of insurable losses.

\medskip
We emphasise that the economic features of the model are closely related to the specific regimes \ref{Analysis_deg}, \ref{Analysis_NOdeg_deltaSmaller1}, \ref{Analysis_NOdeg_delta1} and \ref{Analysis_NOdeg_deltaBigger1} under consideration and thus rely on the impatience levels of both parties, exactly as in the problem without accidents. However, despite the unchanged structure of the problem, the introduction of unpredictable and adverse events allows for the possibility of both golden parachute scenarii. In other words, when the agent is not as impatient as the principal, a golden parachute can be associated with both retirement and termination, while retirement is always the preferred choice in the absence of accidents. This different characterisation stems from the different nature of an appropriate face-lifted version of the agent’s inverse utility function, which is defined through an obstacle problem. While in the accident-free framework, this problem simplifies to the study of an ordinary differential equation, in our model, the analysis of this face-lifted utility function is significantly more involved due to its correlation with the average accident size. We also mention that even with relatively small potential accidents, the impact on the principal is significant, and can result in a 50\% reduction of the value of the principal for an accident size which represents only 2.5\% of the average value of the project.

\medskip
Another important difference with the reference model without accidents is obvious in the study of the first-best problem, which is no longer directly solvable, except in the scenario where the principal and the agent are equally impatient. Despite the difficulty in fully characterising the first-best solution for every regime, we can still replicate similar economic considerations. However, the most challenging aspect of investigating the model incorporating accidents lies in describing the second-best problem in order to address the economic question of whether a golden parachute is optimal. Unfortunately, we cannot provide a definitive answer to this question due to the complexity arising from the integral term appearing in the Hamilton--Jacobi--Bellman equation associated with the contracting problem. We will discuss this in more detail in \Cref{section:discussProblemHJB}. Nonetheless, we are able to characterise the solution to the problem as the unique viscosity solution of the aforementioned equation through a generalisation of Tietze’s extension theorem that allows to overcome the obstacles posed by the limited liability constraint.

\medskip
The rest of the paper is structured as follows. In \Cref{section:modelASS}, we formulate the principal--agent model and provide a comprehensive explanation of the criteria of the agent and the principal. We then discuss the two notions of a golden parachute along with the definition of the face-lifted utility function that combines these concepts. In \Cref{section:faceliftedAnalysis}, we completely characterise the face-lifted utility in all the various cases \ref{Analysis_deg}, \ref{Analysis_NOdeg_deltaSmaller1}, \ref{Analysis_NOdeg_delta1} and \ref{Analysis_NOdeg_deltaBigger1} outlined above. In \Cref{section:firstBest}, we study the first-best contracting problem, which offers insights into the subsequent analysis of the second-best contracting problem in \Cref{section:secondBest}. We defer technical proofs to the appendices.

\medskip
{\small\textbf{Notation.} Let $\N$ be the set of positive integers, and $\R_+$ the set of non-negative real-numbers. For any $(a,b) \in [-\infty,\infty]^2$, we write $a \wedge b \coloneqq \min\{a, b\}$ and $a \vee b \coloneqq \max\{a,b\}$. Consider a probability space $(\Omega, \cF, \P)$ carrying a filtration $\F \coloneqq (\cF_t)_{t \geq 0}$. We use the convention $\cF_{0-} \coloneqq \cF_{0}$ and $\cF_\infty \coloneqq \cF_{\infty -} \coloneqq \sigma(\cup_{t \geq 0} \cF_t)$. For $t \in [0, \infty]$, we write $\E^\P_t[\xi]$ for the conditional expectation of a random variable $\xi$ with respect to $\cF_t$. For any $\F$--stopping time $\tau$, we denote by $\cT_{\tau}(\F)$ the set of $\F$--stopping times $\theta$ such that $\theta \leq \tau$, $\P$--a.s., and by $\llbracket \theta,\tau \rrbracket$ (resp. $\rrbracket \theta,\tau \rrbracket$) the stochastic interval $\{(t,\omega) \in \R_+ \times \Omega: \; \theta(\omega) \leq (\text{resp.} < )\; t \leq \tau(\omega) \}$. The notation $\widetilde{\cP}(\F)$ (resp. $\widetilde{\cO}(\F)$) refers to the predictable (resp. optional) $\sigma$-algebra on $\Omega \times \R_+ \times \R$. For a random measure $\mu$ on $\R_+ \times \R$, we denote by $\mu^p$ (resp. $\tilde{\mu} \coloneqq \mu - \mu^p$) its $(\F,\P)$-compensator (resp. $(\F,\P)$-compensated random measure). We introduce the spaces
\begin{align*}
G_{\rm loc}(\mu,\F,\P) &\coloneqq\bigg\{\R \text{\rm-valued}, \; \widetilde{\cP}(\F) \text{\rm-measurable functions} \; U : \bigg(\int_0^{\cdot} \int_\R |U_s(\ell)|^2 \mu(\d s, \d \ell)\bigg)^\frac{1}{2}\; \text{\rm is} \; (\F,\P)\text{\rm--locally}\;  \P\text{\rm -{integrable}} \bigg\}, \; \\
\L^2_{\mathrm{loc}}(W,\F,\P) &\coloneqq \bigg\{\R \text{\rm-valued}, \; \F \text{\rm-predictable processes} \; Z :  \int_0^{\cdot} |Z_s|^2 \d s \; \text{\rm is} \; (\F,\P)\text{\rm--locally} \; \P\text{\rm-integrable} \bigg\},
\end{align*}
where $W$ is an $(\F,\P)$--Brownian motion. For an $(\F,\P)$--local martingale $X$ in the sense of \citeauthor*{jacod2003limit} \cite[Definition I.1.45]{jacod2003limit}, we denote its stochastic exponential by $\cE(X)$, that is, 
\begin{equation*}
\cE(X)_t = \mathrm{exp}\big(X_t - \frac{1}{2} [ X^c ]_t\big) \prod_{0 < s \leq t} (1+\Delta X_s) \mathrm{e}^{- \Delta X_s}, \; \text{for} \; t \geq 0,
\end{equation*}
where $[ X^c ]$ is the quadratic variation of the continuous martingale part of $X$ and $\Delta X$ denotes the jump process of $X$ (as in \cite[Theorem I.4.61]{jacod2003limit}).
}

\section{Model and assumptions}\label{section:modelASS}
We investigate a continuous-time contracting problem in which a risk-neutral principal hires a risk-averse agent to manage a project over a possibly infinite time horizon. During this period, the agent can exert effort to increase the instantaneous growth rate of the project's value, that is, the principal's profit, while simultaneously reducing the likelihood of accidents that negatively impact the total profit. It is important to emphasise that in the model considered here, the agent is unable to reduce the impact of accidents on the project's value as he has no control over their size, which is an exogenous factor.

\subsection{The setting}
Let $\left(\Omega,\cF,\P\right)$ be a probability space carrying the following jointly $\P$--independent objects: a one-dimensional Brownian motion $W=(W_t)_{t \geq 0}$, a Poisson process $N=(N_t)_{t \geq 0}$ with $\P$-intensity $\int_0^\cdot \d s$ (see \cite[Definition I.3.26]{jacod2003limit}), and a collection of bounded and positive $\P$--i.i.d. random variables $(L_i)_{i\in \N}$. Let $\sigma > 0$, and let $x_0 \in \R$ denote a given initial investment. The dynamics of the project's value, referred to as the output process hereafter, under no exertion of effort by the agent is given by 
\begin{equation*}
X_t \coloneqq x_0 + \sigma W_t - J_t, \; \text{for} \; t \geq 0,
\end{equation*}
where $J=(J_t)_{t \geq 0}$ denotes the compound Poisson process
\begin{equation*}
J_t \coloneqq \sum_{i = 1}^{N_t} L_i, \; \text{for} \; t \geq 0.
\end{equation*}
Here, $W$ represents a source of randomness, and $J$ models the losses that can occur throughout the lifetime of the project at random times. The Poisson process $N$ represents the total number of accidents, while $J$ the respective cumulative losses since, for $i\in\N$, each random variable $L_i$ quantifies the size of the $i$-th accident. The average loss per accident is denoted by
\begin{equation}\label{equation:distributionJumps}
m \coloneqq \E^\P[L_i] = \int_{\R} \ell \Phi(\d \ell),
\end{equation}
where $\Phi$ is the common distribution function associated to all the $L_i$, $i\in\N$. Note that $m$ is positive since $L_i$ is assumed to be positive. Finally, we let $\F= (\cF_t)_{t \geq 0}$ be the $\P$-augmentation of the filtration generated by the Brownian motion $W$ and the compound Poisson process $J$. Note that $\F$ satisfies the usual conditions under $\P$ by \citeauthor*{protter2005stochastic} \cite[Theorem I.31]{protter2005stochastic}. 

\medskip
In this setting, a natural question to ask is whether the predictable martingale representation property with respect to $W$ and $J$ holds. This can indeed be achieved by introducing the associated jump measure of $J$ as follows. Let
\begin{align*}
\mu^{J}(\d t, \d \ell) \coloneqq \sum_{s > 0} \mathbf{1}_{\{\Delta J_\smalltext{s} \neq 0\}} \varepsilon_{(s,\Delta J_\smalltext{s})} (\d t, \d \ell),
\end{align*}
where $\varepsilon$ denotes the Dirac measure, the $(\F,\P)$-compensator (see \cite[Theorem II.1.8]{jacod2003limit} for the definition) is then given by
\begin{align*}
\mu^{J,p}(\d t, \d \ell) \coloneqq \Phi(\d \ell) \d t.
\end{align*}
To ease the notation, we write $\tilde{\mu}^J(\d t, \d \ell)$ for the compensated random measure $\mu^{J}(\d t, \d \ell) - \mu^{J,p}(\d t, \d \ell)$. Furthermore, for any $Z \in \L^2_{\mathrm{loc}}(W,\F,\P)$ and  $U \in G_{\rm loc}(\mu^J,\F,\P)$, we denote by 
\begin{equation}
	\int_0^{t} Z_s \d W_s \; \text{and} \; \int_0^{t} \int_{\R} U_s(\ell) \tilde{\mu}^{J}(\d s, \d \ell), \; t \geq 0,
\end{equation}
the stochastic integral of $Z$ with respect to $W$ and the compensated stochastic integral of $U$ with respect to the random measure $\mu^J$, respectively, in the sense of \cite[Chapter 1 \& 2]{jacod2003limit}. Having collected the necessary ingredients, we can now state the following martingale representation theorem, which is derived from \citeauthor*{cohen2015stochastic} \cite[Theorem 14.5.7]{cohen2015stochastic} combined with \cite[Observation I.4.1]{jacod2003limit}.

\begin{lemma}\label{lemma:martingale Representation}
Let $M$ be an $(\F,\P)$--local martingale. Then, there exist unique $Z \in \L^2_{\mathrm{loc}}(W,\F,\P)$ and $U \in G_{\rm loc}(\mu^J,\F,\P)$ such that
\begin{equation*}
M_{t} = M_0 + \int_0^{t } Z_s \d W_s + \int_0^{t} \int_{\R} U_s(\ell) \tilde{\mu}^{J}(\d s, \d \ell), \;t \geq 0,\; \P \text{\rm --a.s.}
\end{equation*}
Moreover, this property is preserved under an equivalent change of measure, see for instance {\rm\citep[Theorem III.5.24]{jacod2003limit}}.
\end{lemma}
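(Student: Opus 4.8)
The plan is to deduce the statement from the martingale representation property for Lévy processes, exploiting the fact that $(W,J)$ is a two-dimensional Lévy process with independent components: $W$ is Brownian and $J=\sum_{i=1}^{N}L_i$ is compound Poisson. First I would record that the jump measure $\mu^J$ is a Poisson random measure on $\R_+\times\R$ with intensity measure $\Phi(\d\ell)\,\d t$. This follows from the fact that $N$ has $\P$-intensity $\int_0^\cdot\d s$, that the jump sizes $(L_i)_{i\in\N}$ are $\P$-i.i.d. with common law $\Phi$ and independent of $N$, and the characterisation of the compensator of an integer-valued random measure in \cite[Theorem II.1.8]{jacod2003limit}; in particular $\mu^{J,p}(\d t,\d\ell)=\Phi(\d\ell)\,\d t$ as asserted above, and $\tilde\mu^J$ is a genuine martingale measure. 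The boundedness of the $L_i$ ensures the relevant local integrability, so no truncation of small or large jumps is required.

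Next, since $\F$ is the $\P$-augmentation of the natural filtration of $(W,J)$, I would invoke the general representation theorem \cite[Theorem 14.5.7]{cohen2015stochastic}, which provides, for any $(\F,\P)$-local martingale $M$, a decomposition $M_t=M_0+\int_0^t Z_s\,\d W_s+\int_0^t\int_\R U_s(\ell)\,\tilde\mu^J(\d s,\d\ell)$ with $Z$ predictable and $U\in G_{\rm loc}(\mu^J,\F,\P)$; the extra integrability $Z\in\L^2_{\mathrm{loc}}(W,\F,\P)$ is then automatic, since the associated stochastic integral must be an $(\F,\P)$-local martingale. The passage to the $\P$-augmented filtration, and the fact that predictable and optional processes as well as local martingales are unchanged up to evanescent sets, is handled through \cite[Observation I.4.1]{jacod2003limit}.

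For uniqueness I would argue that $\int_0^\cdot Z_s\,\d W_s$ and $\int_0^\cdot\int_\R U_s(\ell)\,\tilde\mu^J(\d s,\d\ell)$ are, respectively, the continuous and the purely discontinuous local martingale parts of $M-M_0$, hence uniquely determined; then $Z$ is pinned down $\d\P\otimes\d t$-a.e. by the predictable quadratic variation of the continuous part, while $U$ is pinned down $\mu^{J,p}$-a.e. because $\langle\int\!\int U\,\tilde\mu^J\rangle=\int_0^\cdot\int_\R U_s(\ell)^2\,\Phi(\d\ell)\,\d s$ and $\mu^{J,p}$ charges every predictable set on which two candidate integrands differ. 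Stability under an equivalent change of measure $\Q\sim\P$ then follows from \cite[Theorem III.5.24]{jacod2003limit}: the density process itself admits such a representation, and applying Girsanov's theorem to both the Brownian and the jump parts shows that $(\F,\Q)$-local martingales again decompose against the $\Q$-Brownian motion and the $\Q$-compensated jump measure.

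The main obstacle is really bookkeeping rather than conceptual: one must check that the abstract Lévy-process representation theorem is invoked with exactly the integrand classes defined in the notation section — in particular that $G_{\rm loc}(\mu^J,\F,\P)$ coincides with the natural domain of the compensated integral against $\tilde\mu^J$, and that $\L^2_{\mathrm{loc}}(W,\F,\P)$ matches the domain of the Brownian integral — and that the compensator computation $\mu^{J,p}(\d t,\d\ell)=\Phi(\d\ell)\,\d t$ is carried out rigorously from the independence and i.i.d. assumptions, so that the two stated references can be combined cleanly.
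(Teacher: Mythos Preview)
Your approach is correct and aligns with the paper's own treatment: the paper does not give a standalone proof but simply states that the result is derived from \cite[Theorem 14.5.7]{cohen2015stochastic} combined with \cite[Observation I.4.1]{jacod2003limit}, and invokes \cite[Theorem III.5.24]{jacod2003limit} for stability under equivalent change of measure. Your write-up follows exactly this route, with the added (and helpful) detail of explaining why the integrand classes match and how uniqueness follows from the continuous/purely discontinuous decomposition.
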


\subsection{Actions and contracts}
We assume that through specific costly efforts, the agent can influence the distribution of the output process $X$, that is, he can affect the probability measure under which the problem is described in its weak formulation. For this, we introduce a compact subset $A$ of $\R_+$ containing $0$ and bounded by $\bar{a}\in A$, for some $\bar{a}>0$, representing the possible effort values that the agent can exert on the drift of the output process. Additionally, we define another compact subset $B$ of $\R_+$ bounded by $m$. This set $B$ denotes all possible accident frequencies, which is why it needs to satisfy the technical condition of being bounded away from zero. Specifically, there exists some $\eps_m\in B $ such that $\eps_m \in (0,m \wedge \bar{a})$ and $\eps_m \leq b$ for any $b \in B$. It is important to note that the fact that $\eps_m$ is positive implies that accidents are less likely to happen if the agent exerts effort, but he cannot eliminate them completely. We denote for simplicity $U\coloneqq A \times B$\footnote{The decision to consider $A$ and $B$ as generic closed sets instead of the intervals $[0,\bar{a}]$ and $[\varepsilon_m,m]$ is made to encompass the binary action case mentioned in \cite{sannikov2008continuous}. In this scenario, the agent can only choose between working, indicated by $(a, b) = (\bar{a}, \varepsilon_m)$, or not working, represented by $(a, b) = (0, m)$.}.

\medskip
The collection of admissible controls $\cU$ for the agent consists of all $\F$-predictable processes $\nu \coloneqq (\alpha, \beta)$ with values in $U$ such that the following stochastic exponential 
\begin{equation*}
M^\nu_t \coloneqq \cE\left(\int_0^\cdot \frac{\alpha_s}{\sigma} \d W_s\right)_t \cE\left(\int_0^\cdot \left(\frac{\beta_s}{m}-1\right) \left(\d N_s - \d s\right)\right)_t = \cE\bigg(\int_0^\cdot \frac{\alpha_s}{\sigma} \d W_s\bigg)_t \cE\bigg(\int_0^\cdot \int_{\R} \bigg(\frac{\beta_s}{m}-1\bigg) \tilde{\mu}^{N}(\d s, \d \ell) \bigg)_t, \; \text{for} \; t \geq 0,
\end{equation*}
is a $\P$--uniformly integrable $(\F,\P)$-martingale. It is well-known that this implies the existence of an $\cF_{\infty}$-measurable random variable $M^\nu_{\infty}$ such that $(M^\nu_t)_{t \in [0,\infty]}$ is still a $\P$--uniformly integrable $(\F,\P)$-martingale.

\medskip
For any $\nu \in \cU$, we define $\P^{\nu}$ as the probability measure on $(\Omega,\cF)$ whose density with respect to $\P$ is given by 
\begin{align}\label{align:changeMeasureNU}
\frac{\d\P^{\nu}}{\d\P} \coloneqq M^\nu_{\infty}.
\end{align}

While the agent can control the growth rate of the output process and reduce the intensity at which accidents occur through costly effort, the objective of the principal is to design a contract that incentivises the agent to increase the overall value of the project. The execution of the contract starts at time 0 and terminates at the random time $\tau$. Throughout that period, the agent receives a remuneration for his work in the form of a continuous stream of payments $\pi$ and a lump-sum payment $\xi$ at termination. We assume that $\tau$ is an $\F$--stopping time, $\pi$ is an $\F$-predictable non-negative process, and $\xi$ is a non-negative $\cF_\tau$-measurable random variable. We denote by $\cC$ the collection of contracts $\bC \coloneqq (\tau, \pi, \xi)$.

\begin{remark}
\begin{enumerate}[label=$(\roman*)$, wide,  labelindent=0pt]
\item To be precise, we only need the process $(M^\nu_t)_{t \in [0,\tau]}$ to be a $\P$--uniformly integrable $(\F,\P)$-martingale, where $\tau$ represents the termination of the contract between the principal and the agent. However, it is straightforward to notice that we are not asking for more by requiring that $M^\nu$ is a $\P$--uniformly integrable $(\F,\P)$-martingale since we can simply redefine the process $\nu$ as $\nu_t = (\alpha_t,\beta_t) = (0,m) $ for all $t \in (\tau,\infty)$ if $\tau$ is finite.
\item By {\rm \cite[Theorem 15.2.6]{cohen2015stochastic}}, for any $\nu\in\Uc$, the process $W^{\nu} \coloneqq W - \int_0^\cdot (\alpha_s/\sigma) \d s$ is an $(\F,\P^{\nu})$--Brownian motion, and the compound Poisson process $J$ and the Poisson process $N$ have $\P^{\nu}$-intensity given by $\int_0^\cdot \beta_s  \d s$ and $\int_0^\cdot (\beta_s/m) \d s$, respectively. Moreover, the measure $\mu^{J}(\d t, \d \ell)$ has $(\F,\P^{{\nu}})$-compensator $\mu^{J^{\smalltext{\nu}},p}(\d t, \d \ell) \coloneqq ({\beta_t}/{m})  \Phi(\d \ell) \d t$. Notice that we highlight the dependence on the probability measure $\P^\nu$ with the superscript $J^\nu$. Accordingly, for any $\nu\in\Uc$, the output process $X$ can be expressed as 
\begin{equation*}
X_t = x_0 + \int_0^t \alpha_s \d s + \sigma W^{\nu}_t - J_t \; \text{\rm for any} \; t \geq 0,\; \P\text{\rm--a.s.}
\end{equation*}
\end{enumerate}
\end{remark}

\subsection{The problem of the agent}\label{section:agentProblem}
The preferences of the agent are determined by a utility function $u: [0, \infty) \longrightarrow [0, \infty)$, which is supposed to be increasing\footnote{Throughout the paper, the term `increasing' is understood to mean strictly increasing (and similarly for `decreasing').}, strictly concave and twice continuously differentiable. Moreover, we assume $u(0)= 0$ and the condition $\lim_{\pi \rightarrow \infty} u^{\prime}(\pi) = 0$. The function $u$ satisfies the following growth condition:
\begin{equation}\label{eq:growthUtility}
c_0 \big(-1 + \pi^{\frac{1}{\gamma}}\big) \leq u(\pi) \leq c_1 \big(1 + \pi^{\frac{1}{\gamma}}\big), \; \text{for} \; \pi \geq 0, \; \text{for some} \; (c_0,c_1) \in( 0,\infty)^2, \; \text{and some} \; \gamma > 1.
\end{equation}

We introduce the opposite of its inverse along with its concave conjugate that we will be using throughout the paper. Precisely,
\begin{equation}\label{eq:defF}
F(y)\coloneqq - u^{(-1)} (y), \; \text{for} \; y  \geq 0, \; \text{and} \; F^\star(p)\coloneqq \inf_{y \geq 0}\{ y p  - F(y)\}, \; \text{for} \; p \in \R.
\end{equation}

\begin{remark}\label{remark:FFStarProperties}
The required conditions on the agent's utility function immediately imply that $F$ is twice continuously differentiable, decreasing, strictly concave, and that $F(0) = 0$. We have
\begin{equation}\label{eq:growthF}
\tilde{c}_0 \big(-1 - y^{\gamma}\big) \leq F(y) \leq \tilde{c}_1 \big(1 -y^{{\gamma}}\big) \; \text{for any} \; y \geq 0, \; \text{for some} \; (\tilde{c}_0,\tilde{c}_1) \in( 0,\infty)^2.
\end{equation}
It is straightforward to verify that $F^\star$ is null on $[F^\prime(0),\infty)$, and it is negative on $(-\infty,F^\prime(0))$. Furthermore, $F^\star$ is twice continuously differentiable, increasing and strictly concave on $(-\infty,F^\prime(0))$. The growth condition \eqref{eq:growthUtility} imposed on the utility function of the agent results in
\begin{equation}
-c^\star_0 \big(1 + (-p)^{\frac{\gamma}{\gamma-1}}\big) \leq F^\star(p) \leq c^\star_1 \big(1 - (-p)^{\frac{\gamma}{\gamma-1}}\big), \; \text{\rm for} \; p < F^\prime(0), \; \text{\rm for some} \; (c^\star_0,c^\star_1) \in( 0,\infty)^2.
\end{equation}
\end{remark}

\medskip
Given a contract $\bC = (\tau, \pi, \xi) \in \cC$ offered by the principal, the agent's optimisation problem is
\begin{equation}\label{eq:AgentReward}
V^{\rm A}(\bC) \coloneqq \sup_{\nu \in \cU} J^{\rm A}(\bC, \nu),  \; \text{where} \; J^{\rm A}(\bC, \nu) \coloneqq \E^{\P^{\smalltext{\nu}}}\bigg[\mathrm{e}^{-r \tau} u(\xi) \mathbf{1}_{\{\tau < \infty\}}+ \int_0^\tau r \mathrm{e}^{-rs} (u(\pi_s) - h(\nu_s)) \d s\bigg].
\end{equation}
In order to simplify notations in the subsequent discussion, it is convenient to rewrite the criterion of the agent as follows
\begin{align*}
J^{\rm A}(\bC, \nu) &= \E^{\P^{\smalltext{\nu}}}\bigg[\mathrm{e}^{-r \tau} \zeta \mathbf{1}_{\{\tau < \infty\}}+ \int_0^\tau r \mathrm{e}^{-rs} (\eta_s - h(\nu_s)) \d s\bigg]\; \text{for all} \; (\bC,\nu) \in \cC \times \cU.
\end{align*}
Here, we have denoted $\zeta \coloneqq u(\xi) $ and $\eta \coloneqq u(\pi)$, respectively. Henceforth, we indifferently identify as a contract $\bC \in \cC$ the triplet $(\tau, \pi, \xi)$ or the triplet $(\tau, \eta, \zeta)$ due to the one-to-one correspondence between them.

\medskip
The agent chooses the effort $\nu\coloneqq (\alpha,\beta)$ that maximises his utility from remuneration subject to the corresponding cost $h(\nu) = h(\alpha,\beta)$. Here, we abuse notation and indifferently identify the one-argument function $h(\cdot)$ with the two-arguments function $h(\cdot,\cdot)$. We assume the cost function $h : [0,\bar{a}] \times [\eps_m,m] \longrightarrow [0, \infty)$ to be twice continuously differentiable, strictly convex, and satisfying $h(0,m) = 0$. This latter assumption captures the fact that there is no cost for exerting no effort. Moreover, we suppose that $h(\cdot, b) : [0,\bar{a}] \longrightarrow [0, \infty)$ is increasing for any $b \in [\eps_m,m]$, and that $h(a, \cdot) : [\eps_m,m]\longrightarrow [0, \infty)$ is decreasing when $a\in [0,\bar{a}]$. This description of the cost function mathematically reflects the idea that exerting effort to increase the instantaneous growth rate of the value of the project and to decrease the likelihood of negative events causes discomfort to the agent as his utility is reduced. Finally, in order to consider the time-value of money, the agent exponentially discounts future income at the constant discount rate $r > 0$.

\begin{remark}
The perspective taken in this work is that in the model we analyse here, accidents are driven exogenously and are not dependent on the agent's effort. However, if we adopt the perspective {\rm`}the greater the effort, the greater the risk{\rm '}, in which accidents are caused endogenously and depend on the consequences of the agent's actions, we can adapt the analysis in this work.\footnote{We thank an anonymous referee for suggesting this variant.} In this case, the control of the agent is solely given by $\alpha \in \mathcal{A}$, and the cost function is defined as $\tilde{h}(a) \coloneqq h(a,b(a))$, for $a \in A$ and for some $b: A \longrightarrow [\varepsilon_m, m]$, which we require to be twice continuously differentiable, increasing, strictly convex, and satisfying $b(0) = \varepsilon_m$ and $b(\bar{a}) = m$. The underlying mathematics and results remain unchanged. However, we would need to replace the constant $m$ with $\varepsilon_m$ in the respective statements.
\end{remark}

\medskip
A control $\hat{\nu} \in \cU$ is considered an optimal response to the contract $\bC\in\cC$ if $V^{\rm A}(\bC) = J^A(\bC,\hat{\nu})$. We denote by $\cU^{\star}(\bC)$ the set of all optimal responses of the agent to the contract $\bC$. In addition, as usual in contract theory, we suppose that the agent has a
reservation utility $u(R)$, for some $R \geq 0$, meaning that he is only willing to accept the contract $\bC$ if his participation constraint is satisfied, that is, $J^A(\bC,\hat\nu) \geq u(R)$. Consequently, the agent only accepts contracts in 
\begin{equation*}
\mathfrak{C}_R \coloneqq \big\{\bC \in \mathfrak{C}: V^{\rm A}(\bC) \geq u(R)\big\},
\end{equation*}
where the set $\mathfrak{C}$ denotes the collection of admissible contracts that will be introduced at the end of \Cref{section:GP}, as we need to impose some additional integrability requirements first.

\subsection{The problem of the principal}
Anticipating the optimal response of the agent, a risk-neutral principal seeks to design the contract which best serves her objective under his participation constraint. Specifically, her problem is
\begin{equation}\label{secondBest_Pproblem}
V^{\rm P} \coloneqq \sup_{\bC \in \mathfrak{C}_\smalltext{R}} \sup_{\nu \in \cU^{\smalltext{\star}}(\bC)} J^{\rm P}(\bC, \nu),
\end{equation}
where
\begin{equation}\label{eq:PrincipalReward}
J^{\rm P}(\bC, \nu) \coloneqq \E^{\P^{\smalltext{\nu}}}\bigg[-\mathrm{e}^{-\rho \tau} \xi  \mathbf{1}_{\{\tau < \infty\}} + \int_0^\tau \rho \mathrm{e}^{-\rho s} (\d X_s - \pi_s \d s)\bigg] = \E^{\P^{\smalltext{\nu}}}\bigg[-\mathrm{e}^{-\rho \tau} \xi \mathbf{1}_{\{\tau < \infty\}} + \int_0^\tau \rho \mathrm{e}^{-\rho s} (\alpha_s - \beta_s - \pi_s)\d s\bigg],
\end{equation}
as a consequence of Doob’s optional sampling theorem. With the notation introduced in the previous section and using the utility function $F$ given in \eqref{eq:defF}, we can rewrite the criterion of the principal to
\begin{equation*}
J^{\rm P}(\bC, \nu) = \E^{\P^{\smalltext{\nu}}}\bigg[\mathrm{e}^{-\rho \tau} F(\zeta)  \mathbf{1}_{\{\tau < \infty\}} + \int_0^\tau \rho \mathrm{e}^{-\rho s} (\alpha_s - \beta_s + F(\eta_s))\d s\bigg] \; \text{for any} \; (\bC,\nu) \in \cC \times \cU.
\end{equation*}
It is worth noting that the future rewards of the principal are discounted with a constant discount rate $\rho > 0$, which may differ from the one of the agent. Additionally, we adopt the convention $\sup \varnothing = -\infty$ that implies that the principal has an interest in only offering contracts $\bC$ which induce an optimal response from the agent, that is, contracts for which the set $\cU^{\star}(\bC)$ is not empty.

\subsection{Golden parachute and reformulation}\label{section:GP}

A golden parachute is a situation where the agent ceases to exert any effort but receives a compensation from the principal. Within the framework presented here, there are two ways in which a golden parachute can occur
\begin{itemize}
\item retirement: the agent is retired by the principal at some non-negative random time, and continues to receive a stream of positive payments;
\item termination: the contract is terminated at some non-negative random time because the agent is fired, and the agent receives a lump-sum compensation.
\end{itemize}

As pointed out in \cite{possamai2020there}, we will prove that although the agent is unconcerned by the two scenarii, the discrepancy between the two discount rates $r$ and $\rho$ can lead to a situation where the principal can improve her reward by retiring the agent rather than firing him. More precisely, for any given state $\omega \in \Omega$, the agent is indifferent between receiving $\xi(\omega)$ at some non-negative random time $\tau(\omega)$ or a continuous stream of payments $\pi(\omega)$ over the interval $[\tau(\omega), \tau(\omega) + T(\omega)]$ for some $T(\omega) \geq 0$, with subsequent termination of the contract postponed to time $\tau(\omega) + T(\omega)$ with a lump-sum compensation $\xi^{\prime}(\omega)$ verifying the following condition
\begin{align}\label{align:sameUtilityInformally}
u(\xi(\omega)) = \mathrm{e}^{-r T(\omega)} u(\xi^\prime(\omega)) \mathbf{1}_{\{T(\omega) < \infty\}} + \int_0^{T(\omega)} r \mathrm{e}^{-rs} u(\pi_s(\omega)) \d s.
\end{align}
On the contrary, the principal may prefer retiring the agent in this case since her criterion compared to the termination scenario can be improved by the quantity informally expressed as
\begin{equation*}
\sup_{\pi(\omega)} \sup_{T(\omega)} \bigg\{-\mathrm{e}^{-\rho T(\omega)} \xi^\prime(\omega) \mathbf{1}_{\{T(\omega) < \infty\}} + \int_0^{T(\omega)} \rho \mathrm{e}^{-\rho s} (- m - \pi_s(\omega)) \d s\bigg\} .
\end{equation*}

Therefore, it is evident that the problem of the principal exhibits the so-called face-lifting phenomenon, as her reward can be improved by the introduction of the face-lifted utility $\bar{F}$, which is defined by the following deterministic mixed control--stopping problem
\begin{equation}\label{eq:FbarEquation}
\bar{F}(y_0) \coloneqq \sup_{p \in \cB_{\smalltext{\R}_\tinytext{+}}} \sup_{T \in [0, T^{\smalltext{y}_\tinytext{0}\smalltext{,}\smalltext{p}}_\smalltext{0}]} \bigg\{\mathrm{e}^{-\rho T} F\big(y^{y_\smalltext{0},p}(T)\big) \mathbf{1}_{\{T < \infty\}} + \int_0^T \rho \mathrm{e}^{-\rho t} \big(- m + F(p(t))\big) \d t\bigg\}, \; \text{for} \; y_0 \geq 0,
\end{equation}
where the function $F$ is introduced in \eqref{eq:defF}, $\cB_{{\R}_\smalltext{+}}$ denotes the set of Borel-measurable maps from $\R_+$ to $\R_+$, and for all $(y_0, p) \in \R_+ \times \cB_{\R_\smalltext{+}}$, $T^{y_\smalltext{0},p}_0 \coloneqq \inf\{t\geq 0: y^{y_\smalltext{0},p}(t) \leq 0\}$, where the state process $y^{y_\smalltext{0},p}$ is defined by the controlled first-order ODE
\begin{equation}\label{eq:stateYdynamics}
\dot{y}^{y_\smalltext{0},p}(t) = r\big(y^{y_\smalltext{0},p}(t)-p(t)\big), \; \text{for} \; t >0,\;  y^{y_\smalltext{0},p}(0) = y_0.
\end{equation}
It follows that
\begin{equation*}
y_0 = \mathrm{e}^{-r t} y^{y_\smalltext{0},p}(t) + \int_0^t r \mathrm{e}^{-r s} p(s) \d s  \; \text{for any} \; t \in \big[0,T^{y_\smalltext{0},p}_0\big).
\end{equation*}
When comparing this equation with \eqref{align:sameUtilityInformally}, it becomes clear that $y^{y_\smalltext{0},p}(t)$ represents the continuation utility the agent gets from the lump-sum payment $u^{{(-1)}}(y^{y\smalltext{0},p}(t))$, for every time $t \in [0,T^{{y}_\smalltext{0},{p}}_0)$. It is important to emphasise that we need to restrict our attention to the subset $[0,T^{{y}_\smalltext{0},{p}}_0)$. This results from the fact that the agent cannot receive negative payments as he is protected by limited liability.

\medskip
After having introduced the face-lifted utility $\bar{F}$, it would be natural to define what is known as the relaxed criterion of the principal in \cite{possamai2020there}. However, before proceeding with this, we first need to define the set of admissible contracts. We denote by $\mathfrak{C}$ the collection of contracts $\bC = (\tau,\pi,\xi) \in \cC$ satisfying the integrability condition
\begin{equation}\label{eq:integrabilityCondition}
\sup_{\nu \in \cU} \E^{\P^{\smalltext{\nu}}}\bigg[\mathrm{e}^{-r^\smalltext{\prime} q\tau} \xi^{\frac{q}{\gamma}} \mathbf{1}_{\{\tau < \infty\}} + \int_0^\tau \mathrm{e}^{-r^\smalltext{\prime} q s} \pi_s^{\frac{q}{\gamma}} \d s\bigg]<\infty,
\end{equation}
for some $r^\prime \in (0,r)$ and $q>2 \vee \gamma $.

\begin{remark}
\begin{enumerate}[label=$(\roman*)$, wide, labelindent=0pt]
\item Note that the value function of the agent $V^{\rm A}(\bC)$ is finite for any $\bC \in \mathfrak{C}$, that is, for any contract that verifies the integrability condition \eqref{eq:integrabilityCondition}. Additionally, the value function of the principal $V^{\rm P}$ is locally bounded; it is trivially bounded from above by $\bar{a} - \varepsilon_m$ and locally bounded from below since there exists at least a contract $\bC \in \mathfrak{C}_R$ such that the set $\cU^{{\star}}(\bC)$ is not empty. 
\item The requirement $q>2$ in the integrability condition \eqref{eq:integrabilityCondition} is necessary in order to apply {\rm \citeauthor*{kruse2015bsdes} \cite[Proposition 2]{kruse2015bsdes} }in the reduction argument in {\rm\Cref{appendix:reductionSecond}}. Specifically, this condition is crucial for the application of the Burkholder--Davis--Gundy inequality for non-continuous local martingales.
\end{enumerate}
\end{remark}

Analogously to \cite{possamai2020there}, we can introduce the collection $\mathfrak{C}^0$ of admissible contracts $\bC^0 =  (\tau^0, \pi^0, \xi^0) \in \mathfrak{C}$ where
\begin{equation*}
\tau^0 \coloneqq \tau + T, \; \pi^0 \coloneqq \pi \; \mathbf{1}_{\llbracket 0,\tau \rrbracket} + p \; \mathbf{1}_{\rrbracket \tau,\tau^0 \rrbracket}, \; \text{and} \; u(\xi^0) \coloneqq y^{u(\xi),p}(T),
\end{equation*}
for some $\bC = (\tau,\pi,\xi) \in \mathfrak{C}$, $\cF_{\tau}$-measurable $p$ with values in $\cB_{\R_\smalltext{+}}$, and $T \in [0,T^{{u(\xi)},{p}}_0]$. Since it can be easily shown that the two sets $\mathfrak{C}$ and $\mathfrak{C}^0$ coincide, we have the following equivalent formulations of the problem of the principal
\begin{equation*}
V^{\rm P} \coloneqq \sup_{\bC \in \mathfrak{C}_\smalltext{R}} \sup_{\nu \in \cU^{\smalltext{\star}}(\bC)} J^{\rm P}(\bC, \nu) = \bar{V}^{\rm P} \coloneqq \sup_{\bC \in \mathfrak{C}_\smalltext{R}} \sup_{\nu \in \cU^{\smalltext{\star}}(\bC)} \bar{J}^{\rm P}(\bC, \nu),
\end{equation*}
where the relaxed reward of the principal is given by 
\begin{equation}\label{eq:newVprincipal}
\bar{J}^{\rm P}(\bC, \nu) \coloneqq  \E^{\P^{\smalltext{\nu}}}\bigg[\mathrm{e}^{-\rho \tau} \bar{F}(\zeta)  \mathbf{1}_{\{\tau < \infty\}} + \int_0^\tau \rho \mathrm{e}^{-\rho s} \big(\alpha_s - \beta_s + F(\eta_s)\big) \d s\bigg], \; \text{for} \; (\bC,\nu) \in \cC \times \cU.
\end{equation}

\medskip
Consequently, the introduction of the face-lifted utility $\bar{F}$ and its corresponding relaxed reward for the principal plays a crucial role as it combines both notions of a golden parachute in the termination scenario.

\begin{definition}\label{def:goldenparachute}
The relaxed contracting problem $\bar{V}^{\rm P}$ exhibits a golden parachute if $\xi^{\star} > 0$ on the event set $\{\tau^{\star} < \infty\}$, for any optimal contract $\bC^{\star} = (\tau^{\star}, \pi^{\star}, \xi^{\star}) \in \mathfrak{C}$.
\end{definition}
It is worth noticing that in the definition of a golden parachute, the optimal termination $\tau^{\star}$ of the contract $\bC^{\star}$ can be null, indicating that the agent receives some remuneration without starting to work. This scenario is a consequence of the formulation of the problem, as the principal cannot choose not to hire the agent, and thus may simply decide to hire him and terminate the contract simultaneously.

\section{On the face-lifted utility}\label{section:faceliftedAnalysis}
We first start by investigating the problem $\eqref{eq:FbarEquation}$ to provide an explicit characterisation of the face-lifted utility $\bar{F}$. The associated Hamilton--Jacobi equation is given by
\begin{align}\label{align:hjFbar}
\min\big\{\bar{F}(y)- F(y), F^\star(\delta \bar{F}^\prime(y)) - \delta y\bar{F}^\prime(y) + \bar{F}(y) + m\big\}=0,\; y\in(0,\infty), \; \bar{F}(0) = 0,
\end{align}
where $\delta \coloneqq r/\rho$, and $F^\star$ denotes the concave conjugate of $F$ introduced in \eqref{eq:defF}.

\medskip
The subsequent sections are dedicated to the analysis of the aforementioned Hamilton--Jacobi equation to compute the face-lifted utility $\bar{F}$ in closed-form. We prove that when the agent and the principal are equally impatient, namely when $\delta = 1$, the principal never benefits from delaying the termination of the contract and allowing the agent to exert no effort for a while. Only this particular scenario corresponds exactly to the analogous one without accidents studied in \cite{possamai2020there}. Indeed, when the two discount factors differ, the possibility of accidents may lead the principal to consider termination as a more favourable option. However, this decision is never optimal when $m =0$, unless the continuation utility reaches the value $0$.

\medskip
If the agent is more impatient than the principal, and thus $\delta >1$, we need to divide the analysis into two cases based on the size of possible accidents. When the average size per accident is small, precisely $m \leq - F^\star (\delta F^\prime(0))$, we show that nothing differs from the results in \cite{possamai2020there} because the face-lifted function $\bar{F}$ is always strictly above $F$, meaning that the retirement scenario is always preferred by the principal. This decision originates from the principal's relatively limited concern for accidents, given that the potential losses she might face are not significant. 

\medskip
However, when the size of the accidents becomes larger, specifically when $m$ is above the aforementioned threshold, the decision to retire the agent is no longer optimal in general because the two functions $\bar{F}$ and $F$ coincide for small values of the continuation utility of the agent. This difference can be attributed solely to the principal's concern regarding accidents since the occurrence of an accident now significantly impacts the value of the project. Consequently, the principal chooses to terminate the contract by firing the agent when his continuation utility decreases sufficiently. This is in stark contrast with the results of \cite{possamai2020there}, where termination is never optimal for $\delta>1$.

\medskip 
When the principal becomes much more impatient than the agent, namely $\rho \geq \gamma r$, the problem degenerates exactly as in \citep{possamai2020there} since $\bar{F}$ coincides with $F$ until the latter reaches the value $-m$, after which it stays identically equal to $-m$. Consequently, for small values of the continuation utility of the agent, the presence of accidents leads the principal to prefer dismissing the agent rather than risking the occurrence of an accident that is more costly for her. Conversely, when the continuation utility of the agent increases sufficiently, the principal chooses not to terminate the contract, and the cost of retiring the agent is given exactly by the average loss incurred in case of an accident.

\medskip
Finally, in case where the principal is still more impatient than the agent but $\rho < r \gamma$, there is one point at which the concavity of $\bar{F}$ breaks, a feature which never occurred in \citep{possamai2020there}, where $\bar F$ is always concave. In fact, the face-lifted utility $\bar{F}$ coincides with $F$ up to this point, and is then a non-trivial majorant of $F$. This implies that the principal can prefer dismissing the agent or retiring him based on the actual level of his continuation utility, owing to the combined effect of her impatience and her concerns regarding possible accidents. Surprisingly, unlike in the case where $\delta >1$, the introduction of potential losses ensures that termination is always preferred over retiring the agent for sufficiently small values of the agent's continuation utility. For every positive value of $m$, termination of the contract is a possibility, which never arises in the analysis without accidents, namely when $m=0$.

\subsection{The equally impatient case}
Here we examine the case where $r = \rho$, or equivalently, $\delta = 1$.
\begin{proposition}\label{theorem:FbarDelta1}
When $\delta=1$, we have $\bar{F}=F$ on $[0,\infty)$.
\end{proposition}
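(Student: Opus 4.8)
The plan is to prove the two inequalities $\bar{F}\geq F$ and $\bar{F}\leq F$ on $[0,\infty)$ separately. The first is immediate: taking the horizon $T=0$ in the supremum \eqref{eq:FbarEquation} (which is always admissible since $T^{y_0,p}_0\geq 0$) contributes exactly $F\big(y^{y_0,p}(0)\big)=F(y_0)$, so $\bar{F}(y_0)\geq F(y_0)$; and $\bar{F}(0)=0=F(0)$ trivially.

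The content lies in the reverse inequality, and the governing idea is that when $\delta=1$, i.e.\ $r=\rho$, the explicit formula for the state collapses to $e^{-\rho t}y^{y_0,p}(t)=y_0-\int_0^t \rho \mathrm{e}^{-\rho s}p(s)\,\d s$ for $t\in[0,T^{y_0,p}_0)$; in particular, since $y^{y_0,p}\geq 0$ on $[0,T]$, this forces $\int_0^T \rho \mathrm{e}^{-\rho s}p(s)\,\d s\leq y_0$. Fix $p\in\cB_{\R_+}$ and $T\in[0,T^{y_0,p}_0]$. For finite $T$, I would read the quantity $\mathrm{e}^{-\rho T}F\big(y^{y_0,p}(T)\big)+\int_0^T \rho \mathrm{e}^{-\rho t}F(p(t))\,\d t$ as the integral of $F$ against the probability measure on $[0,T]$ with density $\rho\mathrm{e}^{-\rho t}$ on $[0,T)$ and an atom of mass $\mathrm{e}^{-\rho T}$ at $T$, evaluated at the map equal to $p(t)$ on $[0,T)$ and to $y^{y_0,p}(T)$ at $T$; by the identity above its barycenter is exactly $\mathrm{e}^{-\rho T}y^{y_0,p}(T)+\int_0^T\rho\mathrm{e}^{-\rho t}p(t)\,\d t=y_0$. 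Jensen's inequality (legitimate because $F$ is concave and, by \eqref{eq:growthF}, bounded above, so $(F\circ p)^+$ is integrable and the integral is well defined in $[-\infty,\infty)$) then bounds this by $F(y_0)$, and subtracting the remaining term $m\,(1-\mathrm{e}^{-\rho T})\geq 0$ shows the supremand is $\leq F(y_0)$. For $T=\infty$ only the running term survives, and Jensen applied to the probability measure $\rho\mathrm{e}^{-\rho t}\,\d t$ on $[0,\infty)$, together with $\int_0^\infty\rho\mathrm{e}^{-\rho t}p(t)\,\d t\leq y_0$ and monotonicity of $F$, gives $\int_0^\infty\rho\mathrm{e}^{-\rho t}F(p(t))\,\d t\leq F(y_0)$, so the supremand is $\leq F(y_0)-m<F(y_0)$. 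Taking the supremum over $p$ and $T$ yields $\bar{F}\leq F$.

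The main obstacle is bookkeeping rather than conceptual: one must check the supremand is well posed (handled via the upper bound in \eqref{eq:growthF}) and, above all, verify the barycenter identity $\mathrm{e}^{-\rho T}y^{y_0,p}(T)+\int_0^T\rho\mathrm{e}^{-\rho t}p(t)\,\d t=y_0$ — the structural coincidence that discounting at the common rate produces a convex combination centred precisely at $y_0$. This identity fails for $\delta\neq 1$, which is exactly why retirement can never help the principal in this regime. A consistency check against \eqref{align:hjFbar} supports the answer: with $\bar{F}=F$ the first bracket vanishes and the second reduces to $F^\star\big(F'(y)\big)-yF'(y)+F(y)+m=m>0$, since strict concavity of $F$ gives $F^\star(F'(y))=yF'(y)-F(y)$.
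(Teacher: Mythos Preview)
Your finite-$T$ argument is correct and is a clean repackaging of the paper's approach. The paper applies the fundamental theorem of calculus to $t\mapsto \mathrm{e}^{-\rho t}F(y^{y_0,p}(t))$ and then uses the tangent-line inequality $F(p)\le F(y)+F'(y)(p-y)$ pointwise; you instead recognise that when $r=\rho$ the pair $\big(\rho\mathrm{e}^{-\rho t}\,\d t\big|_{[0,T)},\ \mathrm{e}^{-\rho T}\delta_T\big)$ is a probability measure whose barycenter under $g=p\mathbf 1_{[0,T)}+y^{y_0,p}(T)\mathbf 1_{\{T\}}$ is exactly $y_0$, and apply Jensen once. Both routes are the same concavity fact in different clothing; your formulation is arguably more transparent about why the case $\delta=1$ is special.

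There is, however, a genuine slip in your $T=\infty$ step. You claim that Jensen together with $\bar p:=\int_0^\infty\rho\mathrm{e}^{-\rho t}p(t)\,\d t\le y_0$ and monotonicity of $F$ gives $\int_0^\infty\rho\mathrm{e}^{-\rho t}F(p(t))\,\d t\le F(y_0)$. But $F$ is \emph{decreasing}: from $\bar p\le y_0$ one gets $F(\bar p)\ge F(y_0)$, and Jensen only yields $\int F(p)\,\d\mu\le F(\bar p)$---the inequality goes the wrong way. Concretely, with $p\equiv 0$ one has $\bar p=0$ and $\int F(p)\,\d\mu=F(0)=0$, whereas $F(y_0)<0$ for $y_0>0$. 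The paper sidesteps this by never isolating the running integral: it keeps the terminal term $\mathrm{e}^{-\rho T'}F(y^{y_0,p}(T'))$ in the finite-$T'$ bound and then passes to the limit $T'\to\infty$, so that the limiting supremand (terminal plus running) is $\le F(y_0)$. Your own finite-$T$ Jensen bound already gives $\text{(full supremand at }T')\le F(y_0)$ for every $T'<\infty$; the $T=\infty$ case should then be obtained as the limit of this inequality, not by a separate Jensen-plus-monotonicity step on the running term alone.
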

\begin{proof}
Let us fix some $y_0 \geq 0$. The definition of the control--stopping problem in \Cref{eq:FbarEquation} easily implies that $\bar{F}(y_0) \geq F(y_0)$. To show the reverse inequality, we fix some $p \in \cB_{\R_\smalltext{+}}$ and $T \in [0, T^{y_\smalltext{0},p}_0]$. If $T < \infty$, the fundamental theorem of calculus implies that 
\begin{equation*}
\mathrm{e}^{-\rho T} F(y^{y_\smalltext{0},p}(T)) = F(y_0) + \int_0^T \rho \mathrm{e}^{-\rho t} \Big(- F\big(y^{y_\smalltext{0},p}(t)\big) - F^\prime\big(y^{y_\smalltext{0},p}(t)\big) \big(p(t) - y^{y_\smalltext{0},p}(t)\big)\Big) \d t,
\end{equation*}
given that the state $y^{y_\smalltext{0},p}$ is determined by the ODE \eqref{eq:stateYdynamics} and the fact that $\rho = r$. Accordingly, it holds that 
\begin{align*}
&\mathrm{e}^{-\rho T} F(y^{y_\smalltext{0},p}(T)) + \int_0^T \rho \mathrm{e}^{-\rho t} \big(- m + F(p(t))\big) \d t \\
&= F(y_0) + \int_0^T \rho \mathrm{e}^{-\rho t} \Big(- F\big(y^{y_\smalltext{0},p}(t)\big) - F^\prime\big(y^{y_\smalltext{0},p}(t)\big) \big(p(t) - y^{y_\smalltext{0},p}(t) \big) + F(p(t)) - m \Big) \d t \leq  F(y_0) + m\big (\mathrm{e}^{-\rho T} - 1\big) \leq F(y_0),
\end{align*}
due to the concavity of the function $F$. On the other hand, if $T = \infty$, similar computations as before lead to
\begin{align*}
\lim_{T^\prime \rightarrow \infty} \bigg(\mathrm{e}^{-\rho T^\prime} F(y^{y_\smalltext{0},p}(T^\prime)) \mathbf{1}_{\{T^\prime < \infty\}}  + \int_0^{T^\prime} \rho \mathrm{e}^{-\rho t} \big(- m + F(p(t))\big) \d t\bigg) \leq F(y_0).
\end{align*}
Thus, we can conclude that $\bar{F}(y_0) \leq F(y_0)$ by arbitrariness of $p \in \cB_{\R_\smalltext{+}}$ and $T \in [0, T^{y_\smalltext{0},p}_0]$.
\end{proof}

\subsection{Different discount rates}
In general, the Hamilton--Jacobi equation \eqref{align:hjFbar} corresponding to the mixed control--stopping problem $\bar{F}$ cannot be simplified into an ODE as presented in \cite[Equation A.1]{possamai2020there}. Another challenge arises from the positivity of the constant $m$, implying that there may exist a non-empty set $\mathcal{I}$ on which $F$ is a super-solution of the ODE described in \eqref{align:hjFbar}, thereby possibly resulting in $\bar{F}$ coinciding with $F$ on $\mathcal{I}$. In cases where the discount factors $r$ and $\rho$ differ, there is a possibility that $\mathcal{I}$ could be disconnected. To prevent this occurrence, we introduce the following assumption, which, in economic terms, suggests that if the principal prefers postponing the termination of the contract for a certain continuation utility of the agent, she maintain the same preference for larger continuation utility, assuming all other factors remain constant.

\begin{assumption}\label{assumption:FdeltamDec}
If $\delta \neq 1$, the function $F_{\delta m}:[0,\infty)\longrightarrow \R$ defined by
\begin{align}\label{eq:fdecreasing}
F_{\delta m}(y) \coloneqq F^\star\big(\delta F^\prime(y)\big) - \delta y F^\prime(y) + F(y) + m, \; \text{for} \; y \geq 0,
\end{align}
is decreasing and such that $\lim_{y \rightarrow \infty} F_{\delta m}(y) = -\infty$.
\end{assumption}
The assumption naturally implies the existence of the following point
\begin{align}\label{eq:ybar}
\bar{y}\coloneqq \inf\big\{y \geq 0: F_{\delta m}(y) \leq 0\big\} \in [0, \infty),
\end{align}
and we will implicitly assume this throughout the paper. We show below that \Cref{assumption:FdeltamDec}, which is introduced for mathematical convenience, also holds in a simple and reasonable example. 
\begin{remark}
It is important to highlight that within the accident-free framework examined by \cite{possamai2020there}, \Cref{assumption:FdeltamDec} is unnecessary. Specifically, when $m=0$ $($and $\delta \neq 1)$, the function $F_{\delta m}$ is strictly negative on $(0, \infty)$ due to the strict concavity of $F$ over the same interval. Consequently, the latter is a sub-solution, not a solution, to the {\rm ODE} in \eqref{align:hjFbar}. This observation is also confirmed by {\rm\cite[Proposition 2.1]{possamai2020there}}, which establishes that, under these conditions, the face-lifted utility $\bar{F}$ is a strict majorant of $F$.
\end{remark}

\begin{example}\label{example:positivePower}
Suppose $\gamma >1$ and consider the utility function $u(\pi) = \pi^{\frac{1}{\gamma}}$, for $\pi \geq 0$. Then,
\begin{equation*}
F(y) = - y^\gamma, \; \text{\rm for} \; y \geq 0, \; \text{\rm and} \; F^{\star}(p) = - (\gamma -1) \bigg({\frac{-p}{\gamma}}\bigg)^{\frac{\gamma}{\gamma - 1}} \; \mathbf{ 1}_{(-\infty,0]}(p), \; \text{\rm for} \; p \in \R.
\end{equation*}
It is straightforward to verify that {\rm\Cref{assumption:FdeltamDec}} holds. Indeed,
\begin{equation*}
F_{\delta m}(y) = \big((1 - \gamma) \delta^{\frac{\gamma}{\gamma - 1}} + \delta \gamma - 1\big) y^\gamma + m ,\; y \geq 0.
\end{equation*}
The factor in front of the positive power function $y^\gamma$ is negative when $\delta \neq 1$, which implies that the map $F_{\delta m}$ is decreasing with $\lim_{y \rightarrow \infty} F_{\delta m}(y) = -\infty$. In particular, we find that
\begin{align*}
\bar{y}= \left(\frac{m}{(\gamma - 1) \delta^{\frac{\gamma}{\gamma - 1}} - \delta \gamma + 1}\right)^\frac{1}{\gamma}, \; \text{\rm and consequently} \; F(\bar{y}) = - \frac{m}{(\gamma - 1) \delta^{\frac{\gamma}{\gamma - 1}} - \delta \gamma + 1}.
\end{align*}
\end{example}

\medskip
In the scenario where $\delta > 1$, the definition of the point $\bar{y}$ in \eqref{eq:ybar} suffices to describe the face-lifted utility function $\bar{F}$, while the case $\delta <1$ requires the introduction of two additional fundamental points. The first one is defined as
\begin{align}\label{eq:yHat}
\hat{y} \coloneqq \inf\{y \geq 0: F(y) \leq -m\} \in (0,\bar{y}).
\end{align}
Given the assumption $\delta < 1$, we initially note that the interval $(0,\bar{y})$ is non-degenerate, as the fact that $F_{\delta m}(0) = F^{\star}(\delta F^{\prime}(0)) + m = m > 0$ implies that $\bar{y}$ is positive. The proof that $\hat{y}$ belongs to $(0,\bar{y})$, or in other words, that $\hat{y}$ is smaller than $\bar{y}$, is given in \Cref{lemma:FyBarlessMinusM}, which we will introduce later for clarity. Furthermore, if, along with $\delta < 1$, we require $\gamma\delta > 1$, as will be proven in \Cref{lemma:existenceOfyTilde}, we can define
\begin{align}\label{eq:yTilde}
\tilde{y} \coloneqq \{y \in (\hat{y},\bar{y}): F(y) = w_0(y) -m\},
\end{align}
where $w_0$ is the face-lifted reward introduced in \cite[Proposition 2.1]{possamai2020there}, which is given by
\begin{align}\label{align:w0nojumps}
w_0(y) \coloneqq (w^\star_0)^\star(y) = \inf_{p \leq 0} \{ y p - w^\star_0(p)\},\;\text{for}\;y\geq 0, \; \text{where} \; w^\star_0(p)\coloneqq \int_0^\infty \rho \mathrm{e}^{-\rho t} F^\star(\delta \mathrm{e}^{\rho (1-\delta)t}p) \d t, \; \text{for} \; p \leq 0.
\end{align}
Furthermore, \cite[Proposition 2.1]{possamai2020there} proves that $w_0$ is twice-continuously differentiable, decreasing with $w_0^\prime(0) = 0$, and satisfies \begin{equation}\label{eq:eqw0NoJumps}
F^\star(\delta w_0^\prime(y)) - \delta y w_0^\prime(y) + w_0(y) =0,\; y\in(0,\infty), \; w_0(0) = 0.
\end{equation}
Consequently, both the function $w_0$ and its first derivative $w_0^\prime$ are negative on the interval $(0,\infty)$.

\subsubsection{A more impatient agent}
Throughout this section, we characterise the face-lifted reward function $\bar{F}$ in the case where $\delta >1$.
\begin{proposition}\label{proposition:FBarGeneralDeltaBigger1}
Let us assume that $\delta >1$. We introduce the function
\begin{align*}
w^\star(p) \coloneqq \frac{(-p)^{\frac{1}{1-\delta}}}{\delta-1} \int_p^{p^\smalltext{\star}} \frac{F^\star(\delta x)}{(-x)^{1+\frac{1}{1-\delta}}} \d x + m \bigg(1- \bigg(\frac{p}{p^\star}\bigg)^{\frac{1}{1-\delta}}\bigg) + \bigg(\frac{p}{F^\prime(\bar{y})}\bigg)^{\frac{1}{1-\delta}} F^{\star}(F^\prime(\bar{y})) \mathbf{1}_{\{m > - F^\smalltext{\star}(\delta F^\smalltext{\prime}(0))\}}, \; \text{for} \; p \leq p^\star,
\end{align*}
where $p^\star \coloneqq ((F^\star)^{(-1)}(-m)/\delta) \mathbf{1}_{\{m \leq - F^\smalltext{\star}(\delta F^\smalltext{\prime}(0))\}} + F^\prime(\bar{y}) \mathbf{1}_{\{m > - F^\smalltext{\star}(\delta F^\smalltext{\prime}(0))\}}$. Then, the face-lifted utility function $\bar{F}$ is such that
\begin{enumerate}[label=$(\roman*)$]
\item \label{mSmall}if $m \leq - F^\star(\delta F^\prime(0))$, we have $\bar{F}(y) = (w^\star)^\star(y)$, for $y \geq 0;$
\item \label{mLarge}if $m > - F^\star(\delta F^\prime(0))$, we have $\bar{F}(y) = F(y)  \mathbf{1}_{[0,\bar{y}]}(y) + (w^\star)^\star(y) \mathbf{1}_{(\bar{y},\infty)}(y) $, for $y \geq 0$.
\end{enumerate}
In particular, $\bar{F}$ is continuously differentiable with $\bar{F}^\prime(0) = ((F^\star)^{(-1)}(-m)/\delta) \mathbf{1}_{\{m \leq - F^\smalltext{\star}(\delta F^\smalltext{\prime}(0))\}} + F^\prime(0) \mathbf{1}_{\{m > - F^\smalltext{\star}(\delta F^\smalltext{\prime}(0))\}} \leq 0$. Moreover, it is decreasing and strictly concave, and satisfies the following growth condition:
\begin{equation*}
\bar{c}_0 \big(-1 - y^{{\gamma}}\big) \leq \bar{F}(y) \leq \bar{c}_1 \big(1 - y^{{\gamma}}\big), \; \text{for} \; y \geq 0, \; \text{\rm for some} \; (\bar{c}_0,\bar{c}_1) \in( 0,\infty)^2.
\end{equation*}
\end{proposition}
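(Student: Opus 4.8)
The plan is to solve the Hamilton--Jacobi equation \eqref{align:hjFbar} in closed form and then identify its solution with $\bar F$ by a verification argument against the control--stopping representation \eqref{eq:FbarEquation}. The guiding observation is that on any interval where $\bar F>F$ the $\min$ in \eqref{align:hjFbar} is attained by its second entry, so there $\bar F$ solves the first--order ODE $F^\star(\delta\bar F'(y))-\delta y\bar F'(y)+\bar F(y)+m=0$, while on $\{\bar F=F\}$ one only needs that second entry to be non--negative; this is an obstacle problem whose continuation region will be governed by \Cref{assumption:FdeltamDec}.

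The ODE on the continuation region is \emph{linearised} by the Legendre transform. Setting $w\coloneqq\bar F^\star$ and using the involutive relations $p=\bar F'(y)$, $y=w'(p)$, $\bar F(y)=pw'(p)-w(p)$, one checks that $\bar F$ solves the ODE if and only if $w$ solves the linear first--order ODE $w(p)+(\delta-1)pw'(p)=F^\star(\delta p)+m$ for $p\le 0$. Since $\delta>1$, multiplying by the integrating factor $(-p)^{1/(\delta-1)}$ and integrating from $p$ up to a free endpoint $p^\star$ with a prescribed value $w(p^\star)=w_0$ yields, once the exponents are reduced to $1/(1-\delta)$ and $1+1/(1-\delta)$ and the elementary integral of the constant term $m$ is evaluated, exactly the function $w^\star$ of the statement (its homogeneous part being $w_0\,(p/p^\star)^{1/(1-\delta)}$, which becomes the displayed $(p/F'(\bar y))^{1/(1-\delta)}F^\star(F'(\bar y))$--term once $p^\star$ and $w_0$ are identified). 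Taking the concave conjugate back produces the candidate $\bar F=(w^\star)^\star$ on the continuation region.

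It remains to locate the free boundary. Evaluating the second entry of \eqref{align:hjFbar} at $\bar F=F$ returns $F_{\delta m}$, so \Cref{assumption:FdeltamDec} shows $\{F_{\delta m}\ge 0\}=[0,\bar y]$; on $[0,\bar y]$ the choice $\bar F=F$ solves \eqref{align:hjFbar}, while on $(\bar y,\infty)$, where $F_{\delta m}<0$, the $\min$ cannot vanish unless $\bar F>F$, forcing the ODE there. The ODE data at $\bar y$ are those of $F$ by $C^1$--pasting, and the identity $F_{\delta m}(\bar y)=0$ is exactly the compatibility of the linear ODE with $(\bar F,\bar F')(\bar y)=(F,F')(\bar y)$, which yields $p^\star=F'(\bar y)$ and $w_0=w^\star(p^\star)=F^\star(F'(\bar y))$; the $C^1$--matching of $(w^\star)^\star$ to $F$ at $\bar y$ follows because the minimiser in $(w^\star)^\star(\bar y)=\inf_{p\le p^\star}\{p\bar y-w^\star(p)\}$ is the endpoint $p^\star$, and $(w^\star)^\star>F$ on $(\bar y,\infty)$ follows from an ODE comparison against the strict super-solution $F$. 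Case \ref{mLarge} is this situation with $\bar y>0$; case \ref{mSmall} is the subcase $\bar y=0$, i.e. $F_{\delta m}(0)=F^\star(\delta F'(0))+m\le 0$, in which the obstacle is nowhere active, the ODE holds on all of $(0,\infty)$, and letting $y\downarrow 0$ with $\bar F(0)=0$ forces $F^\star(\delta\bar F'(0))+m=0$, hence $p^\star=\bar F'(0)=(F^\star)^{-1}(-m)/\delta$ and $w_0=-\bar F(0)=0$. The asserted $C^1$ regularity, strict concavity, the sign $\bar F'(0)\le 0$, the monotonicity of $\bar F'$, and the polynomial growth bounds are then read off the explicit formula for $w^\star$ together with the growth estimates on $F$ and $F^\star$.

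Finally one checks that the explicit candidate $G$ coincides with the value $\bar F$ of \eqref{eq:FbarEquation}. For $\bar F\le G$, fix admissible $(p,T)$ with $T<\infty$; writing $y_t\coloneqq y^{y_0,p}(t)$, $p_t\coloneqq p(t)$, the fundamental theorem of calculus and $r=\rho\delta$ give that the functional in \eqref{eq:FbarEquation} equals $G(y_0)+\mathrm{e}^{-\rho T}(F-G)(y_T)+\int_0^T\rho\mathrm{e}^{-\rho t}\big(F(p_t)-\delta p_tG'(y_t)-G(y_t)+\delta y_tG'(y_t)-m\big)\,\mathrm{d}t$; bounding $F(p_t)-\delta p_tG'(y_t)\le\sup_{q\ge 0}\{F(q)-\delta qG'(y_t)\}=-F^\star(\delta G'(y_t))$ makes the integrand $\le-\big(F^\star(\delta G'(y_t))-\delta y_tG'(y_t)+G(y_t)+m\big)\le 0$ since $G$ solves \eqref{align:hjFbar}, and $(F-G)(y_T)\le 0$ because $G\ge F$, so the functional is $\le G(y_0)$; the case $T=\infty$ is obtained by passing to the limit in this bound, exactly as in the proof of \Cref{theorem:FbarDelta1}. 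For $\bar F\ge G$ one runs the feedback control $p(t)$ determined by $F'(p(t))=\delta G'(y(t))$ --- admissible and strictly positive because $\delta G'(y)\le\delta G'(\bar y)<F'(0)$ for $\delta>1$ --- stopped at the first time the trajectory reaches $\bar y$ (and $T=\infty$ if it never does), which turns all of the above inequalities into equalities and hence attains $G(y_0)$. I expect the main difficulty to be the bookkeeping of the obstacle/free--boundary structure --- that the switching point is precisely $\bar y$, that $\bar F=F$ strictly to its left, and the $C^1$--pasting, all of which hinge on \Cref{assumption:FdeltamDec} --- together with the careful treatment of the limit $T\to\infty$ and of the limited--liability state constraint $y\ge 0$ in the lower--bound construction.
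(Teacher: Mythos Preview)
Your proposal is correct and follows essentially the same route as the paper: pass to the dual via the Legendre transform to obtain the linear first--order ODE for $w^\star$, solve it with an integrating factor, pin down the free boundary through $F_{\delta m}$ and \Cref{assumption:FdeltamDec} (giving $p^\star=f_{\delta m}$ in case \ref{mSmall} and $p^\star=F'(\bar y)$ with $C^1$--pasting in case \ref{mLarge}), and then run a verification argument---upper bound by the fundamental theorem of calculus plus the obstacle inequality, lower bound by the feedback $p(t)=(F^\star)'(\delta G'(y(t)))$ stopped at $\bar y$ (or $0$). The only minor divergence is that the paper establishes $(w^\star)^\star\ge F$ on the continuation region by showing $F^\star\ge w^\star$ on the dual side (via an explicit monotonicity argument for $\phi(p)=(-p)^{-1/(1-\delta)}(F^\star-w^\star)(p)$), whereas you invoke an ODE comparison on the primal side; both work, but the dual argument is cleaner to make rigorous. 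The paper also asserts---and you implicitly need---that the feedback trajectory reaches $\bar y$ (resp.\ $0$) in finite time; this follows from $\dot y=r\tfrac{1-\delta}{\delta}\,G'(y)/G''(y)$ together with $G'(\bar y)=F'(\bar y)<0$ (resp.\ $G'(0)=f_{\delta m}<0$), so your ``$T=\infty$ if it never does'' caveat is in fact never triggered.
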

The proof is postponed to \Cref{appendix:face-lifted utility}.

\begin{remark}\label{remark:onlyOneCaseDeltaBigger1}
{\rm \Cref{proposition:FBarGeneralDeltaBigger1}} distinguishes between two cases based on the size of the average loss per accident. When this value is small, specifically less than $- F^\star(\delta F^\prime(0))$, the point $\bar{y}$ introduced in \eqref{eq:ybar} becomes zero, and therefore the function $F$ cannot be a solution of the Hamilton--Jacobi equation \eqref{align:hjFbar}. Conversely, when $m$ is strictly above the aforementioned threshold, $F$ solves the equation on the non-degenerate interval $[0,\bar{y}]$.
\end{remark}

\begin{example}\label{example:positivePowerDeltaBigger1}
In the specific positive power utility setting of {\rm\Cref{example:positivePower}}, only the second scenario described in {\rm\Cref{proposition:FBarGeneralDeltaBigger1}} occurs, as $F^\star(\delta F^\prime(0)) = 0$. Consequently, the face-lifted utility $\bar{F}$ coincides with $F$ on $[0,\bar{y}]$, and then it is described by the concave conjugate of $w^{\star}$, which is given by
\begin{align*}
w^{\star}(p) =-\left(\frac{-p}{\gamma}\right)^{\frac{1}{1-\delta}} \left(\frac{m}{1 - \gamma\delta + (\gamma-1)\delta^{\frac{\gamma}{\gamma-1}}}\right)^{\frac{\gamma-1}{\gamma(\delta-1)}} \left(\frac{m \gamma (\delta - 1)}{\gamma\delta -1}\right) - (-p)^{\frac{\gamma}{\gamma-1}} \frac{(1-\gamma)^2}{\gamma\delta-1} \left(\frac{\delta}{\gamma}\right)^{\frac{\gamma}{\gamma-1}} + m, \;\text{for} \; p \leq F^{\prime}(\bar{y}).
\end{align*}
Unfortunately, providing an explicit expression for the face-lifted reward $\bar{F}$ on $(\bar{y},\infty)$ is hindered by the presence of the two different powers that characterise the above formula. Nevertheless, we can approximate $\bar{F}$ using numerical techniques, as it is shown in the graphs at the end of this section.
\end{example}

\subsubsection{When the principal becomes strictly more impatient}
In our current scenario where $\delta < 1$, our aim is to provide a comprehensive characterisation of the solution to the Hamilton--Jacobi equation \eqref{align:hjFbar}. As previously noted, the point $\bar{y}$ introduced in \eqref{eq:ybar} is positive, suggesting the possibility of the solution aligning with the function $F$ in a right-neighbourhood of zero. We will demonstrate that it indeed aligns with $F$, at least until $F$ reaches the value of $-m$, or equivalently, until the point $\hat{y}$ defined in \eqref{eq:yHat} is attained. We prove the relation that $\hat{y}$ is smaller than $\bar{y}$ in the following result.

\begin{lemma}\label{lemma:FyBarlessMinusM}
If $\delta < 1$, then it holds that $F(\bar{y}) < -m$. 
\end{lemma}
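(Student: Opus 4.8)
The plan is to reduce the claim, via the strict monotonicity of $F$, to showing that $\bar y$ strictly exceeds the level $y_m$ at which $F$ attains the value $-m$.

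First I would record the elementary facts about $F$: it is continuous and strictly decreasing with $F(0)=0$ (as $F=-u^{(-1)}$ and $u$ is increasing with $u(0)=0$), and $F(y)\to-\infty$ as $y\to\infty$ by the growth bound \eqref{eq:growthF}. Since $m>0$, the intermediate value theorem together with strict monotonicity yields a unique $y_m>0$ with $F(y_m)=-m$, and $F(\bar y)<-m$ is then equivalent to $\bar y>y_m$. Recalling that $F_{\delta m}$ is continuous and, by \Cref{assumption:FdeltamDec}, decreasing, while $\bar y=\inf\{y\ge0:F_{\delta m}(y)\le0\}$ by \eqref{eq:ybar}, it suffices to prove the strict inequality $F_{\delta m}(y_m)>0$: continuity then gives $F_{\delta m}>0$ on a neighbourhood of $y_m$, hence $\bar y>y_m$.

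Next I would evaluate $F_{\delta m}$ at $y_m$. Plugging $F(y_m)=-m$ into \eqref{eq:fdecreasing} gives $F_{\delta m}(y_m)=F^\star(\delta F'(y_m))-\delta y_m F'(y_m)$. Set $\phi(z):=z\,\delta F'(y_m)-F(z)$ for $z\ge0$, so that $F^\star(\delta F'(y_m))=\inf_{z\ge0}\phi(z)$ by the definition of $F^\star$. The map $\phi$ is strictly convex (because $-F$ is) and coercive by \eqref{eq:growthF} (here $\gamma>1$ is used), so it attains its infimum at a unique $z^\star\in[0,\infty)$. The decisive observation is that $z^\star<y_m$: since $\delta<1$ and $F'(y_m)<0$ one has $\phi'(y_m)=\delta F'(y_m)-F'(y_m)=(\delta-1)F'(y_m)>0$, so strict convexity forces the minimiser strictly to the left of $y_m$. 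Therefore
\[
F_{\delta m}(y_m)=\phi(z^\star)-\delta y_m F'(y_m)=\delta F'(y_m)\,(z^\star-y_m)-F(z^\star),
\]
in which $\delta F'(y_m)(z^\star-y_m)>0$, being the product of the two negative numbers $\delta F'(y_m)$ and $z^\star-y_m$, while $-F(z^\star)\ge-F(0)=0$ since $z^\star\ge0$ and $F$ is decreasing. Hence $F_{\delta m}(y_m)>0$, which as explained above yields $\bar y>y_m$ and thus $F(\bar y)<F(y_m)=-m$.

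The argument is short and uses only convex-analytic properties of $F$ and $F^\star$ together with \Cref{assumption:FdeltamDec}; the only point requiring a little care is the strict localisation $z^\star<y_m$ of the minimiser of $\phi$, and this is precisely where the hypothesis $\delta<1$ (i.e.\ $\rho>r$) enters. No information about $\bar F$ itself, nor any comparison argument, is needed.
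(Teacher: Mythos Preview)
Your proof is correct and uses essentially the same convex-analytic idea as the paper: for the relevant $y$ and $p=\delta F'(y)$, the minimiser $z^\star$ of $z\mapsto zp-F(z)$ satisfies $z^\star<y$ when $\delta<1$, whence $F^\star(\delta F'(y))-\delta y F'(y)=\delta F'(y)(z^\star-y)-F(z^\star)>0$. The only cosmetic difference is the point of evaluation: the paper applies this at $y=\bar y$ (using $F_{\delta m}(\bar y)=0$ by continuity to read off $F(\bar y)+m<0$ directly), whereas you apply it at $y=y_m$ (the paper's $\hat y$) and then invoke the monotonicity of $F_{\delta m}$ from \Cref{assumption:FdeltamDec} to obtain $\bar y>y_m$; your route thus yields the inequality $\hat y<\bar y$ as a byproduct, which is exactly what the paper records in \eqref{eq:yHat}.
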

\begin{proof}
It is sufficient to prove that $F^{\star}(\delta F^{\prime}(\bar{y})) - \delta \bar{y} F^{\prime}(\bar{y}) > 0$ since $F^{\star}(\delta F^{\prime}(\bar{y})) - \delta \bar{y} F^{\prime}(\bar{y})  + F(\bar{y}) + m = F_{\delta m}(\bar{y}) = 0$ due to continuity. To achieve this, consider $y^{\star}$ as the maximiser in the definition of $F^{\star}(\delta F^{\prime}(\bar{y}))$, which is the unique value $y^{\star}>0$ such that $\delta F^{\prime}(\bar{y}) = F^{\prime}(y^{\star})$ when $\delta F^{\prime}(\bar{y}) < F^{\prime}(0)$, otherwise $y^{\star} = 0$. In the first case, the condition $\delta < 1$ implies that $\bar{y} > y^{\star}$, allowing us to conclude that
\begin{align*}
F^{\star}(\delta F^{\prime}(\bar{y})) - \delta \bar{y} F^{\prime}(\bar{y}) = \delta F^{\prime}(\bar{y}) (y^{\star}-\bar{y}) - F(y^{\star}) > 0,
\end{align*}
since the function $F$ is decreasing. In the second case, it follows that $F^{\star}(\delta F^{\prime}(\bar{y})) - \delta \bar{y} F^{\prime}(\bar{y}) = -\delta \bar{y} F^{\prime}(\bar{y}) > 0$ as $y^{\star} = 0$. This completes the proof.
\end{proof}

It is now fundamental to distinguish between two distinct cases, depending on $\gamma\delta \leq 1$ or $\gamma\delta > 1$. Specifically, when $\gamma\delta \leq 1$, the problem \eqref{eq:FbarEquation} degenerates, leading to the face-lifted utility $\bar{F}$ coinciding with the function $F$ up to the point $\hat{y}$, after which $\bar{F}$ remains constant at the value of $-m$. Conversely, if $\delta < 1$ and $\gamma\delta > 1$, we show that the two aforementioned functions coincide up to the point $\hat{y}$ within the interval $(\hat{y},\bar{y})$ introduced in \eqref{eq:yTilde}. Beyond this point, $\bar{F}$ adopts the form of an affine transformation of the face-lifted reward without jumps $w_{0}$ recalled in \eqref{align:w0nojumps}.

\begin{lemma}\label{lemma:existenceOfyTilde}
Let $\delta<1$ and $\gamma\delta >1$. There exists a unique $\tilde{y} \in (\hat{y},\bar{y})$ that satisfies $F(\tilde{y}) = w_0(\tilde{y}) - m$.
\end{lemma}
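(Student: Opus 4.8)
The plan is to realise $\tilde y$ as a zero of the $C^1$ function $g(y):=F(y)-w_0(y)+m$ on $[0,\infty)$, and to obtain both its existence and its uniqueness inside $(\hat y,\bar y)$ from the intermediate value theorem together with a sign control on $g'$ at the zeros of $g$. One endpoint estimate is immediate: since $\hat y\in(0,\bar y)$ by \eqref{eq:yHat} and $F$ is continuous and strictly decreasing with $F(0)=0>-m$, the definition of $\hat y$ forces $F(\hat y)=-m$, so that $g(\hat y)=-w_0(\hat y)>0$ because $w_0<0$ on $(0,\infty)$ by \cite[Proposition 2.1]{possamai2020there}.

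The technical engine is the equivalence, valid for every $y>0$, that $g'(y)<0$ if and only if $g(y)<F_{\delta m}(y)$ (and likewise with $<$ replaced by $=$ or $>$). To prove it, subtract the identity $F^\star(\delta w_0'(y))-\delta y\,w_0'(y)=-w_0(y)$, which is \eqref{eq:eqw0NoJumps}, from the identity $F^\star(\delta F'(y))-\delta y\,F'(y)=F_{\delta m}(y)-F(y)-m$, which is just the definition \eqref{eq:fdecreasing} of $F_{\delta m}$, to get
\[
\big[F^\star(\delta F'(y))-\delta y\,F'(y)\big]-\big[F^\star(\delta w_0'(y))-\delta y\,w_0'(y)\big]=F_{\delta m}(y)-g(y).
\]
The map $p\mapsto F^\star(\delta p)-\delta y\,p$ is concave, attains its maximum at $p=F'(y)/\delta$, and is strictly decreasing on $(F'(y)/\delta,\infty)$; moreover both $F'(y)$ and $w_0'(y)$ lie in that half-line, the former because $\delta<1$ and $F'(y)<0$, the latter because $w_0$ is strictly concave (differentiate \eqref{eq:eqw0NoJumps}). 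Hence the left-hand side above has the sign opposite to $F'(y)-w_0'(y)=g'(y)$, which gives the equivalence. Uniqueness then follows at once: if $\tilde y\in(\hat y,\bar y)$ is a zero of $g$, then $g(\tilde y)=0<F_{\delta m}(\tilde y)$, because $F_{\delta m}(y)>0=F_{\delta m}(\bar y)$ for all $y<\bar y$ by the very definition \eqref{eq:ybar} of $\bar y$, whence $g'(\tilde y)<0$ at every such zero; a standard sign-chasing argument (two zeros $\tilde y_1<\tilde y_2$ would force $g<0$ just to the right of $\tilde y_1$ and $g>0$ just to the left of $\tilde y_2$, hence a third zero in between at which $g'\geq 0$, contradicting the equivalence) rules out more than one zero in $(\hat y,\bar y)$. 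It therefore remains only to show $g(\bar y)<0$, after which the intermediate value theorem on $[\hat y,\bar y]$ produces $\tilde y\in(\hat y,\bar y)$.

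The hard part is exactly $g(\bar y)<0$. Evaluating the equivalence above at $y=\bar y$, where $F_{\delta m}(\bar y)=0$, shows that $g(\bar y)<0$ amounts to $F'(\bar y)<w_0'(\bar y)$, and I would in fact establish $w_0'(y)>F'(y)$ for all $y>0$. The decisive structural fact is that $F$ is a strict \emph{subsolution} of the equation \eqref{eq:eqw0NoJumps} solved by $w_0$: by strict concavity of $F$ one has $F^\star(\delta F'(y))-\delta y\,F'(y)+F(y)=F_{\delta m}(y)-m<F_{\delta m}(0)-m=0$ for every $y>0$ (this is the $m=0$ computation already pointed out in the remark following \eqref{eq:ybar}), while $w_0$ satisfies the same equation with equality and $w_0(0)=F(0)=0$. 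Combining this with the strict concavity of $w_0$ and the conjugacy relations — concretely, by writing $w_0(\bar y)$ and $F(\bar y)+m$ as the values at $\bar y$ of the tangent lines to $F$ at the Legendre minimisers associated with $\delta w_0'(\bar y)$ and $\delta F'(\bar y)$ respectively, and using that for a strictly concave $F$ the value at a fixed point $\bar y$ of the tangent line at $z$ is decreasing in $z$ — one reduces $g(\bar y)<0$ to an ordering of those two minimisers, which is delivered by a comparison of the solution $w_0$ of \eqref{eq:eqw0NoJumps} against the strict subsolution $F$. I expect this comparison step (equivalently, the monotone comparison of $w_0'$ with $F'$, or dually of the derivatives of $w_0^\star$ and $F^\star$) to be the main obstacle, and it is also where the hypothesis $\gamma\delta>1$ — which makes the integral defining $w_0^\star$ in \eqref{align:w0nojumps} finite — genuinely enters.
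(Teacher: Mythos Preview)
Your equivalence $g'(y)<0 \Longleftrightarrow g(y)<F_{\delta m}(y)$ is correct (the auxiliary fact $w_0'(y)>F'(y)/\delta$ does follow from differentiating \eqref{eq:eqw0NoJumps} and using the strict concavity of $w_0$), and it yields a tidier uniqueness argument than the paper's, which proves outright that $g$ is decreasing on all of $(0,\infty)$. But for existence you explicitly leave the decisive step unfinished: you reduce $g(\bar y)<0$ to $F'(\bar y)<w_0'(\bar y)$ and then write that you ``expect this comparison step to be the main obstacle'' without carrying it out. The tangent-line computation is correct but circular---it merely restates the reduction already given by your equivalence at $y=\bar y$---and the observation that $F$ is a strict subsolution of \eqref{eq:eqw0NoJumps} only gives the zeroth-order inequality $w_0>F$ on $(0,\infty)$; it does not by itself deliver the first-order inequality $w_0'>F'$ that you need.

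The paper closes this gap with a second-order contradiction argument that uses \Cref{assumption:FdeltamDec} in differentiated form. Suppose $F'(y_0)\geq w_0'(y_0)$ for some $y_0>0$. Differentiating \eqref{eq:eqw0NoJumps} gives $\delta w_0''(y)\big[y-(F^\star)'(\delta w_0'(y))\big]=(1-\delta)w_0'(y)$, while differentiating \eqref{eq:fdecreasing} and invoking $F_{\delta m}'\leq 0$ gives the strict inequality $\delta F''(y)\big[y-(F^\star)'(\delta F'(y))\big]>(1-\delta)F'(y)$. At $y_0$, chaining these with $w_0'(y_0)\leq F'(y_0)$, the monotonicity of $(F^\star)'$, and the positivity of the bracket $y_0-(F^\star)'(\delta w_0'(y_0))$ forces $w_0''(y_0)<F''(y_0)$. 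A continuation argument then propagates $w_0'\leq F'$ to all of $[y_0,\infty)$, so $w_0-F$ is strictly concave and non-increasing there, hence tends to $-\infty$, contradicting $w_0\geq F$ from \cite[Lemma A.3]{possamai2020there}. This differentiated use of \Cref{assumption:FdeltamDec} is precisely the missing ingredient in your proposal.
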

\begin{proof}
We show that the function $\chi(y) \coloneqq F(y) - w_0(y) + m$, for $y \geq 0$, has a unique zero on the interval $(\hat{y},\bar{y})$. To achieve this, we assert that $\chi$ is decreasing on $(0,\infty)$; however, the proof is postponed to the end for the sake of clarity. As a result, the statement follows directly because $\chi(\hat{y})> 0$ and $\chi(\bar{y}) < 0$. The former inequality arises from the definition of the point $\hat{y}$ in \eqref{eq:yHat}, given that $\chi(\hat{y}) = F(\hat{y}) - w_0(\hat{y}) + m = - w_0(\hat{y}) > 0$. To prove that $\chi(\bar{y}) < 0$, we first observe that the definition of the point $\bar{y}$ stated in \eqref{eq:ybar} and the \Cref{eq:eqw0NoJumps} satisfied by $w_0$ lead to the following:
\begin{align*}
\upsilon_{\bar{y}}(w_0^\prime(\bar{y})) - \upsilon_{\bar{y}}(F^\prime(\bar{y})) = F^\star\big(\delta w_0^\prime(\bar{y})\big) - \delta \bar{y} w_0(\bar{y}) - \big(F^\star\big(\delta F^\prime(\bar{y})\big) - \delta \bar{y} F^\prime(\bar{y})\big)= F(\bar{y}) - m - w_0(\bar{y}) = \chi(\bar{y}),
\end{align*}
where $\upsilon_{\bar{y}}(p) \coloneqq F^{\star}(\delta p) - \delta \bar{y} p$, for $p \in \R$. By noticing that $\upsilon_{\bar{y}}$ is decreasing on $(F^\prime(\bar{y})/\delta, \infty)$ and $F^\prime(\bar{y})/\delta < F^\prime(\bar{y}) < w_0^\prime(\bar{y})$, we can conclude that $\chi(\bar{y}) < 0$. The proof is complete once we verify the claim that $F^\prime < w_0^\prime$ on $(0,\infty)$. We proceed by contradiction by assuming there exists some $y_0 \in (0,\infty)$ such that $F^\prime(y_0) \geq w_0^\prime(y_0)$. Then, by differentiating the ODE satisfied by $w_0$ and the function $F_{\delta m}$, combined with \Cref{assumption:FdeltamDec}, we can observe that 
\begin{align*}
\delta w^{\prime\prime}_{0}(y) (y - (F^{\star})^{\prime}(\delta w^{\prime}_{0}(y))) = w^{\prime}_{0}(y) (1-\delta) \; \text{and} \; \delta F^{\prime\prime}(y) (y - (F^{\star})^{\prime}(\delta F^{\prime}(y))) > F^{\prime}(y) (1-\delta) \; \text{for any} \; y \in (0,\infty).
\end{align*}
Given that $\delta < 1$, we deduce that 
\begin{align*}
\delta w^{\prime\prime}_{0}(y_0) (y_0 - (F^{\star})^{\prime}(\delta w^{\prime}_{0}(y_0))) = w^{\prime}_{0}(y_0) (1-\delta) \leq F^{\prime}(y_0) (1-\delta) &< \delta F^{\prime\prime}(y_0) (y_0 - (F^{\star})^{\prime}(\delta F^{\prime}(y_0)))\\& \leq \delta F^{\prime\prime}(y_0) (y_0 - (F^{\star})^{\prime}(\delta w^{\prime}_{0}(y_0))) ,
\end{align*}
where we have used the fact that $(F^{\star})^{\prime}(\delta w^{\prime}_{0}(y_0)) \geq (F^{\star})^{\prime}(\delta F^{\prime}(y_0))$ as $w^{\prime}_0(y_0) \leq F^{\prime}(y_0)$. It follows that $w^{\prime\prime}_{0}(y_0) < F^{\prime\prime}(y_0)$. Now, let us define the set of sets
\begin{align*}
\cI^{y_\smalltext{0}} \coloneqq \{(y_0,y^\ast): w^{\prime}_{0}(y) \leq F^{\prime}(y) \; \text{for any} \; y \in (y_0,y^\ast)\}.
\end{align*}
It holds that $\cup_{\cI \in \cI^{\smalltext{y}_\tinytext{0}}} \cI = (y_0, b_0)$ for some $b_0 \in (y_0, \infty)$. We have $b_0 = \infty$; otherwise, the same argument as before would lead to the existence of some $\varepsilon > 0$ such that $\cup_{\cI \in \cI^{\smalltext{y}_\tinytext{0}}} \cI = (y_0, b_0) \subset (y_0, b_0 + \varepsilon) \subseteq \cup_{\cI \in \cI^{\smalltext{y}_\tinytext{0}}} \cI$. Consequently, $w^{\prime}_{0} \leq F^{\prime}$, and $w^{\prime\prime}_{0} < F^{\prime\prime}$ on $[y_0, \infty)$. As a result, the function $w_{0} - F$ is strictly concave and decreasing on $(y_0, \infty)$, implying that $\lim_{y \rightarrow  \infty} (w_{0} - F)(y) = -\infty$. However, this contradicts the fact that $w_{0}$ dominates $F$ on $(0, \infty)$ (see \cite[Lemma A.3]{possamai2020there}).
\end{proof}

\begin{proposition}\label{proposition:FBarGeneralDeltaSmaller1}
Let us assume that $\delta < 1$. Then, the face-lifted utility function $\bar{F}$ can be described as follows:
\begin{enumerate}[label=$(\roman*)$]
\item \label{degenerateFBar}if $\gamma\delta \leq 1$, we have $\bar{F}(y) = F(y) \mathbf{1}_{[0,\hat{y}]}(y) -m  \mathbf{1}_{(\hat{y},\infty)}(y) $, for $y \geq 0;$
\item \label{nonDegenerateFBar}if $\gamma\delta > 1$, we have $\bar{F}(y) = F(y)  \mathbf{1}_{[0,\tilde{y}]}(y) + (w_0(y)-m)  \mathbf{1}_{(\tilde{y},\infty)}(y) $, for $y \geq 0$,
\end{enumerate}
where $\hat{y}$ is defined in {\rm\Cref{eq:yHat}}, whereas $\tilde{y}$ is introduced in {\rm\Cref{lemma:existenceOfyTilde}}. It holds that $\bar{F}$ is twice-continuously differentiable, except at a single point. Furthermore, when $\gamma\delta > 1$, it is decreasing and exhibits the following growth:
\begin{equation*}
\bar{c}_0 \big(-1 - y^{{\gamma}}\big) \leq \bar{F}(y) \leq \bar{c}_1 \big(1 - y^{{\gamma}}\big), \; \text{for} \; y \geq 0, \; \text{for some} \; (\bar{c}_0,\bar{c}_1) \in( 0,\infty)^2.
\end{equation*}
\end{proposition}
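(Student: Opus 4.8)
The plan is to prove both cases by a verification argument against the Hamilton--Jacobi variational inequality \eqref{align:hjFbar}. Write $G$ for the candidate, namely the right-hand side of \ref{degenerateFBar} if $\gamma\delta\le 1$ and of \ref{nonDegenerateFBar} if $\gamma\delta>1$. I would first establish the regularity and growth asserted for $G$ and check that $G$ solves \eqref{align:hjFbar} --- classically away from a single point, and in the viscosity sense at that point --- and then conclude $\bar F=G$ by proving $\bar F\le G$ and $\bar F\ge G$ directly from the control--stopping representation \eqref{eq:FbarEquation}, so as to bypass a separate comparison argument. The split $\gamma\delta\le 1$ versus $\gamma\delta>1$ is carried through from the start, since it governs the form of $G$ beyond the switching point $\hat y$ (resp.\ $\tilde y$) and, crucially, the behaviour of discounted payoffs when the horizon tends to infinity.

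For the first step, continuity of $G$ at the junction is immediate: $F(\hat y)=-m$ by \eqref{eq:yHat} and $F(\tilde y)=w_0(\tilde y)-m$ by \Cref{lemma:existenceOfyTilde}. Since $F$ and $w_0$ are $C^2$, $G$ is $C^2$ off that single point; it is decreasing (both branches are); and $G\ge F$ on $[0,\infty)$ --- on $(\hat y,\infty)$ this reads $-m\ge F$ by \eqref{eq:yHat}, and on $(\tilde y,\infty)$ it reads $\chi\le 0$, which holds because $\chi$ is decreasing and vanishes at $\tilde y$ by the proof of \Cref{lemma:existenceOfyTilde} --- and the growth bound follows from the explicit pieces together with \eqref{eq:growthF} and the growth of $w_0$. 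That $G$ solves \eqref{align:hjFbar}: on $[0,\hat y)$ (resp.\ $[0,\tilde y)$) the first entry of the $\min$ is $G-F=0$ and the second is $F_{\delta m}(y)\ge 0$, since $\hat y,\tilde y<\bar y$ and $F_{\delta m}$ is decreasing with first zero $\bar y$ by \Cref{assumption:FdeltamDec} and \eqref{eq:ybar}; on $(\hat y,\infty)$, $G'=0$ so the second entry is $F^\star(0)-m+m=0$ because $F^\star$ vanishes on $[F'(0),\infty)\ni 0$, while $G-F>0$; on $(\tilde y,\infty)$, $G'=w_0'$ so the second entry is $F^\star(\delta w_0'(y))-\delta y w_0'(y)+w_0(y)=0$ by \eqref{eq:eqw0NoJumps}, while $G-F=-\chi>0$. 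At the junction $G$ is the pointwise maximum of the two branches, with an upward jump of slope (the concavity-breaking point); there no smooth test function touches $G$ from above, so the subsolution inequality is vacuous, while for the supersolution inequality the map $q\mapsto F^\star(\delta q)-\delta y q+G(y)+m$ is concave in $q$ and, by the two computations just made, non-negative at each of the two one-sided slopes, hence on the whole subdifferential of $G$ at that point.

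The two inequalities go as follows. For $\bar F\le G$, fix $y_0$ and an admissible pair $(p,T)$ with $T$ finite: apply the fundamental theorem of calculus to $t\mapsto\mathrm e^{-\rho t}G(y^{y_0,p}(t))$ --- legitimate as $G$ is locally Lipschitz and $y^{y_0,p}$ absolutely continuous --- and combine it with $F(y^{y_0,p}(T))\le G(y^{y_0,p}(T))$ and the pointwise inequality $F(p(t))-\delta G'(y^{y_0,p}(t))p(t)\le G(y^{y_0,p}(t))-\delta y^{y_0,p}(t)G'(y^{y_0,p}(t))+m$ (which is precisely non-negativity of the second entry of \eqref{align:hjFbar} after using $\sup_{p\ge 0}\{F(p)-\delta G'(y)p\}=-F^\star(\delta G'(y))$) to get that the payoff is $\le G(y_0)$; the limiting horizon $T\uparrow T^{y_0,p}_0$ is then handled by passing to the limit in this estimate, controlling $\mathrm e^{-\rho t}F(y^{y_0,p}(t))$ via \eqref{eq:growthF} and the growth of $y^{y_0,p}$ and using the integrability built into admissibility. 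For $\bar F\ge G$: on $[0,\hat y]$ (resp.\ $[0,\tilde y]$) take $T=0$, so the payoff equals $F(y_0)=G(y_0)$; on the complementary region drive the state by the feedback maximising $p\mapsto F(p)-\delta G'(y)p$, which makes the integrand in the above estimate vanish, so that stopping as soon as the trajectory returns to $\{G=F\}$ yields exactly $G(y_0)$, and if the trajectory never returns one lets the horizon diverge --- invoking, for $\gamma\delta>1$, that this feedback is the optimiser of the jump-free problem of value $w_0$, per \cite[Proposition~2.1]{possamai2020there}, so that the payoff tends to $w_0(y_0)-m=G(y_0)$ --- to obtain $G(y_0)$ up to arbitrarily small error. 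Monotonicity of $\bar F$ and its growth in case \ref{nonDegenerateFBar} then read off the identity $\bar F=G$ and the first step.

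The main obstacle will be the limiting-horizon analysis inside both inequalities: one must show that the supremum in \eqref{eq:FbarEquation} is not increased by admissible controls along which the promised-utility state $y^{y_0,p}$ escapes to $+\infty$, i.e.\ pin down the correct transversality behaviour of $\mathrm e^{-\rho t}F(y^{y_0,p}(t))$. This is genuinely regime-sensitive --- for $\gamma\delta>1$ an escaping state makes the discounted terminal reward tend to $-\infty$, whereas for $\gamma\delta\le 1$, in particular at the boundary $\gamma\delta=1$, one must use slowly-escaping controls of the form $p=\eta\,y^{y_0,p}$ and let $\eta\downarrow 0$ to approach the value $-m$ --- and it is also where the integrability restriction inherited from $\mathfrak C$ has to be used rather than the naive unrestricted supremum. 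A secondary non-routine point is the viscosity check at the concavity-breaking junction in the second step, since there $G$ is only continuous with a convex kink, a feature absent from the corresponding analysis of \cite{possamai2020there}.
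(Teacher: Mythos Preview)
Your route departs from the paper's chiefly in case \ref{nonDegenerateFBar}. There the paper does \emph{not} verify $\bar F\le G$ directly: it proves a dynamic-programming inequality for $\bar F$ (\Cref{lemma:dppINFSUP}), uses it to show that $\bar F$ is itself a viscosity solution of \eqref{align:hjFbar} (\Cref{theorem:viscositySolFbar}), separately checks that the candidate $v_{\tilde y}$ is one (\Cref{theorem:propFbarYhat}---your kink computation at $\tilde y$ is essentially the same as theirs), and then proves a comparison principle in the class of functions with $-y^\gamma$ growth (\Cref{proposition: comparisonFBar}) to conclude by uniqueness. For case \ref{degenerateFBar} both arguments are direct; the paper, however, mollifies the candidate into a $C^1$ super-solution $v_{\varepsilon,m}$ before running the FTC estimate and then sends $\varepsilon\downarrow 0$ using lower--semi-continuity of $\bar F$ (\Cref{lemma:FBarlsc}), whereas you apply FTC directly to the Lipschitz $G$---a legitimate shortcut.

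The transversality obstacle you flag in case \ref{nonDegenerateFBar} is sharper than your proposed fix. The literal supremum \eqref{eq:FbarEquation} admits the pair $p\equiv 0$, $T=\infty$, whose payoff under the indicator convention is $-m$; since $w_0<0$ on $(0,\infty)$ this \emph{exceeds} $G(y_0)=w_0(y_0)-m$ for every $y_0>\tilde y$, so your inequality $\bar F\le G$ would fail as stated. The integrability condition \eqref{eq:integrabilityCondition} inherited from $\mathfrak C$ does \emph{not} exclude this pair ($\pi\equiv 0$ is trivially integrable), so that is not the right lever. What makes the argument go through---and what the paper tacitly uses already in the proof of \Cref{theorem:FbarDelta1}---is to read the $T=\infty$ value as the limit of finite-$T$ payoffs \emph{with} the terminal term retained: since $\gamma\delta>1$, an escaping trajectory forces $\mathrm e^{-\rho T}F(y^{y_0,p}(T))\to-\infty$, and your FTC estimate then passes to the limit cleanly. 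The paper's comparison route conceals the very same point inside the growth hypothesis on $\bar F$ required by \Cref{proposition: comparisonFBar}, which is not independently verified; so neither approach truly bypasses this issue, but once the infinite-horizon reading is fixed your direct argument is sound and shorter, while the paper's is more modular.
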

This result is proved in \Cref{appendix:face-lifted utility}.

\begin{example}\label{example:positivePowerDeltaSmaller1}
Within the specific framework of the positive power utility as described in  {\rm\Cref{example:positivePower}}, we find that
\begin{align*}
\hat{y} = m^{\frac{1}{\gamma}}, \; \text{\rm and} \; \tilde{y} = m^{\frac{1}{\gamma}} \bigg(1- \frac{1}{\delta}\bigg(\frac{\gamma\delta - 1}{\delta(\gamma-1)}\bigg)^{\gamma-1}\bigg)^{-\frac{1}{\gamma}}.
\end{align*}
Consequently, if $\gamma\delta>1$, then 
\begin{align*}
\bar{F}(y) = -y^\gamma  \mathbf{1}_{[0,\tilde{y}]}(y) - \bigg(\bigg(\frac{\gamma\delta-1}{\gamma-1}\bigg)^{\gamma - 1} \bigg(\frac{y}{\delta}\bigg)^\gamma+m\bigg)  \mathbf{1}_{(\tilde{y},\infty)}(y), \;\text{\rm for} \; y \geq 0.
\end{align*}
\end{example}

\subsection{Numerical simulations}
In order to gain a deeper insight into the implications stemming from the accidents incorporated in \citeauthor*{sannikov2008continuous}'s contracting problem explored by \cite{possamai2020there}, we perform some numerical simulations.  Initially, we consider the specific positive power utility setting outlined in \Cref{example:positivePower}, which we elaborate on in \Cref{example:positivePowerDeltaBigger1} and \Cref{example:positivePowerDeltaSmaller1}, depending on the value of the ratio $\delta$ of the impatient levels of both parties. For this analysis, we set $\gamma = 2$ and $m=6$. Specifically, in the two graphs provided in \Cref{fig:faceliftedcompWithFw0}, we illustrate the face-lifted utility $\bar{F}$ and compare it with its barrier $F$ and the face-lifted utility $w_0$ that characterises the problem without accidents. As expected, the latter is always a strict upper bound, particularly since termination is never optimal when $m=0$ but becomes an option for small values of the continuation utility of the agent when $m>0$. However, as the continuation utility exceeds the value $\bar{y}$ introduced in \eqref{eq:ybar} for the case $\delta >1$, or respectively $\tilde{y}$ introduced in \eqref{eq:yTilde} for $\delta < 1$, the principal prefers retiring the agent since her reward $\bar{F}$ begins to deviate from the barrier $F$. This results from the requirement to ensure that the agent receives a sufficiently high utility, leading the principal to prefer the risk of potential future losses.

\begin{figure}[ht!]
  \centering
  \subfloat[$\delta = 2$]{\includegraphics[width=0.48\textwidth]{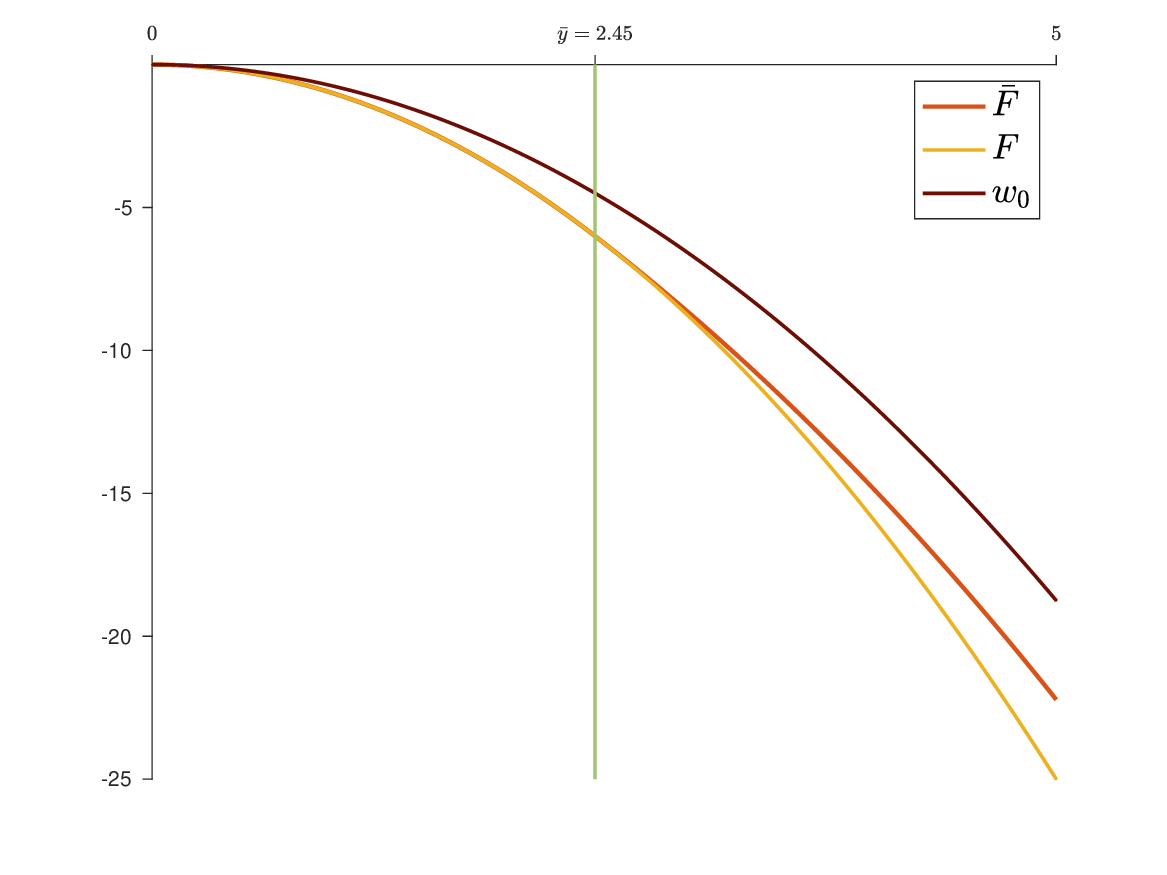}}
  \hfill
  \subfloat[$\delta = 3/4$]{\includegraphics[width=0.48\textwidth]{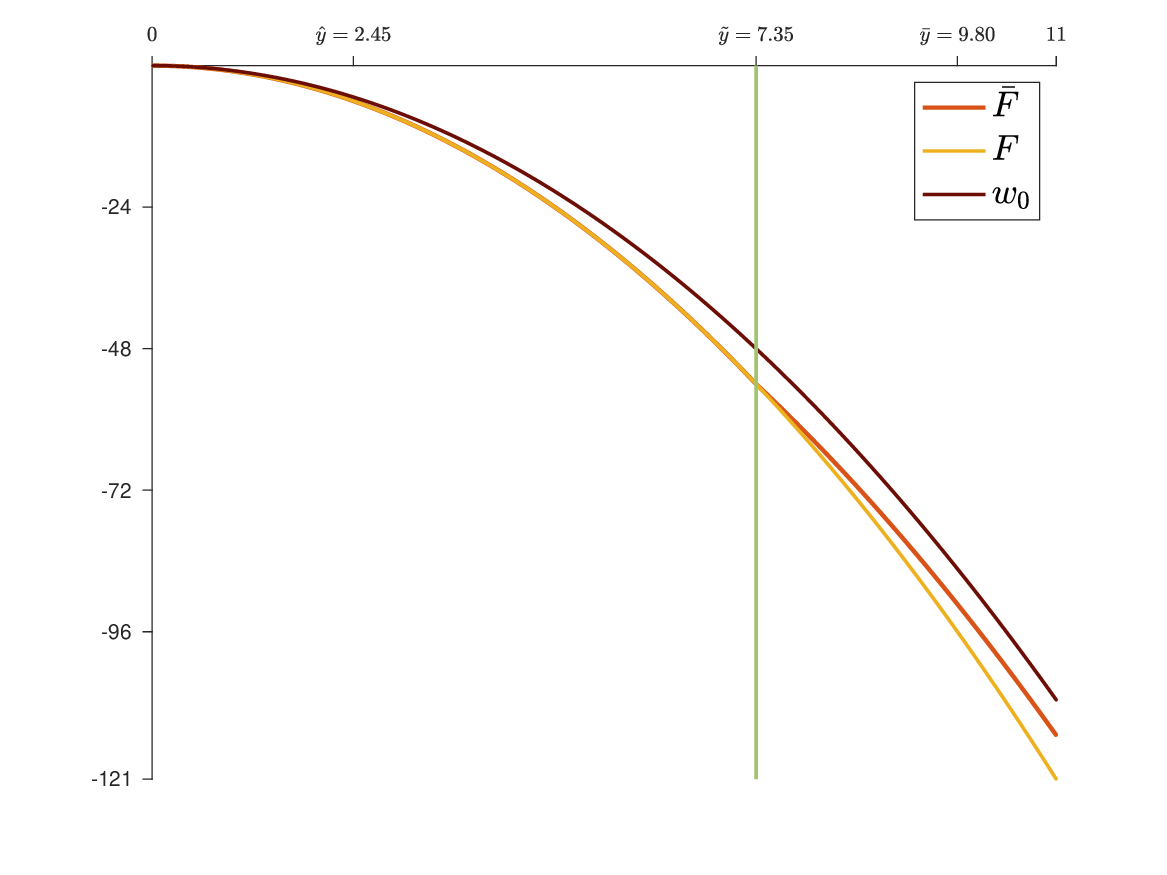}}
  \caption{the face-lifted utility $\bar{F}$}
  \label{fig:faceliftedcompWithFw0}
\end{figure}

\medskip
\begin{figure}[ht!]
  \centering
  \subfloat[$\delta = 2$]{\includegraphics[width=0.48\textwidth]{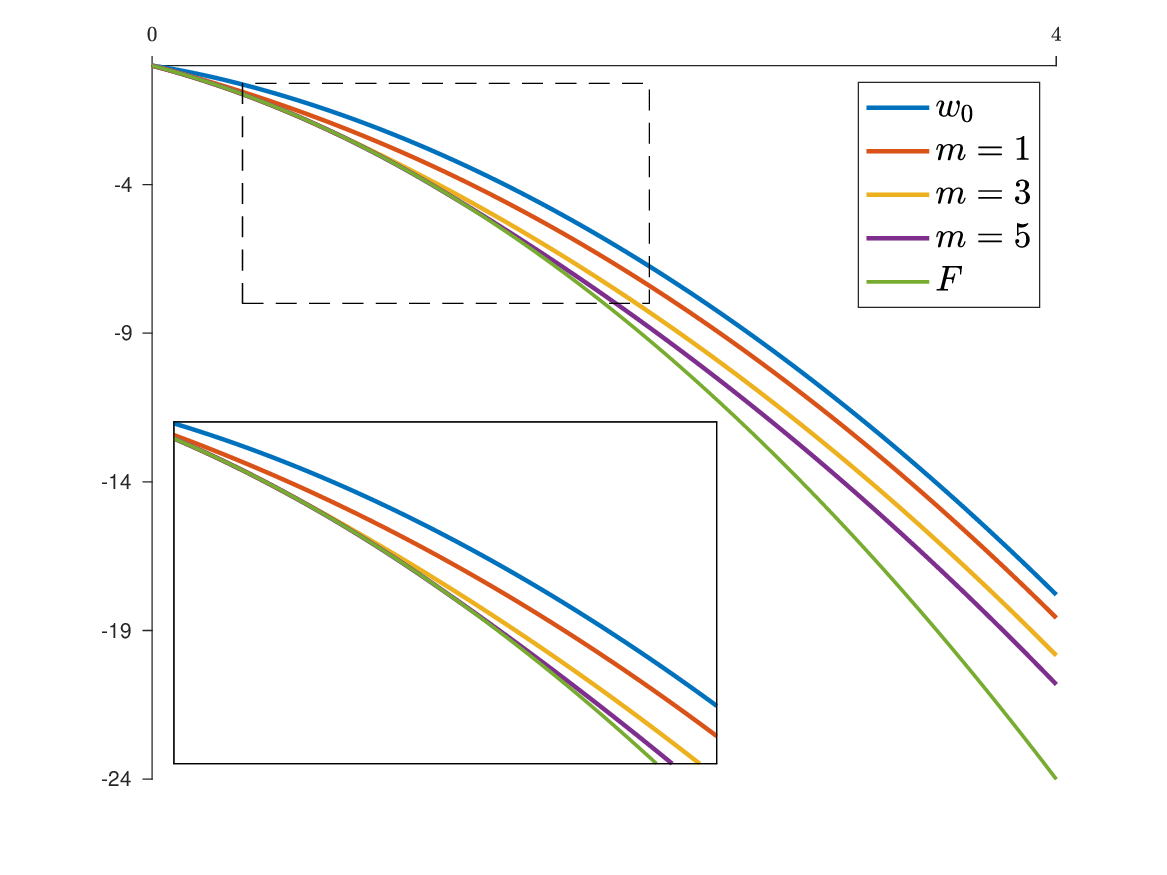}\label{fig:deltaBigger1_diffm}}
  \hfill
  \subfloat[$\delta = 3/4$]{\includegraphics[width=0.48\textwidth]{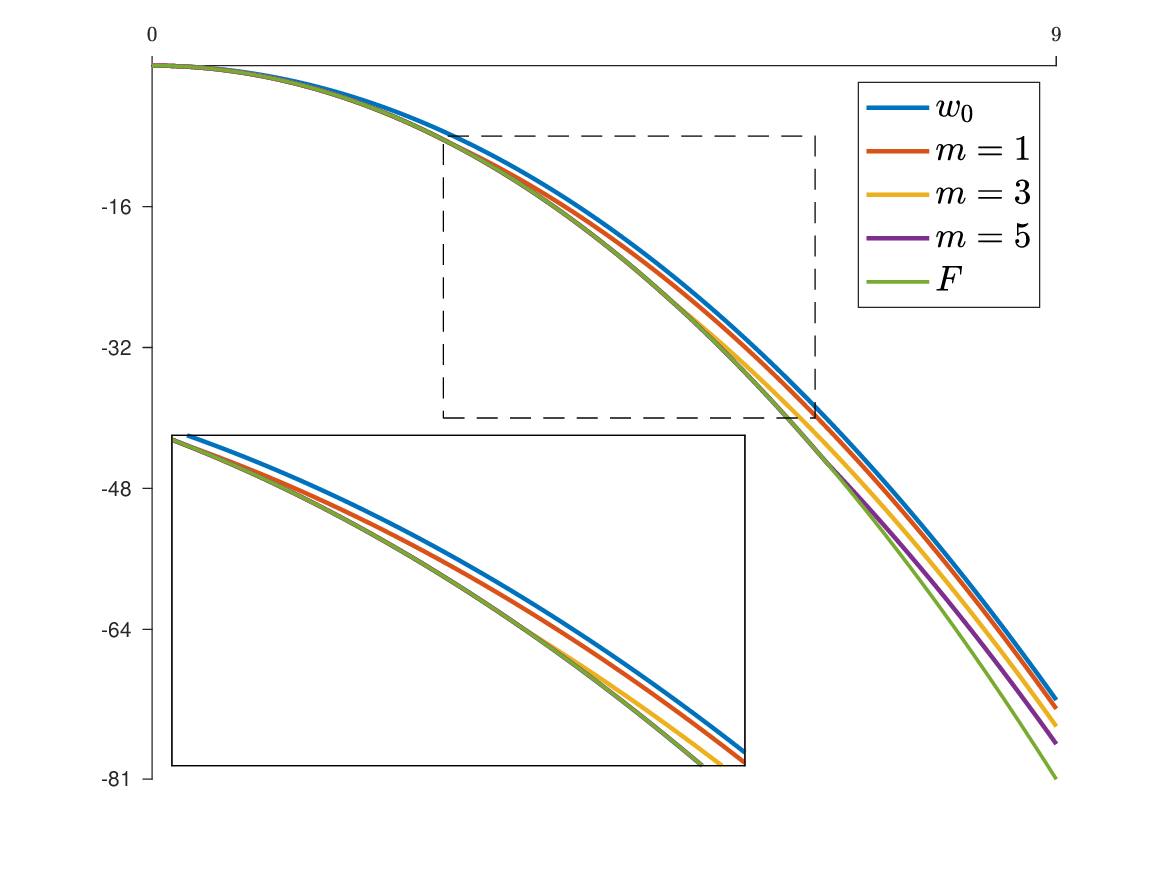}\label{fig:deltaSmaller1_diffm}}
  \caption{the face-lifted utility $\bar{F}$ for different values of $m$}
\end{figure}

As discussed in \Cref{example:positivePowerDeltaBigger1}, the positive power utility exhibits only a singular scenario when $\delta > 1$. Consequently, we slightly modify the utility function by considering $F(y) = - y^\gamma - \gamma y$, for $y \geq 0$. It is important to note that since this function is injective, it corresponds to a specific utility $u$, which cannot be expressed analytically. However, this is unnecessary for our analysis; what is significant is that $F^\star(\delta F^\prime(0)) = -(\gamma-1) (\delta-1)^{\frac{\gamma}{\gamma-1}} < 0$ since this condition ensures that both scenarios in \Cref{proposition:FBarGeneralDeltaBigger1} are feasible. We proceed by illustrating the face-lifted utility $\bar{F}$ in the case when $\gamma=2$ as before, but for different values of $m$. The plot in \Cref{fig:deltaBigger1_diffm} highlights a continuous dependence between the value of $m$ and the value of $\bar{y}$, and therefore a continuous dependence between the average size of accidents and the length of the interval where termination is the preferred choice. Specifically, it is evident that, as the size of the potential losses reduces, the value of $\bar{y}$ also decreases, while the function $\bar{F}$ increases until converging to the accident-free reward $w_0$. A similar analysis can be carried out for $\delta <1$; however, for numerical convenience, we consider the standard positive power utility setting $F(y) = - y^\gamma$, for $y \geq 0$, where we set $\gamma=2$. The results are plotted in \Cref{fig:deltaSmaller1_diffm}. Analogously to the case $\delta >1$, we can observe a correlation between the value $m$ and $\tilde{y}$. The only difference is that the latter is null only in the absence of accidents, that is, when $m=0$.

\section{The first-best problem}\label{section:firstBest}

In this section, we analyse the first-best contracting problem, which represents the benchmark situation without moral hazard. Indeed, in this case, the principal has all the bargaining power, in the sense that she actually chooses both the contract and the effort of the agent under the constraint that his utility is above the reservation utility:
\begin{equation}\label{eq:firstbestV}
V^{\rm P,FB} \coloneqq \sup_{\substack{ (\bC, \nu) \in \mathfrak{C} \times \cU
\\ J^{\smalltext{\rm A}}(\bC,\nu) \geq u(R)}} J^{\rm P}(\bC, \nu) = \sup_{\substack{ (\bC, \nu) \in \mathfrak{C} \times \cU
\\ J^{\smalltext{\rm A}}(\bC,\nu) \geq u(R)}} \bar{J}^{\rm P}(\bC, \nu).
\end{equation}
Note that this equality directly follows from the analysis in \Cref{section:GP}, as $\mathfrak{C} = \mathfrak{C}^0$. Introducing a Lagrange multiplier $\lambda \leq 0$ associated to the participation constraint of the agent and applying the classical Karush--Kuhn--Tucker method, we can always rewrite the first-best problem $V^{\rm P,FB}$ as
\begin{equation*}
\inf_{\lambda \leq 0}\bigg\{\lambda u(R) + \sup_{(\bC, \nu) \in \mathfrak{C} \times \cU} \E^{\P^{\smalltext{\nu}}}\bigg[\big(\mathrm{e}^{-\rho \tau} F(\zeta)- \lambda \mathrm{e}^{-r \tau} \zeta\big) \mathbf{1}_{\{\tau < \infty\}}  + \int_0^\tau \Big( \rho \mathrm{e}^{-\rho s} \big(\alpha_s - \beta_s + F(\eta_s) \big) - \lambda r \mathrm{e}^{-rs} \big(\eta_s - h(\alpha_s,\beta_s)\big) \Big)  \d s  \bigg]\bigg\}.
\end{equation*}
The above expression can be expressed in a more compact form as
\begin{equation}\label{eq:firstbestV_KT}
V^{\rm P,FB} = \inf_{\lambda \leq 0}\bigg\{\lambda u(R) + \sup_{T \geq 0} \bigg\{
-\mathrm{e}^{-\rho T} F^\star\big(\lambda \mathrm{e}^{\rho (1-\delta) T}\big) \mathbf{1}_{\{T < \infty\}} + \int_0^T \rho \mathrm{e}^{-\rho t} \big(G^\star - F^\star\big)\big(\delta\lambda\mathrm{e}^{\rho(1-\delta)t}\big) \d t \bigg\}\bigg\},
\end{equation}
where we have introduced $G^{\star}(p) \coloneqq \sup_{(a,b) \in U} \{a - b + p h(a,b)\}, \; \text{for} \; p \in \R.$ 

\begin{remark}\label{remark:propertiesGStar}
Notice that the function $G^\star$ is clearly continuously differentiable, non-decreasing and convex. In particular, we have $G^{\star}\geq -m$. It remains constantly equal to $-m$ if and only if its derivative is null. Moreover, it holds that $G^{\star}(p) \in [-m,\bar{a}-\varepsilon_m]$ for any $p < 0$, and $G^{\star}(p) = \bar{a}-\varepsilon_m + h(\bar{a},\varepsilon_m) p$ for any $p \geq 0$. We can say that $G^{\star}$ is twice continuously differentiable and increasing on $(\lambda_{inc}, \infty)$, where $\lambda_{inc} \coloneqq \inf \{\lambda \leq 0: (G^{\star})^\prime(\lambda) > 0\} \in (-\infty,0)$.
\end{remark}

\medskip
We introduce the notation $v^{\smalltext{\rm FB}}(y)$ to denote the maximal reward obtained by the principal while maintaining the utility of the agent at the level $y \in \R$. This function is referred to as the first-best value function, and it holds that $V^{\rm P,FB} = v^{\smalltext{\rm FB}}(u(R)).$ In the subsequent analysis, the study of the first-best value function and consequently the first-best problem \eqref{eq:firstbestV} is divided into two separate investigations based on the ratio $\delta$ of the impatience levels of the agent and the principal. Specifically, when $\delta = 1$, the problem can be easily tackled with the aforementioned Karush--Kuhn--Tucker method, while this classical method is not successful to provide a complete characterisation of the first-best value function when the two discount rates differ. Nonetheless, for $\delta \neq 1$ and under the additional condition $\gamma\delta > 1$, we can identify the first-best value function as the unique viscosity solution of the following obstacle problem:
\begin{equation}\label{align:hjbFB}
\min\big\{v^{\smalltext{\rm FB}}(y)-\bar{F}(y), F^{\star}(\delta (v^{\smalltext{\rm FB}})^\prime(y)) - \delta y (v^{\smalltext{\rm FB}})^\prime(y) + v^{\smalltext{\rm FB}}(y) -\cJ^{\smalltext{\rm FB}}\big((v^{\smalltext{\rm FB}})^\prime(y), (v^{\smalltext{\rm FB}})^{\prime\prime}(y)\big)\big\}=0, \; y >-h(\bar{a},\varepsilon_m) \text{\footnotemark},
\footnotetext{The definition of the face-lifted utility $\bar{F}$ in \eqref{eq:FbarEquation} implies that $\bar{F} = -\infty$ on $(-\infty, 0)$ due to the convention $\sup \varnothing = -\infty$.}
\end{equation}
with the initial value $v^{\smalltext{\rm FB}}(-h(\bar{a},\varepsilon_m)) = \bar{a}-\varepsilon_m$, and the operator $\cJ^{\smalltext{\rm FB}}$ defined as
\begin{equation}\label{eq:operatorfirstbest}
\cJ^{\smalltext{\rm FB}}(p,q) \coloneqq G^{\star}(\delta p)  \mathbf{1}_{(-\infty,0]}(q)  +\infty \mathbf{1}_{(0,\infty)}(q), \; \text{for} \; (p, q) \in \R^2.
\end{equation}

\begin{remark}
\begin{enumerate}[label=$(\roman*)$, wide, labelindent=0pt]\label{remark:v0FirstBest}
\item Notice that the obstacle problem for the first-best problem is formulated on a larger domain than the non-negative half-line because the limited liability constraint $J^{{\rm A}}(\bC,\nu) \geq u(R)$ does not ensure a non-negative continuation utility for the agent. Indeed, we will prove that the principal can allow the continuation utility of the agent to take values in the interval $(-h(\bar{a},\varepsilon_m),0)$ and still ensure that it reaches the non-negative value $u(R)$ upon termination of the contract.
\item\label{notattainablevalue} Since $\bar{F} = -\infty$ on $(-h(\bar{a},\varepsilon_m),0)$, it is evident that the first-best value function $v^{\smalltext{\rm FB}}$ does not coincide with its barrier on that interval, but instead solves the {\rm ODE} described in \eqref{align:hjbFB}. However, our focus lies on the values assumed by the first-best value function on $[0,\infty)$, as $V^{\rm P,FB} = v^{\smalltext{\rm FB}}(u(R))$, where $u(R) \geq 0$. Therefore, the initial value we are concerned with is
\begin{equation}\label{eq:initialValue}
v^{\smalltext{\rm FB}}(0) = \inf_{\lambda \leq 0} \sup_{T \geq 0} \bigg\{-\mathrm{e}^{-\rho T} F^{\star}\big(\lambda \mathrm{e}^{\rho(1-\delta) T}\big) \mathbf{1}_{\{T < \infty\}} + \int_0^T \rho \mathrm{e}^{-\rho s} (G^{\star}-F^{\star})\big(\lambda \delta \mathrm{e}^{\rho(1-\delta) s}\big) \d s\bigg\} \in [0, \bar{a} - \varepsilon_m).
\end{equation}
We would like to emphasise that the value $\bar{a} - \varepsilon_m$ is not attainable. Achieving this value would require choosing $\alpha^\star = \bar{a}$, $\beta^\star = \varepsilon_m$, $\tau^\star = \infty$ and $\pi^\star = 0$, but the contract $(\tau^\star, \pi^\star, \xi)$ does not satisfy the participation constraint of the agent for any choice of the lump-sum payment $\xi$ that is $\cF_{\infty}$-measurable. Moreover, observe that we cannot further simplify the expression in \eqref{eq:initialValue} since the function $G^\star$ is not always non-negative. As a result, the concavity of the conjugate $F^\star$ is not sufficient to guarantee that  the supremum is reached at $T=\infty$, as it is shown in the proof of {\rm\cite[Theorem 3.1]{possamai2020there}} for the problem without accidents.
\end{enumerate}
\end{remark}

Despite the apparent differences emphasised previously, we get analogous results to those in \cite[Theorem 3.1]{possamai2020there}. Specifically, we can prove that when $\delta =1$, the first-best value function touches the barrier $\bar{F}$ and, when this occurs, the two functions coincide forever. Conversely, when $\delta > 1$, it can be shown that the two aforementioned functions never intersect, given an additional condition on the sign of $G^\star$ for certain values of the derivative of $\bar{F}$. Unfortunately, in the case where $\delta <1$, we are unable to proceed with the analysis; it is possible only when $\gamma\delta \leq 1$, as we can demonstrate that the problem degenerates. In fact, analogously to \cite[Theorem 3.1]{possamai2020there}, it can be shown that the principal can achieve her maximal value $\bar{a}-\varepsilon_m$, motivating the agent to exert maximal effort continuously and promising him a substantial lump-sum payment at a large retirement time. 

\begin{theorem}\label{thm:FirstBestCompleteC} 
\begin{enumerate}[label=$(\roman*)$, wide, labelindent=0pt]
\item \label{degFB}Let $\gamma\delta \leq 1$. Then $v^{\smalltext{\rm FB}} = \bar{a} - \varepsilon_m$ on $[0,\infty)$ and an optimal contract does not exist.
\item \label{delta1_firstBest}Let $\delta = 1$, and introduce $\lambda_{pos} \coloneqq \inf \{\lambda \leq 0: G^{\star}(\lambda) \geq 0\} \in (-\infty,0)$.
\begin{enumerate}[label=$(\roman{enumi}$-$\arabic*)$, leftmargin=*]
\item If $\lambda_{pos} \geq F^\prime(0)$, then $v^{\smalltext{\rm FB}}= F$ on $[0,\infty);$
\item otherwise if $\lambda_{pos} < F^\prime(0)$, we have that $v^{\smalltext{\rm FB}}(y) = F(y)$ for any $y \geq y^{F^\smalltext{\star}} \coloneqq(F^\star)^\prime(\lambda_{pos})>0$. Additionally, the first-best  value function is continuously differentiable and, defining $y^{F^\smalltext{\star}, G^\smalltext{\star}} \coloneqq(F^\star)^\prime(\lambda_{pos}) - (G^\star)^\prime(\lambda_{pos})$\footnote{Notice that $y^{F^\smalltext{\star}, G^\smalltext{\star}} < y^{F^\smalltext{\star}}$ as $(G^\star)^\prime(\lambda_{pos}) > 0$ by definition of $\lambda_{pos}$.}, determined by 
\begin{align*}
v^{\smalltext{\rm FB}}(y) = \begin{cases}
	\inf_{\lambda \leq 0} \big\{y \lambda- F^{\star}(\lambda) + G^{\star}(\lambda) \big\}  \mathbf{1}_{[0,0 \wedge y^{\smalltext{F}^\tinytext{\star}\smalltext{,} \smalltext{G}^\tinytext{\star}})}(y) + \big(y \lambda_{pos}- F^{\star}(\lambda_{pos})\big)  \mathbf{1}_{[0 \wedge y^{\smalltext{F}^\tinytext{\star}\smalltext{,} \smalltext{G}^\tinytext{\star}},y^{\smalltext{F}^\tinytext{\star}})}(y), \; y \in [0,y^{F^\smalltext{\star}}), \\
		F(y),  \; y \in [y^{F^\smalltext{\star}},\infty).
	\end{cases}
\end{align*}
\item The optimal contract $(\bC^\star,\nu^\star)$ for the first-best problem \eqref{eq:firstbestV} exists and ensures that $J^{{\rm A}}(\bC^\star,\nu^\star) = u(R)$. Additionally, when if $\lambda_{pos} < F^\prime(0)$ and $u(R) \in [0,y^{F^\star})$, let $\lambda^\star$ denote the unique solution to the first-order condition $u(R) - (F^\star - G^\star)^\prime(\lambda^\star)=0$, then the optimal contract takes the form
\begin{equation*}
\tau^\star = \infty, \; \text{and} \; \pi^\star_t = -F\big((F^\prime)^{(-1)}\big)(\lambda^\star) \; \text{and} \; \nu^\star_t \in \argmax_{u \in U} G^\star(\lambda^\star), \; \text{for} \; t \geq 0.
\end{equation*}
If $\lambda_{pos} \geq F^\prime(0)$ or $u(R) \geq y^{F^\smalltext{\star}}$, then $\tau^\star = 0$ with a lump-sum payment $\xi^\star = R$.
\end{enumerate}
\item \label{viscosityCFB}Let $\delta \neq 1$ and $\gamma\delta > 1$. We have that the first-best  value function $v^{\smalltext{\rm FB}}$ is the unique continuous viscosity solution\footnote{We are referring to the definition of viscosity solutions for local equations as provided by \citeauthor*{crandall1992user} \cite[Definition 2.2]{crandall1992user}.} of {\rm\Cref{align:hjbFB}} in the class of functions $v$ such that $|v(y) - \bar{F}(y)| \leq c^{\smalltext{\rm FB}}$ for any $y \geq 0$, for some $c^{\smalltext{\rm FB}}>0$. 
\begin{enumerate}[label=$(\roman{enumi}$-$\arabic*)$, leftmargin=*]
\item\label{deltaBigger1} Let $\delta >1$. If either $m \leq -F^\star(\delta F^\prime(0))$ and $G^\star((F^\star)^{(-1)}(-m)/\delta) \geq 0$, or if $m > -F^\star(\delta F^\prime(0))$ and $G^\star(F^\prime(\bar{y})) > 0$, where $\bar{y}$ is introduced in \eqref{eq:ybar}, then it is true that $v^{\smalltext{\rm FB}} > \bar{F}$ on $[-h(\bar{a},\varepsilon_m),\infty)$. Furthermore, $v^{\smalltext{\rm FB}}$ is twice continuously differentiable, and such that
\begin{equation*}
v^{\smalltext{\rm FB}} = \big(v^{\smalltext{\rm FB},\star}\big)^\star, \; \text{\rm where} \; v^{\smalltext{\rm FB},\star}(p) = \frac{(-p)^{\frac{1}{1-\delta}}}{{\delta-1}} \int_p^0 \frac{F^\star(\delta x) - G^\star(\delta x)}{(-x)^{1+\frac{1}{1-\delta}}} \d x, \; \text{for}\; p \leq 0.
\end{equation*}
\item\label{deltaSmaller1}Let $\delta <1$, and assume that $\frac{\partial h}{\partial a}(0,b) > 0$ for any $b \in B$ and $\frac{\partial h}{\partial b}(a,m) < 0$ for any $a \in A$. We can define $y_m \coloneqq \inf\{y \geq 0: G^{\star}(\delta w_0^\prime(y)) = -m\} \in (0,\infty)$, recalling that $w_0$ represents the face-lifted reward in the problem without accidents as introduced in \eqref{align:w0nojumps}. If $v^{\smalltext{\rm FB}}(y^0) = \bar{F}(y^0)$ for some $y^0 \in [y_m \vee \tilde{y}, \infty)$, then $v^{\smalltext{\rm FB}} = \bar{F}$ on $[y^0, \infty)$.
\end{enumerate}
\end{enumerate}
\end{theorem}

The proof of the first result follows; we show the remaining items in the subsequent sections and in \Cref{appendix:FBImpatientAgent}.
\begin{proof}[Proof of \Cref{thm:FirstBestCompleteC}.\ref{degFB}]
The fact that the first-best value function $v^{\smalltext{\rm FB}}$ is constantly equal to $\bar{a} - \varepsilon_m$ on $[0,\infty)$ follows directly from \Cref{thm:SecondtBestCompleteC}.\ref{degSB}. Furthermore, this value can only be achieved by selecting the first-best contract $(\bC^\star,\nu^\star)$ with $\tau^\star = \infty$, and $(\pi^\star_t,\alpha^\star_t,\beta^\star_t)= (0,\bar{a},\varepsilon_m)$ for all $t \geq 0$. However, as highlighted in \Cref{remark:v0FirstBest}.\ref{notattainablevalue}, this contract fails to meet the participation constraint of the agent.
\end{proof}

\begin{remark}
\begin{enumerate}[label=$(\roman*)$, wide, labelindent=0pt]\label{remark:commentsFirstBest}
\item {\rm\Cref{def:goldenparachute}} can be readily translated into the first-best setting. Specifically, we say that the first-contracting problem \eqref{eq:firstbestV} exhibits a golden parachute if an optimal solution satisfies $\xi^{\star} > 0$ on the event set $\{\tau^{\star} < \infty\}$. Based on {\rm\Cref{thm:FirstBestCompleteC}.\ref{delta1_firstBest}}, we can conclude that when the agent is as impatient as the principal, a first-best golden parachute always exists, and it represents a termination scenario. In other words, if $\lambda_{pos} \geq F^\prime(0)$ and the agent has a positive reservation utility, or if $\lambda_{pos} < F^\prime(0)$ and his participation constraint exceeds $y^{F^\smalltext{\star}}$, the contract does not start, meaning that the agent never begins working, but instead immediately receives a positive lump-sum payment.
\item {\rm\Cref{thm:FirstBestCompleteC}.\ref{deltaBigger1}} states that a first-best golden parachute does not exist when $\delta >1$ since the first-best value function $v^{\smalltext{\rm FB}}$ never touches its barrier $\bar{F}$. This mirrors the result in \cite[Theorem 3.1]{possamai2020there} in the absence of accidents. However, an additional assumption regarding the sign of $G^\star$ for specific values of the derivative of $\bar{F}$ is introduced, and it is worth noting that this additional requirement is always satisfied in the analysis without jumps.
\item The case of $\delta <1$ and $\gamma\delta >1$ is the only one that we are unable to analyse in detail. Given the explicit form of the face-lifted utility $\bar{F}$ expressed in {\rm\Cref{proposition:FBarGeneralDeltaSmaller1}.\ref{nonDegenerateFBar}}, particularly the fact that it is only continuous, we expect that the first-best value function $v^{\smalltext{\rm FB}}$ might not be smooth on the entire interval of definition $[-h(\bar{a},\varepsilon_m),\infty)$. Furthermore, we suspect that, similar to the accident-free framework, there exists a point from which $v^{\smalltext{\rm FB}}$ and its barrier $\bar{F}$ coincide if $\frac{\partial h}{\partial a}(0,b) > 0$ for any $b \in B$ and $\frac{\partial h}{\partial b}(a,m) < 0$ for any $a \in A$. If this were to be confirmed, we could conclude that a first-best golden parachute exists whenever $v^{\smalltext{\rm FB}}(0) > 0$, and this might also be a termination scenario, unlike the case with $m=0$ where the golden parachute always represents a retirement scenario. Conversely, if $v^{\smalltext{\rm FB}}(0) = 0$, we could not automatically claim the existence of a golden parachute, we would need to pay attention to the interval of the non-negative half-line that we are considering. Note that the condition $\frac{\partial h}{\partial a}(0,b) > 0$ for any $b \in B$ and $\frac{\partial h}{\partial b}(a,m) < 0$ for any $a \in A$ is fundamental to ensure that the face-lifted utility $\bar{F}$ is a super-solution---and hence a solution, considering that $G^\star \geq -m$ on the whole real line---of {\rm\Cref{align:hjbFB}} from a certain point onwards. This is the same condition required in {\rm\cite[Theorem 3.1]{possamai2020there}} for $m=0$. In this case, it is proved that when the marginal cost of effort at zero is null, the first-best value function never touches the barrier $\bar{F}$, thereby ruling out the existence of a first-best golden parachute.
\end{enumerate}
\end{remark}

\subsection{The Karush–Kuhn–Tucker method}
\begin{proof}[Proof of \Cref{thm:FirstBestCompleteC}.\ref{delta1_firstBest}]
The first-best value function, as expressed in \eqref{eq:firstbestV_KT}, can be further simplified when $\delta = 1$:
\begin{equation*}
v^{\smalltext{\rm FB}}(y) = \inf_{\lambda \leq 0}\bigg\{\lambda y- F^\star(\lambda) +\sup_{T \geq 0} \big\{ G^\star(\lambda) \big(1-\mathrm{e}^{-\rho T}\big)\big\}\bigg\} \; \text{for any} \; y \geq 0.
\end{equation*}
The optimisation with respect to the variable $T$ yields the following problem
\begin{align}\label{eq:firstBestReduced}
v^{\smalltext{\rm FB}}(y)  = \inf_{\lambda \leq 0} \Big\{\lambda y - F^\star(\lambda) + G^\star(\lambda)  \mathbf{1}_{[\lambda_\smalltext{pos},0]}(\lambda)\Big\}.
\end{align}

If $\lambda_{pos} \geq F^\prime(0)$, the optimisation with respect to $\lambda$ implies that $v^{\smalltext{\rm FB}}(y) = \inf_{\lambda \leq 0} \big\{\lambda y - F^\star(\lambda)\big\} = F(y).$ This is because $y - (F^\star)^\prime(\lambda) + (G^\star)^\prime(\lambda)  = y + (G^\star)^\prime(\lambda) > 0$ for any $\lambda \in  [\lambda_{pos},0]$. We can deduce that $v^{\smalltext{\rm FB}} = F$ on $[0, \infty)$.

\medskip
On the other hand, we assume now that $\lambda_{pos} < F^\prime(0)$. Considering the reformulation of the first-best  problem as shown in \eqref{eq:firstBestReduced}, we describe the function $v^{\smalltext{\rm FB}}$ by distinguishing three distinct cases depending on which interval of the non-negative half-line we are considering. To begin, let us suppose that $y \in (0,0 \vee y^{F^\smalltext{\star}, G^\smalltext{\star}})$. It is worth noting that this interval can be empty because $y^{F^\smalltext{\star}, G^\smalltext{\star}}$ can be non-positive. If this is not the case, then we have that $y - (F^\star)^\prime(\lambda_{pos}) + (G^\star)^\prime(\lambda_{pos}) < 0$, and  
\begin{align*}
v^{\smalltext{\rm FB}}(y) = \inf_{\lambda \in (\lambda_\smalltext{pos},0)} \big\{\lambda y - F^\star(\lambda) + G^\star(\lambda)\big\} = \inf_{\lambda \leq 0} \big\{\lambda y - F^\star(\lambda) + G^\star(\lambda)\big\}.
\end{align*} 
From the initial equality, it is evident that $v^{\smalltext{\rm FB}} > F$ on $[0,0 \vee y^{F^\smalltext{\star}}]$, as $G^\star(\lambda) >0$ for any $\lambda> \lambda_{pos}$. We then consider the case where $y \in [0 \vee y^{F^\smalltext{\star}, G^\smalltext{\star}},y^{F^\smalltext{\star}})$. It holds that $y - (F^\star)^\prime(\lambda_{pos}) + (G^\star)^\prime(\lambda_{pos}) \geq 0$ and $y - (F^\star)^\prime(\lambda_{pos}) < 0$ because of the definition of the two points $y^{F^\smalltext{\star}, G^\smalltext{\star}}$ and $y^{F^\smalltext{\star}}$. Hence, $v^{\smalltext{\rm FB}}$ is a linear function of the form $v^{\smalltext{\rm FB}}(y) = y \lambda_{pos}- F^\star(\lambda_{pos}).$ Additionally, the assumption $\lambda_{{pos}} < F^\prime(0)$, under which we are operating, implies the existence of a unique minimiser of $F^\star(\lambda_{{pos}})$. However, since $y - (F^\star)^\prime(\lambda_{{pos}}) < 0$, it follows that
\begin{align*}
F^\star(\lambda_{pos}) = \inf_{y \geq 0} \{y \lambda_{pos} - F(y)\} =  (F^\star)^\prime(\lambda_{pos}) \lambda_{pos} - F((F^\star)^\prime(\lambda_{pos})) < y \lambda_{pos} - F(y).
\end{align*}
By rearranging the terms, we can deduce that $v^{\smalltext{\rm FB}} > F$ on $[0 \vee y^{F^\smalltext{\star}},y^{F^\smalltext{\star}})$. Lastly, when we have $y \in [y^{F^\smalltext{\star}},\infty)$, it follows that $y - (F^\star)^\prime(\lambda_{pos}) \geq 0$. As a consequence,
\begin{align*}
v^{\smalltext{\rm FB}}(y) = \inf_{\lambda \leq \lambda_{pos}} \big\{ \lambda y - F^\star(\lambda) \big\} = \inf_{\lambda \leq 0} \big\{ \lambda y - F^\star(\lambda) \big\} = F(y).
\end{align*}

Reviewing the computations done so far, we can deduce that the first-best value function $v^{\smalltext{\rm FB}}$ is not only continuous but also continuously differentiable since
\begin{align*}
(v^{\smalltext{\rm FB}})^\prime_{-}(y^{F^\smalltext{\star}, G^\smalltext{\star}}) \mathbf{1}_{\{y^{\smalltext{F}^\tinytext{\star}\smalltext{,} \smalltext{G}^\tinytext{\star}} > 0\}} = (v^{\smalltext{\rm FB}})^\prime_{+}(y^{F^\smalltext{\star}, G^\smalltext{\star}})  \mathbf{1}_{\{y^{\smalltext{F}^\tinytext{\star}\smalltext{,} \smalltext{G}^\tinytext{\star}} > 0\}} = \lambda_{pos} = (v^{\smalltext{\rm FB}})^\prime_{-}(y^{{F^\star}}) = (v^{\smalltext{\rm FB}})^\prime_{+}(y^{F^\smalltext{\star}}).
\end{align*}
\end{proof}

\begin{example}\label{example:FB}
In the specific framework of the positive power utility described in {\rm\Cref{example:positivePower}} with $\delta = 1$, we have $\bar{F}(y) = F(y) = -y^\gamma , \;\text{\rm for} \; y \geq 0.$ Additionally, let us set some $\bar{a}$, $\varepsilon_m$ and $m$ such that $0<\varepsilon_m < m \wedge \bar{a}$, and consider the following cost function:
\begin{align*}
h(a,b) \coloneqq h_a(a) + h_b(b) \coloneqq \frac{a^2}{2} + \bigg(\frac{1}{b} - \frac{1}{m}\bigg), \;\text{\rm where} \; (a,b) \in A \times B \coloneqq [0,\bar{a}] \times [\varepsilon_m,m].
\end{align*}
The decision to consider a separate cost in the two tasks of applying effort and reducing accident risk is solely for purely analytical reasons, as it significantly simplifies the calculations, allowing us to explicitly characterise the convex dual
\begin{align*}
G^{\star}(p) = G^{\star}_a(p) + G^{\star}_b(p) \coloneqq \sup_{a \in [0,\bar{a}]} \{a + p h_a(a)\} + \sup_{b \in [\varepsilon_\smalltext{m},m]} \{-b + p h_b(b)\} \; \text{\rm for any} \; p \in \R.
\end{align*}
Under the present specification, we have that
\begin{align*}
G^{\star}_a(p) & = - \frac{1}{2p} \mathbf{1}_{(-\infty,- 1 / \bar{a}]}(p) + \bar{a} \Big(1+  \frac{\bar{a}}{2} p \Big) \mathbf{1}_{(- 1/\bar{a},\infty)}(p), \\ 
G^{\star}_b(p) & =- m \mathbf{1}_{(-\infty, - m^\smalltext{2}]}(p)  - \big(2m - (-p)^{\frac{1}{2}}\big) \frac{(-p)^{\frac{1}{2}}}{m} \mathbf{1}_{(- m^\smalltext{2}, - \varepsilon_\smalltext{m}^\smalltext{2})}(p) - \bigg(\varepsilon_m + \bigg(\frac{1}{m}-\frac{1}{\varepsilon_m}\bigg)p\bigg) \mathbf{1}_{[- \varepsilon_m^\smalltext{2},\infty)}(p), \; \text{\rm for} \; p \in \R.
\end{align*}

\begin{figure}[ht!]
  \centering
 {\includegraphics[width=0.5\textwidth]{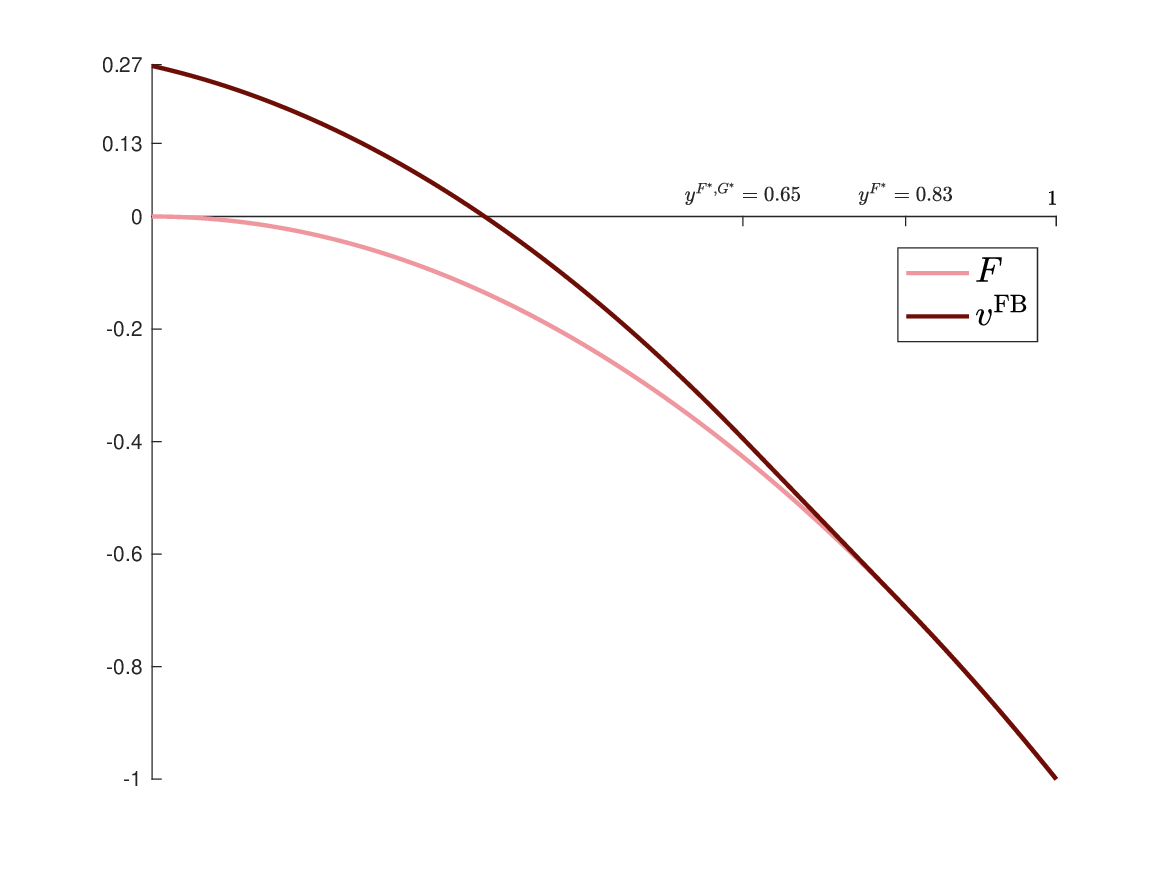}\label{fig:delta1_firstBest}}
  \caption{the first-best value function $v^{\smalltext{\rm FB}}$ and its barrier $F$}
  \label{fig:firstBestAndBarrier}
\end{figure}

Given this specific choice of the utility function $F$, particularly noting that $F^\prime(0) = 0$, we fall into the second scenario of {\rm\Cref{thm:FirstBestCompleteC}}.\ref{delta1_firstBest}. Consequently, the first-best value function is not identically equal to its barrier $F$, but we are unable to provide explicit formulas since $y^{{F}^\smalltext{\star}{,} {G}^\smalltext{\star}}$ and $y^{{F}^\smalltext{\star}}$ cannot be computed analytically. Setting $\gamma = 2$, $\bar{a} = 0.6$, $\varepsilon_m = 0.1$ and $m=0.3$, we plot our numerical results in {\rm\Cref{fig:firstBestAndBarrier}}. Specifically, we observe that $y^{{F}^\smalltext{\star}{,} {G}^\smalltext{\star}}$ is positive, and therefore, the function $v^{\smalltext{\rm FB}}$ is a strictly concave solution to the {\rm ODE} that characterises the obstacle problem \eqref{align:hjbFB} within the interval $[0,y^{{F}^\smalltext{\star}{,} {G}^\smalltext{\star}})$, is affine on $[y^{{F}^\smalltext{\star}{,} {G}^\smalltext{\star}},y^{{F}^\smalltext{\star}}]$, and then coincides with $F$ on $(y^{{F}^\smalltext{\star}}, \infty)$. It is worth noting that contrary to the results in {\rm\cite{possamai2020there}}, the function is not strictly concave on the whole domain of definition. Additionally, it is only continuously differentiable, as the gluing of the functions $\inf_{\lambda \leq 0} \{y \lambda- F^{\star}(\lambda) + G^{\star}(\lambda)\}$, for $y \in [0,y^{\smalltext{F}^\tinytext{\star}\smalltext{,} \smalltext{G}^\tinytext{\star}})$, $y \lambda_{pos}- F^{\star}(\lambda_{pos})$, for $y \in [y^{\smalltext{F}^\tinytext{\star}\smalltext{,} \smalltext{G}^\tinytext{\star}},y^{\smalltext{F}^\tinytext{\star}})$, and $F(y)$, for $y \in [y^{\smalltext{F}^\tinytext{\star}},\infty)$, is only continuously differentiable. These differences arise from the fact that $G^\star$ can be negative in our framework with accidents.

\end{example}

\subsection{HJB approach}

To address the first-best version \eqref{eq:firstbestV} of the contracting problem under the conditions $\delta \neq 1$ and $\gamma\delta >1$, we follow the general approach outlined in \cite{sannikov2008continuous}, developed in \cite{cvitanic2018dynamic,lin2022random}. Specifically, we show that there is no loss of generality when the principal solves her first-best problem in maximising over the set $\cV^{\smalltext{\rm FB}}(Y^{{\rm A}}_0)$, which we introduce later in \Cref{theorem:ReductionFirstBest}, rather than optimising over $\mathfrak{C}\times\cU$ such that the participation constraint is satisfied. This reduction simplifies the first-best problem to a standard mixed control--stopping stochastic problem, with the state variable being the continuation utility of the agent. 

\medskip
Let us fix an $\F$--stopping time $\tau$, an $\F$-predictable non-negative process $\pi$ and a control $\nu \in \cU$. For a constant $Y^{{\rm A}}_0 \in \R$, an $\R$-valued $\F$-predictable process $Z^{{\rm A}}$ and an $\R$-valued $\widetilde{\cP}(\F)$-measurable function $U^{{\rm A}}$, we introduce the process $Y^{\smalltext{\rm FB}, {Y}^{\smalltext{\rm A}}_\smalltext{0}{,}{Z}^{\smalltext{\rm A}} {,}{U}^{\smalltext{\rm A}},\pi,\nu}$ defined by the following SDE, for $t\geq 0$:
\begin{align}\label{align:defYAfb}
Y^{\smalltext{\rm FB}, {Y}^{\smalltext{\rm A}}_\smalltext{0}{,}{Z}^{\smalltext{\rm A}} {,}{U}^{\smalltext{\rm A}}{,}{\pi}{,}{\nu}}_t = Y^{{\rm A}}_0 + \int_0^t r \big(Y^{\smalltext{\rm FB}, {Y}^{\smalltext{\rm A}}_\smalltext{0}{,}{Z}^{\smalltext{\rm A}} {,}{U}^{\smalltext{\rm A}}{,}{\pi}{,}{\nu}}_s - u(\pi_s) +h(\nu_s)\big) \d s + r \sigma \int_0^t Z^{{\rm A}}_s \d W^{\nu}_s + r \int_0^t \int_{\R} U^{{\rm A}}_{s}(\ell) \tilde{\mu}^{{J}^{\smalltext{\nu}}}(\d s, \d \ell),
\end{align}
where $\tilde{\mu}^{{J}^{\smalltext{\nu}}}$ denotes the $(\F,\P^{\nu})$-compensated random measure $\mu^{J} - \mu^{J^{\smalltext{\nu}},p}$. Then, we introduce the set $\cV^{\smalltext{\rm FB}}_\tau$ as the collection of all $Z^{{\rm A}}$ and $U^{{\rm A}}$, defined as before, satisfying in addition the integrability conditions
\begin{align}\label{align:intCondYA}
\sup_{\nu \in \cU} \E^{\P^{\smalltext{\nu}}}\bigg[\sup_{t \geq 0} \Big|\mathrm{e}^{-\kappa (t\wedge\tau)} Y^{\smalltext{\rm FB}, {Y}^{\smalltext{\rm A}}_\smalltext{0}{,}{Z}^{\smalltext{\rm A}} {,}{U}^{\smalltext{\rm A}}{,}{\pi}{,}{\nu}}_{t\wedge\tau}\Big|^p\bigg] < \infty,
\end{align}
\begin{align}\label{align:intCondZAUA}
\sup_{\nu \in \cU} \E^{\P^{\smalltext{\nu}}}\Bigg[\bigg(\int_0^{\tau} \mathrm{e}^{-2 \kappa s} |Z^{{\rm A}}_s|^2 \d s\bigg)^{\frac{p}{2}}\Bigg] < \infty, \; \text{and} \;\sup_{\nu \in \cU} \E^{\P^{\smalltext{\nu}}}\Bigg[\bigg(\int_0^{\tau} \int_{\R}\mathrm{e}^{-2 \kappa s} |U^{{\rm A}}_s(\ell)|^2 \Phi(\d \ell) \d s\bigg)^{\frac{p}{2}}\Bigg] < \infty,
\end{align}
for some $\kappa \in (0, r)$, $p > 2 \vee \gamma $.

\begin{remark}\label{remark:existenceYsde} 
Note that based on {\rm\citeauthor*{oksendal2007applied} \cite[Theorem 1.19]{oksendal2007applied}}, the Lipschitz-continuity in the $y$-variable of the drift implies that the {\rm SDE} \eqref{align:defYAfb} has a pathwise unique solution up to the random time $\tau$. Furthermore, we always consider a c\`adl\`ag $\P$-modification of the process $Y^{\smalltext{\rm FB}, {Y}^{\smalltext{\rm A}}_\smalltext{0}{,}{Z}^{\smalltext{\rm A}} {,}{U}^{\smalltext{\rm A}}{,}{\pi}{,}{\nu}}$.
\end{remark}

For any $Y^{{\rm A}}_0 \in \R$, we define the set $\cV^{\smalltext{\rm FB}}(Y^{{\rm A}}_0)$ as the collection of all quintuples $(\tau, \pi, \nu, Z^{{\rm A}}, U^{{\rm A}})$, where $\tau$ is an $\F$--stopping time, $\pi$ is an $\F$-predictable non-negative process, and $(Z^{{\rm A}},U^{{\rm A}}) \in \cV^{\smalltext{\rm FB}}_\tau$, that verify that
\begin{align*}
\text{the contract} \; \bC\coloneqq \big(\tau, \pi, u^{(-1)}\big(Y^{\smalltext{\rm FB}, {Y}^{\smalltext{\rm A}}_\smalltext{0}{,}{Z}^{\smalltext{\rm A}} {,}{U}^{\smalltext{\rm A}}{,}{\pi}{,}{\nu}}_\tau\big)\big) \; \text{satisfies the integrability conditions \eqref{eq:integrabilityCondition}}.
\end{align*}
\begin{theorem}\label{theorem:ReductionFirstBest}
It holds that
\begin{align*}
V^{\rm P,FB} =  \sup_{(\tau, \pi, \nu, Z^{\smalltext{\rm A}}, U^{\smalltext{\rm A}}) \in \cV^\smalltext{\rm FB}(u(R))} \E^{\P^{\smalltext{\nu}}}\bigg[\mathrm{e}^{-\rho \tau} \bar{F}\big(Y^{\smalltext{\rm FB}, u(R){,}{Z}^{\smalltext{\rm A}} {,}{U}^{\smalltext{\rm A}}{,}{\pi}{,}{\nu}}_\tau\big) \mathbf{1}_{\{\tau < \infty\}} + \int_0^\tau \rho \mathrm{e}^{-\rho s} (\alpha_s - \beta_s - \pi_s) \d s\bigg].
\end{align*}
\end{theorem}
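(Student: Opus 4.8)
The result is a first-best \emph{reduction theorem} in the vein of \cite[Section 4]{lin2022random}: it recasts the constrained optimisation \eqref{eq:firstbestV} as a standard stochastic control problem in which the agent's continuation utility is promoted to a state variable. The starting point is the identity $V^{\rm P,FB}=\sup\{\bar J^{\rm P}(\bC,\nu):(\bC,\nu)\in\mathfrak C\times\cU,\ J^{\rm A}(\bC,\nu)\geq u(R)\}$ recorded in \eqref{eq:firstbestV}, together with the observation that, since $F(u(\pi_s))=-\pi_s$, the quantity $\bar J^{\rm P}(\bC,\nu)$ equals $\E^{\P^\nu}\big[\mathrm e^{-\rho\tau}\bar F(u(\xi))\mathbf{1}_{\{\tau<\infty\}}+\int_0^\tau\rho\mathrm e^{-\rho s}(\alpha_s-\beta_s-\pi_s)\d s\big]$; so, once we know that $u(\xi)=Y_\tau$ on $\{\tau<\infty\}$ for the controlled process $Y$ of \eqref{align:defYAfb}, the right-hand side of the theorem is nothing but $\bar J^{\rm P}(\bC,\nu)$ re-parametrised. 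The plan is therefore to put admissible pairs $(\bC,\nu)$ and elements of $\cV^{\rm FB}(u(R))$ into correspondence, proving the two inequalities separately.

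For the inequality $V^{\rm P,FB}\geq(\text{RHS})$, fix $(\tau,\pi,\nu,Z^{\rm A},U^{\rm A})\in\cV^{\rm FB}(u(R))$, let $Y$ be the associated solution of \eqref{align:defYAfb} up to $\tau$ (well-defined by \Cref{remark:existenceYsde}), and set $\bC\coloneqq(\tau,\pi,u^{-1}(Y_\tau))\in\mathfrak C$. The crux is to check that $Y$ is genuinely the agent's continuation utility, i.e.\ $J^{\rm A}(\bC,\nu)=u(R)$: by It\^o's formula $D_t\coloneqq\mathrm e^{-r(t\wedge\tau)}Y_{t\wedge\tau}+\int_0^{t\wedge\tau}r\mathrm e^{-rs}(u(\pi_s)-h(\nu_s))\d s$ has no drift, hence is a local $(\F,\P^\nu)$-martingale, which the moment bounds built into $\cV^{\rm FB}_\tau$ promote to a uniformly integrable martingale; thus $u(R)=Y_0=D_0=\E^{\P^\nu}[D_\tau]$. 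On $\{\tau=\infty\}$ the slack $\kappa<r$ in the definition of $\cV^{\rm FB}_\tau$ forces $\mathrm e^{-rt}Y_t\to0$ $\P^\nu$-a.s., so $D_\tau$ is exactly the payoff inside $J^{\rm A}(\bC,\nu)$ and the claim follows. Hence $(\bC,\nu)$ is admissible for \eqref{eq:firstbestV} and $\bar J^{\rm P}(\bC,\nu)$ is the displayed expectation, whence the inequality upon taking the supremum.

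For the reverse inequality, fix an admissible $(\bC,\nu)$. First I would reduce to the case where the participation constraint binds, $J^{\rm A}(\bC,\nu)=u(R)$: replacing $(\pi,\xi)$ by $(\lambda\pi,\lambda\xi)$ with $\lambda\in[0,1]$, the map $\lambda\mapsto J^{\rm A}((\tau,\lambda\pi,\lambda\xi),\nu)$ is continuous (dominated convergence, using \eqref{eq:integrabilityCondition} and the growth of $u$), equals $J^{\rm A}(\bC,\nu)\geq u(R)$ at $\lambda=1$ and is at most $0\leq u(R)$ at $\lambda=0$, so it hits $u(R)$ for some $\lambda^\star$; as $\bar F$ and $F$ are non-increasing (\Cref{theorem:FbarDelta1}, \Cref{proposition:FBarGeneralDeltaBigger1} and \Cref{proposition:FBarGeneralDeltaSmaller1}), this substitution does not decrease $\bar J^{\rm P}$ and keeps the contract in $\mathfrak C$. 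Next, introduce the uniformly integrable $(\F,\P^\nu)$-martingale $M_t\coloneqq\E^{\P^\nu}_t\big[\mathrm e^{-r\tau}u(\xi)\mathbf{1}_{\{\tau<\infty\}}+\int_0^\tau r\mathrm e^{-rs}(u(\pi_s)-h(\nu_s))\d s\big]$ and apply the martingale representation \Cref{lemma:martingale Representation}, valid under $\P^\nu$ by its stability under the equivalent change of measure \eqref{align:changeMeasureNU}, to write $M=M_0+\int_0^\cdot\tilde Z_s\d W^\nu_s+\int_0^\cdot\int_\R\tilde U_s(\ell)\tilde\mu^{J^\nu}(\d s,\d\ell)$. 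Setting $Y_t\coloneqq\mathrm e^{rt}\big(M_t-\int_0^t r\mathrm e^{-rs}(u(\pi_s)-h(\nu_s))\d s\big)$, $Z^{\rm A}_t\coloneqq\mathrm e^{rt}\tilde Z_t/(r\sigma)$ and $U^{\rm A}_t\coloneqq\mathrm e^{rt}\tilde U_t/r$, an It\^o computation shows $Y$ solves \eqref{align:defYAfb} with $Y^{\rm A}_0=M_0=u(R)$, while $Y_\tau=u(\xi)$ on $\{\tau<\infty\}$; consequently $\bar J^{\rm P}(\bC,\nu)$ equals the displayed expectation evaluated at $(\tau,\pi,\nu,Z^{\rm A},U^{\rm A})$, and it only remains to verify that $(Z^{\rm A},U^{\rm A})\in\cV^{\rm FB}_\tau$, so that this quintuple lies in $\cV^{\rm FB}(u(R))$.

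The main obstacle is exactly this integrability bookkeeping, in both directions: one must translate the single bound \eqref{eq:integrabilityCondition} on $(\tau,\pi,\xi)$ into the four moment conditions defining $\cV^{\rm FB}_\tau$, and conversely check that the contract $\bC=(\tau,\pi,u^{-1}(Y_\tau))$ reconstructed from a quintuple satisfies \eqref{eq:integrabilityCondition}. I would carry this out by choosing $\kappa=r'$ and $p=q$ so that the discount weights match, by expressing $\mathrm e^{-\kappa(t\wedge\tau)}Y_{t\wedge\tau}$ as a conditional expectation and applying Doob's maximal inequality, by controlling the running-payment contribution via H\"older's inequality with exponents $p$ and $p/(p-1)$ together with the growth estimates \eqref{eq:growthUtility}--\eqref{eq:growthF}, and by applying the Burkholder--Davis--Gundy inequality for c\`adl\`ag martingales to recover the $Z^{\rm A}$- and $U^{\rm A}$-moments; this last step is where the requirement $q>2$ of \eqref{eq:integrabilityCondition} enters, exactly as flagged in the remark after that display. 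The strict inequality $\kappa<r$ is used once more here to absorb the factor $\mathrm e^{(r-\kappa)t}$. The remaining ingredients --- the It\^o computations, optional sampling for the uniformly integrable martingales $D$ and $M$, and the continuity argument in the binding reduction --- are routine.
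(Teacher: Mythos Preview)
Your approach is essentially the paper's: both reduce to the martingale-representation argument of \Cref{theorem:reductionSecondBest}, the key simplification in the first-best case being that the process $M^{\rm A,FB}(\bC,\nu)$ of \eqref{align:Mfb} is a genuine $(\F,\P^\nu)$-martingale for \emph{every} $\nu\in\cU$ (no agent optimisation is needed to upgrade a supermartingale), exactly as you exploit when defining $M_t=\E^{\P^\nu}_t[\cdot]$.

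Two differences are worth flagging. First, your scaling $(\pi,\xi)\mapsto(\lambda\pi,\lambda\xi)$ to force the participation constraint to bind is a valid alternative to the paper's route, which instead invokes the Lagrangian representation \eqref{eq:firstbestV_KT} (see the Remark following the theorem) to see that $v^{\rm FB}$ is non-increasing in the agent's promised utility, so that $Y_0^{\rm A}=u(R)$ is automatically optimal; your argument is more self-contained, while the paper's is shorter once \eqref{eq:firstbestV_KT} is in hand. Second, your integrability sketch glosses over one genuine subtlety: the conditional-expectation bound you obtain on $\mathrm e^{-\kappa t}Y_t$ is under the \emph{fixed} measure $\P^{\nu}$ of the quintuple, yet the moment conditions defining $\cV^{\rm FB}_\tau$ demand $\sup_{\nu'\in\cU}\E^{\P^{\nu'}}[\cdot]$. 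Standard Doob does not directly bridge this change of measure; the paper (through the proof of \Cref{theorem:reductionSecondBest}) first derives a pathwise bound via It\^o--Tanaka--Meyer and then handles the mismatch via \cite[Lemma 4.1]{lin2020second}, before recovering the $Z^{\rm A}$- and $U^{\rm A}$-moments from \cite[Proposition 2]{kruse2015bsdes}.
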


\begin{proof}
The proof follows by simply adapting the arguments in \Cref{theorem:reductionSecondBest} taking into account that, for any contract $\bC = (\tau, \pi, \xi) \in \mathfrak{C}$ and any control $\nu \in \cU$, the process defined as
\begin{align}\label{align:Mfb}
M^{\smalltext{\rm A}\smalltext{,}\smalltext{\rm FB}}_t(\bC,\nu) \coloneqq V^{\smalltext{\rm A}\smalltext{,}\smalltext{\rm FB}}_{t \wedge \tau}(\bC,\nu) \mathrm{e}^{-r (t \wedge \tau)} + \int_0^{t \wedge \tau} r \mathrm{e}^{-r s} (u(\pi_s) - h(\nu_s)) \d s, \; \text{for} \; t \geq 0,
\end{align}
is a $\P^{\nu}$--uniformly integrable $(\F,\P^{{\nu}})$-martingale (see for instance \cite[Theorem I.1.42]{jacod2003limit}). Here,
\begin{align*}
V^{\smalltext{\rm A}\smalltext{,}\smalltext{\rm FB}}_t(\bC,\nu) \coloneqq \E^{\P^{\smalltext{\nu}}}_t \bigg[\mathrm{e}^{-r (\tau-t)} u(\xi) \mathbf{1}_{\{\tau < \infty\}} + \int_t^\tau r \mathrm{e}^{-r(s-t)} (u(\pi_s) - h(\nu_s)) \d s\bigg], \; \text{for} \; t \in [0,\tau].
\end{align*}
\end{proof}

\begin{remark}\label{remark:FBContUtilityAgent}
\begin{enumerate}[label=$(\roman*)$, wide, labelindent=0pt]
\item The proof of {\rm\Cref{theorem:ReductionFirstBest}}, or more precisely the arguments presented to prove \Cref{theorem:reductionSecondBest}, show that the process $Y^{\smalltext{\rm FB},Y^{\smalltext{\rm A}}_\smalltext{0}{,}{Z}^{\smalltext{\rm A}} {,}{U}^{\smalltext{\rm A}}{,}{\pi^\smalltext{\star}}{,}{\nu^\smalltext{\star}}}$, with $Y^{{\rm A}}_{0} = u(R)$ represents the continuation utility of the agent given the first-best contract $(\bC^\star, \nu^\star)$, where $\xi = \xi^{Y^{\smalltext{\rm A}}_\smalltext{0}{,}Z^{\smalltext{\rm A}}{,}U^{\smalltext{\rm A}}{,}{\pi^\smalltext{\star}}{,}{\nu^\smalltext{\star}}} \coloneqq u^{(-1)}\big(Y^{\smalltext{\rm FB},Y^{\smalltext{\rm A}}_\smalltext{0}{,}{Z}^{\smalltext{\rm A}} {,}{U}^{\smalltext{\rm A}}{,}{\pi^\smalltext{\star}}{,}{\nu^\smalltext{\star}}}_\tau\big)$. In particular, the reward of the agent is exactly given by his participation constraint, meaning $V^{\rm A}(\bC^\star) = Y^{\smalltext{\rm FB},Y^{\smalltext{\rm A}}_\smalltext{0}{,}{Z}^{\smalltext{\rm A}} {,}{U}^{\smalltext{\rm A}}{,}{\pi^\smalltext{\star}}{,}{\nu^\smalltext{\star}}}_0 = Y^{{\rm A}}_{0} = u(R)$.
\item\label{remark:lowerboundYFB} Applying a comparison theorem for {\rm BSDEs} $($for instance, one can adapt the arguments of {\rm \citeauthor*{lin2020second} \cite[Theorem 3.7]{lin2020second}} and {\rm\citeauthor*{possamai2023reflections} \cite[Proposition 7.3]{possamai2023reflections}}$)$, we can derive the constraint $Y^{\smalltext{\rm FB}, {Y}^{\smalltext{\rm A}}_\smalltext{0}{,}{Z}^{\smalltext{\rm A}} {,}{U}^{\smalltext{\rm A}}{,}{\pi}{,}{\nu}}_{t \wedge \tau} \geq -h(\bar{a},\varepsilon_m)$ for any $t \geq 0$, $\P${\rm--a.s.}, given the non-negativity condition on the utility function $u$ and the upper boundedness of the cost function $h$ by $h(\bar{a},\varepsilon_m)$. 
\end{enumerate}
\end{remark}

\Cref{theorem:ReductionFirstBest} significantly simplifies the complexity of the first-best contracting problem. In fact, it reduces the Stackelberg game to a mixed control--stopping stochastic problem, to which we can associate a Hamilton--Jacobi--Bellman equation using classical control theory reasoning. To identify the domain over which the equation holds, we need to address the following stochastic target problem, for $y \geq 0$:
\begin{align*}
{d}^{\smalltext{\rm FB}}(y) \coloneqq \inf \mathcal{D}^{\smalltext{\rm FB}}_y \coloneqq \inf \Big\{Y^{{\rm A}}_0 \in \R: \; \big(Y^{\smalltext{\rm FB}, {Y}^{\smalltext{\rm A}}_\smalltext{0}{,}{Z}^{\smalltext{\rm A}} {,}{U}^{\smalltext{\rm A}}{,}{\pi}{,}{\nu}}_\tau - y\big) \mathbf{1}_{\{\tau < \infty\}}  \geq 0 \; \P\text{{\rm--a.s.}} \; \text{\rm for some} \; (\tau, \pi, \nu, Z^{\smalltext{\rm A}}, U^{\smalltext{\rm A}}) \in \cV^\smalltext{\rm FB}\big(Y^{{\rm A}}_0\big) \Big\}.
\end{align*}
This problem consists in determining the minimum $Y^{{\rm A}}_0$ for which a control of the reduced control--stopping stochastic problem allows the state process $Y^{\smalltext{\rm FB}, {Y}^{\smalltext{\rm A}}_\smalltext{0}{,}{Z}^{\smalltext{\rm A}} {,}{U}^{\smalltext{\rm A}}{,}{\pi}{,}{\nu}}$ to reach the target $y$, representing the participation constraint of the agent, at the terminal time $\tau$ when the latter is finite. To address this stochastic target problem, let us choose $y \geq 0$ and some $n \in \N$, and introduce the control $(\tau^{n,y}, \pi^{n,y}, \nu^{n,y}, Z^{\smalltext{\rm A},n,y}, U^{\smalltext{\rm A},n,y})$ such that 
\begin{align*}
\tau^{n,y} \coloneqq \frac{\log{(n(y+h(\bar{a},\varepsilon_m)))}}{r}, \; \text{and} \; (\pi^{n,y}_t, \nu^{n,y}_t, Z^{\smalltext{\rm A},n,y}_t, U^{\smalltext{\rm A},n,y}_t) \coloneqq (0, (\bar{a},\varepsilon_m),0,0), \; \text{for} \; t \in [0,\tau^{n,y} ]
\end{align*}
From \eqref{align:defYAfb}, it is evident that 
\begin{align*}
Y^{\smalltext{\rm FB}, -h(\bar{a},\varepsilon_m) +1/n{,}{Z}^{\smalltext{\rm A},n,y} {,}{U}^{\smalltext{\rm A},n,y}{,}{\pi^{n,y}}{,}{\nu^{n,y}}}_{\tau^{n,y}} =  y.
\end{align*}
Hence, we can deduce that $-h(\bar{a},\varepsilon_m) +1/n \in \mathcal{D}^{\smalltext{\rm FB}}_y$ for any $y \geq 0$ and $n \in \N$. We conclude that ${d}^{\smalltext{\rm FB}}= -h(\bar{a},\varepsilon_m)$ on $[0,\infty)$ since $\mathcal{D}^{\smalltext{\rm FB}}_y \subseteq [-h(\bar{a},\varepsilon_m), \infty)$ for all $y \geq 0$, as stated in \Cref{remark:FBContUtilityAgent}.\ref{remark:lowerboundYFB}. Consequently, the domain of the Hamilton-Jacobi-Bellman equation associated with the first-best value function $v^{\smalltext{\rm FB}}$ is $[-h(\bar{a},\varepsilon_m), \infty)$. Our next objective is to prove that $v^{\smalltext{\rm FB}}$ is the unique viscosity solution of the aforementioned equation in the class of functions that behave like $\bar{F}$ at infinity. However, before verifying this statement, it is necessary to prove that our value function exhibits the desired growth.

\begin{lemma}\label{lemma:growthfirstbest}
If $\delta \neq 1$ and $\gamma\delta >1$, then there exists some $c^{\smalltext{\rm FB}}>0$ such that $| v^{\smalltext{\rm FB}}(y) - \bar{F}(y) | \leq c^{\smalltext{\rm FB}}$ for any $ y \geq 0$.
\end{lemma}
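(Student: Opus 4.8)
Here is my plan for proving \Cref{lemma:growthfirstbest}.

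The idea is to sandwich $v^{\smalltext{\rm FB}}$ between $\bar{F}$ and $\bar{F}$ plus an explicit constant, using the accident‑free face‑lifted reward $w_0$ from \eqref{align:w0nojumps} as a pivot, since both $v^{\smalltext{\rm FB}}$ and $\bar{F}$ differ from $w_0$ only by amounts controlled by $m$ and $\bar{a}-\varepsilon_m$. Concretely, I would establish on $[0,\infty)$ the inequalities $\bar{F}\le v^{\smalltext{\rm FB}}$, then $v^{\smalltext{\rm FB}}\le\bar{a}-\varepsilon_m+w_0$, and $w_0-m\le\bar{F}\le w_0$; chaining them yields $0\le v^{\smalltext{\rm FB}}(y)-\bar{F}(y)\le\bar{a}-\varepsilon_m+m$, hence the claim with $C\coloneqq\bar{a}-\varepsilon_m+m$.

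Two of these are immediate. For $\bar{F}\le v^{\smalltext{\rm FB}}$: recalling that $v^{\smalltext{\rm FB}}(y)$ is the supremum of $\bar{J}^{\rm P}(\bC,\nu)$ over $(\bC,\nu)\in\mathfrak{C}\times\cU$ with $J^{\rm A}(\bC,\nu)=y$, the contract $\bC=(0,0,u^{-1}(y))$ — which lies in $\mathfrak{C}$ because \eqref{eq:integrabilityCondition} reduces to $y^{q/\gamma}<\infty$ when $\tau=0$ — has $J^{\rm A}(\bC,\nu)=y$ and $\bar{J}^{\rm P}(\bC,\nu)=\bar{F}(y)$ for every $\nu$. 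For $w_0-m\le\bar{F}\le w_0$: by \cite[Proposition 2.1]{possamai2020there}, $w_0$ coincides with the value \eqref{eq:FbarEquation} taken with $m=0$, and since $\int_0^T\rho\mathrm{e}^{-\rho t}\,\d t\in[0,1]$, isolating the term $\int_0^T\rho\mathrm{e}^{-\rho t}(-m)\,\d t$ inside \eqref{eq:FbarEquation} gives both inequalities at once.

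The core of the proof is $v^{\smalltext{\rm FB}}(y)\le\bar{a}-\varepsilon_m+w_0(y)$. Fix an admissible $(\bC,\nu)$ with $J^{\rm A}(\bC,\nu)=y$, set $\eta\coloneqq u(\pi)$, $\zeta\coloneqq u(\xi)$, and introduce the agent's continuation utility computed ignoring the cost of effort, $Y_t\coloneqq\E^{\P^{\nu}}_t\big[\mathrm{e}^{-r(\tau-t)}\zeta\mathbf{1}_{\{\tau<\infty\}}+\int_t^\tau r\mathrm{e}^{-r(s-t)}\eta_s\,\d s\big]$ for $t\in[0,\tau]$. Then $Y_\tau=\zeta$ on $\{\tau<\infty\}$, and $Y_0=J^{\rm A}(\bC,\nu)+\E^{\P^{\nu}}\big[\int_0^\tau r\mathrm{e}^{-rs}h(\nu_s)\,\d s\big]\ge y$ as $h\ge0$. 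By \Cref{lemma:martingale Representation} (valid under $\P^{\nu}$), $Y$ solves $\d Y_t=r(Y_{t-}-\eta_t)\,\d t+\sigma Z_t\,\d W^{\nu}_t+\int_{\R}U_t(\ell)\,\tilde{\mu}^{J^{\nu}}(\d t,\d\ell)$ on $[0,\tau]$ for some $Z,U$. I would then apply Itô's formula to $\mathrm{e}^{-\rho t}w_0(Y_t)$ and check that $\mathrm{e}^{-\rho t}w_0(Y_t)+\int_0^t\rho\mathrm{e}^{-\rho s}F(\eta_s)\,\d s$ is a $\P^{\nu}$--supermartingale on $[0,\tau]$: since $w_0\in C^2$ is concave, the continuous second‑order term $\tfrac12 w_0^{\prime\prime}(Y_{t-})\sigma^2Z_t^2$ and the jump compensator term $\int_{\R}\!\big(w_0(Y_{t-}+U_t(\ell))-w_0(Y_{t-})-w_0^\prime(Y_{t-})U_t(\ell)\big)\tfrac{\beta_t}{m}\Phi(\d\ell)$ are both $\le0$, while the remaining drift equals $\rho\big(-w_0(Y_{t-})+\delta Y_{t-}w_0^\prime(Y_{t-})+F(\eta_t)-\delta w_0^\prime(Y_{t-})\eta_t\big)$ (using $r=\delta\rho$), which by the Fenchel inequality $F(\eta_t)-\delta w_0^\prime(Y_{t-})\eta_t\le-F^\star(\delta w_0^\prime(Y_{t-}))$ is $\le-\rho\big(F^\star(\delta w_0^\prime(Y_{t-}))-\delta Y_{t-}w_0^\prime(Y_{t-})+w_0(Y_{t-})\big)=0$ by \eqref{eq:eqw0NoJumps}. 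Optional stopping, justified by the integrability built into \eqref{eq:integrabilityCondition} and the polynomial growth of $F$ and $w_0$, then gives $\E^{\P^{\nu}}\big[\mathrm{e}^{-\rho\tau}w_0(Y_\tau)\mathbf{1}_{\{\tau<\infty\}}+\int_0^\tau\rho\mathrm{e}^{-\rho s}F(\eta_s)\,\d s\big]\le w_0(Y_0)\le w_0(y)$, the last step because $w_0$ is non‑increasing and $Y_0\ge y$. Combining with $\alpha_s-\beta_s\le\bar{a}-\varepsilon_m$, with $\bar{F}\le w_0$, and with $Y_\tau=\zeta$ yields $\bar{J}^{\rm P}(\bC,\nu)\le\bar{a}-\varepsilon_m+w_0(y)$, and taking the supremum over $(\bC,\nu)$ gives the bound; with the previous step, $v^{\smalltext{\rm FB}}(y)\le\bar{a}-\varepsilon_m+w_0(y)\le\bar{a}-\varepsilon_m+m+\bar{F}(y)$.

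The delicate point — where I expect the real work — is this last supermartingale argument in the jump setting: carrying out the Itô/compensator computation so that concavity of $w_0$ genuinely absorbs the jump contributions, and distilling from \eqref{eq:integrabilityCondition} the integrability that allows passing from a local to a true supermartingale and taking expectations at the possibly infinite horizon $\tau$. This is the analogue for our model of the verification underpinning \cite[Proposition 2.1]{possamai2020there}, so the computation there is a useful template.
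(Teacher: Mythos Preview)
Your proof is correct in outline and takes a genuinely different route from the paper's. Both arguments establish $\bar F\le v^{\smalltext{\rm FB}}\le w_0+\bar a-\varepsilon_m$ and then bound $w_0-\bar F$, but they differ on both halves. For the upper bound on $v^{\smalltext{\rm FB}}$, the paper exploits the Lagrangian representation \eqref{eq:firstbestV_KT} directly: bounding $G^\star\le\bar a-\varepsilon_m$ inside the inf--sup and using concavity of $F^\star$ to send $T\to\infty$ yields $v^{\smalltext{\rm FB}}(y)\le\bar a-\varepsilon_m+\iota(y)$ with $\iota=(w_0^\star)^\star=w_0$, a purely deterministic estimate that sidesteps your supermartingale/optional-stopping difficulty entirely. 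Your It\^o argument on $\mathrm e^{-\rho t}w_0(Y_t)$ reaches the same bound but at the cost of the technical work you flag; it can in principle be completed via the integrability of the continuation utility established in \Cref{appendix:reductionSecond} (this is exactly where the hypothesis $\gamma\delta>1$ enters, to match the discount $\rho$ against the $\gamma$-growth of $w_0$), but the Lagrangian shortcut is clearly preferable here. Conversely, for the bound on $w_0-\bar F$, your one-line comparison of the control problems \eqref{eq:FbarEquation} with and without the $-m$ term (giving $w_0-m\le\bar F\le w_0$, hence $w_0-\bar F\le m$ uniformly) is more elegant and case-free than the paper's dual analysis, which treats separately the regimes $\delta>1$ with small $m$, $\delta>1$ with large $m$, and $\delta<1$. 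Combining the paper's first step with your second would give the cleanest proof, with the explicit constant $C=\bar a-\varepsilon_m+m$.
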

\begin{proof}
The reformulation of the first-best problem in \Cref{theorem:ReductionFirstBest} straightforwardly implies that $v^{\smalltext{\rm FB}} - \bar{F} \geq -c^{\smalltext{\rm FB}}$ on $[0,\infty)$, where $c^{\smalltext{\rm FB}} > 0$. To prove the reverse bound, we first introduce the function 
\[\iota(y) \coloneqq \inf_{\lambda \leq 0}\bigg\{\lambda y - \int_0^{\infty} \rho \mathrm{e}^{-\rho s} F^{\star}\big(\lambda \delta \mathrm{e}^{\rho(1-\delta) s}\big) \d s \bigg\}, \; \text{for} \; y \geq 0. \]
For any $y \geq 0$, we note that
\begin{align*}
v^{\smalltext{\rm FB}}(y) &= \inf_{\lambda \leq 0}\bigg\{\lambda y + \sup_{T \geq 0} \bigg\{-\mathrm{e}^{-\rho T} F^{\star}\big(\lambda \mathrm{e}^{\rho(1-\delta) T}\big) \mathbf{1}_{\{T < \infty\}} + \int_0^T \rho \mathrm{e}^{-\rho s} \big(G^{\star}-F^{\star}\big)\big(\lambda \delta \mathrm{e}^{\rho(1-\delta) s}\big) \d s\bigg\} \bigg\} \\
&\leq \inf_{\lambda \leq 0}\bigg\{\lambda y + \sup_{T \geq 0} \bigg\{-\mathrm{e}^{-\rho T} F^{\star}\big(\lambda e^{\rho(1-\delta) T}\big) \mathbf{1}_{\{T< \infty\}} - \int_0^T \rho \mathrm{e}^{-\rho s} F^{\star}\big(\lambda \delta \mathrm{e}^{\rho(1-\delta) s}\big) \d s \bigg\} + \bar{a} - \varepsilon_m = \iota(y) + \bar{a} - \varepsilon_m,
\end{align*}
where the inequality is a consequence of the definition of $G^\star$, while the last equality follows from the concavity of $F^\star$. Consequently, it is sufficient to verify that there exists a constant $c^{\smalltext{\rm FB}}>0$ such that 
\begin{align}\label{align:boundAim}
\iota(y) - \bar{F}(y) \leq c^{\smalltext{\rm FB}} \; \text{for all} \; y \geq 0.
\end{align}
We can divide the proof into three distinct parts, depending on the specific case under consideration and the corresponding face-lifted utility $\bar{F}$. In the scenario where $\delta > 1$ and $m \leq - F^{\star}(\delta F^{\prime}(0))$, the function $\bar{F}$ is given in terms of its concave conjugate $w^{\star}$ introduced in \eqref{align:wstarexp}. Based on its description, we can deduce that
\begin{align*}
w^{\star}(p) \leq m + \frac{(-p)^{\frac{1}{1-\delta}}}{\delta-1} \int_p^0 \frac{F^{\star}(\delta x)}{(-x)^{1+\frac{1}{1-\delta}}} \d x = m + \int_0^{\infty} \rho \mathrm{e}^{-\rho t} F^{\star}\big(\delta p \mathrm{e}^{\rho(1-\delta)t}\big) \d t \; \text{for any} \; p \leq \frac{(F^\star)^{(-1)}(-m)}{\delta}.
\end{align*}
In the first inequality, we have used the fact that $F^{\star}(\delta x) + m \geq 0$ for any $x \in [f_{\delta m}, 0]$ followed by the change of variables $x \longmapsto p \mathrm{e}^{\rho(1-\delta)t}$. From the previous inequality, it becomes evident that $\iota(y) - \bar{F}(y) \leq m$ for all $y \geq 0$. We can conclude that the inequality in \eqref{align:boundAim} is satisfied. 

\medskip
When the jumps become larger, specifically in the case of $\delta > 1$ and $m > - F^{\star}(\delta F^{\prime}(0))$, we have that $\bar{F}=F$ on $[0,\bar{y}]$, see \Cref{proposition:FBarGeneralDeltaBigger1}.\ref{mLarge}. As the function $\iota$ is upper--semi-continuous since it is an infimum of an arbitrary family of continuous functions, we can deduce the existence of some constant $c^{\smalltext{\rm FB}}>0$ such that $(\iota-F)(y) \leq c^{\smalltext{\rm FB}}$ for any $y \in [0,\bar{y}]$. Nonetheless, we still need to provide a similar bound on $(F^{\prime}(\bar{y}),\infty)$, where $\bar{F}$ is described by the concave conjugate of $w^{\star}_{\bar{y}}$ as given in \eqref{align:wstarybar}. Its explicit formula is as follows, for any $p \in (-\infty,F^{\prime}(\bar{y}))$:
\begin{align*}
w^{\star}_{\bar{y}}(p) &= \frac{(-p)^{\frac{1}{1-\delta}}}{\delta-1} \int_p^{F^{\prime}(\bar{y})} \frac{F^{\star}(\delta x)+m}{(-x)^{1+\frac{1}{1-\delta}}} \d x + F^{\star}(F^{\prime}(\bar{y})) \left(\frac{p}{F^{\prime}(\bar{y})}\right)^{\frac{1}{1-\delta}} \\
&\leq \frac{(-p)^{\frac{1}{1-\delta}}}{\delta-1} \int_p^{F^{\prime}(\bar{y})} \frac{F^{\star}(\delta x)+m}{(-x)^{1+\frac{1}{1-\delta}}} \d x \\
&= \frac{(-p)^{\frac{1}{1-\delta}}}{\delta-1} \int_p^0 \frac{F^{\star}(\delta x)+m}{(-x)^{1+\frac{1}{1-\delta}}} \d x - \frac{(-p)^{\frac{1}{1-\delta}}}{\delta-1} \int_{F^{\prime}(\bar{y})}^0 \frac{F^{\star}(\delta x)+m}{(-x)^{1+\frac{1}{1-\delta}}} \d x \leq C + \frac{(-p)^{\frac{1}{1-\delta}}}{\delta-1} \int_p^0 \frac{F^{\star}(\delta x)+m}{(-x)^{1+\frac{1}{1-\delta}}} \d x,
\end{align*}
for some $C > 0$. This allows us to conclude by applying the same change of variables as we did previously.

\medskip
Lastly, we examine the scenario where $\delta < 1$ and $\gamma\delta > 1$. It is straightforward to prove that the inequality in \eqref{align:boundAim} is satisfied on the interval $[0,\tilde{y}]$. Additionally, for $y \in (\tilde{y},\infty)$, it holds that $\iota(y) = w_0(y) = \bar{F}(y)$, where the function $w_0$ is introduced in \eqref{align:w0nojumps}. This concludes the proof.
\end{proof}

\begin{proof}[Proof of \Cref{thm:FirstBestCompleteC}.\ref{viscosityCFB}]
First, it is important to note that the operator $\cJ^{\smalltext{\rm FB}}$ introduced in \eqref{eq:operatorfirstbest} is Lipschitz-continuous and non-decreasing with respect to its second variable. These properties are sufficient to suggest that adapting the same techniques outlined in \cite[Lemma B.1]{possamai2020there} to our context is sufficient to prove a comparison theorem on $[-(\bar{a}-\varepsilon),\infty)$ for the Hamilton--Jacobi--Bellman equation \eqref{align:hjbFB} in the class of functions $v$ verifying $ |v(y) - \bar{F}(y)| \leq c^{\smalltext{\rm FB}}$, $y \geq 0$, for some $c^{\smalltext{\rm FB}}>0$. Furthermore, standard arguments from viscosity solution theory show that $v^{\smalltext{\rm FB}}$ is a discontinuous viscosity solution of \Cref{align:hjbFB} (the approach is similar to the one in \Cref{theorem:viscositySolFbar}). Consequently, \Cref{lemma:growthfirstbest} allows to apply the aforementioned comparison theorem to the function $v^{\smalltext{\rm FB}}$, leading to the conclusion that $v^{\smalltext{\rm FB}}$ is the unique viscosity solution, in that class of functions, and it is continuous. 
\end{proof}

\begin{proof}[Proof of \Cref{thm:FirstBestCompleteC}.\ref{deltaSmaller1}]
By assumption, there exists some $y^0 \in [y_m \vee \tilde{y}, \infty)$ such that $v^{\smalltext{\rm FB}}(y^0) = \bar{F}(y^0)$. Since $y^0 \geq \tilde{y}$, it follows that $\bar{F}(y) = w_0(y) -m$ for any $y \in [y^0, \infty)$ due to \Cref{proposition:FBarGeneralDeltaSmaller1}.\ref{nonDegenerateFBar}. Additionally, on this interval, $\bar{F}$ solves \Cref{align:hjbFB} because the fact that $w_0$ is a decreasing function implies that $G^{\star}(\delta w_0^\prime(y)) = -m$ for any $y \in [y^0, \infty)$. Therefore, the comparison theorem  mentioned in the proof of \Cref{thm:FirstBestCompleteC}.\ref{viscosityCFB} allows us to conclude.
\end{proof}

\section{Reduction of the second-best contracting problem}\label{section:secondBest}

This section focuses on reducing the Stackelberg game \eqref{secondBest_Pproblem} of the principal to a standard mixed control--stopping stochastic problem. As in the analysis of the first-best contracting problem when $\delta \neq 1$ and $\gamma\delta >1$, we identify a specific class of contracts $\bC$ that reveal the optimal response $\nu^\star(\bC)$ of the agent, while ensuring no loss of generality. Before exploring further into this approach, it is essential to note that the Hamiltonian associated to the problem of the agent is defined as
\begin{align}\label{align:defhA}
H^{\rm A}(z^{\smalltext{\rm A}},u^{\smalltext{\rm A}}) \coloneqq \sup_{ (a,b) \in U} h^{\rm A}(z^{\smalltext{\rm A}},u^{\smalltext{\rm A}}, a,b) \coloneqq \sup_{ (a,b) \in U} \bigg\{a z^{\smalltext{\rm A}} -  \frac{m-b}{m} \int_{\R} u^{\smalltext{\rm A}}(\ell) \Phi(\d \ell) - h(a,b)\bigg\}, \; \text{for} \;(z^{\smalltext{\rm A}}, u^{\smalltext{\rm A}}) \in \R\times\cB_{\R}.
\end{align}

Let us introduce an $\R$-valued $\F$-predictable process $Z^{\rm A}$ and an $\R$-valued $\widetilde{\cP}(\F)$-measurable function $U^{\rm A}$. For any given $\F$--stopping time $\tau$, we define the set $\cU^\star_\tau(Z^{{\rm A}},U^{{\rm A}})$ as the collection of efforts $\nu = (\alpha,\beta) \in \cU$ such that
\begin{align}\label{align:HamiltonianAgent}
(\alpha_t,\beta_t) \in \argmax_{(a,b) \in U}  h^{\rm A}(Z^{\rm A}_t,U^{\rm A}_t,a,b), \; \d t \otimes \d \P\text{--a.e.} \; \text{on} \; \llbracket 0, \tau \rrbracket.
\end{align}
Similarly to the approach taken in \eqref{align:defYAfb} for the first-best problem, we also need to introduce a family of processes that will be used to reformulate the admissible lump-sum payments at terminal time received by the agent. Let us fix a constant $Y^{\rm A}_0 \in \R$ along with a non-negative $\F$-predictable process $\pi$. We define the process $Y^{Y^{\smalltext{\rm A}}_\smalltext{0},Z^{\smalltext{\rm A}},U^{\smalltext{\rm A}},\pi}$ as the solution of the following SDE, for $t\geq0$:
\begin{align}\label{align:defYA}
Y^{Y^{\smalltext{\rm A}}_\smalltext{0},Z^{\smalltext{\rm A}},U^{\smalltext{\rm A}},\pi}_t = Y^{\rm A}_0 + \int_0^t r \big(Y^{Y^{\smalltext{\rm A}}_\smalltext{0},Z^{\smalltext{\rm A}},U^{\smalltext{\rm A}},\pi}_s - u(\pi_s) - H^{\rm A}(Z^{\rm A}_s,U^{\rm A}_s)\big) \d s + r \sigma \int_0^t Z^{\rm A}_s \d W_s + r \int_0^t \int_{\R} U^{\rm A}_s(\ell) \tilde{\mu}^{{J}}(\d s, \d \ell).
\end{align} 
Then, we introduce the set $\cV^{\smalltext{\rm A}}_\tau$ as the collection of all $\R$-valued, $\F$-predictable processes $Z^{\rm A}$ and $\R$-valued $\widetilde{\cP}(\F)$-measurable functions $U^{\rm A}$ such that the supremum \eqref{align:defhA} in the definition of $H^{\rm A}(Z^{{\rm A}},U^{{\rm A}})$ is attained, or equivalently, the set $\cU^\star_\tau(Z^{{\rm A}},U^{{\rm A}})$ is not empty, and the integrability conditions \eqref{align:intCondYA} and \eqref{align:intCondZAUA} are met.

\begin{remark}\label{remark:sdeContUtilityAgent}
\begin{enumerate}[label=$(\roman*)$, wide, labelindent=0pt]
\item\label{y0} As pointed out in {\rm\Cref{remark:existenceYsde}}, the previous {\rm SDE} admits a unique strong solution $Y^{Y^{\smalltext{\rm A}}_\smalltext{0},Z^{\smalltext{\rm A}},U^{\smalltext{\rm A}},\pi}$, which represents the continuation utility of the agent given the contract $\bC=(\tau,\pi,\xi)$, where $\xi = \xi^{Y^{\smalltext{\rm A}}_\smalltext{0},Z^{\smalltext{\rm A}},U^{\smalltext{\rm A}},\pi} \coloneqq u^{(-1)}\big(Y^{Y^{\smalltext{\rm A}}_\smalltext{0},Z^{\smalltext{\rm A}},U^{\smalltext{\rm A}},\pi}_\tau\big)$. 
\item We highlight that a comparison theorem for {\rm BSDEs} $($one can adapt the arguments of {\rm \cite[Theorem 3.7]{lin2020second}} and {\rm \cite[Proposition 7.3]{possamai2023reflections}} to the present framework$)$ imposes a non-negativity constraint of the continuation utility of the agent, namely $Y^{Y^{\smalltext{\rm A}}_\smalltext{0},Z^{\smalltext{\rm A}},U^{\smalltext{\rm A}},\pi}_{t \wedge \tau} \geq 0$ for any $t \geq 0$, $\P${\rm--a.s.}, given the non-negativity condition satisfied by both the utility function $u$ and the Hamiltonian $H^{\rm A}$. Additionally, the limited liability constraint implies that 
\begin{align*}
Y^{Y^{\smalltext{\rm A}}_\smalltext{0},Z^{\smalltext{\rm A}},U^{\smalltext{\rm A}},\pi}_{t-} + r \int_{\R} U^{\rm A}_t(\ell) \Phi(\d \ell) \geq  0, \; \d t \otimes \d \P\text{\rm --a.e.} \; \text{\rm on} \; \llbracket 0, \tau \rrbracket.
\end{align*}
\end{enumerate}
\end{remark}

\medskip
Considering the previous discussion, we define the set $\cV^{\smalltext{\rm S}}(Y^{\rm A}_0)$ as the collection of all quadruples $(\tau, \pi, Z^{\rm A}, U^{\rm A})$ verifying that $\tau$ is an $\F$--stopping time, $\pi$ is an $\F$-predictable non-negative process, $(Z^{\rm A},U^{\rm A}) \in \cV_\tau^{\smalltext{\rm A}}$, and the contract $(\tau, \pi, u^{(-1)}(Y^{Y^{\smalltext{\rm A}}_\smalltext{0},Z^{\smalltext{\rm A}},U^{\smalltext{\rm A}},\pi}_\tau))$ satisfies the integrability condition \eqref{eq:integrabilityCondition}. With this setup, we can now proceed to formulate the reduction argument to rewrite the problem of the principal as a mixed control--stopping problem.

\begin{theorem}\label{theorem:reductionSecondBest}
It holds that\footnote{The proof of \Cref{theorem:reductionSecondBest} shows that $Y^{Y^{\smalltext{\rm A}}_\smalltext{0},Z^{\smalltext{\rm A}},U^{\smalltext{\rm A}},\pi}_0 = Y^{{\rm A}}_0 = V^{\rm A}(\bC)$, hence, the condition $Y^{{\rm A}}_0 \geq u(R)$ ensures that the participation constraint is satisfied.}
\begin{align*}
{V}^{\rm P} = \bar{V}^{\rm P} = &\sup_{Y^{\smalltext{\rm A}}_\smalltext{0} \geq u(R)} \bar{V}^{\rm P}(Y^{\rm A}_0),
\end{align*}
with
\begin{align}\label{align: reducedProblem}
 \bar{V}^{\rm P}(Y^{\rm A}_0) \coloneqq \sup_{(\tau, \pi, Z^{\smalltext{\rm A}}, U^{\smalltext{\rm A}}) \in \cV^{\smalltext{\rm S}}(Y^{\smalltext{\rm A}}_\smalltext{0})} \sup_{\nu\in \cU_\tau^{\smalltext\star}(Z^{\smalltext{\rm A}},U^{\smalltext{\rm A}})} \E^{\P^{\smalltext{\nu}}}\bigg[\mathrm{e}^{-\rho \tau} \bar{F}(Y^{Y^{\smalltext{\rm A}}_\smalltext{0},Z^{\smalltext{\rm A}},U^{\smalltext{\rm A}},\pi}_\tau) \mathbf{1}_{\{\tau < \infty\}} + \int_0^\tau \rho \mathrm{e}^{-\rho s} (\alpha_s - \beta_s - \pi_s) \d s\bigg].
\end{align}
\end{theorem}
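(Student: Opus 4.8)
The equality $V^{\rm P}=\bar V^{\rm P}$ has already been established in \Cref{section:GP}, so what remains is to prove that $\bar V^{\rm P}=\sup_{Y^{\smalltext{\rm A}}_\smalltext{0}\geq u(R)}\bar V^{\rm P}(Y^{\rm A}_0)$. I would obtain this through two matching inclusions. On the one hand, I would show that every contract $\bC\in\mathfrak C_R$ with $\cU^\star(\bC)\neq\varnothing$ can be re-encoded as an element of some $\cV^{\smalltext{\rm S}}(Y^{\rm A}_0)$ with $Y^{\rm A}_0\geq u(R)$, achieving the same principal's reward; on the other hand, that any quadruple in $\cV^{\smalltext{\rm S}}(Y^{\rm A}_0)$ with $Y^{\rm A}_0\geq u(R)$ produces such a contract. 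The common tool is the agent's continuation utility together with the martingale optimality principle for his control problem: for any $\bC=(\tau,\pi,\xi)\in\mathfrak C$ and any $\nu\in\cU$, the process $M^{\smalltext{\rm A}}_t(\bC,\nu)$ of the form \eqref{align:Mfb} (with the agent's conditional reward in place of its first-best analogue) is a $\P^{\nu}$--supermartingale, and a $\P^{\nu}$--uniformly integrable martingale precisely when $\nu$ is an optimal response; the admissibility condition \eqref{eq:integrabilityCondition} together with the growth of $u$ guarantees these processes are well defined and uniformly integrable.

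\textbf{From contracts to the reduced class.} Fix $\bC=(\tau,\pi,\xi)\in\mathfrak C_R$ with $\cU^\star(\bC)\neq\varnothing$ and pick $\hat\nu\in\cU^\star(\bC)$. The value process $Y^{\rm A}_t\coloneqq V^{\rm A}_t(\bC,\hat\nu)$ on $\llbracket 0,\tau\rrbracket$ does not depend on the chosen optimal response and is an $(\F,\P)$--special semimartingale; applying \Cref{lemma:martingale Representation} to (the stopped version of) its martingale part---the representation being preserved under the changes of measure $\P^{\nu}$---yields $Z^{\rm A}\in\L^2_{\mathrm{loc}}(W,\F,\P)$ and $U^{\rm A}\in G_{\rm loc}(\mu^J,\F,\P)$ with $\d Y^{\rm A}_t = b_t\,\d t + r\sigma Z^{\rm A}_t\,\d W_t + r\int_\R U^{\rm A}_t(\ell)\,\tilde\mu^{J}(\d t,\d\ell)$. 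Writing everything under $\P^{\nu}$ (using the Girsanov change of intensity of \Cref{remark:sdeContUtilityAgent} and the remark following \Cref{align:changeMeasureNU}) and imposing that $M^{\smalltext{\rm A}}(\bC,\nu)$ be a supermartingale for all $\nu\in\cU$ and a martingale for $\hat\nu$ forces, $\d t\otimes\d\P$--a.e.\ on $\llbracket 0,\tau\rrbracket$, that $b_t = r\big(Y^{\rm A}_t - u(\pi_t) - H^{\rm A}(Z^{\rm A}_t,U^{\rm A}_t)\big)$ (so $Y^{\rm A}$ solves \eqref{align:defYA}) and that $\hat\nu$ attains the supremum in \eqref{align:defhA}, i.e.\ \eqref{align:HamiltonianAgent} holds and $\hat\nu\in\cU^\star_\tau(Z^{\rm A},U^{\rm A})$; in particular $(Z^{\rm A},U^{\rm A})\in\cV^{\smalltext{\rm A}}_\tau$. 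The non-negativity and limited-liability constraints of \Cref{remark:sdeContUtilityAgent} follow from $u,h\geq 0$ and a comparison argument, while the integrability requirements \eqref{align:intCondYA}--\eqref{align:intCondZAUA} are deduced from \eqref{eq:integrabilityCondition} through moment estimates for the linear SDE \eqref{align:defYA} and the Burkholder--Davis--Gundy inequality for c\`adl\`ag martingales (this is where $p>2\vee\gamma$ and $\kappa<r$ enter). Since $Y^{\rm A}_0 = V^{\rm A}(\bC)\geq u(R)$ and $\xi = u^{-1}(Y^{\rm A}_\tau)$, we get $(\tau,\pi,Z^{\rm A},U^{\rm A})\in\cV^{\smalltext{\rm S}}(Y^{\rm A}_0)$, and the reward is unchanged as explained below.

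\textbf{From the reduced class to contracts, and matching the objectives.} Conversely, given $(\tau,\pi,Z^{\rm A},U^{\rm A})\in\cV^{\smalltext{\rm S}}(Y^{\rm A}_0)$ with $Y^{\rm A}_0\geq u(R)$, let $Y^{\rm A}$ be the unique strong solution of \eqref{align:defYA}, set $\xi\coloneqq u^{-1}(Y^{\rm A}_\tau)$ and $\bC\coloneqq(\tau,\pi,\xi)\in\mathfrak C$ (the integrability \eqref{eq:integrabilityCondition} is part of the definition of $\cV^{\smalltext{\rm S}}$). An It\^o expansion of $M^{\smalltext{\rm A}}(\bC,\nu)$ along \eqref{align:defYA} shows it is a $\P^{\nu}$--supermartingale for every $\nu\in\cU$ (since $H^{\rm A}$ dominates $h^{\rm A}(\cdot,\cdot,\alpha_t,\beta_t)$) and a $\P^{\nu}$--uniformly integrable martingale exactly when $\nu\in\cU^\star_\tau(Z^{\rm A},U^{\rm A})$, which is non-empty because $(Z^{\rm A},U^{\rm A})\in\cV^{\smalltext{\rm A}}_\tau$; hence $V^{\rm A}(\bC)=Y^{\rm A}_0\geq u(R)$, so $\bC\in\mathfrak C_R$, and $\cU^\star_\tau(Z^{\rm A},U^{\rm A})\subseteq\cU^\star(\bC)$ (equality holding by the first part). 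In either direction, with $\zeta=u(\xi)=Y^{\rm A}_\tau$ and the identity $F(u(\pi_s))=-\pi_s$, the relaxed reward \eqref{eq:newVprincipal} becomes $\bar J^{\rm P}(\bC,\nu)=\E^{\P^{\smalltext{\nu}}}\big[\mathrm e^{-\rho\tau}\bar F(Y^{\rm A}_\tau)\mathbf 1_{\{\tau<\infty\}}+\int_0^\tau\rho\mathrm e^{-\rho s}(\alpha_s-\beta_s-\pi_s)\,\d s\big]$, which is the integrand of \eqref{align: reducedProblem}. Taking $\sup$ over $\nu$ and then over the data, the two inclusions give $\bar V^{\rm P}=\sup_{Y^{\smalltext{\rm A}}_\smalltext{0}\geq u(R)}\bar V^{\rm P}(Y^{\rm A}_0)$, and combining with $V^{\rm P}=\bar V^{\rm P}$ proves the theorem.

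\textbf{Main obstacle.} The genuinely delicate points are, first, the rigorous version of the dynamic programming principle for the agent in this weak, random-horizon, jump setting---needing a measurable selection to realise \eqref{align:HamiltonianAgent} and an aggregation of the conditional value process---and, second, the two-way transfer of the precise integrability between \eqref{eq:integrabilityCondition} and \eqref{align:intCondYA}--\eqref{align:intCondZAUA}, which rests on BSDE/SDE moment estimates and the Burkholder--Davis--Gundy inequality for discontinuous local martingales and is exactly the reason for the exponent constraint $p>2\vee\gamma$ and the strict discount gap $\kappa<r$; everything else (the It\^o/Girsanov bookkeeping, the comparison argument for non-negativity) is routine. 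These technical steps are the ones I would defer to \Cref{appendix:reductionSecond}.
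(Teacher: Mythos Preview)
Your proposal is correct and follows essentially the same route as the paper: both directions hinge on the martingale optimality principle for the agent (the process $M^{\smalltext{\rm A}}(\bC,\nu)$ is a $\P^{\nu}$--supermartingale for every $\nu\in\cU$ and a martingale precisely at an optimal response), the martingale representation of \Cref{lemma:martingale Representation} to extract $(Z^{\rm A},U^{\rm A})$, an It\^o computation identifying the drift with \eqref{align:defYA}, a measurable selection to realise a maximiser of $H^{\rm A}$, and moment estimates (It\^o--Tanaka--Meyer together with the BSDE a~priori bounds of \citeauthor*{kruse2015bsdes}) to transfer the integrability between \eqref{eq:integrabilityCondition} and \eqref{align:intCondYA}--\eqref{align:intCondZAUA}. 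The only point where the paper is more explicit than your sketch is in aggregating the family $(V^{\rm A}_\theta(\bC))_{\theta\in\cT_\tau(\F)}$ into an optional process via the supermartingale-system result of \citeauthor*{dellacherie1981sur}, which is what makes the dynamic programming principle and the subsequent representation rigorous; you rightly flag this as one of the two delicate steps.
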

The result is proved in \Cref{appendix:reductionSecond}.

\subsection{The associated HJB equation}\label{section: HJB}

In accordance with the results from the reference paper \cite{possamai2020there} in the accident-free framework, we show that the second-best contracting problem \eqref{secondBest_Pproblem} degenerates when $\gamma\delta \leq 1$. In the alternative scenario where $\gamma \delta >1$, this degeneracy is no longer observed, and in order to characterise the solution of the Stackelberg game, we can consider the rewriting of game as a simpler mixed control--stopping problem, presented in \Cref{align: reducedProblem}. Therefore, by using standard techniques in stochastic control (see for instance \cite{oksendal2007applied}), we can introduce the following second-order integro-differential equation:
\begin{align}\label{align:hjbSB}
\min\big\{v^{\smalltext{\rm SB}}(y)-\bar{F}(y), F^{\star}(\delta (v^{\smalltext{\rm SB}})^\prime(y)) - \delta y (v^{\smalltext{\rm SB}})^\prime(y) + v^{\smalltext{\rm SB}}(y) -\cJ^{\smalltext{\rm SB}}\big(y,(v^{\smalltext{\rm SB}})^\prime(y), (v^{\smalltext{\rm SB}})^{\prime\prime}(y),v^{\smalltext{\rm SB}}(\cdot))\big\}=0, \; y \in (0,\infty),
\end{align}
for the initial condition $v^{\smalltext{\rm SB}}(0) = 0$. Here, $v^{\smalltext{\rm SB}}(y)$ represents the maximal reward obtained by the principal while maintaining the utility of the agent at the level $y \geq 0$, following a notation similar to that used for the first-best problem. For any $(y,p,q, v(\cdot)) \in \R_+ \times \R \times \R \times \cB_{\R}$, the non-local operator is of the form
\begin{align}\label{eq:operatorsecondbest}
\begin{split}
\cJ^{\smalltext{\rm SB}}(y,p,q, v(\cdot)) \coloneqq \sup_{(z^{\smalltext{\rm A}}, u^{\smalltext{\rm A}}) \in \mathfrak{V}_y}\sup_{(a,b) \in U^\smalltext{\star}(z^{\smalltext{\rm A}}, u^{\smalltext{\rm A}})} \bigg\{ & a-b+ \delta h(a,b) p + \frac{r\delta \sigma^2}{2} (z^{\rm A})^2 q \\
&+ \frac{b}{m \rho} \int_{\R} \big(v(y + r u^{\rm A}(\ell)) - v(y) - r p u^{\rm A}(\ell) \big)  \Phi(\d \ell) \bigg\}  \mathbf{1}_{(-\infty,0]} (q) + \infty  \mathbf{1}_{(0,\infty)}(q),
\end{split}
\end{align}
where the set $\mathfrak{V}_y$ is the collection of all $z^{\rm A} \in \R$ and $u^{\rm A} \in \cB_\R$ such that
\begin{equation}\label{equation:minPlusIntCond}
\min_{\ell \in \R_\smalltext{+}} \big\{r u^{{\rm A}}(\ell) \big\}\geq -y, \; \text{and} \; \int_{\R} |u^{{\rm A}}(\ell)|^\mu \Phi(\d \ell) <  \infty,
\end{equation}
for some $\mu \geq 1$. Additionally, recalling that the function $h^{\rm A}$ is introduced in \Cref{align:defhA}, we denote 
\begin{align*}
U^\star(z^{\rm A},u^{\rm A}) \coloneqq \bigg\{u^\star = (a^\star,b^\star) \in U :  u^\star \in \argmax_{(a,b) \in U} \big\{ h^{\rm A}(z^{\rm A},u^{\rm A},a,b)\big\}\bigg\}.
\end{align*}

\begin{remark}\label{remark:wellDefinedOperator}
It is important to note that the integral operator is of order zero due to the fact that $\Phi$ is a bounded measure by assumption since it represents the accident size distribution. Moreover, we have supposed that its support is contained in $[c_m,C_m]$, for some $0<c_m \leq C_m$. Consequently, there is no singularity even at infinity and the integrability condition stated in {\rm\Cref{equation:minPlusIntCond}} is sufficient to guarantee that the integral operator is well-defined for any function $v \in \cG_\mu$, where 
\begin{equation*}
\cG_\mu \coloneqq \bigg\{v: [0,\infty) \longrightarrow \R:  \sup_{y \in [0,\infty)} \frac{|v(y)|}{1 + |y|^\mu} <  \infty\bigg\}.
\end{equation*}
\end{remark}

\medskip
\Cref{align:hjbSB} should be interpreted in a weaker sense because we do not know \emph{a priori} whether the value function $v^{\smalltext{\rm SB}}$ is smooth, given that it is well-known it does not hold true in various applications. Therefore, we need to refer to an appropriate concept of viscosity solution.

\subsubsection{Viscosity solutions for the integro-differential HJB equation}

In this section, we introduce the notion of viscosity solution that we refer to throughout the paper since it strictly depends on the integral term appearing in \Cref{align:hjbSB}, especially on the regularity of the measure $\Phi$. For the purpose of our application, it is convenient to adopt the definition of viscosity solution from \cite[Definition 2.1]{hollender2016levy} (analogous to the one used by \cite[Definition 2]{alvarez1996viscosity}), specifically the version derived from \cite[Lemma 2.6]{hollender2016levy}, as we seek a solution $v^{\smalltext{\rm SB}}$ to \eqref{align:hjbSB} that is defined on the interval $[0,\infty)$ rather than the entire real line.

\begin{definition}\label{def:viscositydef}
Let us fix $\mu \geq 1$. An upper--semi-continuous $($resp. lower--semi-continuous$)$ function $u:[0,\infty) \longrightarrow \R$ is a viscosity sub-solution $($resp. viscosity super-solution$)$ of \eqref{align:hjbSB} on $(0,\infty)$ if $u \in \cG_\mu$, and for any $(y, \phi) \in (0,  \infty) \times C^2((0,\infty))$ such that $\phi \in \cG_\mu $, and $u-\phi$ attains a global maximum $($resp. global minimum$)$ at $y$, it holds that 
\begin{align*}
\min\left\{u(y)-\bar{F}(y), F^{\star}(\delta \phi^\prime(y)) - \delta y \phi^\prime(y)+u(y) -\cJ^{\smalltext{\rm SB}}(y,\phi^\prime(y), \phi^{\prime\prime}(y),\phi(\cdot))\right\}\leq 0 \; (\text{resp.} \; \geq 0).
\end{align*}
Additionally, we say that a locally bounded function $u:[0,\infty) \longrightarrow \R$ is a viscosity solution of \eqref{align:hjbSB} on $(0,\infty)$ if its upper--semi-continuous envelope and its lower--semi-continuous envelope is a viscosity sub-solution and a viscosity super-solution of \eqref{align:hjbSB} on $(0,\infty)$, respectively.
\end{definition}

We can take advantage of the regularity of the integral term that characterises our Hamilton--Jacobi--Bellman equation \eqref{align:hjbSB} to provide two equivalent formulations of being a viscosity sub-solution---and consequently a viscosity super-solution. These alternative definitions will be more convenient in proving the comparison theorem for \eqref{align:hjbSB}, as we will employ a standard maximum principle that invokes the notion of semi-jets instead of being formulated in terms of test functions. 

\begin{lemma}\label{lemma:definitionsViscSub}
An upper--semi-continuous function $u:[0,\infty) \longrightarrow \R$ is a viscosity sub-solution of \eqref{align:hjbSB} on $(0,\infty)$ if and only if either of the two following conditions hold:
\begin{enumerate}[label=$(\roman*)$, wide, labelindent=0pt]
\item\label{SubWithJets} for any $y\in (0,  \infty)$, it is true that\footnote{We refer to \citeauthor*{touzi2013optimal} \cite[Section 6.4.2]{touzi2013optimal} for the definitions of the semi-jets $J^{2,+}u(y)$ and $J^{2,-}u(y)$ along with their respective closures $\bar{J}^{2,+}u(y)$ and $\bar{J}^{2,-}u(y)$. Additional properties can be found in \cite[Remarks 2.7]{crandall1992user}.}
\begin{align}\label{align:subSemijets}
\min\big\{u(y)-\bar{F}(y), F^{\star}(\delta p) - \delta y p + u(y) -\cJ^{\smalltext{\rm SB}}(y,p, q,u(\cdot))\big\}\leq 0, \; \text{\rm for} \; (p,q) \in J^{2,+}u(y);
\end{align}
\item\label{noJets}for any $(y, \phi) \in (0,  \infty) \times C^2((0,\infty))$ such that $u-\phi$ attains a global maximum at $y$, it is true that
\begin{align*}
\min\big\{u(y)-\bar{F}(y), F^{\star}(\delta \phi^\prime(y)) - \delta y \phi^\prime(y)+u(y) -\cJ^{\smalltext{\rm SB}}(y,\phi^\prime(y), \phi^{\prime\prime}(y),u(\cdot))\big\}\leq 0.
\end{align*}
\end{enumerate}
\end{lemma}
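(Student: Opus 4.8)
The plan is to prove the chain of equivalences \ref{def:viscositydef} $\Longleftrightarrow$ \ref{noJets} $\Longleftrightarrow$ \ref{SubWithJets}, and the whole argument rests on two structural features of $\cJ^{\smalltext{\rm SB}}$ that I would record first. On the one hand, for fixed $(y,p,q)$ the operator depends on its function argument only through the increments $v(y+r u^{\rm A}(\ell))-v(y)$, so it is monotone in the sense that $\cJ^{\smalltext{\rm SB}}(y,p,q,v_1(\cdot))\leq\cJ^{\smalltext{\rm SB}}(y,p,q,v_2(\cdot))$ whenever $v_1-v_2$ attains its supremum over $[0,\infty)$ at $y$; indeed the constraint defining $\mathfrak{V}_y$ forces $y+ru^{\rm A}(\ell)\geq 0$, and $b/(m\rho)\geq 0$, $\Phi\geq 0$. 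On the other hand, by \Cref{remark:wellDefinedOperator} the operator is of order zero and well-defined on $\cG_\mu$, so that along a pointwise convergent, uniformly $\cG_\mu$-dominated sequence of function arguments one can pass to the limit inside the integral.

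\textbf{Equivalence with \Cref{def:viscositydef}.} The implication \ref{noJets} $\Rightarrow$ \ref{def:viscositydef} is immediate: when $u-\phi$ has a global maximum at $y$ the monotonicity above gives $\cJ^{\smalltext{\rm SB}}(y,\phi^\prime(y),\phi^{\prime\prime}(y),u(\cdot))\leq\cJ^{\smalltext{\rm SB}}(y,\phi^\prime(y),\phi^{\prime\prime}(y),\phi(\cdot))$, so the quantity inside the minimum in \Cref{def:viscositydef} is no larger than the one in \ref{noJets}, hence $\leq 0$. For the converse I would argue by approximation. Fix $\phi\in C^2((0,\infty))\cap\cG_\mu$ with $u-\phi$ attaining a global maximum at $y_0\in(0,\infty)$ and normalise $\phi(y_0)=u(y_0)$, so that $\phi\geq u$. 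Since $u$ is upper--semi-continuous with $\cG_\mu$-growth, it is the pointwise limit of smooth functions lying above it (write $u=g\,(1+y^\mu)$ with $g$ bounded and upper--semi-continuous, approximate $g$ from above); glueing $\phi$ on a shrinking ball $B_{\varepsilon_n}(y_0)$ to such a smooth majorant of $u$ outside that ball yields $\phi_n\in C^2((0,\infty))\cap\cG_\mu$ with $\phi_n=\phi$ near $y_0$, $\phi_n(y_0)=u(y_0)$ for every $n$, $\phi_n\geq u$, $\phi_n\to u$ pointwise on $[0,\infty)\setminus\{y_0\}$, and $u\leq\phi_n\leq\psi_1$ for a fixed $\cG_\mu$-majorant $\psi_1$. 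Each $\phi_n$ is an admissible test function with $u-\phi_n$ having a global maximum at $y_0$ and $(\phi_n^\prime(y_0),\phi_n^{\prime\prime}(y_0))=(\phi^\prime(y_0),\phi^{\prime\prime}(y_0))$, so \Cref{def:viscositydef} applies to it; because $\phi_n(y_0)$ is constant in $n$, the increments $\phi_n(\cdot)-\phi_n(y_0)$ converge to $u(\cdot)-u(y_0)$ under a uniform $\cG_\mu$-bound, whence $\cJ^{\smalltext{\rm SB}}(y_0,\phi^\prime(y_0),\phi^{\prime\prime}(y_0),\phi_n(\cdot))\to\cJ^{\smalltext{\rm SB}}(y_0,\phi^\prime(y_0),\phi^{\prime\prime}(y_0),u(\cdot))$, and letting $n\to\infty$ in the inequality supplied by \Cref{def:viscositydef} gives \ref{noJets} at $y_0$.

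\textbf{Equivalence between \ref{noJets} and \ref{SubWithJets}.} In both formulations $\cJ^{\smalltext{\rm SB}}$ is evaluated at $u(\cdot)$, so the test function enters only through $(\phi^\prime(y),\phi^{\prime\prime}(y))$ and the extremality condition at $y$; this reduces the equivalence to the classical relation between test functions and the second-order superjet. Indeed, a global maximum of $u-\phi$ at $y$ forces $(\phi^\prime(y),\phi^{\prime\prime}(y))\in J^{2,+}u(y)$ by Taylor expansion, giving \ref{SubWithJets} $\Rightarrow$ \ref{noJets}. For the reverse implication, given $(p,q)\in J^{2,+}u(y)$ I would pick $\varphi\in C^2((0,\infty))$ with $\varphi(y)=u(y)$, $(\varphi^\prime(y),\varphi^{\prime\prime}(y))=(p,q)$ and $u-\varphi$ having a strict local maximum at $y$, then add to $\varphi$ a function in $C^2((0,\infty))\cap\cG_\mu$ that vanishes together with its first two derivatives near $y$ and dominates $(u-\varphi)^+$ elsewhere (available since $u-\varphi$ is upper--semi-continuous with $\cG_\mu$-growth), turning the local maximum into a global one while preserving $(p,q)$; then \ref{noJets} yields \eqref{align:subSemijets} at $y$, hence \ref{SubWithJets}.

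\textbf{Expected main obstacle.} The hard part will be the implication \Cref{def:viscositydef} $\Rightarrow$ \ref{noJets}, i.e.\ the approximation step: building smooth majorants of $u$ that agree with $\phi$ near $y_0$, respect the $\cG_\mu$-growth and keep $\phi_n(y_0)$ pinned at $u(y_0)$, and --- above all --- justifying that the supremum over the controls $(z^{\rm A},u^{\rm A})\in\mathfrak{V}_{y_0}$ and $(a,b)\in U^\star(z^{\rm A},u^{\rm A})$ commutes with the limit $\phi_n\to u$ inside the non-local term. It is precisely the order-zero character of $\cJ^{\smalltext{\rm SB}}$ from \Cref{remark:wellDefinedOperator} (a bounded, compactly supported $\Phi$), together with the control on the increments $\phi_n(\cdot)-\phi_n(y_0)$ and the uniform $\cG_\mu$-domination, that will make this exchange legitimate.
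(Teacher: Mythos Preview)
Your proposal is correct and takes essentially the same approach as the paper's proof. The only differences are organisational: the paper runs the approximation argument to establish \Cref{def:viscositydef} $\Rightarrow$ \ref{SubWithJets} rather than \Cref{def:viscositydef} $\Rightarrow$ \ref{noJets}, obtains its approximating sequence as a \emph{monotone} one $u\leq\phi_{n+1}\leq\phi_n\leq\phi_0$ via \cite[Theorem 1.20]{hollender2016levy} (so it invokes the monotone convergence theorem rather than your dominated convergence), and cites \cite[Lemma V.4.1]{fleming2006controlled} for the superjet-to-test-function construction that you sketch by hand.
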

\begin{remark}
As noted in {\rm\cite[Remark 2]{alvarez1996viscosity}}, in {\rm\Cref{lemma:definitionsViscSub}}.\ref{SubWithJets}, it is equivalent to require that a sub-solution $u$ satisfies \eqref{align:subSemijets} for any $(p,q) \in \bar{J}^{2,+}u(y)$, where $y \in (0,\infty)$.
\end{remark}

\begin{proof}
It is straightforward to verify that \Cref{lemma:definitionsViscSub}.\ref{SubWithJets} implies that $u$ is a viscosity sub-solution of \eqref{align:hjbSB} on $(0,\infty)$. Indeed, consider $(y, \phi) \in (0,  \infty) \times C^2((0,\infty))$ such that $\phi \in \cG_\mu $ and $u-\phi$ attains a global maximum at $y$. Consequently, by definition, it holds that $(\phi^\prime(y), \phi^{\prime\prime}(y)) \in J^{2,+}u(y)$. Furthermore, we require that $(u-\phi)(y) = 0$, and thus $u \leq \phi$ on $[0,\infty)$. Hence, the monotonicity of the integral operator allows us to conclude since
\begin{align*}
&\min\left\{u(y)-\bar{F}(y), F^{\star}(\delta \phi^\prime(y)) - \delta y \phi^\prime(y)+u(y) -\cJ^{\smalltext{\rm SB}}(y,\phi^\prime(y), \phi^{\prime\prime}(y),\phi(\cdot))\right\} \\
&\leq 
\min\left\{u(y)-\bar{F}(y), F^{\star}(\delta \phi^\prime(y)) - \delta y \phi^\prime(y)+u(y) -\cJ^{\smalltext{\rm SB}}(y,\phi^\prime(y), \phi^{\prime\prime}(y),u(\cdot))\right\} \leq 0.
\end{align*}

\medskip
Analogously, we can show that \Cref{lemma:definitionsViscSub}.\ref{SubWithJets} implies \Cref{lemma:definitionsViscSub}.\ref{noJets}. Conversely, let us assume \Cref{lemma:definitionsViscSub}.\ref{noJets} and fix some $y >0$ and $(p,q) \in J^{2,+}u(y)$. By referencing \citeauthor*{fleming2006controlled} \cite[Lemma V.4.1]{fleming2006controlled} (or equivalently \citep[Theorem 1.23]{hollender2016levy}), it follows that there exists a $C^2((0,\infty))$-function $\phi$ such that $(p,q) = (\phi^\prime(y), \phi^{\prime\prime}(y))$, and $u-\phi$ achieves its maximum at $y$. Moreover, we can also assume that $(u-\phi)(y) = 0$. This implies \Cref{lemma:definitionsViscSub}.\ref{SubWithJets}, and thus proves the desired implication.

\medskip
Finally, in order to show that \Cref{def:viscositydef} implies \Cref{lemma:definitionsViscSub}.\ref{SubWithJets}, we consider some $y >0$ and $(\tilde{p},\tilde{q}) \in J^{2,+}u(y)$. As before, we can derive the existence of a $C^2((0,\infty))$-function $\phi_0$ such that $(\tilde{p},\tilde{q}) = (\phi_0^\prime(y), \phi_0^{\prime\prime}(y))$, and $u-\phi_0$ achieves its maximum at $y$, where $(u-\phi)(y) = 0$. The result in \cite[Theorem 1.20]{hollender2016levy} allows us to select a sequence of $C^2((0,\infty))$-functions $(\phi_n)_{n \in \N}$ such that, for each $n \in \N$, $\phi_n \in \cG_\mu$ for some $\mu \geq 1$ and
\begin{align*}
    u(x) \leq \phi_{n + 1} (x) \leq \phi_n(x) \leq \phi_0(x) \; \text{for any} \; x >0.
\end{align*}
Moreover, it holds that $\lim_{n \rightarrow  \infty} \phi_n (x) = u (x)$ for any $x >0$. We can deduce that $u - \phi_n$ and $\phi_n - \phi_0$ also have global maxima at $y$. The latter maximum condition implies that $\phi_n^\prime(y) = \phi_0^\prime(y)= \tilde{p}$ and $\phi_n^{\prime\prime}(y) \leq  \phi_0^{\prime\prime}(y) = \tilde{q}$. Therefore, 
\begin{align*}
F^{\star}(\delta p(y)) - \delta y \tilde{p}(y) + u(y) -\cJ^{\smalltext{\rm SB}}(y,\tilde{p},\tilde{q},\phi_n(\cdot)) \leq F^{\star}(\delta \phi_n^\prime(y)) - \delta y \phi_n^\prime(y) + u(y) -\cJ^{\smalltext{\rm SB}}(y,\phi_n^\prime(y), \phi_n^{\prime\prime}(y),\phi_n(\cdot)) \leq 0.
\end{align*}
The following equality holds:
\begin{align*}
\int_{\R} \big(\phi_n(y + r u^{\rm A}(\ell)) - \phi_n(y) - r \tilde{p} u^{\rm A}(\ell) \big)  \Phi(\d \ell)  = \int_{\R} \big(\phi_n(y + r u^{\rm A}(\ell)) - u(y) - r \tilde{p} u^{\rm A}(\ell) \big)  \Phi(\d \ell),
\end{align*}
and thus the proof immediately follows from the the monotone convergence theorem, as it implies that
\begin{align*}
\int_{\R} \big(\phi_n(y + r u^{\rm A}(\ell)) - u(y) - r \tilde{p} u^{\rm A}(\ell) \big)  \Phi(\d \ell) \underset{n\to\infty}{\longrightarrow} \int_{\R} \big(u(y + r u^{\rm A}(\ell)) - u(y) - r \tilde{p} u^{\rm A}(\ell) \big)  \Phi(\d \ell).
\end{align*}
\end{proof}

\subsection{Characterisation of the solution to the contracting problem}\label{section:discussProblemHJB}

As already mentioned, the second-best contracting problem degenerates when $\gamma\delta \leq 1$. Specifically, we can show that the reward of the principal reaches its maximum $\bar{a} - \varepsilon_m$ by means of a sequence of admissible contracts that offer the agent small intermediate payments and promise him a large lump-sum payment at a large retirement time. These contracts are constructed in such a way that the agent exerts maximum effort throughout their extremely long duration. However, the large discrepancy between the discount rates of the agent and the principal ensures that even the utility of the agent reaches its maximum.

\medskip
The alternative scenario where $\gamma\delta >1$ is more interesting to analyse since the problem no longer degenerates. Nevertheless, it is quite challenging since the non-local nature of the Hamilton--Jacobi--Bellman equation \eqref{align:hjbSB} prevents us from replicating the regularity results presented in \citep{possamai2020there}. What we can still do is characterising the second-best value function $v^{\smalltext{\rm SB}}$ as the unique viscosity solution---within a specific class of functions---to the aforementioned equation by applying dynamic programming principle and some other standard arguments in control theory.

\begin{theorem}\label{thm:SecondtBestCompleteC}
\begin{enumerate}[label=$(\roman*)$, wide, labelindent=0pt]
\item \label{degSB}If $\gamma\delta \leq 1$, then $v^{\smalltext{\rm SB}} = \bar{a} - \varepsilon_m$ on $[0,\infty)$. Moreover, there is no admissible contract achieving this value.
\item \label{nodegSB}Let $\gamma\delta > 1$. The second-best value function $v^{\smalltext{\rm SB}}$ is the unique continuous viscosity solution of {\rm\Cref{align:hjbSB}} in the class of functions $v$ such that $|v(y) - \bar{F}(y)| \leq c^{\smalltext{\rm SB}}$ for any $y \geq 0$, for some $c^{\smalltext{\rm SB}}>0$. Additionally, $(v^{\smalltext{\rm SB}})^\prime(0) \leq 0$ if $\delta >1$.
\end{enumerate}
\end{theorem}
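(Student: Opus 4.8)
The plan is to handle the two regimes of \Cref{thm:SecondtBestCompleteC} separately. For \ref{degSB} I would first record the trivial upper bound: from the relaxed reward \eqref{eq:newVprincipal}, using $\bar{F}\leq 0$, $F\leq 0$ and $\alpha_s-\beta_s\leq\bar{a}-\varepsilon_m$, one gets $\bar{J}^{\rm P}(\bC,\nu)\leq\bar{a}-\varepsilon_m$ for every $(\bC,\nu)$, hence $v^{\smalltext{\rm SB}}\leq\bar{a}-\varepsilon_m$. For the reverse inequality I would mirror the construction in the proof of \cite[Theorem~3.1]{possamai2020there}: fix $y>0$; on a deterministic ``ramp-up'' window $[0,T_1]$ take $\pi=0$, $Z^{\rm A}=U^{\rm A}=0$, so that the agent exerts no effort and the continuation utility grows as $Y_t=y\mathrm{e}^{rt}$, reaching a high level; then on $[T_1,T_1+n]$ choose $(Z^{\rm A},U^{\rm A})$ so that $a=\bar{a}$ and $b=\varepsilon_m$ maximise the agent's Hamiltonian $h^{\rm A}$ (a large $z^{\rm A}$ for the drift part and $\int_{\R_{\smalltext{+}}}U^{\rm A}(\ell)\,\Phi(\d\ell)$ bounded away from $0$ for the jump part), keep $\pi=0$, and pay a lump-sum at the retirement time $T_1+n$ that keeps the agent's value equal to $y$; since $\gamma\delta\leq 1$, i.e.\ $\rho\geq\gamma r$, the growth bound \eqref{eq:growthF} makes the discounted cost of that lump-sum vanish as $n\to\infty$ (a polynomial correction to the payment schedule dealing with the boundary case $\gamma\delta=1$), so the principal's reward converges to $\bar{a}-\varepsilon_m$. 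The one genuinely new point compared with \cite{possamai2020there} is that incentivising the low accident rate $b=\varepsilon_m$ forces $\int_{\R_{\smalltext{+}}}U^{\rm A}(\ell)\,\Phi(\d\ell)$ to be strictly negative, which clashes with the limited-liability constraint $Y_{t-}+r\int_{\R_{\smalltext{+}}}U^{\rm A}(\ell)\,\Phi(\d\ell)\geq 0$ when $Y$ is small; I would circumvent this with a state-dependent rule that reverts temporarily to $b=m$ (i.e.\ $U^{\rm A}\approx 0$) whenever $Y$ drops near the danger zone and lets it regrow, the time spent with $\beta=m$ being negligible in the limit. Non-existence of an optimiser then follows because equality $v^{\smalltext{\rm SB}}(y)=\bar{a}-\varepsilon_m$ would force $\alpha=\bar{a}$, $\beta=\varepsilon_m$, $\pi=0$ $\d t\otimes\d\P$--a.e., $\bar{F}(Y_\tau)\mathbf{1}_{\{\tau<\infty\}}=0$ and $\tau=\infty$ $\P$--a.s.; but then the prescribed effort yields the agent value $-h(\bar{a},\varepsilon_m)<0$, whereas exerting no effort yields $0$, so $\nu\notin\cU^\star(\bC)$, a contradiction (and also the participation constraint fails).

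For \ref{nodegSB} the programme is the classical one: reduction, growth estimate, viscosity characterisation, comparison. The reduction \Cref{theorem:reductionSecondBest} already identifies $v^{\smalltext{\rm SB}}$ with the reduced mixed control--stopping problem \eqref{align: reducedProblem}. The bound $|v^{\smalltext{\rm SB}}(y)-\bar{F}(y)|\leq C$ I would obtain by sandwiching: taking $\tau=0$ gives $v^{\smalltext{\rm SB}}\geq\bar{F}$, while dropping the incentive-compatibility requirement shows $v^{\smalltext{\rm SB}}\leq v^{\smalltext{\rm FB}}$, and \Cref{lemma:growthfirstbest} — or, when $\delta=1$, the explicit expression of \Cref{thm:FirstBestCompleteC}.\ref{delta1_firstBest} together with the boundedness of $G^\star$ on $(-\infty,0]$ — gives $v^{\smalltext{\rm FB}}\leq\bar{F}+C$. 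Next I would establish a dynamic programming principle for \eqref{align: reducedProblem} and, via the standard arguments of stochastic control (exactly as in the proof of \Cref{thm:FirstBestCompleteC}.\ref{viscosityCFB}, and using the equivalent semi-jet formulations and monotone-convergence passage of \Cref{lemma:definitionsViscSub} to handle the non-local term), conclude that $v^{\smalltext{\rm SB}}$ is a discontinuous viscosity solution of \eqref{align:hjbSB} in the sense of \Cref{def:viscositydef}, with the obstacle $v\geq\bar{F}$ arising from the retirement option encoded in $\bar{F}$.

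The crux is the comparison principle for \eqref{align:hjbSB} in the class $\{v:\ |v-\bar{F}|\leq C\}$. Here I would note that $\cJ^{\smalltext{\rm SB}}$ is non-decreasing in the last argument $v(\cdot)$ and Lipschitz-continuous and non-decreasing in the second-derivative slot, and that $\Phi$ is a bounded measure with support in a compact subset of $(0,\infty)$, so the integral term is of order zero with no singularity; the non-local maximum principle of \cite{barles2008second}/\cite{hollender2016levy} (Jensen--Ishii lemma plus doubling of variables) can then be combined with the arguments of \cite[Lemma~B.1]{possamai2020there}. Two features need extra care: the state space is the half-line $[0,\infty)$ and the admissible controls $\mathfrak{V}_y$ depend on $y$ through the limited-liability constraint $ru^{\rm A}(\ell)\geq -y$ — this I would handle by a Tietze-type extension of the sub- and super-solutions (and of $\bar{F}$) to all of $\R$, as announced in the introduction, so that the comparison theory on $\R$ applies and is then restricted; and the behaviour at infinity, which is controlled precisely by the $\bar{F}$-relative growth class ($\bar{F}$ being of polynomial growth by \Cref{proposition:FBarGeneralDeltaBigger1}). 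Comparison then yields uniqueness in the stated class and, applied to the upper- and lower-semicontinuous envelopes of $v^{\smalltext{\rm SB}}$, its continuity. For the sign $(v^{\smalltext{\rm SB}})^\prime(0)\leq 0$ when $\delta>1$: one has $\bar{F}\leq v^{\smalltext{\rm SB}}\leq v^{\smalltext{\rm FB}}$ with $v^{\smalltext{\rm SB}}(0)=\bar{F}(0)=0$ and $\bar{F}^\prime(0)\leq 0$ by \Cref{proposition:FBarGeneralDeltaBigger1}; when $v^{\smalltext{\rm FB}}_0=0$ this pins $v^{\smalltext{\rm SB}}=\bar{F}$ on a right-neighbourhood of $0$ via \Cref{thm:FirstBestCompleteC}.\ref{startAt0}, and in general a strictly positive slope at $0$ is incompatible with the equation \eqref{align:hjbSB} there, since $F^\star\equiv 0$ on $[F^\prime(0),\infty)$ while $\cJ^{\smalltext{\rm SB}}(0,\cdot)$ is strictly positive, so the second term of the minimum would be strictly negative.

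I expect the comparison principle to be the main obstacle: reconciling the non-local operator, the half-line domain with its $y$-dependent limited-liability constraint, and the $\bar{F}$-relative growth class within a single maximum-principle argument is the delicate step. By contrast, the degenerate construction in \ref{degSB} is technically fiddly — the state-dependent handling of limited liability during the high-effort phase and the boundary case $\gamma\delta=1$ — but conceptually routine.
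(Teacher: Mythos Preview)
Your programme for \ref{nodegSB} is essentially the paper's: reduction via \Cref{theorem:reductionSecondBest}, the sandwich $\bar F\le v^{\smalltext{\rm SB}}\le v^{\smalltext{\rm FB}}$ together with \Cref{lemma:growthfirstbest} for the growth estimate, a DPP-based viscosity characterisation, and a comparison theorem on $[0,\infty)$ handled through a Tietze-type extension (the paper's \Cref{lemma:extViscositySolutions} and \Cref{proposition: comparisonSecondBest}). The one genuine discrepancy is your argument for $(v^{\smalltext{\rm SB}})^\prime(0)\le 0$ when $\delta>1$. Your ``general'' route---that a positive right-slope at $0$ contradicts the equation because $F^\star\equiv 0$ on $[F^\prime(0),\infty)$ while $\cJ^{\smalltext{\rm SB}}(0,\cdot)$ is strictly positive---is not solid: the equation \eqref{align:hjbSB} is posed on $(0,\infty)$, not at $0$; the sign of $\cJ^{\smalltext{\rm SB}}(y,p,q,v(\cdot))$ also depends on the second-derivative slot $q$ (a very negative $q$ can make the supremum non-positive); and at $y=0$ the constraint $r u^{\rm A}(\ell)\ge 0$ in $\mathfrak V_0$ severely restricts the jump control, so ``$\cJ^{\smalltext{\rm SB}}(0,\cdot)$ strictly positive'' is not automatic. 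The paper instead observes that the \emph{accident-free} second-best value $v^{\smalltext{\rm SB}}_0$ from \cite[Theorem~3.4]{possamai2020there} is a viscosity super-solution of \eqref{align:hjbSB}; comparison then gives $v^{\smalltext{\rm SB}}\le v^{\smalltext{\rm SB}}_0$ with equality at $0$, and $(v^{\smalltext{\rm SB}}_0)^\prime(0)\le 0$ forces $(v^{\smalltext{\rm SB}})^\prime(0)\le 0$.

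For \ref{degSB} your overall strategy is reasonable but the construction differs from the paper's, and your version is technically more fragile. The paper does \emph{not} use a ramp-up phase followed by a state-dependent switch of $b$ near the limited-liability boundary. Instead it takes constant $z^{\rm A}$, $u^{\rm A}$ with $U^\star(z^{\rm A},u^{\rm A})=\{(\bar a,\varepsilon_m)\}$, a small positive payment $\pi^\varepsilon=u^{-1}(\varepsilon Y^\varepsilon)$, and starts the continuation utility at the large level $y_0/\sqrt\varepsilon$. The resulting $Y^\varepsilon$ is an explicit spectrally negative L\'evy-type process, and fluctuation theory (\cite[Theorem~8.1]{kyprianou2014fluctuations}, \cite[Lemma~3.3]{kyprianou2013theory}) shows $\P^{\bar u}[T_0^\varepsilon<\infty]\to 0$; combined with moment estimates on the compensated Poisson integral this makes both the terminal cost $\E[\mathrm e^{-\rho\tau^\varepsilon}\bar F(Y^\varepsilon_{\tau^\varepsilon})]$ and the running cost $\E[\int_0^{\tau^\varepsilon}\rho\mathrm e^{-\rho s}F(\varepsilon Y^\varepsilon_s)\,\d s]$ vanish, so the principal's reward converges to $\bar a-\varepsilon_m$. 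Your alternative---reverting temporarily to $b=m$ when $Y$ nears zero and claiming the time spent there is negligible---would require a separate quantitative argument about excursions of a reflected/switched process, which is exactly what the L\'evy scale-function computation in the paper replaces. Your non-existence argument is fine and in the spirit of \Cref{remark:v0FirstBest}.\ref{notattainablevalue}.
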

The proof of \Cref{thm:SecondtBestCompleteC}.\ref{degSB} is given in \Cref{degeneracySeconBest}.
\begin{proof}[Proof of \Cref{thm:SecondtBestCompleteC}.\ref{nodegSB}]
The result is obtained in two main steps. First, the fact that the second-best value function $v^{\smalltext{\rm SB}}$ is a viscosity solution of \Cref{align:hjbSB} is an immediate consequence of some standard arguments in control theory (see for instance \cite{oksendal2007applied}). Second, \Cref{thm:FirstBestCompleteC} implies that $|v^{\smalltext{\rm SB}}(y)- \bar{F}(y)| \leq c^{\smalltext{\rm SB}} \; \text{for any} \; y \in [0,\infty), \; \text{for some} \; c^{\smalltext{\rm SB}}>0,$ given the fact that the first-best value function $v^{\smalltext{\rm FB}}$ is an upper bound for $v^{\smalltext{\rm SB}}$ on their domain of definition $[0,\infty)$. In \Cref{compResultapp}, we prove a comparison result for the integro-differential variational inequality \eqref{align:hjbSB} that immediately allows us to deduce that the second-best value function $v^{\smalltext{\rm SB}}$ is the unique viscosity solution of \Cref{align:hjbSB}, and that it is continuous. Lastly, if $\delta>1$, we can prove that $(v^{\smalltext{\rm SB}})^\prime(0) \leq 0$, where $(v^{\smalltext{\rm SB}})^\prime(0)$ exists as the right derivative of a continuous function. This is an immediate consequence of \cite[Theorem 3.4]{possamai2020there} as the second-best value function in the accident-free setting is a viscosity super-solution of our Hamilton--Jacobi--Bellman equation \eqref{align:hjbSB}.
\end{proof}

Unfortunately, we are unable to replicate the regularity results proved in the reference paper without accidents. Specifically, the fact that $\cJ^{\smalltext{\rm SB}}$ maximises over the set of controls $\mathfrak{V}_y$ that depend on the considered point $y \geq 0$ prevents us from showing that whenever the second-best value function $v^{\smalltext{\rm SB}}$ intersects its barrier $\bar{F}$, then it coincides with it forever (this is not possible even under the assumption of \cite[Lemma 8.1]{possamai2020there} that $F^\prime$ is concave). Consequently, we cannot characterise the stopping region $\{ v^{\smalltext{\rm SB}} = \bar{F} \}$ of the contracting problem. Moreover, the introduction of accidents poses another challenge stemming from the definition of the operator $\cJ^{\smalltext{\rm SB}}$. The latter is described as the sum of two components: one related to the effort $a$ on the drift of the output process and the other related to the effort $b$ on the intensity of accidents. For the sake of simplicity, let us assume that the cost function is separable, specifically $h(a,b) = h_a(a) + h_b(b)$, for $(a,b) \in A \times B$. As a result,
\begin{align}\label{eq:operatorsecondbestSEP}
\cJ^{\smalltext{\rm SB}}(y,p,q, v(\cdot)) = \big(\cJ^{\smalltext{\rm SB} \smalltext{:} \smalltext{a}}(p,q)  + \cJ^{\smalltext{\rm SB} \smalltext{:} \smalltext{b}}(y,p, v(\cdot))\big)  \mathbf{1}_{(-\infty,0]}(q) + \infty  \mathbf{1}_{(0,\infty)}(q) \; \text{for any} \; (y,p,q, v(\cdot)) \in \R_+ \times \R \times \R \times \cB_{\R},
\end{align}
where 
\begin{align*}
\cJ^{\smalltext{\rm SB} \smalltext{:} \smalltext{a}}(p,q) &\coloneqq  \sup_{z^{\smalltext{\rm A}}\in \R}\sup_{a \in A^\smalltext{\star}(z^{\smalltext{\rm A}})} \bigg\{ a + \delta h_a(a) p + \frac{r\delta \sigma^2}{2} (z^{\rm A})^2 q \bigg\} \; \text{and} \\
\cJ^{\smalltext{\rm SB} \smalltext{:} \smalltext{b}}(y,p, v(\cdot)) &\coloneqq \sup_{\substack{ u^{\smalltext{\rm A}}\in  \cB_\R \; \text{s.t.} \\ \text{conditions \eqref{equation:minPlusIntCond} are satisfied}}} \sup_{b \in B^\smalltext{\star}(u^{\smalltext{\rm A}})} \bigg\{-b + h_b(b) p + \frac{b}{m \rho} \int_{\R} \big(v(y + r u^{\rm A}(\ell)) - v(y) - r p u^{\rm A}(\ell) \big)  \Phi(\d \ell) \bigg\}.
\end{align*}
Hence, it is evident that $\cJ^{\smalltext{\rm SB} \smalltext{:} \smalltext{a}} \geq 0$ and $\cJ^{\smalltext{\rm SB} \smalltext{:} \smalltext{b}} \geq -m$. To analyse the differences with the framework without accidents, we consider an open interval $\cI \subset (0,\infty)$ where the second-best value function $v^{\smalltext{\rm SB}}$ does not coincide with its barrier $\bar{F}$, and thus solves the ODE that characterises the variational inequality \eqref{align:hjbSB}. Two different scenarii can occur:

\begin{enumerate}[label=$(\roman*)$, wide, labelindent=0pt]
\item if the operator $\cJ^{\smalltext{\rm SB} \smalltext{:} \smalltext{a}}$ is positive on $\cI$, then it is possible to demonstrate that $v^{\smalltext{\rm SB}}$ is twice continuously differentiable on $\cI$. Indeed, similarly to Step 2 in the proof of \cite[Lemma 8.1]{possamai2020there}, we can show that the aforementioned ODE is uniformly elliptic and consequently admits a classical solution $w$ (unique if initial conditions are provided). This is because \Cref{thm:SecondtBestCompleteC}.\ref{nodegSB} implies that $v^{\smalltext{\rm SB}}$ is continuous, and therefore the ODE can be rewritten as \begin{align}\label{align:newEqSB}
F^{\star}(\delta w^\prime(y)) - \delta y w^\prime(y) + w(y) -\cJ\big(y,w^\prime(y), w^{\prime\prime}(y))=0, \; y \in \cI,
\end{align}
where
\begin{align}\label{eq:operatorsecondbestNEW}
\cJ(y,p,q) \coloneqq \big(\cJ^{\smalltext{\rm SB} \smalltext{:} \smalltext{a}}(p,q) + \cJ^{\smalltext{\rm SB} \smalltext{:} \smalltext{b}}(y,p, v^{\smalltext{\rm SB}}(\cdot)) \big) \mathbf{1}_{(-\infty,0]}(q) + \infty  \mathbf{1}_{(0,\infty)}(q), \; (p,q) \in \R^2.
\end{align}
We argue that the solution $w$ coincides with $v^{\smalltext{\rm SB}}$ on $\cI$ since the comparison theorem \cite[Lemma B.1]{possamai2020there} can still be applied to \Cref{align:newEqSB}. However, we are unable to find sufficient conditions that guarantee that the operator $\cJ^{\smalltext{\rm SB} \smalltext{:} \smalltext{a}}$ is positive. For example, in the accident-free framework with $\delta = 1$, the fact that the value function is necessarily strictly concave implies that $\cJ^{\smalltext{\rm SB} \smalltext{:} \smalltext{a}}$ is positive in a right-neighbourhood of zero. In the case with accidents, this condition only implies that the operator $\cJ$ is positive, and this is compatible with having $\cJ^{\smalltext{\rm SB} \smalltext{:} \smalltext{a}}(p,q) = 0$ and $\cJ^{\smalltext{\rm SB} \smalltext{:} \smalltext{b}} (y,p,v(\cdot))> 0$ if the first derivative $p$ is positive for any $(y,q,v) \in \R_+ \times \R \times \cB_{\R}$;
\item if the operator $\cJ^{\smalltext{\rm SB} \smalltext{:} \smalltext{a}}$ vanishes on $\cI$, this does not necessarily imply that $\cJ = -m$ on $\cI$, and thus the second-best value function $v^{\smalltext{\rm SB}}$ might solve a different equation from the one that characterises its barrier $\bar{F}$. This is a pure non-linear first-order integro-differential equation. This possibility can never occur in the problem without accidents since whenever $\cJ^{\smalltext{\rm SB} \smalltext{:} \smalltext{a}} = 0$, then the value function touches $\bar{F}$ or it solves the same equation but with a different initial condition. 
\end{enumerate}

These considerations lead us to believe that it is unlikely for our second-best value function $v^{\smalltext{\rm SB}}$ to be more regular than continuously differentiable, especially when considering that \Cref{thm:FirstBestCompleteC} proves that its upper bound $v^{\smalltext{\rm PB}}$ is only continuously differentiable when $\delta = 1$.

\begin{remark}
\begin{enumerate}[label=$(\roman*)$, wide, labelindent=0pt]
\item It is well-known that the existence of an optimal control for the reduced mixed control--stopping problem \eqref{align: reducedProblem}, and thus of an optimal contract for the principal--agent problem, is linked to the existence of a classical smooth solution to the corresponding Hamilton--Jacobi--Bellman equation \eqref{align:hjbSB}. This connection is evident in {\rm\cite[Theorem 3.4]{possamai2020there}}, where the smoothness of the solution to the Hamilton–Jacobi–Bellman equation is essential for deducing the existence of an optimal contract. Since we have been unable to prove that the second-best value function $v^{\smalltext{\rm SB}}$ is smooth enough due to the complexity of the second-order integro-differential equation characterising the problem, we are unable to deduce the existence of an optimal contract $\bC^\star$. Nonetheless, if an optimal contract $\bC^\star$ exists, it has to be of the form
\begin{align*}
\bC^\star \coloneqq \big(\tau^\star, \pi^\star, u^{(-1)}(Y^{Y^{\smalltext{\rm A},\star}_\smalltext{0}{,}Z^{\smalltext{\rm A},\star}{,}U^{\smalltext{\rm A},\star},\pi^\star}_\tau)\big),
\end{align*}
for some $(\tau^\star, \pi^\star, Z^{\rm A,\star}, U^{\rm A,\star}) \in \cV^{\smalltext{\rm S}}(Y^{\rm A,\star}_0)$, where $Y^{\rm A,\star}_0 \geq U(R)$ is such that $\sup_{Y^{\smalltext{\rm A}}_\smalltext{0} \geq u(R)} \bar{V}^{\rm P}(Y^{\rm A}_0) =  \bar{V}^{\rm P}(Y^{\rm A,\star}_0)$ in {\rm\Cref{theorem:reductionSecondBest}}. Moreover, the agent's optimal effort is determined by the maximiser of his Hamiltonian, i.e. $\nu^\star\in \cU_{\tau^\star}^{\star}(Z^{{\rm A,\star}},U^{{\rm A,\star}})$.
\item A possible approach to the existence of optimal controls found in the literature is to allow the agent to use measure-valued controls, although this would necessitate studying a different and more general principal-agent model, see {\rm\citeauthor*{krvsek2023randomisation} \cite{krvsek2023randomisation}}. An alternative method could involve constructing approximate optimal contracts starting from an approximate problem as in {\rm\citeauthor*{jakobsen2008error} \cite{jakobsen2008error}}.
\end{enumerate}
\end{remark}

\subsection{Numerical result}

Given the impossibility of explicitly characterising the second-best value function $v^{\smalltext{\rm SB}}$, it is necessary to perform some numerical simulations to illustrate its behaviour and provide insights into its properties. For computational tractability, we simplify the model by assuming that accidents have a constant size, specifically equal to $m$. Consequently, the distribution function $\Phi$ of each accident becomes a Dirac measure. This simplification facilitates the direct calculation of the maximisers of the Hamiltonian in \eqref{align:defhA} and, as a result, enables the use of a finite difference approximation to solve the Hamilton--Jacobi--Bellman equation \eqref{align:hjbSB}. To draw a comparison with the case analysed by \cite[Figure 1]{sannikov2008continuous}, later revisited by \cite[Example 3.10]{possamai2020there}, we consider the scenario $\delta=1$ within the specific positive power utility setting with separable cost
\begin{align*}
h(a,b) \coloneqq h_a(a) + h_b(b) \coloneqq \bigg(\frac{a^2}{2} + \frac{2 a}{5}\bigg) + \bigg(\frac{1}{b} - \frac{1}{m}\bigg), \;\text{\rm where} \; (a,b) \in A \times B \coloneqq [0,\bar{a}] \times [\varepsilon_m,m].
\end{align*}
We set the parameters as follows: $\gamma = 2$, $\bar{a} = 4.6$, $\varepsilon_m = 0.1$ and $m=0.2$, along with $r= 0.1 = \rho$ and $\sigma = 1$. Note that despite considering the set $A$ as bounded, fixing this bound to $\bar{a} = 4.6$ enables us to illustrate \citeauthor*{sannikov2008continuous}’s contracting problem within the context of accidents, even though \citeauthor*{sannikov2008continuous}’s problem is characterised by an unbounded $A$. This is because this parameter choice ensures that the optimal effort $a^\star$ remains bounded by $\bar{a}=4.6$. 

\begin{figure}[h!tbp]
  \centering
  \includegraphics[width=0.5\textwidth]{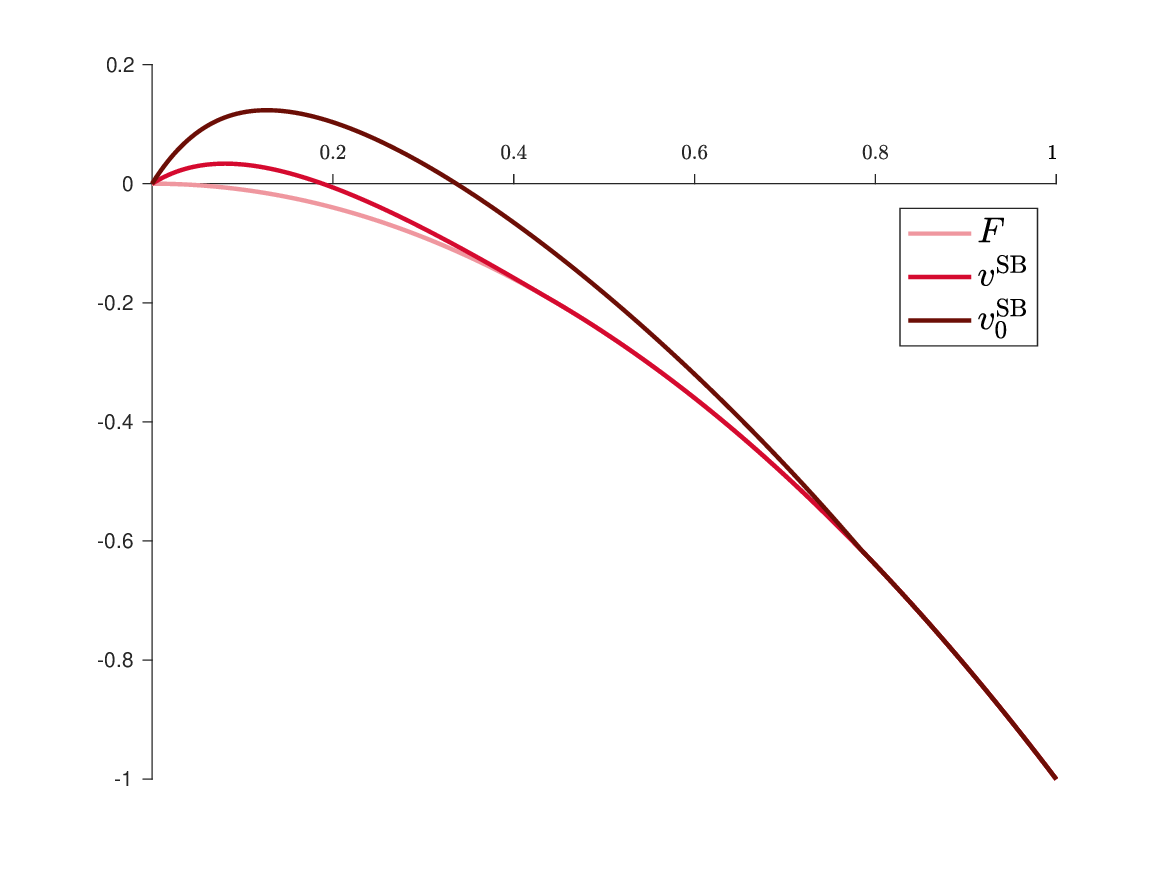}
  \caption{the second-best value function $v^{\smalltext{\rm SB}}$, its barrier $F$ and the accident-free second-best value function $v^{\smalltext{\rm SB}}_0$}
  \label{fig:deltaE1_secondBest}
\end{figure}

\Cref{fig:deltaE1_secondBest} illustrates the second-best value function $v^{\smalltext{\rm SB}}$ and its barrier $F$, along with the second-best value function within the accident-free framework $v^{\smalltext{\rm SB}}_0$. The latter obviously represents an upper bound for $v^{\smalltext{\rm SB}}$ as the criterion of the principal is higher in the absence of accidents. While these two functions do not coincide over the entire half-line, it is evident that the features defining $v^{\smalltext{\rm SB}}_0$ are transferred into the description of $v^{\smalltext{\rm SB}}$. In other words, this parameter choice leads us to infer that the agent with a small reservation utility benefits from an informational rent because the principal optimally offers him a contract that gives utility strictly higher than his participation value. Furthermore, we can notice that the second-best value function $v^{\smalltext{\rm SB}}$ does not coincide with its barrier ${F}$ in a neighbourhood of zero, but then becomes equal to it, indicating the existence of a golden parachute. This suggests that if the agent's reservation utility is higher than the point where these functions intersect, the contract terminates immediately. In such a scenario, the agent exerts no effort but receives a positive lump-sum compensation since we are considering the case where both parties are equally impatient, resulting in $\bar{F} = F$.

\medskip
To better understand how the second-best value function $v^{\smalltext{\rm SB}}$ is affected by the accidents' size, we compute this function for different values of $m$ while keeping the other parameters unchanged. \Cref{fig:secondBestDifferentValuesM} clearly shows the convergence of $v^{\smalltext{\rm SB}}$ to the second-best value function within the accident-free framework $v^{\smalltext{\rm SB}}_0$ as $m$ decreases to 0. Furthermore, it is evident that as $m$ increases, the value function $v^{\smalltext{\rm SB}}$ reaches the barrier $F$ sooner. This happens because, as $m$ becomes large, the size of potential accidents grows, leading the principal to decide earlier to terminate the contract due to her increased fear. Consequently, we can deduce that accidents significantly influence the second-best value function $v^{\smalltext{\rm SB}}$ when the agent's reservation utility is not too high. This occurs because in this case, the principal prefers to terminate the contract earlier than in the framework with fewer or no accidents. This preference arises from the increased cost associated with potential accidents, which exceeds the cost of offering the agent an immediate lump-sum compensation. This phenomenon is clearly visible in the zoomed-in box.

\begin{figure}[h!tbp]
  \centering
  \includegraphics[width=0.5\textwidth]{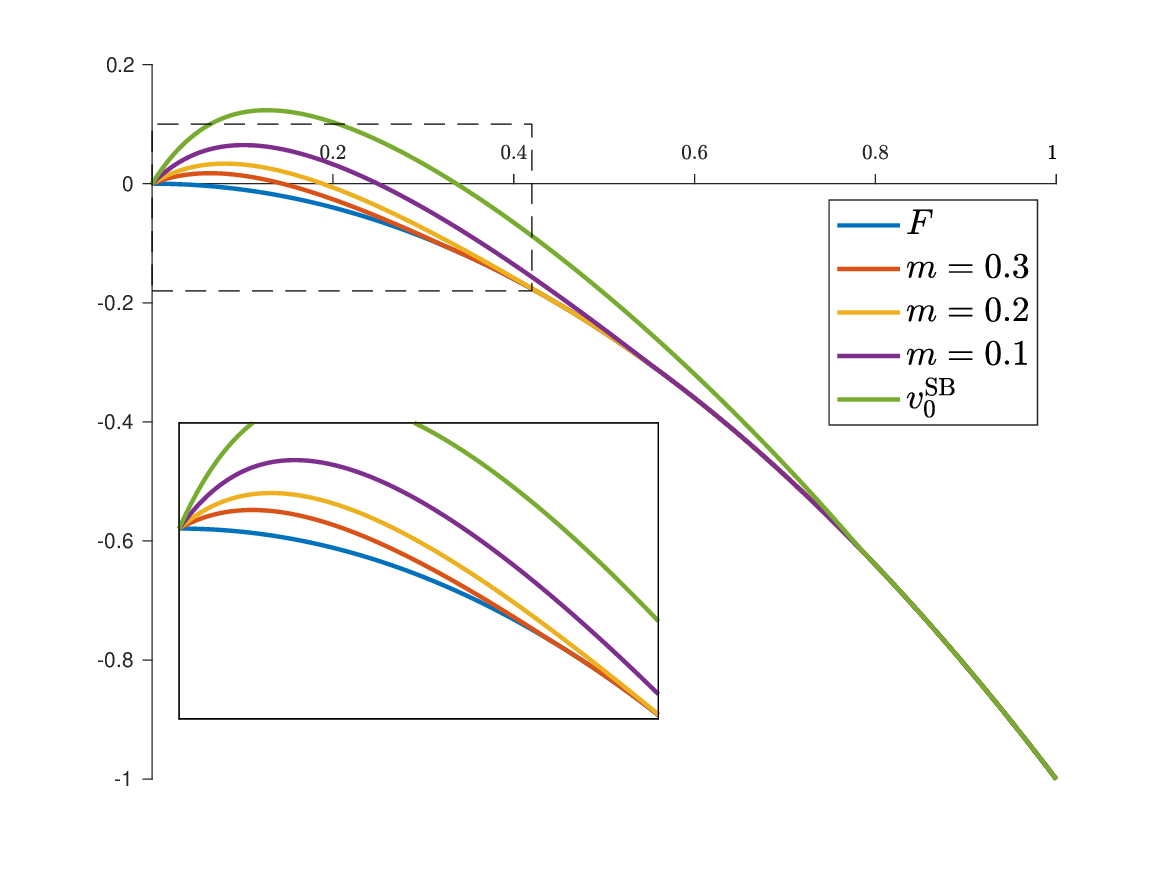}
  \caption{the second-best value function $v^{\smalltext{\rm SB}}$ for different values of $m$}
  \label{fig:secondBestDifferentValuesM}
\end{figure}

To conclude the analysis, we present in \Cref{table:maxAndLoss} the maximum values assumed by the functions $v^{\smalltext{\rm SB}}_0$ and $v^{\smalltext{\rm SB}}$ for different values of $m$. This helps us to better understand the loss in utility for the principal due to the presence of accidents. It is evident that as $m$ increases, the relative loss increases significantly. This phenomenon occurs because the larger the potential loss the principal could face, the more inclined she is to avoid risks and terminate the contract by compensating the agent with a lump-sum payment. It is worth noting that seemingly small values of $m$ can result in significant losses, approaching nearly 50\%, as seen in the case of $m=0.1$. 

\begin{table}[ht] 
\centering
\begin{tabular}[t]{lcccc}
\hline
&$v^{\smalltext{\rm SB}}_0$ & $m = 0.1$ & $m = 0.2$ & $m = 0.3$\\[0.5ex] 
\hline
maximum value & 0.1234 & 0.0648 & 0.0336 & 0.0176\\
relative loss & --- & 47.48\% & 72.75\% & 85.75\% \\
\hline
\end{tabular}
\caption{maximum value and relative loss for different values of $m$}\label{table:maxAndLoss}
\end{table}

To demonstrate that $m=0.1$ indeed represents a small value concerning potential accidents, we compare it against the average value assumed by the project's value $X$ at the termination of the contract. Specifically, we simulate several paths of the project's value $X$, whose dynamics under the optimal effort $\nu^\star = (\alpha^\star, \beta^\star)$ is given by
\begin{equation*}
X_{t} = x_0 + \int_0^{t} \alpha^\star_s \d s + \sigma W^{\nu^\star}_{t} - mN_{t}, \; \text{\rm for} \; t \geq 0,
\end{equation*}
where the intensity of the Poisson process $N$ is $\int_0^\cdot (\beta^\star_s/m) \d s$. We consider $x_0 = 0$. Additionally, the optimal control $\nu^\star$ is associated with the continuation value of the agent $Y^{Y^{\smalltext{\rm A}}_\smalltext{0},\hat{Z}^{\smalltext{\rm A}},\hat{U}^{\smalltext{\rm A}},\hat{\pi}}$, whose dynamics is given in \Cref{align:defYA}. More precisely, $\hat{Z}^{{\rm A}}_t$ and $\hat{U}^{{\rm A}}_t$ are the maximisers of the operator $\cJ^{\smalltext{\rm SB}}\big(Y^{{Y}^{\smalltext{\rm A}}_\smalltext{0},{\hat{Z}}^{\smalltext{\rm A}},{\hat{U}}^{\smalltext{\rm A}},{\hat{\pi}}}_t,(v^{\smalltext{\rm SB}})^\prime(Y^{{Y}^{\smalltext{\rm A}}_\smalltext{0},{\hat{Z}}^{\smalltext{\rm A}},{\hat{U}}^{\smalltext{\rm A}},{\hat{\pi}}}_t), (v^{\smalltext{\rm SB}})^{\prime\prime}(Y^{{Y}^{\smalltext{\rm A}}_\smalltext{0},{\hat{Z}}^{\smalltext{\rm A}},{\hat{U}}^{\smalltext{\rm A}},{\hat{\pi}}}_t),v^{\smalltext{\rm SB}}(\cdot))$, with associated optimal effort $\nu^\star_t = (\alpha^\star_t, \beta^\star_t) \in U^{\star}(\hat{Z}^{{\rm A}}_t, \hat{U}^{{\rm A}}_t)$, and $\hat{\pi}_t \coloneqq (F^\star)^\prime((v^{\smalltext{\rm SB}})^\prime(Y^{Y^{\smalltext{\rm A}}_\smalltext{0},\hat{Z}^{\smalltext{\rm A}},\hat{U}^{\smalltext{\rm A}},\hat{\pi}}_t))$. The termination of the contract then occurs at 
\begin{equation*}
\tau^\star \coloneqq \inf\Big\{ t \geq 0: \; v^{\smalltext{\rm SB}}\big(Y^{Y^{\smalltext{\rm A}}_\smalltext{0},\hat{Z}^{\smalltext{\rm A}},\hat{U}^{\smalltext{\rm A}},\hat{\pi}}_t\big) = F\big(Y^{Y^{\smalltext{\rm A}}_\smalltext{0},\hat{Z}^{\smalltext{\rm A}},\hat{U}^{\smalltext{\rm A}},\hat{\pi}}_t\big) \Big\}.
\end{equation*}
Since $Y^{\rm A}_0 = V^{\rm A}$ (as explained in \Cref{remark:sdeContUtilityAgent}.\ref{y0}), and \Cref{fig:secondBestDifferentValuesM} indicates that the maximiser is $0.1011$ for $m =0.1$, we set $Y^{\rm A}_0 = 0.11$. \Cref{fig:trajectoriesX} illustrates several trajectories of the stopped project's value $X^{\tau^\smalltext{\star}}$ under the optimal effort $\nu^\star$ on the time horizon $[0,20]$, along with its average value. We select this time interval because less than 5\% of the trajectories fail to reach the stopping region by that time, making the graph statistically significant. We can infer that the average value of $X$ at the time $\tau^\star$ is slightly below 4, indicating that $0.1$ is indeed small in comparison.

\begin{figure}[h!tbp]
  \centering
  \includegraphics[width=0.5\textwidth]{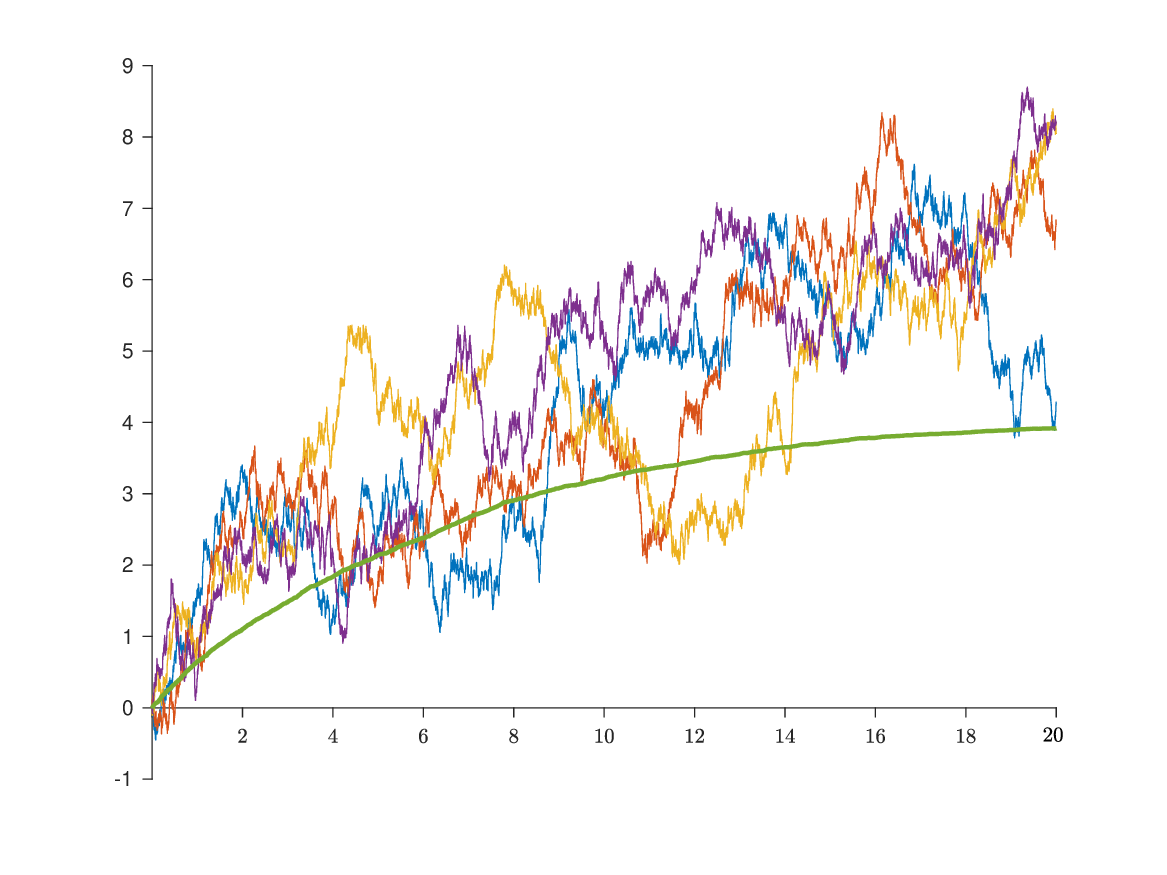}
  \caption{several trajectories depicting the project's value $X$, under the optimal effort $\nu^\star$, and its average value}
  \label{fig:trajectoriesX}
\end{figure}

{\small
\bibliography{BibliographyDylan}}

\begin{appendix}

\section{Representation of the face-lifted utility} \label{appendix:face-lifted utility}
In this section, our objective is twofold: first, to deduce the form of the face-lifted utility $\bar{F}$ through an analysis of the Hamilton--Jacobi equation \eqref{align:hjFbar}, and second, to establish that the conjectured function is indeed given by the mixed control--stopping problem. This is accomplished by means of a verification argument when the candidate solution exhibits sufficient regularity, or by invoking the dynamic programming principle in case where this regularity condition is not met.

\subsection{An impatient agent and small average accident size}
We assume that $\delta > 1$ and $m \leq - F^\star(\delta F^\prime(0))$. The latter assumption implies that point $\bar{y}$ introduced in \eqref{eq:ybar} is zero, and thus leads to the conclusion that the function $F$ cannot be a solution of the Hamilton--Jacobi equation \eqref{align:hjFbar}. Hence, we focus our attention to the study of the following ODE:
\begin{align}\label{eq:wode}
F^\star\big(\delta w^\prime(y)\big) - \delta yw^\prime(y) + w(y) + m = 0,\; y\in (0,\infty), \; w(0) = 0.
\end{align}
In what follows, we adopt the same approach as in \cite[Appendix A]{possamai2020there} to show that the utility $\bar{F}$ is a strictly concave solution to the previously introduced ODE, and it never touches the barrier $F$.

\begin{lemma}\label{lemma:derivative}
If $w$ is a strictly concave continuously differentiable solution of {\rm \Cref{eq:wode}}, then $w^\prime(0) = f_{\delta m}$, where  $f_{\delta m} \coloneqq (F^\star)^{(-1)}(-m)/\delta$.
\end{lemma}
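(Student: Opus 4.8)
The plan is to read off the value of $w^\prime(0)$ by letting $y\downarrow 0$ in the ODE \eqref{eq:wode}. Since $w$ is continuously differentiable on $[0,\infty)$ with $w(0)=0$, both $w(y)$ and $\delta y\, w^\prime(y)$ vanish as $y\downarrow 0$ (the latter because $w^\prime(0)$ is finite), while the continuity of the real-valued concave function $F^\star$ gives $F^\star(\delta w^\prime(y))\to F^\star(\delta w^\prime(0))$. Passing to the limit in \eqref{eq:wode} therefore yields the single scalar identity $F^\star(\delta w^\prime(0)) = -m$.

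It then remains to invert $F^\star$ at $-m$, and the only mild subtlety is that $F^\star$ is not globally injective: it is identically $0$ on $[F^\prime(0),\infty)$. This is exactly where the strict positivity $m>0$ enters: since $F^\star(\delta w^\prime(0)) = -m<0$, the argument $\delta w^\prime(0)$ must lie in $(-\infty,F^\prime(0))$, the set on which $F^\star$ is strictly negative. On that interval the infimum defining $F^\star(p)=\inf_{y\ge 0}\{yp-F(y)\}$ is attained at the unique point $(F^\prime)^{-1}(p)>0$ — uniqueness and positivity following from the strict concavity of $F$ together with $p<F^\prime(0)$ — so by a standard envelope argument $(F^\star)^\prime(p) = (F^\prime)^{-1}(p)>0$; hence $F^\star$ is strictly increasing, in particular injective, on $(-\infty,F^\prime(0))$. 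Applying the inverse gives $\delta w^\prime(0) = (F^\star)^{-1}(-m)$, that is $w^\prime(0) = (F^\star)^{-1}(-m)/\delta = f_{\delta m}$, which also shows a posteriori that $f_{\delta m}$ is well defined.

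I do not expect a genuine obstacle here; the points that need a little care are the finiteness of $w^\prime(0)$ — immediate from the $C^1$ hypothesis — and the identification of the correct branch of $F^\star$ on which to invert, settled as above by the sign of $-m$. Note that strict concavity of $w$ itself plays no role in this particular lemma: only the regularity $w\in C^1$ and the boundary value $w(0)=0$ are used.
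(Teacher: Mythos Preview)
Your argument for the case where $w'(0)$ is finite is correct and coincides with the paper's: take $y\downarrow 0$ in the ODE and invert $F^\star$ on the branch $(-\infty,F'(0))$ where it is strictly increasing. Your additional care in identifying the invertible branch is fine.

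The gap is your assumption that finiteness of $w'(0)$ is ``immediate from the $C^1$ hypothesis''. The paper reads ``continuously differentiable solution of \eqref{eq:wode}'' as $C^1$ on the open interval $(0,\infty)$ only---the ODE is posed on $(0,\infty)$ with the boundary datum $w(0)=0$---so the right-derivative $w'(0)\coloneqq\lim_{y\downarrow 0}w'(y)$ may a priori be $+\infty$. The paper treats this as a genuine second case: strict concavity forces $w'$ to be decreasing, hence if the limit is not finite it must equal $+\infty$; then for $y$ small one has $\delta w'(y)\geq F'(0)$, so $F^\star(\delta w'(y))=0$ and the ODE reduces to $-\delta y\,w'(y)+w(y)+m=0$, whose explicit solution on a right-neighbourhood of $0$ is shown to blow up, contradicting $w(0)=0$. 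This is precisely where strict concavity enters, so your closing remark that it ``plays no role in this particular lemma'' is wrong under the paper's intended reading. If you want to keep your short proof, you would need to strengthen the hypothesis to $w\in C^1([0,\infty))$; otherwise you must add the case $w'(0)=+\infty$ and exclude it as the paper does.
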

\begin{proof}
If the right-derivative $w^{\prime}(0)$ is finite, the result follows by simply computing the limit for $y$ going to $0$ in \eqref{eq:wode}. If we rather assume that $w^{\prime}(0)$ is not finite, specifically, $w^{\prime}(0) = \infty$ given the strict concavity of the function $w$, then there exists some $\varepsilon > 0$ such that $w >0$ and $F^\star(\delta w^\prime) = 0$ on $(0,\varepsilon)$. Consequently, the following equation is satisfied:
\begin{align*}
- \delta yw^{\prime}(y) + w(y) + m = 0, \; y \in (0,\varepsilon).
\end{align*}
For any arbitrary $y_0 \in (0,\varepsilon)$, the previous equation implies that 
\begin{align*}
w(y) = \frac{w(y_0) + m}{y_0^{\frac{1}{\delta}}} y^{\frac{1}{\delta}} - m \; \text{for any} \; y \in (y_0,\varepsilon).
\end{align*}
Given that $w(0) =0$, we deduce that $\lim_{y_0 \rightarrow 0^\smalltext{+}} (w(y_0) + m)y_0^{-\frac{1}{\delta}} = \infty$. Then, $w = \infty$ on the interval $[0,\eps)$, which contradicts the initial condition.
\end{proof}

Let us introduce the notation $w$ for a strictly concave and continuously differentiable solution to the ODE given in {\rm \Cref{eq:wode}}. Our goal is to derive the ODE that its concave conjugate $w^\star$ satisfies in order to deduce some properties that characterise $w^\star$, and consequently the function $w$ by standard convex duality. According to \citeauthor*{alvarez1997convex} \cite[Proposition 5]{alvarez1997convex}, the ODE that $w^\star$ satisfies is
\begin{align}\label{eq:wStarode}
F^\star(\delta p) + (1-\delta) p (w^\star)^\prime(p) - w^\star(p) + m = 0, \; p\in (-\infty, f_{\delta m}), \; w^\star(f_{\delta m}) = 0.
\end{align}
We can express the previous ODE in a linear form. As a result, based on, for instance, \citeauthor*{ahmad2015textbook} \cite[Corollary 2.2.11]{ahmad2015textbook}, we can uniquely determine the solution $w^\star$ by the following expression:  
\begin{align}\label{align:wstarexp}
w^\star(p) = \frac{(-p)^{\frac{1}{1-\delta}}}{\delta-1} \int_p^{f_{\smalltext{\delta}\smalltext{m}}} \frac{F^\star(\delta x)}{(-x)^{1+\frac{1}{1-\delta}}} \d x + m \bigg(1- \bigg(\frac{p}{f_{\delta m}}\bigg)^{\frac{1}{1-\delta}}\bigg), \; \text{for} \; p \leq f_{\delta m}.
\end{align}

\begin{proposition}\label{proposition:propWStar}
It holds that $F^\star(p) \geq w^\star(p)$ for any $p \leq f_{\delta m}$. Moreover, $w^\star$ is strictly concave and increasing on $(-\infty, f_{\delta m})$.
\end{proposition}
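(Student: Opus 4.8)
The plan is to exploit the fact that $w^\star$ solves the \emph{linear} first-order ODE \eqref{eq:wStarode} with datum $w^\star(f_{\delta m}) = 0$, and to show that $F^\star$ is a sub-solution of the very same equation, so that a one-dimensional comparison argument forces $F^\star \geq w^\star$ on $(-\infty, f_{\delta m}]$. Write $\Lambda[\psi](p) := F^\star(\delta p) + (1-\delta)p\psi'(p) - \psi(p) + m$, so that \eqref{eq:wStarode} reads $\Lambda[w^\star] \equiv 0$ on $(-\infty,f_{\delta m})$. First I would record, using the case hypothesis $m \leq -F^\star(\delta F'(0))$ together with the monotonicity of $F^\star$ (and the fact that $F^\star$ vanishes on $[F'(0),\infty)$ and is strictly increasing to its left), that $F'(0) \leq f_{\delta m} < 0$; since $f_{\delta m}\geq F'(0)$ this already gives $F^\star(f_{\delta m}) = 0 = w^\star(f_{\delta m})$, i.e. the two functions agree at the right endpoint.

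The key algebraic step is the identity
\begin{align*}
\Lambda[F^\star](p) \;=\; F_{\delta m}\big((F^\star)'(p)\big), \qquad p < F'(0),
\end{align*}
which follows by inserting $y = (F^\star)'(p)$ into the definition \eqref{eq:fdecreasing} of $F_{\delta m}$ and using the Legendre relations $F'\big((F^\star)'(p)\big) = p$ and $F\big((F^\star)'(p)\big) = p(F^\star)'(p) - F^\star(p)$. Since $(F^\star)'(p) \geq 0$ and $F_{\delta m}$ is decreasing by \Cref{assumption:FdeltamDec}, the right-hand side is at most $F_{\delta m}(0) = F^\star(\delta F'(0)) + m \leq 0$; and on the remaining range $p \in [F'(0), f_{\delta m}]$ one has $(F^\star)'(p) = 0$, $F^\star(p) = 0$, so $\Lambda[F^\star](p) = F^\star(\delta p) + m \leq F^\star(\delta f_{\delta m}) + m = 0$ by monotonicity of $F^\star$. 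Hence $\Lambda[F^\star] \leq 0 = \Lambda[w^\star]$ on $(-\infty, f_{\delta m}]$. Setting $\Delta := F^\star - w^\star$, affinity of $\Lambda$ in $\psi$ gives $\Lambda[F^\star] - \Lambda[w^\star] = (1-\delta)p\Delta' - \Delta$, whence $(1-\delta)p\Delta'(p) \leq \Delta(p)$ for $p < f_{\delta m}$; multiplying by the positive integrating factor $\mu(p) := (-p)^{1/(\delta-1)}$ shows that $\mu\Delta$ is non-increasing on $(-\infty, f_{\delta m})$, and since it vanishes at $f_{\delta m}$ we get $\mu\Delta \geq 0$, i.e. $F^\star \geq w^\star$ on $(-\infty, f_{\delta m}]$, proving the first assertion.

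For the second assertion I would argue by convex duality: $w$ is by hypothesis strictly concave and continuously differentiable on $[0,\infty)$ with $w'(0) = f_{\delta m}$ by \Cref{lemma:derivative}, and $\lim_{y\to\infty}w'(y) = -\infty$ (otherwise the left-hand side of \eqref{eq:wode} would diverge to $+\infty$ as $y\to\infty$); hence $w'$ is a strictly decreasing bijection from $[0,\infty)$ onto $(-\infty, f_{\delta m}]$. Consequently its concave conjugate $w^\star$ is strictly concave and continuously differentiable on the interior $(-\infty, f_{\delta m})$, with $(w^\star)'(p) = (w')^{-1}(p) \in (0,\infty)$ there, so $w^\star$ is strictly increasing on $(-\infty,f_{\delta m})$. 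Alternatively, differentiating \eqref{align:wstarexp} after the change of variables $x = p\mathrm{e}^{-t}$, which recasts it as $w^\star(p) = (\delta-1)^{-1}\int_0^{\ln(p/f_{\delta m})}\big(F^\star(\delta p\mathrm{e}^{-t}) + m\big)\mathrm{e}^{-t/(\delta-1)}\diff t$, yields $(w^\star)' > 0$ and $(w^\star)'' < 0$ directly from $(F^\star)'\geq 0$ and the concavity of $F^\star$, the boundary terms disappearing because $F^\star(\delta f_{\delta m}) + m = 0$.

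I expect the main obstacle to be the identity $\Lambda[F^\star](p) = F_{\delta m}\big((F^\star)'(p)\big)$ and, relatedly, recognising that it is precisely the case hypothesis $m \leq -F^\star(\delta F'(0))$ together with \Cref{assumption:FdeltamDec} that turns $F^\star$ into a sub-solution of the $p$-ODE; without this input $F^\star$ need not dominate $w^\star$. The comparison mechanism for the scalar linear ODE and the convex-duality facts are routine. A minor technical point to handle carefully is the kink of $F^\star$ at $p = F'(0)$ (where its second derivative jumps) and that of $w^\star$ at $p = F'(0)/\delta$: both functions stay $C^1$ there, so $\Delta$ is $C^1$ and the integrating-factor argument goes through unchanged.
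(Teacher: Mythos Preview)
Your argument for the inequality $F^\star\geq w^\star$ is correct and coincides with the paper's: both reduce to showing that $F^\star$ is a sub-solution of the linear ODE \eqref{eq:wStarode} (your identity $\Lambda[F^\star](p)=F_{\delta m}\big((F^\star)'(p)\big)$ is exactly the convex-duality translation the paper invokes), and then apply the integrating factor $(-p)^{1/(\delta-1)}=(-p)^{-1/(1-\delta)}$, which is the paper's $\phi$.

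For the second assertion, however, your primary route via convex duality from $w$ is circular in the paper's logical flow. The sentence ``let $w$ be a strictly concave $C^1$ solution of \eqref{eq:wode}'' is only a heuristic to \emph{derive} the dual ODE; the object $w^\star$ in \Cref{proposition:propWStar} is the function defined by the explicit formula \eqref{align:wstarexp}, and the strictly concave $w$ is only constructed afterwards, in \Cref{theorem:propW}, as $w=(w^\star)^\star$ --- a construction that \emph{uses} the strict concavity of $w^\star$ proved here. So you cannot appeal to properties of $w$ to establish those of $w^\star$.

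Your alternative (differentiating the representation $w^\star(p)=(\delta-1)^{-1}\int_0^{\ln(p/f_{\delta m})}\big(F^\star(\delta p\mathrm{e}^{-t})+m\big)\mathrm{e}^{-t/(\delta-1)}\diff t$) does work: the boundary term in $(w^\star)'$ vanishes because $F^\star(\delta f_{\delta m})+m=0$, leaving a positive integrand since $(F^\star)'>0$ on $(-\infty,F'(0))\supset(\delta p,\delta f_{\delta m})$; and in $(w^\star)''$ the boundary term is strictly negative while the integral term is $\leq 0$ by concavity of $F^\star$. The paper takes a different and slightly slicker route: it differentiates the ODE \eqref{eq:wStarode} to obtain
\[
(1-\delta)^2p^2(w^\star)''(p)=\delta\big(w^\star(p)-F^\star(\delta p)-m-(1-\delta)p(F^\star)'(\delta p)\big),
\]
bounds the bracket by $w^\star(p)-F^\star(p)-m$ via the tangent-line inequality for the concave $F^\star$, and then invokes the first part ($w^\star\leq F^\star$) together with $m>0$ to conclude $(w^\star)''<0$. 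The monotonicity then follows because a strictly concave function that attains its maximum value $0$ at the right endpoint $f_{\delta m}$ (since $w^\star\leq F^\star\leq 0$) must be increasing. This route has the advantage of recycling Part 1 and avoiding any explicit manipulation of the integral formula.
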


\begin{proof}
The condition imposed on the parameter $m$, specifically $m \leq - F^{\star}(\delta F^{\prime}(0))$, implies that $\bar{y}$, as introduced in \eqref{eq:ybar}, is equal to zero. Consequently,
\begin{equation*}
-F^{\star}(p) + (1-\delta) p (F^{\star})^\prime(p) + F^{\star}(\delta p) + m \leq 0 \; \text{for any} \; p \leq F^\prime(0) \leq f_{\delta m},
\end{equation*} 
by standard convex duality (see \cite[Proposition 5]{alvarez1997convex}). Additionally, if $F^\prime(0) < f_{\delta m}$, then $F^{\star} = 0$ and $(F^{\star})^\prime = 0$ on the interval $(F^\prime(0), f_{\delta m}]$. This observation, together with the fact that $f_{\delta m} = (F^\star)^{(-1)}(-m)/\delta$, leads to the following inequality on the entire domain of definition of $w^\star$:
\begin{equation}\label{eq:inequalityOnTheWholeDomain}
-F^{\star}(p) + (1-\delta) p (F^{\star})^\prime(p) + F^{\star}(\delta p) + m \leq 0, \; p\in (-\infty,f_{\delta m}].
\end{equation} 
By defining $\phi(p) \coloneqq (-p)^{-\frac{1}{1-\delta}} (F^{\star}(p) - w^{\star}(p))$, for $p \leq f_{\delta m}$, and using the fact that $w^{\star}$ satisfies \eqref{eq:wStarode}, we derive that
\begin{align*}
\phi^{\prime}(p) &= \frac{1}{1-\delta} (-p)^{-\frac{1}{1-\delta}-1} \big(F^{\star}(p) - w^{\star}(p) -p (1-\delta)((F^{\star})^{\prime}(p) - (w^{\star})^{\prime}(p)\big) \\
&= \frac{1}{1-\delta} (-p)^{-\frac{1}{1-\delta}-1} \left(F^{\star}(p) - (1-\delta) p (F^{\star})^{\prime}(p) - F^{\star}(\delta p) - m\right).
\end{align*}
It follows that the function $\phi$ is non-increasing because of \eqref{eq:inequalityOnTheWholeDomain}. As a consequence, $F^{\star}$ is not below $w^{\star}$ on $(-\infty, f_{\delta m}]$ since
\begin{align*}
\phi(p) \geq \lim_{p \rightarrow f^\smalltext{-}_{\smalltext{\delta}\smalltext{ m}}} \phi(p) = (-f_{\delta m})^{-\frac{1}{1-\delta}} \big(F^{\star}(f_{\delta m}) - w^{\star}(f_{\delta m})\big) = 0 \; \text{for any} \; p \leq f_{\delta m}.
\end{align*}

We have showed that $F^\star(p) \geq w^\star(p)$ for any $p \leq f_{\delta m}$. However, it is still necessary to verify that the function $w^\star$ is strictly concave and to do this, we prove that its second derivative is negative by differentiating the ODE \eqref{eq:wStarode}:
\begin{align*}
(1-\delta)^2 p^2 (w^{\star})^{\prime\prime}(p) &= \delta \big((1-\delta) p (w^{\star})^\prime(p) - (1-\delta) p (F^{\star})^\prime(\delta p)\big) \\
	&= \delta (w^{\star}(p) - F^{\star}(\delta p) - m - (1-\delta) p (F^{\star})^\prime(\delta p))\leq \delta (w^{\star}(p) - F^{\star}(p) - m) < 0 \; \text{for any} \; p < f_{\delta m}.
\end{align*}
The fact that $w^{\star}$ is strictly concave on $(-\infty, f_{\delta m})$ implies that $w^{\star}$ is an increasing function since $w^\star(f_{\delta m}) = 0$, and it remains below $F^{\star}$, which is increasing on $(-\infty, F^\prime(0))$ and constant at $0$ on $[F^\prime(0),f_{\delta m}]$.
\end{proof}

Let us introduce the concave conjugate of $w^{\star}$, that is, $w^{\star\star}(y) \coloneqq \inf_{p \leq f_{\smalltext{\delta}\smalltext{ m}}}\{ yp - w^{\star}(p)\}$, for $y \geq 0$.

\begin{proposition}\label{theorem:propW}
Assuming that $\delta > 1$ and $m \leq - F^{\star}(\delta F^\prime(0))$, and considering the unique solution $w^{\star}$ to {\rm\Cref{eq:wStarode}}, it holds that $w = w^{\star\star}$ is a solution to \eqref{align:hjFbar} and satisfies the growth condition $\bar{c}_0 (-1-y^\gamma) \leq w(y) \leq \bar{c}_1 (1-y^\gamma)$, $y \geq 0$, for some $(\bar{c}_0,\bar{c}_1)\in(0,\infty)^2$. Moreover, $w^{\prime}(0) = f_{\delta m} = (F^\star)^{(-1)}(-m)/\delta$.
\end{proposition}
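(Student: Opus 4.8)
The plan is to close the loop opened by \Cref{lemma:derivative}--\Cref{proposition:propWStar}: having produced the explicit candidate $w^\star$ for the concave conjugate of the face-lifted utility and recorded its properties, I verify that $w := w^{\star\star}$ is a genuine (classical, $C^1$) solution of the first-order equation \eqref{align:hjFbar}, that it has the claimed growth, and that $w'(0) = f_{\delta m}$. First I would settle the regularity of $w$. From the explicit formula \eqref{align:wstarexp}, $w^\star$ is $C^1$ on $(-\infty, f_{\delta m})$, and by \Cref{proposition:propWStar} it is strictly concave and increasing there with $w^\star(f_{\delta m}) = 0$; a direct estimate of \eqref{align:wstarexp} using the power growth of $F^\star$ recorded just after \eqref{align:hjFbar} shows moreover that $w^\star(p)$ tends to $-\infty$ superlinearly as $p \to -\infty$, equivalently $(w^\star)'(p) \to +\infty$, so $(w^\star)'$ is a continuous decreasing bijection from $(-\infty, f_{\delta m}]$ onto $[0,\infty)$. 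Consequently $w = w^{\star\star}$ is finite on all of $[0,\infty)$ and, being the concave conjugate of a strictly concave $C^1$ function, is itself strictly concave and $C^1$ on $(0,\infty)$. Since $w^\star$ attains its maximum value $0$ at $p = f_{\delta m}$, we get $w(0) = \inf_{p \le f_{\delta m}}\{-w^\star(p)\} = 0$, the boundary condition in \eqref{align:hjFbar}. Evaluating \eqref{eq:wStarode} at $p = f_{\delta m}$, with $F^\star(\delta f_{\delta m}) = F^\star\big((F^\star)^{-1}(-m)\big) = -m$, $w^\star(f_{\delta m}) = 0$, and noting $f_{\delta m} < 0$ and $\delta \ne 1$, forces $(w^\star)'(f_{\delta m}) = 0$; since Legendre duality makes $w'$ the inverse of $(w^\star)'$, letting $y \downarrow 0$ gives $w'(0) = f_{\delta m}$, the last assertion.

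Next I would show $w$ solves the ODE \eqref{eq:wode} on $(0,\infty)$ by running the convex-duality computation behind \cite[Proposition 5]{alvarez1997convex} in the reverse direction. For $y > 0$, let $p(y) < f_{\delta m}$ be the unique minimiser in $w(y) = \inf_{p}\{yp - w^\star(p)\}$; then $p(y) = w'(y)$, $(w^\star)'(w'(y)) = y$, and $w^\star(w'(y)) = y\,w'(y) - w(y)$. Substituting $p = w'(y)$ into \eqref{eq:wStarode} and using these two identities collapses it to
\[
F^\star\big(\delta w'(y)\big) - \delta y\,w'(y) + w(y) + m = 0,
\]
which is \eqref{eq:wode}; in particular the second entry of the minimum in \eqref{align:hjFbar} vanishes at every $y > 0$. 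For the first entry, \Cref{proposition:propWStar} gives $w^\star \le F^\star$ on $(-\infty, f_{\delta m}]$; extending $w^\star$ by $-\infty$ beyond $f_{\delta m}$, and using that concave conjugation is order-reversing together with $F = F^{\star\star}$ (Fenchel--Moreau, $F$ being proper, concave and upper--semi-continuous), we obtain $w = w^{\star\star} \ge F^{\star\star} = F$ on $[0,\infty)$. Hence $\min\{w(y) - F(y),\, F^\star(\delta w'(y)) - \delta y w'(y) + w(y) + m\} = 0$ for every $y > 0$, so $w$ is a solution of \eqref{align:hjFbar}.

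It remains to prove the growth estimate. The lower bound is immediate from $w \ge F$ and \eqref{eq:growthF}, namely $w(y) \ge F(y) \ge \tilde{c}_0(-1 - y^\gamma)$. For the upper bound, the estimate of \eqref{align:wstarexp} from the first step also delivers a matching lower bound $w^\star(p) \ge -C\big(1 + (-p)^{\gamma/(\gamma-1)}\big)$ on $(-\infty, f_{\delta m}]$ for some $C > 0$; since the Legendre transform interchanges the conjugate exponents $\gamma/(\gamma-1)$ and $\gamma$, taking concave conjugates yields $w(y) \le C_1 - C_2 y^\gamma$ for all $y \ge 0$ and some $C_1, C_2 > 0$. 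Combining this with $w \le 0$ on $[0,\infty)$ (which follows at once from $w(0) = 0$, concavity, and $w'(0) = f_{\delta m} < 0$) and the continuity of $w$ produces a constant $\bar{c}_1 > 0$ with $w(y) \le \bar{c}_1(1 - y^\gamma)$ for all $y \ge 0$. I expect the main technical obstacle to be precisely this last point — extracting the sharp power decay of $w = w^{\star\star}$ from the explicit conjugate $w^\star$ via \eqref{align:wstarexp} — while the remaining steps are routine bookkeeping with Legendre duality and the facts about $w^\star$ already established in \Cref{proposition:propWStar}.
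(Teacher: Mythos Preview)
Your proposal is correct and follows essentially the same route as the paper's proof, which is a terse appeal to standard concave-duality facts: Rockafellar's biconjugate and differentiability theorems give $w=w^{\star\star}\in C^1$, the conjugate ODE \eqref{eq:wStarode} transfers to \eqref{eq:wode} via the Legendre relations (as in \cite{alvarez1997convex}), $w^\star\le F^\star$ reverses to $w\ge F$, and the growth of $w^\star$ dualises to the growth of $w$. Your write-up simply unpacks each of these steps explicitly; the only minor deviation is that you obtain $w'(0)=f_{\delta m}$ directly by evaluating \eqref{eq:wStarode} at $p=f_{\delta m}$ and inverting $(w^\star)'$, whereas the paper would simply invoke \Cref{lemma:derivative} once $w$ is known to be a strictly concave $C^1$ solution of \eqref{eq:wode}.
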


\begin{proof}
The result \citeauthor*{rockafellar1970convex} \cite[Theorem 12.2]{rockafellar1970convex} shows that $w = w^{\star\star}$ due to the strict concavity of $w^{\star}$. This strict concavity property further implies the continuously differentiability of $w$, as demonstrated by \cite[Theorem 26.6]{rockafellar1970convex}. Finally, concave duality ensures the growth of $w$ from the growth condition of $w^{\star}$, and  implies that $w$ is a solution of \eqref{eq:wode} but also of \eqref{align:hjFbar}, given that $w \geq F^{\star\star} = F$ on the interval $[0,\infty)$.
\end{proof}

We can conclude this section by employing a standard verification argument to prove that the previously mentioned function $w$ indeed represents the face-lifted utility defined by the deterministic mixed control--stopping problem in \eqref{eq:FbarEquation}.
\begin{proof}[Proof of \Cref{proposition:FBarGeneralDeltaBigger1}.\ref{mSmall}]
We first show that $w$ is an upper bound for $\bar{F}$ on $[0,\infty)$. To this aim, we select a starting point $y_0 \geq 0$, a control $p \in \mathcal{B}_{\R_\smalltext{+}}$ and a stopping time $T \in [0,T^{y_\smalltext{0},p}_0]$. Given that the function $w$ is not below $F$ on $[0,\infty)$, as proved in \Cref{theorem:propW}, we have
\begin{align*}
w(y_0) &= \mathrm{e}^{-\rho T} w(y^{y_\smalltext{0},p}(T)) \mathbf{1}_{\{T<\infty\}} + \int_0^T \rho \mathrm{e}^{-\rho t} \big(w(y^{y_\smalltext{0},p}(t)) - \delta w^\prime(y^{y_\smalltext{0},p}(t)) (y^{y_0,p}(t) - p(t))\big) \d t \\
& \geq \mathrm{e}^{-\rho T} F(y^{y_\smalltext{0},p}(T)) \mathbf{1}_{\{T<\infty\}} + \int_0^T \rho \mathrm{e}^{-\rho t} \big(F(p(t)) - m\big) \d t,
\end{align*}
due to the fact that $w$ solves {\rm\Cref{eq:wStarode}}. The arbitrariness of $p \in \mathcal{B}_{\R_\smalltext{+}}$ and $T \in [0,T^{y_\smalltext{0},p}_0]$ implies the desired inequality, that is, $w \geq F$ on the interval $[0,\infty)$. 

\medskip
To establish the converse inequality, let us fix some $y_0 \geq 0$. We consider the maximiser in the definition of $F^{\star}(\delta w^\prime(y^{y_\smalltext{0}, p^{\star}}))$ as a feedback control, described by
\begin{align*}
\dot{y}^{y_\smalltext{0},p^{\smalltext\star}}(t) = r\big(y^{y_\smalltext{0},p^{\smalltext\star}}(t) - p^{\star}(t)\big), \; \text{where} \; p^{\star}(t) \coloneqq (F^{\star})^\prime\big(\delta w^\prime(y^{y_\smalltext{0},p^{\smalltext{\star}}}(t))\big), \; \text{for} \; t\geq 0.
\end{align*}
It holds that 
\begin{align*}
\dot{y}^{y_\smalltext{0},p^{\smalltext\star}}(t) = r \bigg(\frac{1-\delta}{\delta}\bigg) \frac{w^\prime(y^{y_\smalltext{0},p^{\smalltext\star}}(t))}{w^{\prime\prime}(y^{y_\smalltext{0},p^{\smalltext\star}}(t))} \; \text{for any} \; t>0.
\end{align*}
Consequently, the trajectory $y^{y_\smalltext{0},p^{\smalltext\star}}$ is decreasing and thus remains well-defined until it reaches zero at its first hitting time $T^\star \coloneqq T^{y_\smalltext{0},p^{\star}}_0 < \infty$. The same calculations done in the first part of the proof lead to
\begin{align*}
w(y_0) = \mathrm{e}^{-\rho T^{\star}} w\big(y^{y_\smalltext{0},p^{\smalltext\star}}(T^{\star})\big) + \int_0^{T^\smalltext{\star}} \rho \mathrm{e}^{-\rho t} \big(F(p^{\star}(t)) - m\big) \d t &= \mathrm{e}^{-\rho T^{\smalltext\star}} w(0) + \int_0^{T^{\smalltext\star}} \rho \mathrm{e}^{-\rho t} \big(F(p^{\star}(t)) - m\big) \d t \\
&= \mathrm{e}^{-\rho T^{\smalltext\star}} F(0) + \int_0^{T^{\smalltext\star}} \rho \mathrm{e}^{-\rho t} \big(F(p^{\star}(t)) - m\big) \d t \\
&= \mathrm{e}^{-\rho T^\smalltext{\star}} F(y^{y_\smalltext{0},p^{\smalltext\star}}(T^{\star})) + \int_0^{T^{\smalltext\star}} \rho \mathrm{e}^{-\rho t} \big(F(p^{\star}(t)) - m\big) \d t,
\end{align*}
since $w(0) = 0 = F(0)$ by the boundary condition. We deduce that $(T^{\star},p^{\star})$ is an optimal control for the problem $\bar{F}$, thereby leading to the conclusion that the face-lifted utility $\bar{F}$ coincides with its upper bound $w$.
\end{proof}

\subsection{When the agent is more impatient and the accidents' size matters}\label{subsection:deltaGreater1_FBar}
Throughout this section, we investigate the mixed control--stopping problem \eqref{eq:FbarEquation} under the assumptions that $\delta > 1$ and $m > - F^{\star}(\delta F^\prime(0))$. It is important to note that the latter assumption implies that $\bar{y}$ introduced in \eqref{eq:ybar} is positive. In contrast to the previous section, it follows that $F$ can be a solution to the Hamilton--Jacobi equation \eqref{align:hjFbar} on a right-neighbourhood of zero. Indeed, this is precisely the case, as demonstrated by the following lemma for strictly concave solutions.

\begin{lemma}\label{lemma:wEqualF}
For any strictly concave solution $w$ of {\rm\Cref{align:hjFbar}}, there exists $\varepsilon > 0$ such that $w$ coincides with $F$ on $[0, \varepsilon)$.
\end{lemma}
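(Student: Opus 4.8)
The plan is to argue by contradiction, exploiting the fact that under the standing hypotheses of this subsection ($\delta>1$ and $m>-F^\star(\delta F^\prime(0))$) the barrier $F$ itself already solves \eqref{align:hjFbar} near the origin. First I set up some bookkeeping. Since $F_{\delta m}(0)=F^\star(\delta F^\prime(0))+m>0$ and $F_{\delta m}$ is continuous and decreasing by \Cref{assumption:FdeltamDec}, the threshold $\bar y$ from \eqref{eq:ybar} is strictly positive and $F_{\delta m}>0$ on $[0,\bar y)$; equivalently, by \eqref{eq:fdecreasing}, $F$ is a classical solution of \eqref{align:hjFbar} on $(0,\bar y)$, because there the barrier term $\bar F-F$ vanishes while the Hamiltonian term equals $F_{\delta m}\geq 0$. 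I will also use the elementary equivalence, a consequence of the monotonicity of $F^\star$ on $(-\infty,F^\prime(0)]$, that the hypothesis $m>-F^\star(\delta F^\prime(0))$ is the same as $f_{\delta m}<F^\prime(0)$, with $f_{\delta m}=(F^\star)^{-1}(-m)/\delta$. Finally, since $w$ solves \eqref{align:hjFbar}, the barrier term gives $w\geq F$ on $[0,\infty)$; together with $w(0)=F(0)=0$ and the concavity of $w$, this yields that $w^\prime(0^+)\coloneqq\lim_{y\downarrow 0}w(y)/y$ exists in $(-\infty,+\infty]$ and $w^\prime(0^+)\geq F^\prime(0)$.

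Now suppose for contradiction that $w\neq F$ on every $[0,\varepsilon)$, so $\mathcal{O}\coloneqq\{y\in(0,\bar y):w(y)>F(y)\}$ is nonempty with $0$ in its closure, and split into two cases. In \emph{Case A}, $\mathcal{O}\supseteq(0,\varepsilon_0)$ for some $\varepsilon_0>0$: on $(0,\varepsilon_0)$ the barrier term is strictly positive, so $w$ solves $F^\star(\delta w^\prime(y))-\delta y w^\prime(y)+w(y)+m=0$ there; letting $y\downarrow 0$ along differentiability points and using $w(y)\to 0$ and $y w^\prime(y)\to 0$ (the latter, when $w^\prime(0^+)=+\infty$, from the concavity bound $0\leq y w^\prime(y)\leq w(y)$), one gets $m=0$ if $w^\prime(0^+)=+\infty$ — impossible — and $F^\star(\delta w^\prime(0^+))=-m$, i.e. $w^\prime(0^+)=f_{\delta m}$, if $w^\prime(0^+)$ is finite; but then $f_{\delta m}\geq F^\prime(0)$ contradicts the equivalence above. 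In \emph{Case B}, $\mathcal{O}$ contains no right-neighbourhood of $0$: then $\{w=F\}$ also accumulates at $0$, so near $0$ the set $\mathcal{O}$ is a countable union of disjoint intervals $(a_n,b_n)$ with $a_n\downarrow 0$, hence also $b_n\to 0$; fix one, say $(a,b)$, with $b<\bar y$. By continuity $w(a)=F(a)$ and $w(b)=F(b)$, while $w>F$ on $(a,b)$, so $g\coloneqq w-F$ attains a strictly positive maximum at some $y^\star\in(a,b)$. Testing $w$ from above at $y^\star$ by the affine support $y\mapsto w(y^\star)+F^\prime(y^\star)(y-y^\star)$ — whose slope $F^\prime(y^\star)$ lies in $[w^\prime_+(y^\star),w^\prime_-(y^\star)]$ exactly because $g$ is maximal at $y^\star$ — the sub-solution property of $w$, combined with $w(y^\star)-F(y^\star)=g(y^\star)>0$, forces $F^\star(\delta F^\prime(y^\star))-\delta y^\star F^\prime(y^\star)+w(y^\star)+m\leq 0$, which rearranges to $g(y^\star)\leq -F_{\delta m}(y^\star)<0$ since $y^\star<b<\bar y$; this contradicts $g(y^\star)>0$. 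If $w$ is known to be continuously differentiable, as in the companion \Cref{lemma:derivative}, Case B reduces to the elementary computation $w^\prime(y^\star)=F^\prime(y^\star)$ inserted into the ODE.

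Since both cases are impossible, there is $\varepsilon>0$ with $w=F$ on $[0,\varepsilon)$, and in particular $F$ solves \eqref{align:hjFbar} on this interval, which feeds into the rest of the construction of $\bar F$ on $(\bar y,\infty)$. The step I expect to be the main obstacle is the analysis in a neighbourhood of $0$: separating Case A from Case B, and within Case B producing a connected component of $\{w>F\}$ whose closure lies inside $(0,\bar y)$, together with the low regularity of $w$ — only strict concavity is assumed, so $w$ may have corners and the comparison with $F$ at $y^\star$ must be carried out in the viscosity sense rather than through a first-order identity. The limit $y w^\prime(y)\to 0$ in Case A when $w^\prime(0^+)=+\infty$ is a second, minor point requiring the concavity estimate noted above.
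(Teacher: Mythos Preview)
Your argument is correct, and your Case~A is exactly the paper's proof: assuming $w>F$ on a right-neighbourhood of $0$, the ODE \eqref{eq:wode} forces $w'(0^+)=f_{\delta m}$ via \Cref{lemma:derivative}, and the standing hypothesis $m>-F^\star(\delta F'(0))$, which you correctly rephrase as $f_{\delta m}<F'(0)$, contradicts $w'(0^+)\geq F'(0)$. The paper stops there.

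Your Case~B is genuinely additional. The paper's proof sets up the contradiction as ``$w$ is above $F$ on $(0,\varepsilon)$'', which is not the full negation of the statement; it tacitly assumes $\{w>F\}$ contains a right-neighbourhood of $0$. You fill this gap by locating a connected component $(a,b)\subset(0,\bar y)$ of $\{w>F\}$, taking an interior maximiser $y^\star$ of $g=w-F$, and testing $w$ from above with the supporting affine function of slope $F'(y^\star)$ to obtain $g(y^\star)\leq -F_{\delta m}(y^\star)<0$, a contradiction. This is a clean viscosity-style argument that works without assuming $w\in C^1$. In the $C^1$ case the paper is implicitly working in, Case~B can be dispatched even more directly: since $w$ solves the ODE on $(a,b)$ and $w,w'$ are continuous, the ODE also holds at the touching point $a$, where $w(a)=F(a)$ and $w'(a)=F'(a)$, yielding $F_{\delta m}(a)=0$ in contradiction with $a<\bar y$. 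Either way, your treatment is more complete than the paper's while sharing its core idea.
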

\begin{proof}
Let us fix $\varepsilon>0$. We assume to the contrary that $w$ is a strictly concave solution of the Hamilton--Jacobi equation \eqref{align:hjFbar} such that it is above $F$ on $(0, \varepsilon)$. Consequently, it becomes a solution of \Cref{eq:wode} on the interval $(0, \varepsilon)$, leading to $w^{\prime}(0) = f_{\delta m}$, as inferred from \Cref{lemma:derivative}. The given hypothesis regarding the parameter $m$, that is, $F^{\star}(\delta F^{\prime}(0)) > -m = F^{\star}(\delta f_{\delta m})$, implies that $w^{\prime}(0) < F^{\prime}(0)$. However, this contradicts the fact that $w > F$ on $(0, \varepsilon)$. 
\end{proof}

Similar to the previous section, we now examine the equation solved by the concave conjugate $w^{\star}$ which is given by \cite[Proposition 5]{alvarez1997convex} to be 
\begin{align}\label{align:hjFbarStar}
-\min\Big\{(w^{\star})^\prime(p) p - w^{\star}(p) - F\big((w^{\star})^\prime(p)\big), (1-\delta) p (w^{\star})^\prime(p)- w^{\star}(p)  + F^{\star}(\delta p) + m\Big\}=0, \; p<F^\prime(0), \;w^{\star}(F^\prime(0)) = 0.
\end{align}
As we seek a strictly concave solution to the previous equation that coincides with $F^\star$ on the right-neighbourhood of $F^\prime(0)$ (as indicated by \Cref{lemma:wEqualF}), let us fix some $p_{\bar{y}} < F^\prime(0)$ and consider the function 
\begin{align}\label{align:barFyHat}
w^{\star}(p) \coloneqq \begin{cases}
	w^{\star}_{\bar{y}}(p),\; p \in (-\infty,p_{\bar{y}})\\
	F^{\star}(p), \; p \in [p_{\bar{y}},F^{\prime}(0)]
	\end{cases},
\end{align}
where $w^{\star}_{\bar{y}}$ is the unique solution of 
\begin{align}\label{align:wHatyBar}
- w^{\star}_{\bar{y}}(p) + (1-\delta) p (w^{\star}_{\bar{y}})^{\prime}(p) + F^{\star}(\delta p) + m = 0, \;p\in (-\infty, p_{\bar{y}}), \; w^{\star}_{\bar{y}}(p_{\bar{y}}) = F^{\star}(p_{\bar{y}}).
\end{align}
The solution to the equation above is explicitly given by
\begin{align}\label{align:wstarybar}
w^{\star}_{\bar{y}}(p) = \frac{(-p)^{\frac{1}{1-\delta}}}{\delta-1} \int_p^{p_{\smalltext{\bar{y}}}} \frac{F^{\star}(\delta x)}{(-x)^{1+\frac{1}{1-\delta}}} \d x + \bigg(\frac{p}{p_{\bar{y}}}\bigg)^{\frac{1}{1-\delta}} (F^{\star}(p_{\bar{y}}) - m) + m, \; \text{for} \; p \leq p_{\bar{y}}.
\end{align}

\begin{lemma}
The function described in \eqref{align:barFyHat} is continuously differentiable on $(-\infty,F^\prime(0))$ if and only if $p_{\bar{y}} = F^\prime(\bar{y})$.
\end{lemma}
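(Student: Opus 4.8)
The plan is to reduce the statement to a single matching condition at the gluing point $p_{\bar{y}}$. Away from $p_{\bar{y}}$ the function in \eqref{align:barFyHat} is already $C^1$: on $(p_{\bar{y}},F'(0))$ it equals $F^\star$, which is $C^1$ (indeed $C^2$) there because $F$ is $C^2$ and strictly concave, so that $(F^\star)'=(F')^{-1}$ on $(-\infty,F'(0))$; and on $(-\infty,p_{\bar{y}})$ it equals $w^\star_{\bar{y}}$, which solves the linear first-order ODE \eqref{align:wHatyBar} with smooth coefficients and therefore extends $C^1$ up to the endpoint $p_{\bar{y}}$ (alternatively, one differentiates the closed form \eqref{align:wstarybar} directly). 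Since by construction $w^\star_{\bar{y}}(p_{\bar{y}})=F^\star(p_{\bar{y}})$, the two branches agree in value at $p_{\bar{y}}$, so $w^\star\in C^1((-\infty,F'(0)))$ if and only if the one-sided derivatives match at $p_{\bar{y}}$, i.e. $(w^\star_{\bar{y}})'(p_{\bar{y}})=(F^\star)'(p_{\bar{y}})$.

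Next I would evaluate both one-sided derivatives. Reading \eqref{align:wHatyBar} at $p=p_{\bar{y}}$ and using $w^\star_{\bar{y}}(p_{\bar{y}})=F^\star(p_{\bar{y}})$ gives $(w^\star_{\bar{y}})'(p_{\bar{y}})=\bigl(F^\star(p_{\bar{y}})-F^\star(\delta p_{\bar{y}})-m\bigr)/\bigl((1-\delta)p_{\bar{y}}\bigr)$, while $(F^\star)'(p_{\bar{y}})=\tilde{y}$, where $\tilde{y}>0$ is the unique point with $F'(\tilde{y})=p_{\bar{y}}$ (well defined since $p_{\bar{y}}<F'(0)$). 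Equating the two expressions, clearing the denominator, and then substituting the Legendre identities $F^\star(p_{\bar{y}})=\tilde{y}p_{\bar{y}}-F(\tilde{y})$ and $p_{\bar{y}}=F'(\tilde{y})$, the terms $\tilde{y}F'(\tilde{y})$ cancel and the matching condition collapses precisely to $F^\star(\delta F'(\tilde{y}))-\delta\tilde{y}F'(\tilde{y})+F(\tilde{y})+m=0$, that is, $F_{\delta m}(\tilde{y})=0$ with $F_{\delta m}$ as in \eqref{eq:fdecreasing}.

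Finally I would invoke Assumption \ref{assumption:FdeltamDec}: as $\delta\neq 1$, the map $F_{\delta m}$ is strictly decreasing with $F_{\delta m}(+\infty)=-\infty$, and in the regime of this subsection ($m>-F^\star(\delta F'(0))$) we have $F_{\delta m}(0)=F^\star(\delta F'(0))+m>0$, so $\bar{y}$ defined in \eqref{eq:ybar} is the unique zero of $F_{\delta m}$. Hence $F_{\delta m}(\tilde{y})=0$ is equivalent to $\tilde{y}=\bar{y}$, and since $F'$ is strictly decreasing this is in turn equivalent to $p_{\bar{y}}=F'(\tilde{y})=F'(\bar{y})$, which yields both implications. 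The only genuinely delicate points are bookkeeping ones — justifying $C^1$-regularity of $w^\star_{\bar{y}}$ up to $p_{\bar{y}}$ from \eqref{align:wHatyBar}, the duality identity $(F^\star)'=(F')^{-1}$ on $(-\infty,F'(0))$, and the uniqueness of the zero of $F_{\delta m}$ — none of which presents a real obstacle under Assumption \ref{assumption:FdeltamDec}.
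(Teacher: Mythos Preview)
Your proof is correct and follows essentially the same route as the paper's: both reduce to the derivative matching condition $(w^\star_{\bar{y}})'(p_{\bar{y}})=(F^\star)'(p_{\bar{y}})$, read $(w^\star_{\bar{y}})'(p_{\bar{y}})$ off the ODE \eqref{align:wHatyBar}, and then use convex duality together with Assumption~\ref{assumption:FdeltamDec} to identify the unique admissible $p_{\bar{y}}$ as $F'(\bar{y})$. Your version simply spells out the Legendre substitution $p_{\bar{y}}=F'(\tilde{y})$, $F^\star(p_{\bar{y}})=\tilde{y}p_{\bar{y}}-F(\tilde{y})$ that the paper compresses into ``standard convex duality''.
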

\begin{proof}
The function $w^\star$ is continuously differentiable if and only if 
\begin{align*}
\frac{F^{\star}(\delta p_{\bar{y}}) - F^{\star}(p_{\bar{y}}) + m}{p_{\bar{y}} (\delta-1)} = (w^{\star}_{\bar{y}})^{\prime}(p_{\bar{y}}) = (F^{\star})^{\prime}(p_{\bar{y}}).
\end{align*}
The previous equality holds true if and only if 
\begin{align*}
F^{\star}(p_{\bar{y}}) - F^{\star}(\delta p_{\bar{y}}) - m = w^{\star}_{\bar{y}}(p_{\bar{y}}) - F^{\star}(\delta p_{\bar{y}}) - m = (1-\delta) p_{\bar{y}} (w^{\star}_{\bar{y}})^{\prime}(p_{\bar{y}}) = (1-\delta) p_{\bar{y}} (F^{\star})^{\prime}(p_{\bar{y}}).
\end{align*}
By standard convex duality and in accordance with Assumption \ref{assumption:FdeltamDec}, there exists a unique $p_{\bar{y}} \leq F^{\prime}(0)$ for which the aforementioned equality is satisfied. Specifically, this is determined as  $p_{\bar{y}} = F^{\prime}(\bar{y})$.
\end{proof}

Henceforth, we slightly abuse notations by setting $p_{\bar{y}} = F^{\prime}(\bar{y})$ in the definition of the function $w^{\star}$ as given in \Cref{align:barFyHat}. Then, we proceed to outline several properties of this function $w^\star$ and prove this latter solves the conjugate obstacle problem. This, in turn, allows us to conclude that the concave conjugate of $w^\star$ coincides with the face-lifted function $\bar{F}$.

\begin{remark}
It holds that $w^{\star} \leq F^{\star}$ on $(-\infty, F^{\prime}(0)]$, and $w^\star$ is strictly concave and increasing on $(-\infty, F^{\prime}(0))$. These properties are evident on $[F^{\prime}(\bar{y}),F^{\prime}(0)]$ because $w^{\star}$ coincides with $F^{\star}$ on this interval, while can be proved with arguments similar to those presented in {\rm\Cref{proposition:propWStar}} on $(-\infty, F^{\prime}(\bar{y}))$.
\end{remark}

\begin{proposition}
The function $w^{\star}$ introduced in {\rm \eqref{align:barFyHat}} satisfies {\rm\Cref{align:hjFbarStar}}.
\end{proposition}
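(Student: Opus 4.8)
The plan is to verify directly that the piecewise-defined function $w^{\star}$ from \eqref{align:barFyHat} satisfies the obstacle problem \eqref{align:hjFbarStar} by checking the two branches $(-\infty,F^{\prime}(\bar{y}))$ and $[F^{\prime}(\bar{y}),F^{\prime}(0)]$ separately, and on each branch showing that the minimum in \eqref{align:hjFbarStar} equals zero because \emph{one} of the two terms vanishes and the \emph{other} is non-negative. First I would treat the interval $[F^{\prime}(\bar{y}),F^{\prime}(0)]$, where $w^{\star}=F^{\star}$. Here the first term $(w^{\star})^{\prime}(p)p-w^{\star}(p)-F((w^{\star})^{\prime}(p))$ equals $(F^{\star})^{\prime}(p)p-F^{\star}(p)-F((F^{\star})^{\prime}(p))$, which is identically zero by standard convex duality (the Fenchel equality for the strictly concave $F$). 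So on this branch it remains to check that the second term is non-negative, i.e.
\[
(1-\delta)p(F^{\star})^{\prime}(p)-F^{\star}(p)+F^{\star}(\delta p)+m\geq 0,\quad p\in[F^{\prime}(\bar{y}),F^{\prime}(0)].
\]
By the convex-duality identity of \cite[Proposition 5]{alvarez1997convex}, the left-hand side is exactly $-F_{\delta m}(y)$ evaluated at $y=(F^{\star})^{\prime}(p)\in[0,\bar{y}]$ (using $\delta p = \delta F^{\prime}(y)$), and since $F_{\delta m}$ is non-positive on $[0,\bar{y}]$ by \Cref{assumption:FdeltamDec} and the definition \eqref{eq:ybar} of $\bar{y}$, the inequality holds.

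Next I would treat the branch $(-\infty,F^{\prime}(\bar{y}))$, where $w^{\star}=w^{\star}_{\bar{y}}$. Here the construction \eqref{align:wHatyBar} makes the \emph{second} term vanish identically by definition, so the task is to show the \emph{first} term $(w^{\star}_{\bar{y}})^{\prime}(p)p-w^{\star}_{\bar{y}}(p)-F((w^{\star}_{\bar{y}})^{\prime}(p))\geq 0$ on $(-\infty,F^{\prime}(\bar{y}))$. Since $w^{\star}_{\bar{y}}$ is strictly concave and increasing (as recorded in the \textbf{Remark} just above, proved as in \Cref{proposition:propWStar}), $(w^{\star}_{\bar{y}})^{\prime}$ is positive, so $F$ at that argument makes sense and convex duality gives $F(q)=\inf_{p}\{qp-F^{\star}(p)\}\leq (w^{\star}_{\bar{y}})^{\prime}(p)p-F^{\star}(p)$ — but that is the wrong direction. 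Instead I would argue: the first term, as a function of $p$, has derivative $(w^{\star}_{\bar{y}})^{\prime\prime}(p)\bigl(p-F^{\prime}((w^{\star}_{\bar{y}})^{\prime}(p))\bigr)$ (the other contributions cancel), and I would show this has a sign that forces the term to be non-negative given its boundary value at $p=F^{\prime}(\bar{y})$, where by continuous differentiability $w^{\star}_{\bar{y}}(F^{\prime}(\bar{y}))=F^{\star}(F^{\prime}(\bar{y}))$ and $(w^{\star}_{\bar{y}})^{\prime}(F^{\prime}(\bar{y}))=F^{\prime}(\bar{y})$, so the first term there equals $F^{\prime}(\bar{y})F^{\prime}(\bar{y})-F^{\star}(F^{\prime}(\bar{y}))-F(F^{\prime}(\bar{y}))$; wait — more carefully, at the junction it equals $(w^{\star})^{\prime}(p)p - w^{\star}(p) - F((w^{\star})^{\prime}(p))$ with $w^{\star}=F^{\star}$, which as above is the Fenchel identity and hence $0$. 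So the boundary value of the first term at $F^{\prime}(\bar{y})$ is $0$, and I need monotonicity on $(-\infty,F^{\prime}(\bar{y}))$: using $w^{\star}_{\bar{y}}\leq F^{\star}$ together with the ODE \eqref{align:wHatyBar} and the inequality \eqref{eq:inequalityOnTheWholeDomain}-type estimate, one shows the first term is decreasing in $p$ there, so it is $\geq$ its value at the right endpoint, namely $\geq 0$.

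The main obstacle I anticipate is precisely the sign analysis of the first term on $(-\infty,F^{\prime}(\bar{y}))$: unlike the clean branch $[F^{\prime}(\bar{y}),F^{\prime}(0)]$ where duality kills it outright, here one must combine the ODE \eqref{align:wHatyBar}, strict concavity of $w^{\star}_{\bar{y}}$, the already-established domination $w^{\star}_{\bar{y}}\leq F^{\star}$, and \Cref{assumption:FdeltamDec} to pin down the monotonicity; the bookkeeping with the two different exponents $\tfrac{1}{1-\delta}$ and $\tfrac{\gamma}{\gamma-1}$ (as seen in \Cref{example:positivePowerDeltaBigger1}) makes a fully explicit computation awkward, so I would instead differentiate \eqref{align:wHatyBar} once to get an expression for $(w^{\star}_{\bar{y}})^{\prime\prime}$ and feed it into the derivative of the first term, mirroring the second-derivative computation at the end of the proof of \Cref{proposition:propWStar}. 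Once both branches are handled and continuous differentiability at $F^{\prime}(\bar{y})$ is invoked (already proved in the preceding lemma), the function $w^{\star}$ solves \eqref{align:hjFbarStar}; passing to the concave conjugate via \cite[Theorem 12.2, Theorem 26.6]{rockafellar1970convex} and a verification/dynamic-programming argument exactly as in the proof of \Cref{proposition:FBarGeneralDeltaBigger1}.\ref{mSmall} then yields the stated form of $\bar{F}$ in case \ref{mLarge}.
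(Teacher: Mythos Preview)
Your overall structure---splitting into the two branches and showing on each that one term of the minimum vanishes while the other is non-negative---is exactly the paper's approach, and your treatment of $[F^{\prime}(\bar{y}),F^{\prime}(0)]$ matches it (modulo a harmless double sign slip: the dual expression equals $+F_{\delta m}(y)$, not $-F_{\delta m}(y)$, and $F_{\delta m}$ is non-\emph{negative} on $[0,\bar{y}]$ by the definition of $\bar{y}$; the two errors cancel).

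The real issue is on $(-\infty,F^{\prime}(\bar{y}))$, where you discard the correct argument and then embark on an unnecessary monotonicity computation. Your first attempt is \emph{not} the wrong direction. The Fenchel--Young inequality you wrote, $F(q)\le qp-F^{\star}(p)$ with $q=(w^{\star}_{\bar{y}})^{\prime}(p)$, gives
\[
(w^{\star}_{\bar{y}})^{\prime}(p)\,p-F^{\star}(p)-F\bigl((w^{\star}_{\bar{y}})^{\prime}(p)\bigr)\ge 0,
\]
and combining this with the domination $w^{\star}_{\bar{y}}\le F^{\star}$ (from the preceding Remark) yields immediately
\[
(w^{\star}_{\bar{y}})^{\prime}(p)\,p-w^{\star}_{\bar{y}}(p)-F\bigl((w^{\star}_{\bar{y}})^{\prime}(p)\bigr)\ge(w^{\star}_{\bar{y}})^{\prime}(p)\,p-F^{\star}(p)-F\bigl((w^{\star}_{\bar{y}})^{\prime}(p)\bigr)\ge 0,
\]
which is precisely the paper's two-line proof (the paper phrases the second inequality via concavity of $F$ at the point $(F^{\star})^{\prime}(p)$ and the identity $F^{\prime}((F^{\star})^{\prime}(p))=p$, but this is the same content). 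Your proposed alternative via the derivative $(w^{\star}_{\bar{y}})^{\prime\prime}(p)\bigl(p-F^{\prime}((w^{\star}_{\bar{y}})^{\prime}(p))\bigr)$ would require establishing $(F^{\star})^{\prime}\ge(w^{\star}_{\bar{y}})^{\prime}$ on $(-\infty,F^{\prime}(\bar{y}))$, which does not follow directly from $F^{\star}\ge w^{\star}_{\bar{y}}$ plus the $C^{1}$-matching at the endpoint, so that route demands extra work you have not supplied. Finally, the concluding remarks about passing to the biconjugate and running a verification argument are correct but belong to the subsequent results, not to this proposition.
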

\begin{proof}
The function $w^{\star}$ is given by $F^{\star}$ on $[F^{\prime}(\bar{y}), F^{\prime}(0)]$. Within this interval, the following equations hold:
\begin{align*}
(F^{\star})^{\prime}(p) p - F^{\star}(p) - F((F^{\star})^{\prime}(p)) = 0, \; \text{and} \; - F^{\star}(p) + (1-\delta) p (F^{\star})^{\prime}(p) + F^{\star}(\delta p) + m \geq 0,
\end{align*}
where the inequality is a consequence of the definition of $\bar{y}$ in \eqref{eq:ybar}. Then, we only need to show that the solution $w^{\star}_{\bar{y}}$ of the ODE \eqref{align:wHatyBar} satisfies $(w^{\star}_{\bar{y}})^{\prime}(p) p - w^{\star}_{\bar{y}}(p) - F((w^{\star}_{\bar{y}})^{\prime}(p)) \geq 0$ on $(-\infty, F^{\prime}(\bar{y}))$ in order to conclude the proof. However, this is verified because
\begin{align*}
(w^{\star}_{\bar{y}})^{\prime}(p) p - w^{\star}_{\bar{y}}(p) - F((w^{\star}_{\bar{y}})^{\prime}(p)) &\geq (w^{\star}_{\bar{y}})^{\prime}(p) p - F^{\star}(p) - F((w^{\star}_{\bar{y}})^{\prime}(p)) \\
&\geq (w^{\star}_{\bar{y}})^{\prime}(p) p - F^{\star}(p) - \big(F((F^{\star})^{\prime}(p)) + F^{\prime}((F^{\star})^{\prime}(p))((w^{\star}_{\bar{y}})^{\prime}(p)-(F^{\star})^{\prime}(p))\big) \\
&= (w^{\star}_{\bar{y}})^{\prime}(p) (p - F^{\prime}((F^{\star})^{\prime}(p))) = 0.
\end{align*}
\end{proof}

Up to this point, we have investigated the properties of the function $w^{\star}$. Our current objective is to translate them into characteristics of its concave conjugate $w^{\star\star}$. By standard convex duality, the latter can be expressed as
\begin{align}\label{align:wYBar}
w^{\star\star}(y) = \begin{cases}
	F(y),\; y \in [0,\bar{y}] \\
	w_{\bar{y}}(y), \;  y \in (\bar{y}, \infty)
	\end{cases},
\end{align}
where $w_{\bar{y}}$ represents the concave conjugate of $w^{\star}_{\bar{y}}$ and thus satisfies
\begin{align}\label{align:wybar}
F^{\star}\big(\delta w^\prime_{\bar{y}}(y)\big) - \delta y w^\prime_{\bar{y}}(y) + w_{\bar{y}}(y) + m = 0, \; y \in (\bar{y},\infty), \; w_{\bar{y}}(\bar{y}) = F(\bar{y}), \; w^\prime_{\bar{y}}(\bar{y}) = F^\prime(\bar{y}).
\end{align}

\begin{proposition}\label{theorem:propBarF}
Let $\delta > 1$ and $m > - F^{\star}(\delta F^{\prime}(0))$. The function $w^{\star\star}$ introduced in \eqref{align:wYBar} is a solution to the Hamilton--Jacobi equation \eqref{align:hjFbar} while satisfying the growth condition $\bar{c}_0 (-1-y^\gamma) \leq w^{\star\star}(y) \leq \bar{c}_1 (1-y^\gamma)$ for any $y \geq 0$, for some $(\bar{c}_0,\bar{c}_1) \in( 0,\infty)^2$.
\end{proposition}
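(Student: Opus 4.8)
The plan is to read off every claimed property of $w^{\star\star}$ from the corresponding property of its concave conjugate $w^{\star}$, which in the present section has already been shown to be strictly concave, continuously differentiable, to satisfy $w^{\star}\le F^{\star}$ on $(-\infty,F^{\prime}(0)]$, and to solve the conjugate obstacle problem \eqref{align:hjFbarStar}. Since $w^{\star}$, extended by $-\infty$ outside $(-\infty,F^{\prime}(0)]$, is a closed, strictly concave and continuously differentiable function, \citeauthor*{rockafellar1970convex} \cite[Theorems 12.2 and 26.6]{rockafellar1970convex} ensure that $w^{\star\star}=(w^{\star})^{\star}$ is closed, finite on $[0,\infty)$, strictly concave, continuously differentiable, and that biconjugation is tight, i.e. $(w^{\star\star})^{\star}=w^{\star}$. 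This justifies the piecewise representation \eqref{align:wYBar}: $w^{\star\star}=F$ on $[0,\bar{y}]$ because $w^{\star}\equiv F^{\star}$ on $[F^{\prime}(\bar{y}),F^{\prime}(0)]$, and on $(\bar{y},\infty)$ it equals the concave conjugate $w_{\bar{y}}$ of $w^{\star}_{\bar{y}}$, which solves the ODE \eqref{align:wybar}; the $C^{1}$--matching at $\bar{y}$ is precisely what fixed the choice $p_{\bar{y}}=F^{\prime}(\bar{y})$ in the lemma preceding \eqref{align:wstarybar}.

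With this structure in hand, I would verify the Hamilton--Jacobi equation \eqref{align:hjFbar} region by region. On $[0,\bar{y}]$ the identity $w^{\star\star}=F$ makes the first entry of the minimum vanish and leaves the second entry equal to $F_{\delta m}(y)$ from \eqref{eq:fdecreasing}, which is non-negative on $[0,\bar{y}]$ by the definition \eqref{eq:ybar} of $\bar{y}$ together with \Cref{assumption:FdeltamDec}; the boundary condition $w^{\star\star}(0)=F(0)=0$ also holds. On $(\bar{y},\infty)$, inserting $w_{\bar{y}}^{\prime}(y)$ into \eqref{align:wybar} shows that the second entry of the minimum vanishes, so it only remains to check $w^{\star\star}\ge F$ there; this is immediate from the order-reversal of the concave conjugate applied to $w^{\star}\le F^{\star}$, namely $w^{\star\star}=(w^{\star})^{\star}\ge (F^{\star})^{\star}=F$ on $[0,\infty)$. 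Combining the two regions gives that $w^{\star\star}$ solves \eqref{align:hjFbar}.

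For the growth estimate, the lower bound comes for free: $w^{\star\star}\ge F$ together with \eqref{eq:growthF} yields $w^{\star\star}(y)\ge\tilde{c}_{0}(-1-y^{\gamma})$. For the upper bound it suffices to treat $y>\bar{y}$, since $w^{\star\star}=F\le\tilde{c}_{1}(1-y^{\gamma})$ on $[0,\bar{y}]$; there I would derive the polynomial growth of $w^{\star}_{\bar{y}}$ from its explicit formula \eqref{align:wstarybar} and the growth bounds on $F^{\star}$ recorded in this section (the same computation, via the change of variables $x\longmapsto p\,\mathrm{e}^{\rho(1-\delta)t}$, used in the proof of \Cref{lemma:growthfirstbest}), and then transfer it to $w^{\star\star}$ by concave Legendre duality, using that $\gamma$ and $\gamma/(\gamma-1)$ are conjugate exponents exactly as in \Cref{theorem:propW}. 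The function $w^{\star\star}$ will subsequently be identified with the mixed control--stopping value $\bar{F}$ by the very same verification/dynamic-programming argument as in the proof of \Cref{proposition:FBarGeneralDeltaBigger1}.\ref{mSmall}, but that step is not part of the present statement.

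I expect the only genuine difficulty to be the bookkeeping of the conjugation on the half-line: checking that the biconjugate is tight, so that no spurious affine piece is introduced (this uses the strict concavity and closedness of $w^{\star}$), and that the $C^{1}$ junction at $\bar{y}$ is inherited by $w^{\star\star}$. All the real analytic content has effectively been front-loaded into the ODE \eqref{align:wybar} for $w^{\star}_{\bar{y}}$ and into the sign of $F_{\delta m}$ on $[0,\bar{y}]$, so the remaining steps are essentially formal applications of convex-duality theorems.
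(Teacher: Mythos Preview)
Your proposal is correct and follows essentially the same approach as the paper's proof: use the order-reversal of the concave Legendre transform on $w^{\star}\le F^{\star}$ to obtain $w^{\star\star}\ge F$, combine this with the definition of $\bar{y}$ (so that $F_{\delta m}\ge 0$ on $[0,\bar{y}]$) and the ODE \eqref{align:wybar} on $(\bar{y},\infty)$ to check \eqref{align:hjFbar} region by region, and read off the growth of $w^{\star\star}$ from that of $w^{\star}$ by concave duality. The paper's proof is simply a two-sentence compression of exactly these steps, so your more detailed write-up is a faithful expansion of it.
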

\begin{proof}
As taking conjugates evidently reverses functional inequalities, it follows that $w^{\star\star}$ is always above $F$. This observation, together with the definition of $\bar{y}$, leads us to the conclusion that it is indeed a solution to \Cref{align:hjFbar}. Furthermore, concave duality ensures the growth of $w^{\star\star}$ in accordance with the growth condition established for $w^{\star}$.
\end{proof}

We can now present the main result of this section, which demonstrates that the aforementioned function $w^{\star\star}$ indeed represents the face-lifted utility $\bar{F}$.
\begin{proof}[Proof of \Cref{proposition:FBarGeneralDeltaBigger1}.\ref{mLarge}]
Similar to the proof of the statement in \Cref{proposition:FBarGeneralDeltaBigger1}.\ref{mSmall}, we can show that $w^{\star\star} \geq \bar{F}$ on $[0,\infty)$. To prove the reverse inequality, we need to distinguish two distinct cases. Firstly, when $y_0 \in [0, \bar{y}]$, the trivial control $(T^{\star},p^{\star}) : = (0,0)$ is optimal as it attains the upper bound $w^{\star\star}(y_0)$. This is straightforward to prove, as follows:
\begin{align*}
w^{\star\star}(y_0) = \mathrm{e}^{-\rho T^{\star}} w^{\star\star} \big(y^{y_\smalltext{0},p^{\star}}(T^{\star})\big) + \int_0^{T^{\smalltext\star}} \rho \mathrm{e}^{-\rho t} \big(F(p^{\star}(t)) - m\big) \d t = w^{\star\star}(y^{y_\smalltext{0},p^{\smalltext\star}}(0)) = F\big((y^{y_\smalltext{0},p^{\smalltext{\star}}}(0))\big) = F(y_0).
\end{align*}
Next, we turn our attention to the case where $y_0 > \bar{y}$. In this scenario, we define $p^{\star}(t)$ as the maximiser in the definition of $F^{\star}(\delta (\bar{F}^{\star\star})^\prime(y^{y_\smalltext{0},p^{\smalltext\star}}(t)))$, for $t \in [0,T^{\star}]$, where $T^{\star} \coloneqq\inf\{t\geq 0: y^{y_\smalltext{0},p^{\smalltext{\star}}}(t) \leq \bar{y}\}\in (0,T^{y_\smalltext{0},p^{\smalltext\star}}_0)$. Consequently, the state dynamics is given by
\begin{align*}
\dot{y}^{y_\smalltext{0},p^{\smalltext\star}}(t) = r \bigg(\frac{1-\delta}{\delta}\bigg) \frac{(w^{\star\star})^\prime(y^{y_\smalltext{0},p^{\smalltext\star}}(t))}{(w^{\star\star})^{\prime\prime}(y^{y_\smalltext{0},p^{\smalltext\star}}(t))} \; \text{for any} \; t\in (0,T^{\star}).
\end{align*}
It follows that $y^{y_\smalltext{0},p^{\smalltext\star}}$ is decreasing and, therefore, well-defined on $(0,T^{\star})$. Under the control $(T^{\star},p^{\star})$, it holds that
\begin{align*}
w^{\star\star}(y_0) = \mathrm{e}^{-\rho T^{\smalltext\star}} w^{\star\star}(y^{y_\smalltext{0},p^{\smalltext\star}}(T^{\star})) + \int_0^{T^{\smalltext\star}} \rho \mathrm{e}^{-\rho t} \big(F(p^{\star}(t)) - m\big) \d t 
	&= \mathrm{e}^{-\rho T^{\smalltext\star}} F(\bar{y}) + \int_0^{T^{\smalltext\star}} \rho \mathrm{e}^{-\rho t} \big(F(p^{\star}(t)) - m\big) \d t \\
	&= \mathrm{e}^{-\rho T^{\smalltext\star}} F(y^{y_0,p^{\star}}(T^{\star})) + \int_0^{T^{\smalltext\star}} \rho \mathrm{e}^{-\rho t} \big(F(p^{\star}(t)) - m\big) \d t.
\end{align*}
Here, we have used the fact that $y^{y_\smalltext{0},p^{\smalltext\star}}(T^{\star})=\bar{y}$ and that $v_{\bar{y}}(\bar{y}) = F(\bar{y})$. We deduce that $(T^{\star},p^{\star})$ is an optimal control for the problem $\bar{F}$ when $y_0 > \bar{y}$. This concludes the proof.
\end{proof}

\subsection{When the principal becomes impatient, but not too much}

We assume that $\delta < 1$ and $\gamma\delta > 1$. Reasoning analogous to that developed in \Cref{lemma:wEqualF} suggests that any solution $w$ to the Hamilton-Jacobi equation \eqref{align:hjFbar} coincides with the function $F$ on $[0,\varepsilon)$ for some $\varepsilon > 0$. Furthermore, $w$ is continuously differentiable only when it equals $F$ on the non-degenerate interval $[0,\bar{y}]$ (it is important to highlight that $\bar{y}$ is positive under the assumptions of this section). However, this choice is inadmissible since it leads to a solution $w$ that falls below the barrier $F$ on a right-neighbourhood of $\bar{y}$. Therefore, we propose the \emph{ansatz} that $w$ takes the form outlined in \Cref{proposition:FBarGeneralDeltaSmaller1}. Given that the suggested function $w$ fails to be differentiable everywhere, our next results necessitate the use of the theory of viscosity solutions. For a more comprehensive treatment in the case of Hamilton--Jacobi equations, we refer the reader to \citeauthor*{lions1982generalized} \cite{lions1982generalized}.

\begin{proposition}\label{theorem:propFbarYhat}
Assuming $\delta < 1$ and $\gamma\delta > 1$, the function
\begin{align*}
v_{\tilde{y}}(y) \coloneqq \begin{cases}
	F(y), \; y \in [0,\tilde{y}] \\
	w_{0}(y)-m, \; y \in (\tilde{y}, \infty)
	\end{cases},
\end{align*}
becomes a viscosity solution to the Hamilton--Jacobi equation \eqref{align:hjFbar}. Here, $w_{0}$ denotes the face-lifted utility that is introduced in {\rm\cite[Proposition 2.1]{possamai2020there}} and readdressed in {\rm\Cref{align:w0nojumps}}, while $\tilde{y}$ is defined in {\rm\Cref{lemma:existenceOfyTilde}}. Additionally, we have that $\bar{c}_0 (-1-y^\gamma) \leq v_{\tilde{y}}(y) \leq \bar{c}_1 (1-y^\gamma)$, for $y \geq 0$, for some $(\bar{c}_0,\bar{c}_1)\in(0,\infty)^2$.
\end{proposition}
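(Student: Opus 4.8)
The strategy is to verify directly that the candidate $v_{\tilde y}$ is a viscosity solution of \eqref{align:hjFbar} by checking the sub- and super-solution properties separately on the three relevant regions: the open interval $(0,\tilde y)$, the open interval $(\tilde y,\infty)$, and the single gluing point $\tilde y$. On $(0,\tilde y)$ the function equals $F$, which is smooth, so there the PDE is checked classically: the first term $\bar F(y)-F(y)$ vanishes, so it suffices to show $F^\star(\delta F^\prime(y))-\delta yF^\prime(y)+F(y)+m\ge 0$, which is exactly $F_{\delta m}(y)\ge 0$; this holds because $\tilde y<\bar y$ and \Cref{assumption:FdeltamDec} makes $F_{\delta m}$ decreasing with $F_{\delta m}(\bar y)=0$. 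On $(\tilde y,\infty)$ the function equals $w_0-m$, which is $C^2$ by \cite[Proposition 2.1]{possamai2020there}; there $v_{\tilde y}-F=w_0(y)-m-F(y)=-\chi(y)+0$\dots more precisely $w_0(y)-m-F(y)\ge 0$ for $y>\tilde y$ because $\chi$ is decreasing and $\chi(\tilde y)=0$ (from \Cref{lemma:existenceOfyTilde}), so the obstacle term is $\ge 0$; and the second term vanishes because $w_0$ solves \eqref{eq:eqw0NoJumps}, so $F^\star(\delta w_0^\prime)-\delta y w_0^\prime+(w_0-m)+m=0$. Thus on both open intervals $v_{\tilde y}$ is a classical solution.

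The only genuine work is at the gluing point $y=\tilde y$, and here I would argue as follows. For the sub-solution property one takes a smooth $\phi$ touching $v_{\tilde y}$ from above at $\tilde y$; since $v_{\tilde y}$ agrees with $F$ to the left and with $w_0-m$ to the right of $\tilde y$, the one-sided derivatives satisfy $F^\prime(\tilde y)=(v_{\tilde y})^\prime_-(\tilde y)\le \phi^\prime(\tilde y)\le (v_{\tilde y})^\prime_+(\tilde y)=w_0^\prime(\tilde y)$, and by the claim $F^\prime<w_0^\prime$ established in the proof of \Cref{lemma:existenceOfyTilde} this interval is nondegenerate. Since the obstacle term $v_{\tilde y}(\tilde y)-F(\tilde y)=0$, the minimum in \eqref{align:hjFbar} is automatically $\le 0$, so the sub-solution inequality holds trivially at $\tilde y$. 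For the super-solution property one takes smooth $\phi$ touching from below at $\tilde y$; then necessarily $\phi^\prime(\tilde y)$ lies between the two one-sided slopes as well, i.e. $\phi^\prime(\tilde y)\in[F^\prime(\tilde y),w_0^\prime(\tilde y)]$ — but for a test function from below at an interior point the two-sided sandwich forces $\phi^\prime(\tilde y)\in$ this interval only if it is a singleton, so one must instead show: for every $p\in[F^\prime(\tilde y),w_0^\prime(\tilde y)]$, $F^\star(\delta p)-\delta\tilde y p+v_{\tilde y}(\tilde y)+m\ge 0$. This reduces to checking the function $p\mapsto F^\star(\delta p)-\delta\tilde y p$ is $\ge -F(\tilde y)-m$ on that interval. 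Using $v_{\tilde y}(\tilde y)=F(\tilde y)=w_0(\tilde y)-m$ and the identities $F_{\delta m}(\tilde y)>0$ (since $\tilde y<\bar y$) and $F^\star(\delta w_0^\prime(\tilde y))-\delta\tilde y w_0^\prime(\tilde y)+w_0(\tilde y)=0$ (the $w_0$-equation), the quantity equals $F_{\delta m}(\tilde y)>0$ at $p=F^\prime(\tilde y)$ and equals $0$ at $p=w_0^\prime(\tilde y)$; since $p\mapsto F^\star(\delta p)-\delta\tilde y p$ is concave with maximiser where $\delta(F^\star)^\prime(\delta p)=\tilde y$, a short monotonicity argument (the maximiser lies to the left of $w_0^\prime(\tilde y)$ because $(F^\star)^\prime$ is increasing and $\delta(F^\star)^\prime(\delta w_0^\prime(\tilde y))=(F^\star)^\prime(\delta w_0^\prime(\tilde y))\cdot\delta<\tilde y$, using $\delta<1$) shows it is decreasing on $[F^\prime(\tilde y),w_0^\prime(\tilde y)]$, hence $\ge 0$ throughout. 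This gives the super-solution inequality at $\tilde y$.

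Finally, the growth bound $\bar c_0(-1-y^\gamma)\le v_{\tilde y}(y)\le\bar c_1(1-y^\gamma)$ is inherited piecewise: on $[0,\tilde y]$ from \eqref{eq:growthF} for $F$, and on $(\tilde y,\infty)$ from the analogous growth of $w_0$ proved in \cite[Proposition 2.1]{possamai2020there} (shifting by the constant $-m$ only affects the constants $\bar c_0,\bar c_1$). Continuity of $v_{\tilde y}$ at $\tilde y$ is exactly the defining equation $F(\tilde y)=w_0(\tilde y)-m$ of \Cref{lemma:existenceOfyTilde}, so $v_{\tilde y}$ is a well-defined continuous function and the viscosity-solution verification above is complete.

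The main obstacle I anticipate is the super-solution check at $\tilde y$: unlike the sub-solution side, where the obstacle term rescues us for free, here one must genuinely control the sign of $F^\star(\delta p)-\delta\tilde y p+F(\tilde y)+m$ uniformly over the whole slope interval $[F^\prime(\tilde y),w_0^\prime(\tilde y)]$, and this is where the hypotheses $\delta<1$, $\gamma\delta>1$, \Cref{assumption:FdeltamDec}, and the precise location of $\tilde y$ strictly between $\hat y$ and $\bar y$ all get used together. (A secondary subtlety is making sure no smooth test function can touch $v_{\tilde y}$ from below at $\tilde y$ with a slope outside the sandwich interval — this follows because a $C^1$ lower bound at an interior point must have its derivative squeezed between the two Dini derivatives of $v_{\tilde y}$, which are $F^\prime(\tilde y)$ and $w_0^\prime(\tilde y)$.)
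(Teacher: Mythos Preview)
Your approach is essentially the same as the paper's and is correct: verify the equation classically on $(0,\tilde y)$ and $(\tilde y,\infty)$, then handle the gluing point $\tilde y$ via the sub- and super-differentials, with the growth bound inherited piecewise. One small slip: for a smooth $\phi$ touching $v_{\tilde y}$ \emph{from above} at a convex kink (left slope $F'(\tilde y)<$ right slope $w_0'(\tilde y)$), the sandwich you wrote goes the wrong way---in fact no such $\phi$ exists, i.e.\ $D^+v_{\tilde y}(\tilde y)=\varnothing$, which is precisely how the paper dispatches the sub-solution property at $\tilde y$. This does not damage your argument, since your independent observation that the obstacle term $v_{\tilde y}(\tilde y)-F(\tilde y)=0$ already forces the minimum to be $\le 0$ is valid on its own; just drop the incorrect sandwich claim. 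Your super-solution check at $\tilde y$ (monotonicity of $p\mapsto F^\star(\delta p)-\delta\tilde y p$ on $[F'(\tilde y),w_0'(\tilde y)]$, value $0$ at the right endpoint) matches the paper exactly.
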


\begin{proof}
Firstly, it is important to note that the growth condition directly follows from \cite[Proposition 2.1]{possamai2020there}. Subsequently, as highlighted in \Cref{lemma:existenceOfyTilde}, the function $v_{\tilde{y}}$ is continuous on $(0,\infty)$ and continuously differentiable on the same interval, except at the point $\tilde{y}>0$, by definition. We can then prove that $v_{\tilde{y}}$ satisfies the Hamilton--Jacobi equation \eqref{align:hjFbar} in three distinct steps based on the interval of the non-negative half-line under consideration. Initially, it is trivial that $v_{\tilde{y}}$ solves \Cref{align:hjFbar} on $(0,\tilde{y})$ since $\tilde{y} < \bar{y}$. The proof is also straightforward on $(\tilde{y},\infty)$, given that $w_{\tilde{y}}$ is a solution to the ODE \eqref{eq:eqw0NoJumps}. It is only sufficient to check whether $v_{\tilde{y}}$ is above $F$ on that interval but this is simply verified. In fact, the proof of \Cref{lemma:existenceOfyTilde} implies that the function $v_{\tilde{y}} - F = w_{0} -m -F$ is increasing on $(\tilde{y},\infty)$ and $(v_{\tilde{y}} - F)(\tilde{y}) =0$.

\medskip
Lastly, the proof of the viscosity solution property of $v_{\tilde{y}}$ at $\tilde{y}$ requires our focus on examining the super-differential $D^+v_{\tilde{y}}(\tilde{y})$ and the sub-differential $D^-v_{\tilde{y}}(\tilde{y})$ of the function at that particular point, see for instance \citeauthor*{bardi1997optimal} \cite[Lemma I.1.7]{bardi1997optimal}. As a direct consequence of this result, the viscosity super-solution property of $v_{\tilde{y}}$ at $\tilde{y}$ turns out to be equivalent to the following condition:
\begin{align}\label{eq:conditionSupersolyionYHat}
\min\left\{v_{\tilde{y}}(\tilde{y}) - F(\tilde{y}), F^{\star}(\delta p) - \delta \tilde{y} p + v_{\tilde{y}}(\tilde{y}) + m\right\} = \min\left\{0, F^{\star}(\delta p) - \delta \tilde{y} p + w_0(\tilde{y})\right\} \geq 0 \; \text{for any}\; p \in D^-v_{\tilde{y}}(\tilde{y}).
\end{align}
Therefore, \Cref{eq:conditionSupersolyionYHat} is verified if and only if $F^{\star}(\delta p) - \delta \tilde{y} p + w_0(\tilde{y}) \geq 0$ for any $p \in D^-v_{\tilde{y}}(\tilde{y})$. This condition is derived straightforwardly from the fact that the function $[F^\prime(\tilde{y}),w_0^\prime(\tilde{y})] \ni p \longmapsto F^{\star}(\delta p) - \delta \tilde{y} p + w_0(\tilde{y})$ is decreasing and is null at $w_0^\prime(\tilde{y})$ due to the ODE satisfied by $w_0$, while $D^-v_{\tilde{y}}(\tilde{y}) = \{p \in \R: F^\prime(\tilde{y}) \leq p \leq w^\prime_0(\tilde{y})\}$. However, the proof of this last assertion is deferred to the end for the sake of clarity. Consequently, we can conclude that $v_{\tilde{y}}$ constitutes a viscosity solution of the Hamilton--Jacobi equation \eqref{align:hjFbar} at $\tilde{y}$ because \cite[Lemma II.1.8]{bardi1997optimal} implies that the super-differential $D^+v_{\tilde{y}}(\tilde{y})$ is empty due to the function not being continuously differentiable at that specific point.

\medskip
To complete the proof, it remains to prove the claim $D^-v_{\tilde{y}}(\tilde{y}) = \{p \in \R: F^\prime(\tilde{y}) \leq p \leq w^\prime_0(\tilde{y})\}$. For any $p >w^\prime_0(\tilde{y})$, it is evident that
\begin{align*}
\liminf_{y \rightarrow \tilde{y}}\bigg\{ \frac{v_{\tilde{y}}(y) - v_{\tilde{y}}(\tilde{y})-p(y-\tilde{y})}{|\tilde{y}-y|}\bigg\} \leq \liminf_{y \rightarrow \tilde{y}^{\smalltext{+}}} \bigg\{\frac{w_{0}(y) - w_{0}(\tilde{y})-p(y-\tilde{y})}{|\tilde{y}-y|}\bigg\} \leq  w^\prime_{0}(\tilde{y})- p < 0.
\end{align*}
Here, we have used the fact that the function $w_0$ is strictly concave on its domain of definition. Therefore, it follows that $p \notin D^-v_{\tilde{y}}(\tilde{y})$, and the same conclusion holds for any $p < F^{\prime}(\tilde{y})$:
\begin{align*}
\liminf_{y \rightarrow \tilde{y}}\bigg\{ \frac{v_{\tilde{y}}(y) - v_{\tilde{y}}(\tilde{y})-p(y-\tilde{y})}{|\tilde{y}-y|}\bigg\} \leq \liminf_{y \rightarrow \tilde{y}^{\smalltext{-}}} \bigg\{\frac{F(y) - F(\tilde{y})-p(y-\tilde{y})}{|\tilde{y}-y|}\bigg\} \leq p - F^{\prime}(\tilde{y}) < 0,
\end{align*}
taking into account the concavity of $F$. For any $p \in [F^{\prime}(\tilde{y}),w^\prime(\tilde{y})]$, let us define the continuously differentiable function 
\begin{equation*}
\phi_{p} (y) \coloneqq F(\tilde{y}) + p (y-\tilde{y}) -  (y-\tilde{y})^{\frac{1}{\gamma}}, \; \text{for} \; y \geq 0.
\end{equation*}
We can justify that $\tilde{y}$ is a local minimum of the difference $v_{\tilde{y}} - \phi_p$ because $p$ falls within the interval $[(v_{\tilde{y}})_{-}^\prime(\tilde{y}),(v_{\tilde{y}})_{+}^\prime(\tilde{y})] = [F^\prime(\tilde{y}),0]$. According to \cite[Lemma II.1.7]{bardi1997optimal}, it follows that $p \in D^-v_{\tilde{y}}(\tilde{y})$.
\end{proof}

We have just proved that $v_{\tilde{y}}$ is a viscosity solution of the Hamilton--Jacobi equation \eqref{align:hjFbar}. Next, we proceed to show that the function $\bar{F}$, defined as the solution to the mixed control-stopping problem in \eqref{eq:FbarEquation}, also satisfies the same equation in the viscosity sense. Subsequently, by invoking a comparison theorem, we can effectively characterise $\bar{F}$ as the unique viscosity solution, with appropriate growth at infinity. This guarantees the equivalence of the two aforementioned functions. To this aim, we introduce the following result, which is fundamental for characterising $\bar{F}$ as a viscosity solution of \Cref{eq:FbarEquation}, as proven in \Cref{theorem:viscositySolFbar} below.

\begin{lemma}\label{lemma:dppINFSUP}
Let us fix some $y_0 \geq 0$. For any time $\theta \geq 0$, it holds that 
\begin{align*}
\bar{F}(y_0) &\leq \sup_{p \in \cB_{\smalltext{\R}_\tinytext{+}}} \sup_{T \in [0, T^{\smalltext{y}_\tinytext{0}\smalltext{,}\smalltext{p}}_\smalltext{0}]} \bigg\{ \int_0^{T \wedge \theta} \rho \mathrm{e}^{-\rho t} (- m + F(p(t))) \d t + \mathrm{e}^{-\rho T} F(y^{y_\smalltext{0},p}(T))  \mathbf{ 1}_{\{T < \theta\}} + \mathrm{e}^{-\rho \theta} \bar{F}^s (y^{y_\smalltext{0},p}(\theta))  \mathbf{ 1}_{\{T \geq \theta\}} \bigg\} \\
\bar{F}(y_0) &\geq \sup_{p \in \cB_{\smalltext{\R}_\tinytext{+}}} \sup_{T \in [0, T^{\smalltext{y}_\tinytext{0}\smalltext{,}\smalltext{p}}_\smalltext{0}]} \bigg\{ \int_0^{T \wedge \theta} \rho \mathrm{e}^{-\rho t} \big(- m + F(p(t))\big) \d t + \mathrm{e}^{-\rho T} F(y^{y_\smalltext{0},p}(T))\; \mathbf{ 1}_{\{T < \theta\}} + \mathrm{e}^{-\rho \theta} \bar{F}_i (y^{y_\smalltext{0},p}(\theta))\; \mathbf{ 1}_{\{T \geq \theta\}} \bigg\}.
\end{align*}
Here, the functions $\bar{F}^s$ and $\bar{F}_i$ denote the upper--semi-continuous and the lower--semi-continuous envelope of $\bar{F}$, respectively.
\end{lemma}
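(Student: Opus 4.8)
The plan is to establish the two inequalities as a (weak) dynamic programming principle for the deterministic mixed control--stopping problem \eqref{eq:FbarEquation}. Fix $y_0\geq 0$ and a horizon $\theta\geq 0$. The key structural fact I would use is the \emph{flow property} of the state ODE \eqref{eq:stateYdynamics}: for any $p\in\cB_{\R_\smalltext{+}}$ and any $t$ before the exit time $T^{y_\smalltext{0},p}_0$, one has $y^{y_\smalltext{0},p}(\theta+t)=y^{y^{y_\smalltext{0},p}(\theta),p(\theta+\cdot)}(t)$, and the discounted running cost splits additively as
\begin{equation*}
\int_0^{T}\rho\mathrm{e}^{-\rho t}\bigl(-m+F(p(t))\bigr)\d t=\int_0^{\theta}\rho\mathrm{e}^{-\rho t}\bigl(-m+F(p(t))\bigr)\d t+\mathrm{e}^{-\rho\theta}\int_0^{T-\theta}\rho\mathrm{e}^{-\rho s}\bigl(-m+F(\widetilde p(s))\bigr)\d s,
\end{equation*}
with $\widetilde p\coloneqq p(\theta+\cdot)$, valid on the event $\{T\geq\theta\}$. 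On the complementary event $\{T<\theta\}$ the payoff already equals $\int_0^{T}\rho\mathrm{e}^{-\rho t}(-m+F(p(t)))\d t+\mathrm{e}^{-\rho T}F(y^{y_\smalltext{0},p}(T))$, so the terms involving $\theta$ are irrelevant there; this is why $T\wedge\theta$ and the indicator $\mathbf 1_{\{T<\theta\}}$ appear in both displayed bounds.

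For the \textbf{first (upper) inequality}: take any admissible pair $(p,T)$ with $T\in[0,T^{y_\smalltext{0},p}_0]$. If $T<\theta$ there is nothing to prove since the corresponding term in the supremum on the right already dominates the contribution of $(p,T)$ to $\bar F(y_0)$. If $T\geq\theta$, apply the split above: the tail piece $\mathrm{e}^{-\rho(T-\theta)}F(y^{y^{y_\smalltext{0},p}(\theta),\widetilde p}(T-\theta))\mathbf 1_{\{T<\infty\}}+\int_0^{T-\theta}\rho\mathrm{e}^{-\rho s}(-m+F(\widetilde p(s)))\d s$ is a particular admissible value for the control--stopping problem started at $y^{y_\smalltext{0},p}(\theta)$, hence is $\leq\bar F(y^{y_\smalltext{0},p}(\theta))\leq\bar F^{s}(y^{y_\smalltext{0},p}(\theta))$. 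Substituting and taking the supremum over $(p,T)$ gives the claimed bound. Here I should be slightly careful when $T=\infty$: the tail piece is then the $\limsup$ of the truncated payoffs, which is still bounded by $\bar F(y^{y_\smalltext{0},p}(\theta))$ by definition of the latter as a supremum over $[0,T^{y^{y_\smalltext{0},p}(\theta),\widetilde p}_0]$ (including the value $+\infty$).

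For the \textbf{second (lower) inequality}: fix any $p\in\cB_{\R_\smalltext{+}}$ and $T\in[0,T^{y_\smalltext{0},p}_0]$; I want to exhibit, from this datum, an admissible competitor for $\bar F(y_0)$ whose payoff is at least the right-hand expression evaluated at $(p,T)$, \emph{up to} $\eps$. On $\{T<\theta\}$ the pair $(p,T)$ itself already realises the desired value. On $\{T\geq\theta\}$, note that $\bar F_i(y^{y_\smalltext{0},p}(\theta))=\liminf_{y\to y^{y_\smalltext{0},p}(\theta)}\bar F(y)$, so for any $\eps>0$ there is a point $y_\eps$ close to $y^{y_\smalltext{0},p}(\theta)$ and an admissible control--stopping pair $(q_\eps,S_\eps)$ started at $y_\eps$ whose payoff exceeds $\bar F_i(y^{y_\smalltext{0},p}(\theta))-\eps$; by continuous dependence of the explicit flow $y^{y_0,p}(t)=\mathrm{e}^{rt}(y_0-\int_0^t r\mathrm{e}^{-rs}p(s)\d s)$ on the initial condition, one may instead reach $y^{y_\smalltext{0},p}(\theta)$ exactly before time $\theta$ by a small perturbation of $p$ on $[0,\theta]$ that changes the running payoff by at most $\eps$ (the integrand is bounded on the relevant compact, and shrinking the exit time only helps). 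Concatenating this perturbed control on $[0,\theta]$ with $(q_\eps,S_\eps)$ on $[\theta,\infty)$ produces an admissible pair for $\bar F(y_0)$ whose total discounted payoff is at least $\int_0^{\theta}\rho\mathrm{e}^{-\rho t}(-m+F(p(t)))\d t+\mathrm{e}^{-\rho\theta}\bar F_i(y^{y_\smalltext{0},p}(\theta))-C\eps$. Taking $\eps\downarrow 0$ and then the supremum over $(p,T)$ yields the second inequality. The \textbf{main obstacle} is precisely this concatenation/approximation step on the lower bound: because $\bar F$ is only lower--semi-continuous a priori, one cannot simply plug in $y^{y_\smalltext{0},p}(\theta)$ as an initial point of an \emph{optimal} sub-problem, so one has to combine an $\eps$-optimal sub-control with a controllability argument for the linear ODE to hit the target state, while keeping the running cost perturbation under control; measurable selection of the $\eps$-optimal continuation (to keep $q_\eps$ Borel-measurable in the concatenated control) is the technical point that needs the most care.
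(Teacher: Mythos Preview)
Your argument for the upper inequality is correct and matches the paper's proof exactly: split at $\theta$, invoke the flow property, and bound the tail by $\bar F\leq\bar F^{s}$.

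For the lower inequality, however, you are working much harder than necessary, and the ``main obstacle'' you identify is not actually an obstacle. The key point you miss is that $\bar F_i\leq\bar F$ \emph{pointwise} by definition of the lower envelope. Hence, for the fixed deterministic number $y^{y_0,p}(\theta)$ and any $\varepsilon>0$, the very definition of $\bar F(y^{y_0,p}(\theta))$ as a supremum yields an $\varepsilon$-optimal pair $(p^{\varepsilon},T^{\varepsilon})$ started \emph{exactly} at $y^{y_0,p}(\theta)$ with payoff at least $\bar F(y^{y_0,p}(\theta))-\varepsilon\geq\bar F_i(y^{y_0,p}(\theta))-\varepsilon$. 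Concatenating $p$ on $[0,\theta]$ with $p^{\varepsilon}(\cdot-\theta)$ on $[\theta,\infty)$ gives an admissible control for $\bar F(y_0)$, and the inequality follows by sending $\varepsilon\downarrow 0$ and then taking the supremum over $(p,T)$. This is precisely the paper's argument.

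Your detour through a nearby point $y_\varepsilon$, a controllability perturbation of $p$ on $[0,\theta]$, and a measurable selection concern is therefore unnecessary: since the problem is deterministic and $y^{y_0,p}(\theta)$ is a single real number once $p$ is fixed, no selection issue arises, and there is no need to approximate the initial condition of the sub-problem. Your route would work, but it manufactures difficulties that the pointwise inequality $\bar F_i\leq\bar F$ dissolves immediately.
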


\begin{proof}
Let us consider some $p \in \cB_{\R_+}$ and $T \in [0, T^{y_\smalltext{0},p}_0]$, and denote
\begin{align*}
f^{y_{\smalltext{0}}}(p,T) \coloneqq &\int_0^T \rho \mathrm{e}^{-\rho t} \big(- m + F(p(t))\big) \d t + \mathrm{e}^{-\rho T} F\big(y^{y_\smalltext{0},p}(T)\big) \mathbf{ 1}_{\{T < \infty\}}.
\end{align*}
It follows that
\begin{align*}
f^{y_{\smalltext{0}}}(p,T) &= f^{y_{\smalltext{0}}}(p,T) \mathbf{1}_{\{T < \theta\}} \\
	&\quad+\bigg(\int_0^{\theta} \rho \mathrm{e}^{-\rho t} \big(- m + F(p(t))\big) \d t + \int_{\theta}^T \rho \mathrm{e}^{-\rho t} \big(- m + F(p(t))\big) \d t + \mathrm{e}^{-\rho T} F\big(y^{y_\smalltext{0},p}(T)\big)\mathbf{ 1}_{\{T < \infty\}}\bigg)\mathbf{ 1}_{\{T \geq \theta\}} \\
	&=f^{y_{\smalltext{0}}}(p,T) \mathbf{1}_{\{T < \theta\}} +\bigg(\int_0^{\theta} \rho \mathrm{e}^{-\rho t} \big(- m + F(p(t))\big) \d t \bigg)\mathbf{ 1}_{\{T \geq \theta\}}\\
	&\quad + \mathrm{e}^{-\rho \theta} \bigg(\int_0^{T - \theta} \rho \mathrm{e}^{-\rho t} \big(- m + F(\tilde{p}(t))\big) \d t + \mathrm{e}^{-\rho (T-\theta)} F\big(y^{{y^{\smalltext{y}_\tinytext{0}\smalltext{,}\smalltext{p}}(\theta)},\tilde{p}}(T-\theta)\big) \mathbf{ 1}_{\{T < \infty\}}\bigg) \mathbf{ 1}_{\{T \geq \theta\}}, 
\end{align*}
where $\tilde{p}(t) \coloneqq p(t-\theta)$, for $t \geq 0$. Since $\tilde{p} \in \cB_{\R_\smalltext{+}}$, we have
\begin{align*}
f^{y_{\smalltext{0}}}(p,T) &\leq f^{y_{\smalltext{0}}}(p,T) \mathbf{ 1}_{\{T < \theta\}} + \bigg(\int_0^{\theta} \rho \mathrm{e}^{-\rho t} \big(- m + F(p(t))\big) \d t + \mathrm{e}^{-\rho \theta} \bar{F}^s (y^{y_\smalltext{0},p}(\theta)) \bigg) \mathbf{ 1}_{\{T \geq \theta\}}.
\end{align*}
We deduce that the first inequality simply follows by taking the supremum over $p \in \cB_{\R_\smalltext{+}}$ and $T \in \left[0, T^{y_\smalltext{0},p}_0\right]$. In order to prove the second inequality, let us fix $\varepsilon > 0$. For any $y\geq 0$, there exist some $p^{\varepsilon, y} \in \cB_{\R_\smalltext{+}}$ and $T^{\varepsilon, y} \in [0, T^{y,p^{\smalltext{\varepsilon}\smalltext{,}\smalltext{y}}}_0]$ such that 
\begin{equation}\label{eq:DPPUImpoinequality}
\int_0^{T^{\smalltext{\varepsilon}\smalltext{,}\smalltext{y}}} \rho \mathrm{e}^{-\rho t} \big(- m + F(p^{\varepsilon, y}(t))\big) \d t + \mathrm{e}^{-\rho T^{\smalltext{\varepsilon}\smalltext{,}\smalltext{y}}} F\big(y^{y,p^{\smalltext{\varepsilon}\smalltext{,}\smalltext{y}}}(T^{\varepsilon, y})\big) \mathbf{ 1}_{\{T^{\smalltext{\varepsilon}\smalltext{,}\smalltext{y}} < \infty\}} \geq \bar{F}_i(y) - \varepsilon. 
\end{equation}
Let us fix $p \in \cB_{\R_\smalltext{+}}$ and $T \in [0, T^{y_\smalltext{0},p}_0]$. It holds that 
\begin{align*}
\bar{F}(y_0) &\geq f^{y_{\smalltext{0}}}(p,T) \mathbf{ 1}_{\{T < \theta\}} +\bigg(\int_0^{\theta + T^{\smalltext{\varepsilon}\smalltext{,}\smalltext{y}^{\tinytext{y}_\stinytext{0}\tinytext{,}\tinytext{p}}(\theta)}} \rho \mathrm{e}^{-\rho t} \big(- m + F(\tilde{p}^{\varepsilon, y^{y_\smalltext{0},p}(\theta)}(t))\big) \d t \bigg) \mathbf{1}_{\{T \geq \theta\}}\\
	&\quad + \mathrm{e}^{-\rho (\theta + T^{\smalltext{\varepsilon}\smalltext{,}\smalltext{y}^{\tinytext{y}_\stinytext{0}\tinytext{,}\tinytext{p}}(\theta)})} F\Big(y^{y_\smalltext{0},\tilde{p}^{\smalltext{\varepsilon}\smalltext{,}\smalltext{y}^{\tinytext{y}_\stinytext{0}\tinytext{,}\tinytext{p}}(\theta)}}\big(\theta + T^{\smalltext{\varepsilon}\smalltext{,}\smalltext{y}^{\tinytext{y}_\stinytext{0}\tinytext{,}\tinytext{p}}(\theta)}\big)\Big)  \mathbf{ 1}_{\{T^{\smalltext{\varepsilon}\smalltext{,}\smalltext{y}^{\tinytext{y}_\stinytext{0}\tinytext{,}\tinytext{p}}(\theta)} < \infty \}}  \mathbf{1}_{\{T \geq \theta\}},
\end{align*}
where 
\begin{align*}
\tilde{p}^{\varepsilon, y^{\smalltext{y}_\tinytext{0}\smalltext{,}\smalltext{p}}(\theta)}(t) \coloneqq 
\begin{cases} 
p(t), \; t \in [0, \theta)\\
p^{\varepsilon, y^{\smalltext{y}_\tinytext{0}\smalltext{,}\smalltext{p}}(\theta)}(t-\theta), \; t \in [\theta, \infty)
\end{cases}.
\end{align*}
As $\tilde{p}^\varepsilon \in \cB_{\R_\smalltext{+}}$ and $\theta + T^{\varepsilon, y^{\smalltext{y}_\tinytext{0}\smalltext{,}\smalltext{p}}_\theta} \in [0, T^{y_\smalltext{0},\tilde{p}^\varepsilon}_0]$, we can apply the previous inequality \eqref{eq:DPPUImpoinequality} to get
\begin{align*}
\bar{F}(y_0) &\geq \int_0^{T \wedge \theta} \rho \mathrm{e}^{-\rho t} \big(- m + F(p(t))\big) \d t + \mathrm{e}^{-\rho T} F(y^{y_\smalltext{0},p}(T)) \mathbf{ 1}_{\{T < \theta\}} \\
&\quad+\bigg(\mathrm{e}^{-\rho \theta} \int_{\theta}^{\theta + T^{\smalltext{\varepsilon}\smalltext{,}\smalltext{y}^{\tinytext{y}_\stinytext{0}\tinytext{,}\tinytext{p}}(\theta)}} \rho \mathrm{e}^{-\rho (t-\theta)} \big(- m + F(p^{\varepsilon, y^{y_\smalltext{0},p}(\theta)}(t-\theta))\big) \d t\bigg)  \mathbf{1}_{\{T \geq \theta\}} \\
&\quad +\mathrm{e}^{-\rho \theta} \mathrm{e}^{-\rho T^{\smalltext{\varepsilon}\smalltext{,}\smalltext{y}^{\tinytext{y}_\stinytext{0}\tinytext{,}\tinytext{p}}(\theta)}} F\Big(y^{y^{\smalltext{y}_\tinytext{0}\smalltext{,}\smalltext{p}}(\theta),p^{\smalltext{\varepsilon}\smalltext{,}\smalltext{y}^{\tinytext{y}_\stinytext{0}\tinytext{,}\tinytext{p}}(\theta)}}\big(T^{\varepsilon, y^{\smalltext{y}_\tinytext{0}\smalltext{,}\smalltext{p}}(\theta)}\big)\Big) \mathbf{ 1}_{\{T^{\smalltext{\varepsilon}\smalltext{,}\smalltext{y}^{\tinytext{y}_\stinytext{0}\tinytext{,}\tinytext{p}}(\theta)} < \infty \}}  \mathbf{ 1}_{\{T \geq \theta\}} \\
&\geq \int_0^{T \wedge \theta} \rho \mathrm{e}^{-\rho t} \big(- m + F(p(t))\big) \d t + \mathrm{e}^{-\rho T} F\big(y^{y_\smalltext{0},p}(T)\big)\; \mathbf{ 1}_{\{T < \theta\}} + \big(\bar{F}_i(y^{y_\smalltext{0},p}(\theta)) - \varepsilon\big) \mathbf{ 1}_{\{T \geq \theta\}}.
\end{align*}
The conclusion follows by taking the supremum over $p \in \cB_{\R_\smalltext{+}}$ and $T \in [0, T^{y_\smalltext{0},p}_0]$, and then by sending $\varepsilon$ to zero.
\end{proof}

\begin{proposition}\label{theorem:viscositySolFbar}
The function $\bar{F}$ is a viscosity solution of the Hamilton--Jacobi equation \eqref{align:hjFbar}.
\end{proposition}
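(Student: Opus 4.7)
The plan is to leverage the dynamic programming principle stated in \Cref{lemma:dppINFSUP}, which provides matching upper and lower bounds for $\bar{F}(y_0)$ in terms of $\bar{F}^s$ and $\bar{F}_i$, and to carry out a standard test-function argument separately for the sub-solution property of the upper semi-continuous envelope $\bar{F}^s$ and the super-solution property of the lower semi-continuous envelope $\bar{F}_i$ on $(0,\infty)$. The obstacle inequality $\bar{F} \geq F$ is built into the definition of \eqref{eq:FbarEquation} through the admissible choice $T = 0$, and the continuity of $F$ promotes this to $\bar{F}_i \geq F$, so the first entry of the min in \eqref{align:hjFbar} is automatically controlled in the super-solution inequality, and in the sub-solution inequality only the case $\bar{F}^s(y_0) > F(y_0)$ needs genuine work.

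For the super-solution property of $\bar{F}_i$, I fix $y_0 \in (0,\infty)$ and a test function $\phi \in C^2((0,\infty))$ such that $\bar{F}_i - \phi$ attains a global minimum of zero at $y_0$, and pick $y_n \to y_0$ with $\bar{F}(y_n) \to \bar{F}_i(y_0)$. For any $p_0 \geq 0$ held constant and $\theta > 0$ small enough that $\theta < T_0^{y_n,p_0}$ for large $n$, the lower DPP combined with $\bar{F}_i \geq \phi$ yields
\[
\bar{F}(y_n) \geq \int_0^\theta \rho \mathrm{e}^{-\rho t}\big(-m + F(p_0)\big)\,\mathrm{d}t + \mathrm{e}^{-\rho \theta}\phi\big(y^{y_n,p_0}(\theta)\big).
\]
Rewriting $\mathrm{e}^{-\rho \theta}\phi(y^{y_n,p_0}(\theta)) - \phi(y_n) = \int_0^\theta \mathrm{e}^{-\rho t}[-\rho\phi(y^{y_n,p_0}(t)) + r\phi'(y^{y_n,p_0}(t))(y^{y_n,p_0}(t) - p_0)]\,\mathrm{d}t$ through the fundamental theorem of calculus and the state ODE \eqref{eq:stateYdynamics}, passing $n \to \infty$ so that $\bar{F}(y_n) - \phi(y_n) \to 0$, then dividing by $\theta$ and letting $\theta \to 0^+$, produces $0 \geq -m + F(p_0) - \phi(y_0) + \delta \phi'(y_0)(y_0 - p_0)$ for every $p_0 \geq 0$. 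Taking the supremum over $p_0$ rewrites this as $F^\star(\delta \phi'(y_0)) - \delta y_0 \phi'(y_0) + \phi(y_0) + m \geq 0$, the desired super-solution inequality.

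For the sub-solution property of $\bar{F}^s$, I fix a test function $\phi$ with $\bar{F}^s - \phi$ attaining a global maximum of zero at $y_0$ and assume $\bar{F}^s(y_0) > F(y_0)$. Picking $y_n \to y_0$ with $\bar{F}(y_n) \to \bar{F}^s(y_0)$ and a fixed small $\theta > 0$, the upper DPP provides $(1/n)$-optimal pairs $(p_n, T_n)$. A dichotomy on the behaviour of $T_n$ is required. If $T_n \geq \theta$ eventually, the inequality $\bar{F}^s \leq \phi$ gives
\[
\bar{F}(y_n) \leq \int_0^\theta \rho \mathrm{e}^{-\rho t}\big(-m + F(p_n(t))\big)\,\mathrm{d}t + \mathrm{e}^{-\rho \theta}\phi\big(y^{y_n,p_n}(\theta)\big) + 1/n,
\]
and the same fundamental-theorem-of-calculus expansion, the passage $n \to \infty$, the division by $\theta$, and the limit $\theta \to 0^+$ produce $0 \leq \sup_{p\geq 0}\{-m + F(p) + \delta \phi'(y_0)(y_0 - p)\} - \phi(y_0) = -m - F^\star(\delta \phi'(y_0)) + \delta y_0 \phi'(y_0) - \phi(y_0)$, which is exactly the sub-solution inequality. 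If instead $T_n < \theta$ along a subsequence, the one-sided bound $\bar{F}(y_n) \leq \int_0^{T_n} \rho \mathrm{e}^{-\rho t}(-m + F(p_n(t)))\,\mathrm{d}t + \mathrm{e}^{-\rho T_n} F(y^{y_n,p_n}(T_n)) + 1/n$ has to be shown to contradict $\bar{F}^s(y_0) > F(y_0)$ after taking $n \to \infty$ and then $\theta \to 0^+$.

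The principal obstacle lies in closing the second branch of this dichotomy, since the controls $p_n$ are a priori unbounded, the trajectory endpoints $y^{y_n,p_n}(T_n)$ are not automatically close to $y_0$ when $T_n \to 0$, and the monotonicity of $F$ means that large downward moves of the state can increase the barrier value above $F(y_0)$. Overcoming this requires quantifying the prohibitive running cost incurred by taking $p_n$ large via the growth bound $F(p) \leq \tilde{c}_1(1 - p^\gamma)$ in \eqref{eq:growthF}, showing that any near-optimal sequence with $T_n < \theta$ must keep $\int_0^{T_n}p_n\,\mathrm{d}s$ uniformly bounded, and then exploiting the explicit representation $y^{y_n,p_n}(T_n) = \mathrm{e}^{rT_n}y_n - r\int_0^{T_n}\mathrm{e}^{r(T_n-s)}p_n(s)\,\mathrm{d}s$ together with $F(0) = 0$, $F \leq 0$, and the strict concavity of $F$ to trap the endpoints in a right-neighbourhood of $y_0$ inside which $F$ is close to $F(y_0) < \bar{F}^s(y_0)$. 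Once this refined estimate is in place, the remaining steps are routine consequences of the dynamic programming principle and the smoothness of $\phi$, $F$, and the state dynamics.
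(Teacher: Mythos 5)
Your overall strategy coincides with the paper's: both proofs rest on \Cref{lemma:dppINFSUP}, treat the envelopes $\bar F^s$ and $\bar F_i$ separately, and run a test-function argument. Your super-solution half is correct and essentially the argument in the paper (the paper uses general controls $p\in\cB_{\R_+}$ and a horizon $h_n\to0$ normalised by $|\bar F(y_n)-\phi(y_n)|^{1/2}$, you use constant controls with iterated limits $n\to\infty$ then $\theta\to0$; with a constant $p_0$ the trajectory $y^{y_n,p_0}$ converges uniformly to $y^{y_0,p_0}$ and stays near $y_0$ on $[0,\theta]$, so your order of limits is legitimate).

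The sub-solution half, however, contains a genuine gap, and it is located precisely in the branch you declare routine. With $\theta$ fixed and $(1/n)$-optimal controls $p_n$ that are a priori unbounded, the trajectory $y^{y_n,p_n}(t)$ can traverse essentially all of $[0,e^{r\theta}y_n]$ within $[0,\theta]$ at finite cost (e.g.\ $p_n\equiv 2y_0/(r\theta)$). After you majorise the integrand by its supremum over $p$, you are left with $\frac1\theta\int_0^\theta\rho\mathrm{e}^{-\rho t}\,\mathcal H(y^{y_n,p_n}(t))\,\mathrm{d}t$ where $\mathcal H(y)\coloneqq -m-F^\star(\delta\phi'(y))+\delta y\phi'(y)-\phi(y)$ is evaluated along an uncontrolled trajectory; letting $n\to\infty$ and then $\theta\to0$ only yields $\sup_{y\in[0,y_0]}\mathcal H(y)\ge0$, not $\mathcal H(y_0)\ge0$. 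The superlinear-cost penalisation you reserve for the stopping branch (forcing $\int_0^{\cdot}p_n\,\mathrm ds\to0$ via $F(p)\le\tilde c_1(1-p^\gamma)$, hence confining the trajectory near $y_0$) is needed here as well, and it only bites if the horizon shrinks with $n$ — which is exactly why the paper works with $h_n\to0$ and extracts the inequality at $t=0$ via the mean value theorem, inside a proof by contradiction that assumes both entries of the min positive. Separately, your dichotomy on the stopping branch is incomplete: the endpoint-trapping argument requires $T_n\to0$, whereas a subsequence may have $T_n\to T^*\in(0,\theta)$, a case your sketch does not address (and note the endpoints must be trapped from \emph{below}, since the control only pushes the state downward from $\mathrm{e}^{rT_n}y_n$ and $F$ is decreasing, so the dangerous scenario is the state collapsing toward $0$ where $F$ vanishes). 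To your credit, you flag a case the paper itself passes over silently — the possibility that the near-optimal stopping time falls below the horizon — but the proposal as written neither closes that case nor recognises that the main branch suffers from the same unbounded-control difficulty.
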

\begin{proof}
Starting from the evident inequality $\bar{F} - F \geq 0$ on $[0,\infty)$, it follows immediately that $\bar{F}_i - F \geq 0$ on the same interval. Thus, it suffices to prove that $\bar{F}$ is a viscosity super-solution of the ODE presented in \eqref{align:hjFbar} to show the super-solution property. Let us fix a smooth test function $\phi$ and $y^0 \geq 0$ such that $y^0$ is a local minimum point for the difference $\bar{F}_i - \phi$ on $[0,\infty)$, with $(\bar{F}_i - \phi)(y^0) = 0$. Then, by definition of $\bar{F}_i$, there exists a non-negative sequence $(y_n)_{n\in\N}$ such that 
\begin{align*}
y_n \underset{n\to\infty}{\longrightarrow} y^0, \; \bar{F}(y_n) \underset{n\to\infty}{\longrightarrow} \bar{F}_i(y^0), \; \bar{F}(y_n) - \phi(y_n) \underset{n\to\infty}{\longrightarrow} 0.
\end{align*}
Consequently, we can define a positive sequence $(h_n)_{n\in\N}$ by
\begin{equation}\label{eq:sequencehN}
h_n \coloneqq |\bar{F}(y_n) - \phi(y_n)|^{\frac{1}{2}} \; \mathbf{ 1}_{\{|\bar{F}(y_\smalltext{n}) - \phi(y_\smalltext{n})| >0\}} + \frac{1}{n} \; \mathbf{ 1}_{\{|\bar{F}(y_\smalltext{n}) - \phi(y_\smalltext{n})| =0\}}, \; \text{for} \; n \in \N.
\end{equation}
It is true that 
\begin{align*}
h_n \underset{n\to\infty}{\longrightarrow} 0, \; \frac{\bar{F}(y_n)-\phi(y_n)}{h_n} \underset{n\to\infty}{\longrightarrow} 0.
\end{align*}
The second inequality of \Cref{lemma:dppINFSUP} implies that, for any $p \in \cB_{\R_\smalltext{+}}$ and sufficiently small values of $h_n$,
\begin{align*}
\bar{F}(y_n) &\geq \int_0^{h_{n}} \rho \mathrm{e}^{-\rho t} \big(- m + F(p(t))\big) \d t + \mathrm{e}^{-\rho h_\smalltext{n}} \bar{F}_i (y^{y_\smalltext{n},p}(h_n)) \\
&\geq \int_0^{h_{n}} \rho \mathrm{e}^{-\rho t} \big(- m + F(p(t))\big) \d t + \mathrm{e}^{-\rho h_\smalltext{n}} \phi(y^{y_\smalltext{n},p}(h_n)) \\
&= \int_0^{h_{n}} \rho \mathrm{e}^{-\rho t} \big(- m + F(p(t))\big) \d t + \bigg(\phi(y_n) + \int_0^{h_{n}} \rho \mathrm{e}^{-\rho t}\big (-\phi(y^{y_\smalltext{n},p}(t)) + \delta \phi^{\prime}(y^{y_\smalltext{n},p}(t))  (y^{y_\smalltext{n},p}(t) - p(t))\big) \d t\bigg).
\end{align*}
Here, the second inequality follows directly from the local minimiser property of $y^0$, while the last equality is a consequence of the fundamental theorem of calculus. It follows that 
\begin{align*}
\frac{\bar{F}(y_n) - \phi(y_n)}{h_n} + \frac{1}{h_n} \int_0^{h_{n}} \rho \mathrm{e}^{-\rho t} \big(m - F(p(t)) + \phi(y^{y_\smalltext{n},p}(t)) - \delta \phi^{\prime}(y^{y_\smalltext{n},p}(t)) (y^{y_\smalltext{n},p}(t) - p(t))\big) \d t \geq 0
\end{align*}
and, by taking the limit for $n$ going to $\infty$, we get that 
\begin{align*}
m - F(p(0)) + \phi(y^0) - \delta \phi^{\prime}(y^0) (y^0 - p(0)) \geq 0 \; \text{for any} \; p \in \cB_{\R_\smalltext{+}}.
\end{align*}
The super-solution property is therefore proved due to the arbitrariness of $p \in \cB_{\R_\smalltext{+}}$. Subsequently, we show that $\bar{F}$ is a sub-solution by contradiction. As before, we select a smooth test function $\phi$ and $y^0 \geq 0$ such that $y^0 $ is a point of local maximum for $\bar{F}^s - \phi$, with $(\bar{F}^s - \phi)(y^0 )=0$. By contradiction, we assume that 
\begin{align*}
\phi(y^0) - F(y^0) > 0, \; \text{and} \; F^{\star}(\delta \phi^{\prime}(y^0)) - \delta \phi^{\prime}(y^0) y^0  + \phi(y^0) + m > 0.
\end{align*}
Let us fix $\varepsilon > 0$. The definition of $\bar{F}^s$ implies the existence of a sequence $(y_n)_{n\in\N}$ taking values in $(y^0-\varepsilon, y^0+\varepsilon)\cap[0,\infty)$ such that
\begin{align*}
y_n \underset{n\to\infty}{\longrightarrow} y^0, \; \bar{F}(y_n) \underset{n\to\infty}{\longrightarrow} \bar{F}^s(y^0), \; \bar{F}(y_n) - \phi(y_n) \underset{n\to\infty}{\longrightarrow} 0.
\end{align*}
As in \eqref{eq:sequencehN}, we can construct a positive sequence $(h_n)_{n\in\N}$ satisfying 
\begin{align*}
h_n \underset{n\to\infty}{\longrightarrow} 0, \; \frac{\bar{F}(y_n)-\phi(y_n)}{h_n} \underset{n\to\infty}{\longrightarrow} 0.
\end{align*}
Then, the first inequality in \Cref{lemma:dppINFSUP} implies that, for sufficiently small values of $h_n$, there exists $p^\varepsilon \in \cB_{\R_+}$ such that 
\begin{align*}
\bar{F}(y_n) - \varepsilon h_n &\leq \int_0^{h_n} \rho \mathrm{e}^{-\rho t} \big(- m + F(p^\varepsilon(t))\big) \d t + \mathrm{e}^{-\rho h_n} \bar{F}^s \big(y^{y_\smalltext{n},p^\smalltext{\varepsilon}}(h_n)\big) \\
&\leq \int_0^{h_n} \rho \mathrm{e}^{-\rho t} \big(- m + F(p^\varepsilon(t))\big) \d t + \mathrm{e}^{-\rho h_n} \phi\big(y^{y_\smalltext{n},p^\smalltext{\varepsilon}}(h_n)) \\
&= \int_0^{h_n} \rho \mathrm{e}^{-\rho t} \big(- m + F(p^\varepsilon(t))\big) \d t + \phi(y_n) + \int_0^{h_n} \rho \mathrm{e}^{-\rho t} \big(-\phi(y^{y_\smalltext{n},p^\smalltext{\varepsilon}}(t)) + \delta \phi^{\prime}(y^{y_\smalltext{n},p^\smalltext{\varepsilon}}(t)) (y^{y_\smalltext{n},p^\smalltext{\varepsilon}}(t) - p^\varepsilon(t))\big) \d t.
\end{align*}
It follows that 
\begin{align*}
\frac{\bar{F}(y_n) - \phi(y_n)}{h_n} - \varepsilon + \frac{1}{h_n} \int_0^{h_n} \rho \mathrm{e}^{-\rho t} \big(m - F(p^\varepsilon(t)) + \phi(y^{y_\smalltext{n},p^\smalltext{\varepsilon}}(t)) - \delta \phi^{\prime}(y^{y_\smalltext{n},p^\smalltext{\varepsilon}}(t)) (y^{y_\smalltext{n},p^\smalltext{\varepsilon}}(t) - p^\varepsilon(t))\big) \d t \leq 0.
\end{align*}
Additionally, the mean-value theorem implies that 
\begin{align*}
- \varepsilon + \rho \big(m - F(p^\varepsilon(0)) + \phi(y^0) - \delta \phi^{\prime}(y^0) (y^0 - p^\varepsilon(0))\big) \leq 0.
\end{align*}
We get the desired contradiction by taking the limit for $\varepsilon$ going to zero.
\end{proof}

After showing that $\bar{F}$ is a viscosity solution of the Hamilton--Jacobi equation \eqref{align:hjFbar}, we can infer its equivalence with $v_{\tilde{y}}$ as defined in \Cref{theorem:propFbarYhat}. This conclusion is attained through the following result that relies on the application of the comparison theorem for the aforementioned equation in \Cref{proposition: comparisonFBar}.
\begin{proof}[Proof of \Cref{proposition:FBarGeneralDeltaSmaller1}.\ref{degenerateFBar}]
\Cref{theorem:viscositySolFbar} proves that $\bar{F}^s$ is an upper--semi-continuous viscosity sub-solution of (\ref{align:hjFbar}), whereas $\bar{F}_i$ is a lower--semi-continuous viscosity super-solution. The comparison result stated in \Cref{proposition: comparisonFBar} further indicates that $\bar{F}^s \leq \bar{F}_i$, leading to the conclusion that the function $\bar{F}$ is continuous on its domain of definition. It follows that $\bar{F}$ is the unique viscosity solution of the equation (\ref{align:hjFbar}) within an appropriate class of functions with a given growth condition. Moreover, \Cref{theorem:propFbarYhat} implies that $\bar{F} = v_{\tilde{y}}$ on $[0,\infty)$.
\end{proof}

\begin{proposition}\label{proposition: comparisonFBar}
Let $u$ be an upper--semi-continuous viscosity sub-solution and $v$ be a lower--semi-continuous viscosity super-solution to {\rm\Cref{align:hjFbar}} so that there exists $(\bar c_0,\bar c_1)\in(0,\infty)^2$ satisfying 
\[
-\bar{c}_0(1+y^\gamma) \leq \phi(y) \leq \bar{c}_1(1-y^\gamma),\; y\geq 0, \; \text{for} \; \phi \in \{u,v\}.
\]
If $u(0) = 0 = v(0)$, then $u \leq v$ on $[0,\infty)$.
\end{proposition}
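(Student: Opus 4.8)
The plan is to run the standard doubling-of-variables comparison argument for a first-order obstacle problem, keeping track of three features: the obstacle term $\bar F(y)-F(y)$, the Dirichlet datum at $y=0$, and the fact that the admissible class allows growth of order $\gamma>1$ on the unbounded half-line. Suppose, for contradiction, that $m_0:=\sup_{y\geq0}\bigl(u(y)-v(y)\bigr)>0$. Since $u,v$ are bounded above by $\bar c_1(1-y^\gamma)\to-\infty$ while $v\geq F$ (super-solution property, hence $v\geq F\geq-\tilde c_0(1+y^\gamma)$), I would first localise by penalisation: for $\eta>0$ set $\psi(y):=(1+y^2)^\gamma$, whose growth strictly exceeds that of $u-v$, and for $\varepsilon>0$ consider
\[
\Phi_{\varepsilon,\eta}(x,y):=u(x)-v(y)-\frac{(x-y)^2}{2\varepsilon}-\eta\psi(x)-\eta\psi(y),\quad (x,y)\in[0,\infty)^2 .
\]
This is upper--semi-continuous and tends to $-\infty$ at infinity, hence attains its maximum at some $(x_{\varepsilon,\eta},y_{\varepsilon,\eta})$ in a compact set independent of $\varepsilon$ (for each fixed $\eta$); with $m_\eta:=\sup_{y\geq0}\{u(y)-v(y)-2\eta\psi(y)\}$ one has $m_\eta\uparrow m_0$ as $\eta\downarrow0$, so $m_\eta>0$ for $\eta$ small, and the classical estimates (see e.g.\ \cite{fleming2006controlled}) give $(x_{\varepsilon,\eta}-y_{\varepsilon,\eta})^2/\varepsilon\to0$ and, along a subsequence, $x_{\varepsilon,\eta},y_{\varepsilon,\eta}\to\bar y_\eta$ with $u(x_{\varepsilon,\eta})-v(y_{\varepsilon,\eta})\to m_\eta$.

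Next I would exclude boundary maxima: using $u(0)=v(0)=0$ together with the upper--semi-continuity of $u$ and the lower--semi-continuity of $v$, the relation $u(x_{\varepsilon,\eta})-v(y_{\varepsilon,\eta})\to m_\eta>0$ is incompatible with $\bar y_\eta=0$, so $\bar y_\eta>0$ and hence $x_{\varepsilon,\eta},y_{\varepsilon,\eta}\in(0,\infty)$ once $\varepsilon$ is small; the interior viscosity inequalities then apply at
\[
p^x_\varepsilon:=\frac{x_{\varepsilon,\eta}-y_{\varepsilon,\eta}}{\varepsilon}+\eta\psi'(x_{\varepsilon,\eta})\in D^+u(x_{\varepsilon,\eta}),\qquad p^y_\varepsilon:=\frac{x_{\varepsilon,\eta}-y_{\varepsilon,\eta}}{\varepsilon}-\eta\psi'(y_{\varepsilon,\eta})\in D^-v(y_{\varepsilon,\eta}).
\]
I would then discard the obstacle alternative in the sub-solution inequality: since $v\geq F$, $F$ is locally Lipschitz, $|x_{\varepsilon,\eta}-y_{\varepsilon,\eta}|\to0$ and $u(x_{\varepsilon,\eta})-v(y_{\varepsilon,\eta})\to m_\eta>0$, we get $u(x_{\varepsilon,\eta})>F(x_{\varepsilon,\eta})$ for $\varepsilon$ small, so the sub-solution property forces the PDE part $F^\star(\delta p^x_\varepsilon)-\delta x_{\varepsilon,\eta}p^x_\varepsilon+u(x_{\varepsilon,\eta})+m\leq0$, while the super-solution property yields $F^\star(\delta p^y_\varepsilon)-\delta y_{\varepsilon,\eta}p^y_\varepsilon+v(y_{\varepsilon,\eta})+m\geq0$.

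Subtracting these, the cross term equals $-\delta(x_{\varepsilon,\eta}-y_{\varepsilon,\eta})^2/\varepsilon-\delta\eta\bigl(x_{\varepsilon,\eta}\psi'(x_{\varepsilon,\eta})+y_{\varepsilon,\eta}\psi'(y_{\varepsilon,\eta})\bigr)\leq0$ (both summands are non-negative since $\psi$ is non-decreasing and $x_{\varepsilon,\eta},y_{\varepsilon,\eta}\geq0$); this negativity is exactly the dissipativity coming from the linear-in-state drift $r(y-p)$ of \eqref{eq:stateYdynamics}, and is the reason no condition on the discount ratio is needed. What remains is to control $F^\star(\delta p^x_\varepsilon)-F^\star(\delta p^y_\varepsilon)$: $F^\star$ is concave and non-decreasing, and the super-solution inequality combined with the superlinear coercivity of $-F^\star$ (exponent $\gamma/(\gamma-1)>1$) and $y_{\varepsilon,\eta}$ bounded away from $0$ yields a lower bound on $p^y_\varepsilon$ uniform in $\varepsilon$, so $F^\star$ is Lipschitz on the half-line where $\delta p^x_\varepsilon,\delta p^y_\varepsilon$ live; hence $F^\star(\delta p^x_\varepsilon)-F^\star(\delta p^y_\varepsilon)=O\bigl(\eta(\psi'(x_{\varepsilon,\eta})+\psi'(y_{\varepsilon,\eta}))\bigr)$, to be absorbed by the dissipative term. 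Sending $\varepsilon\to0$ and then $\eta\to0$ gives $m_\eta\leq0$ in the limit, i.e.\ $m_0\leq0$, a contradiction, so $u\leq v$ on $[0,\infty)$; the uniqueness and continuity of $\bar F$ follow as in the proof of \Cref{proposition:FBarGeneralDeltaSmaller1}.\ref{degenerateFBar}. The main obstacle I anticipate is precisely this last estimate: reconciling the polynomial growth permitted in the comparison class with the non-globally-Lipschitz Hamiltonian $F^\star$ requires a careful calibration of the penalisation $\psi$ against the superlinearity of $-F^\star$, so that the error introduced on the gradients is genuinely dominated by $-\delta\eta\,y\,\psi'(y)$; the obstacle and boundary steps, and the benign $y$-dependence of the Hamiltonian (only through the linear term $-\delta y p$), are otherwise routine.
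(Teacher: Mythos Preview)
Your overall architecture matches the paper's proof: doubling of variables, a power-type penalisation at infinity, exclusion of the boundary via $u(0)=v(0)=0$, elimination of the obstacle branch, and subtraction of the two viscosity inequalities. The differences are cosmetic (the paper doubles with $|x-y|^p$ for some $p>\gamma$ and penalises with $\tfrac{\varepsilon}{2}(|x|^p+|y|^p)$ rather than your quadratic doubling and $\eta(1+y^2)^\gamma$), but your reading of the two remaining terms after subtraction is inverted, and this matters.

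First, the $F^\star$ term is \emph{not} the delicate one. With your penalisation you already have $p_\varepsilon^x-p_\varepsilon^y=\eta(\psi'(x_{\varepsilon,\eta})+\psi'(y_{\varepsilon,\eta}))\geq 0$, so $F^\star(\delta p_\varepsilon^x)\geq F^\star(\delta p_\varepsilon^y)$ by monotonicity of $F^\star$; when you isolate $u-v$ this contributes a non-positive quantity on the right and can simply be dropped. No local Lipschitz bound or coercivity argument on $p_\varepsilon^y$ is needed, and the paper uses exactly this monotonicity shortcut.

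Second, the cross term is \emph{not} dissipative in the direction you want. Your computation $-\delta x_{\varepsilon,\eta}p_\varepsilon^x+\delta y_{\varepsilon,\eta}p_\varepsilon^y=-\delta\frac{(x_{\varepsilon,\eta}-y_{\varepsilon,\eta})^2}{\varepsilon}-\delta\eta\big(x_{\varepsilon,\eta}\psi'(x_{\varepsilon,\eta})+y_{\varepsilon,\eta}\psi'(y_{\varepsilon,\eta})\big)\leq 0$ is correct, but this sits on the \emph{left} of the subtracted inequality; moving everything to bound $u(x_{\varepsilon,\eta})-v(y_{\varepsilon,\eta})$ you obtain
\[
u(x_{\varepsilon,\eta})-v(y_{\varepsilon,\eta})\;\leq\;\delta\Big[\tfrac{(x_{\varepsilon,\eta}-y_{\varepsilon,\eta})^2}{\varepsilon}+\eta\big(x_{\varepsilon,\eta}\psi'(x_{\varepsilon,\eta})+y_{\varepsilon,\eta}\psi'(y_{\varepsilon,\eta})\big)\Big],
\]
a \emph{non-negative} residual. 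It cannot ``absorb'' the $F^\star$ error; it \emph{is} the error that must vanish as $\varepsilon\to 0$ and then $\eta\to 0$. The paper is left with the exact same residual, $\tfrac{\delta p}{2}\big[\alpha_n|x_n-y_n|^p+\varepsilon(|x_n|^p+|y_n|^p)\big]$, and disposes of it by sending $\alpha_n\to\infty$ (so $\alpha_n|x_n-y_n|^p\to 0$) and then $\varepsilon\to 0$, relying on $p>\gamma$ so that the penalty strictly dominates the $y^\gamma$-growth allowed for $u$ and $v$. Your acknowledged ``main obstacle'' is therefore real, but it lives entirely in this cross term, not in the $F^\star$ difference; the calibration you describe should be aimed at making $\eta\,\bar y_\eta\,\psi'(\bar y_\eta)\to 0$, which is precisely what the paper's choice of a $p$-th power penalty with $p>\gamma$ is designed to achieve.
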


\begin{proof}
By contradiction, we assume that there exists some $y^{\smalltext{\Delta}} \in (0,\infty)$ such that $(u-v)(y^{\smalltext{\Delta}}) > 0$. Let us fix $\varepsilon > 0$. We introduce a positive and non-decreasing sequence $(\alpha_n)_{n\in\N}$ such that $\alpha_n$ goes to $\infty$ as $n$ goes to $\infty$, and define 
\[
M_{\alpha_\smalltext{n}}^{\varepsilon} \coloneqq \sup_{(x,y) \in \R^\smalltext{2}_\smalltext{+}} \big\{u(x) - v(y) - \phi_{\alpha_n}^{\varepsilon}(x,y)\big\}, \; \text{where} \; \phi_{\alpha_\smalltext{n}}^{\varepsilon}(x,y) \coloneqq \frac{1}{2} \big(\alpha_n |x-y|^p + \varepsilon |x|^p + \varepsilon |y|^p\big), \; \text{for} \; (x,y) \in \R^2_+,
\]
for a generic $p >\gamma$. The growth condition on $u$ and $v$ implies that there exists a maximiser $(x_n,y_n)\coloneqq (x_{\alpha_\smalltext{n}}, y_{\alpha_\smalltext{n}})$ such that 
\begin{align*}
M_{\alpha_\smalltext{n}}^\varepsilon = u(x_n) - v(y_n) - \phi_{\alpha_\smalltext{n}}^{\varepsilon}(x_n,y_n).
\end{align*}
It is worth noting that we can find a compact set where the sequence $(x_n,y_n)_{n\in\N}$ takes values. Consequently, by considering a sub-sequence if necessary, we have that $(x_n,y_n)_{n\in\N}$ converges to $(x^0,y^0)$ as $n$ goes to $\infty$, for some non-negative $x^0$ and $y^0$. Let us first prove that the sequence $(\alpha_n)_{n\in\N}$ is such that 
\begin{align*}
(x_n,y_n) \underset{n\to\infty}{\longrightarrow} (y^0,y^0), \; \alpha_n |x_n - y_n|^p \underset{n\to\infty}{\longrightarrow} 0, \; M^\varepsilon_{\alpha_\smalltext{n}} \underset{n\to\infty}{\longrightarrow} M^\varepsilon_{\infty} \coloneqq \sup_{y \in \R_\smalltext{+}} \big\{(u-v)(y) - \varepsilon |y|^p\big\}.
\end{align*}
To this aim, note that
\begin{align*}
N^\varepsilon \coloneqq \sup_{y \in \R_\smalltext{+}} \big\{(u-v)(y)-\varepsilon |y|^p\big\} \leq M^\varepsilon_{\alpha_\smalltext{n}} = u(x_n) - v(y_n) - \phi^\varepsilon_{\alpha_\smalltext{n}}(x_n,y_n) < \infty.
\end{align*}
Then, taking into account the upper--semi-continuity of the function and the fact that $\lim_{y \rightarrow \infty} \{(u-v)(y)-\varepsilon |y|^p\} = -\infty$, we can deduce the existence of a maximiser $y^{\star,\varepsilon}$ satisfying $N^\varepsilon = (u-v)(y^{\star,\varepsilon})-\varepsilon |y^{\star,\varepsilon}|^p$. Therefore,
\begin{align*}
d^{\star,\varepsilon} \coloneqq \frac{1}{2} \limsup_{n \rightarrow \infty}\big\{ \alpha_n |x_n - y_n|^p\big\} &\leq \limsup_{n \rightarrow \infty} \bigg\{u(x_n) - v(y_n) - \frac{\varepsilon}{2} |x_n|^p - \frac{\varepsilon}{2} |y_n|^p\bigg\}- (u-v)(y^{\star}) + \varepsilon |y^{\star}|^p \\
&\leq u(x^0) - v(y^0) - \frac{\varepsilon}{2} |x^0|^p - \frac{\varepsilon}{2} |y^0|^p - (u-v)(y^{\star,\varepsilon}) + \varepsilon |y^{\star,\varepsilon}|^p,
\end{align*}
given the upper--semi-continuity of the map $(x, y) \longmapsto u(x) - v(y) - \frac{\varepsilon}{2} |x|^p - \frac{\varepsilon}{2} |y|^p$. We deduce that $d^{\star,\varepsilon} < \infty$, which in turn implies that $x^0 = y^0$ since $\alpha_n$ explodes as $n$ goes to $\infty$. Hence, we can conclude that $M^\varepsilon_{\alpha_\smalltext{n}}$ converges to $M^\varepsilon_\infty$ since, by definition of $y^{\star,\varepsilon}$, we have that
\begin{align*}
0 \leq d^{\star,\varepsilon} \leq u(y^0) - v(y^0) - \frac{\varepsilon}{2} |y^0|^p - \frac{\varepsilon}{2} |y^0|^p - (u-v)(y^{\star,\varepsilon}) + \varepsilon |y^{\star,\varepsilon}|^p \leq 0.
\end{align*}
 
We show that $y^0$ is positive. For $\varepsilon$ small enough, it holds that 
\begin{align*}
0 < (u-v)(y^{\smalltext{\Delta}}) - \varepsilon |y^{\smalltext{\Delta}}|^p \leq u(x_n) - v(y_n) - \phi_{\alpha_\smalltext{n}}^{\varepsilon}(x_n,y_n) = M^\varepsilon_{\alpha_\smalltext{n}} &\leq \limsup_{n \rightarrow \infty} M^\varepsilon_{\alpha_\smalltext{n}} \\
&\leq \limsup_{n \rightarrow \infty} (u(x_n) - v(y_n)) \\
&\leq \limsup_{n \rightarrow \infty} u(x_n) - \liminf_{n \rightarrow \infty} v(y_n) \leq u(y^0) - v(y^0).
\end{align*}
Therefore, we can deduce that $y^0 > 0$ since $u(0) = 0 = v(0)$ by assumption.

\medskip
We have now all the necessary elements to achieve the desired contradiction. In fact, we have that $x_n$ is a local maximiser of the function $x \longmapsto u(x) - \phi^{1,\varepsilon}_{\alpha_n}(x) \coloneqq u(x) - \frac{1}{2}(\alpha_n |x - y_n|^p + \varepsilon |x|^p)$. We can deduce from the fact that $u$ is a viscosity sub-solution that
\begin{align}\label{align:uSub}
\min\bigg\{u(x_n) - F(x_n), F^{\star}\big(\delta \phi^{{1,\varepsilon}^{\smalltext\prime}}_{\alpha_n}(x_n,y_n)\big) - \frac{\delta p}{2} x_n (\alpha_n \mathrm{sgn}(x_n-y_n) |x_n-y_n|^{p-1} + \varepsilon |x_n|^{p-1}) + u(x_n) + m\bigg\} \leq 0.
\end{align}
Similarly, the point $y_n$ is a local minimiser of $y \longmapsto v(y) - \phi^{2,\varepsilon}_{\alpha_n}(y) \coloneqq v(y) - \left(-\frac{1}{2}(\alpha_n |x_n - y|^p - \varepsilon |y|^p)\right)$, and $v$ is a viscosity super-solution. Therefore, we get that
\begin{align}\label{align:vSuper}
\min\bigg\{v(y_n) - F(y_n), F^{\star}\big(\delta \phi^{{2,\varepsilon}^{\smalltext\prime}}_{\alpha_n}(x_n,y_n)\big) - \frac{\delta p}{2} y_n (\alpha_n \mathrm{sgn}(x_n-y_n) |x_n-y_n|^{p-1} - \varepsilon |y_n|^{p-1}) + v(y_n) + m\bigg\} \geq 0.
\end{align}
We divide our analysis into two distinct cases. The first case occurs if $u(x_n) - F(x_n) \leq 0$ along some subsequence. In this scenario, \Cref{align:vSuper} implies that $u(x_n) \leq v(y_n)$, which contradicts the fact that $(u-v)(y^{\smalltext{\Delta}}) > 0$. Therefore, we can only consider the second case, where along some subsequence, it holds that
\begin{align}\label{align:uSubPrime}
F^{\star}\big(\delta \phi^{{1,\varepsilon}^{\smalltext\prime}}_{\alpha_n}(x_n,y_n)\big) - \frac{\delta p}{2} x_n (\alpha_n \mathrm{sgn}(x_n-y_n) |x_n-y_n|^{p-1} + \varepsilon |x_n|^{p-1}) + u(x_n) + m \leq 0.
\end{align}
We combine \eqref{align:uSubPrime} with \eqref{align:vSuper}, yielding
\begin{align*}
u(x_n) - v(y_n) &\leq F^{\star}\big(\delta \phi^{{2,\varepsilon}^{\smalltext\prime}}_{\alpha_n}(x_n,y_n)\big) - F^{\star}\big(\delta \phi^{{1,\varepsilon}^{\smalltext\prime}}_{\alpha_n}(x_n,y_n)\big) + \frac{\delta p}{2} \alpha_n \mathrm{sgn}(x_n -y_n) |x_n-y_n|^p + \frac{\delta p}{2} \varepsilon |x_n|^p + \frac{\delta p}{2} \varepsilon |y_n|^p  \\
&= F^{\star}\bigg(\frac{\delta p}{2} (\alpha_n \mathrm{sgn}(x_n-y_n) |x_n-y_n|^{p-1} - \varepsilon |y_n|^{p-1})\bigg) - F^{\star}\bigg(\frac{\delta p}{2} (\alpha_n \mathrm{sgn}(x_n-y_n) |x_n-y_n|^{p-1} + \varepsilon |x_n|^{p-1})\bigg) \\
&\quad+ \frac{\delta p}{2} \alpha_n \mathrm{sgn}(x_n -y_n) |x_n-y_n|^p + \frac{\delta p}{2} \varepsilon |x_n|^p + \frac{\delta p}{2} \varepsilon |y_n|^p \\
&\leq \frac{\delta p}{2} \alpha_n \mathrm{sgn}(x_n -y_n) |x_n-y_n|^p + \frac{\delta p}{2} \varepsilon |x_n|^p + \frac{\delta p}{2} \varepsilon |y_n|^p.
\end{align*}
We arrive at a contradiction with the hypothesis that $(u-v)(y^{\smalltext{\Delta}}) > 0$ by first letting $\varepsilon$ go to zero, and then $n$ go to infinity. This completes the proof.
\end{proof}

\subsection{A very impatient principal}
We consider here the remaining case where $\delta < 1$ and $\gamma\delta \leq 1$. In analogy with the reference paper \citep{possamai2020there}, which demonstrates that the face-lifted utility is null, we posit that the mixed control--stopping problem degenerates. In what follows, we prove that $\bar{F}$ coincides with the function already introduced in \Cref{proposition:FBarGeneralDeltaSmaller1}:
\begin{align}\label{align:vm}
v_{m}(y) \coloneqq F(y)  \mathbf{1}_{[0,\hat{y}]}(y) -m \mathbf{1}_{(\hat{y}, \infty)}(y),\; y\geq 0.
\end{align}
To achieve this, we once again need to employ the theory of viscosity solutions, as $v_{m}$ is not continuously differentiable.

\begin{lemma}\label{lemma:FBarlsc}
The function $\bar{F}$ introduced in {\rm\Cref{eq:FbarEquation}} is lower--semi-continuous.
\end{lemma}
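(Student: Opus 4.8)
The plan is to show lower--semi-continuity of $\bar{F}$ directly from the definition \eqref{eq:FbarEquation} as a supremum of a family of maps each of which is continuous (indeed lower--semi-continuous) in the initial datum $y_0$. The key observation is that for a \emph{fixed} control pair $(p,T)$ the quantity appearing inside the supremum depends continuously on $y_0$ through the flow $y^{y_0,p}$, at least as long as we stay strictly before the first hitting time of zero; the only delicate point is the behaviour of the admissible stopping horizon $T^{y_0,p}_0$ and of the indicator $\mathbf 1_{\{T<\infty\}}$ as $y_0$ varies.

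The main steps I would carry out, in order, are as follows. First, fix $y_0\geq 0$ and a sequence $y_n\to y_0$; I want to show $\liminf_n \bar F(y_n)\geq \bar F(y_0)$. Fix $\varepsilon>0$ and pick a near-optimal pair $(p,T)$ for $\bar F(y_0)$, i.e. $p\in\cB_{\R_\smalltext+}$ and $T\in[0,T^{y_0,p}_0]$ with the bracketed payoff at least $\bar F(y_0)-\varepsilon$. Second, observe from the explicit formula $y^{y_0,p}(t)=\mathrm e^{rt}\big(y_0-\int_0^t r\mathrm e^{-rs}p(s)\,\d s\big)$ that for each fixed $t$ the map $y_0\mapsto y^{y_0,p}(t)$ is affine with slope $\mathrm e^{rt}$, hence $T^{y_0,p}_0$ is non-decreasing and lower--semi-continuous in $y_0$ (a larger starting point delays the hitting time). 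Third, split into the two cases $T<\infty$ and $T=\infty$. When $T<\infty$: if $T<T^{y_0,p}_0$ strictly, then for $n$ large we still have $T<T^{y_n,p}_0$, the same control $(p,T)$ is admissible for $\bar F(y_n)$, and $y^{y_n,p}(t)\to y^{y_0,p}(t)$ uniformly on $[0,T]$, so by dominated convergence (using the growth bound \eqref{eq:growthF} on $F$ together with the integrability of $p$ which may be assumed WLOG, since a control with $\int_0^T F(p(t))\,\d t=-\infty$ gives payoff $-\infty$) the payoff converges and $\liminf_n\bar F(y_n)\geq \bar F(y_0)-\varepsilon$. If instead $T=T^{y_0,p}_0$, i.e. $y^{y_0,p}(T)=0$, then since $F(0)=0$ the terminal term vanishes and one may replace $T$ by $T-\eta$ for small $\eta>0$ (paying an $o(1)$ price as $\eta\to0$ by continuity of the integrand and of $t\mapsto y^{y_0,p}(t)$ near $T$, and using $F$ continuous with $F(y^{y_0,p}(T-\eta))\to F(0)=0$), reducing to the strict case. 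When $T=\infty$: use $\mathbf 1_{\{T<\infty\}}=0$, so the payoff is $\int_0^\infty\rho\mathrm e^{-\rho t}(-m+F(p(t)))\,\d t$, which does not depend on $y_0$ at all, and $T=\infty$ is admissible for $\bar F(y_n)$ precisely when $T^{y_n,p}_0=\infty$, which holds once $y_n$ is large enough by the monotonicity in the second step (or one truncates at $T^{y_n,p}_0\uparrow\infty$ and passes to the limit). In all cases $\liminf_n\bar F(y_n)\geq \bar F(y_0)-\varepsilon$, and letting $\varepsilon\downarrow0$ gives the claim.

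I expect the main obstacle to be the boundary case $T=T^{y_0,p}_0$ together with the interaction between the hitting time $T^{y_n,p}_0$ and the chosen $T$: one has to argue carefully that a control near-optimal for $y_0$ can be perturbed into an admissible control for each $y_n$ whose payoff is not much smaller, and this requires exploiting both $F(0)=0$ (so truncating the horizon to just before hitting zero costs little) and the lower--semi-continuity/monotonicity of $y_0\mapsto T^{y_0,p}_0$ (so that shrinking $T$ slightly makes it admissible for all large $n$). Everything else — the uniform convergence of the flows on compact time intervals, the dominated convergence using \eqref{eq:growthF}, and the harmless reduction to integrable $p$ — is routine. Note that only lower--semi-continuity is claimed here (upper--semi-continuity, and hence continuity, will follow later from the comparison theorem, as in the proof of \Cref{proposition:FBarGeneralDeltaSmaller1}.\ref{degenerateFBar}), so I do not need to worry about the reverse inequality in this lemma.
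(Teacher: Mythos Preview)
Your proposal is correct and is essentially the same argument as the paper's, only packaged through the sequential definition of lower--semi-continuity rather than through closedness of the epigraph. The paper rewrites $\bar F(y_0)$ as an unconstrained supremum over $T\geq 0$ with a $-\infty$ penalty on $\{T>T^{y_0,p}_0\}$, then takes a sequence $(y_n,t_n)$ in the epigraph, extracts a monotone subsequence of $(y_n)$ so that $(T^{y_n,p}_0)$ is monotone (by the same monotonicity of $y_0\mapsto T^{y_0,p}_0$ that you invoke), and passes to the limit for each fixed $(p,T)$. Your version fixes a near-optimal $(p,T)$ for $y_0$ and shows it (or the truncation $(p,T-\eta)$, resp.\ $(p,T^{y_n,p}_0)$) is admissible for nearby $y_n$ with nearly the same payoff; this is the dual of the paper's limiting argument and uses exactly the same two ingredients --- affine dependence of the flow on $y_0$ and left-continuity/monotonicity of $y_0\mapsto T^{y_0,p}_0$. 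Two small cosmetic points: your appeal to dominated convergence in the strict case $T<T^{y_0,p}_0$ is unnecessary, since the running integral does not depend on $y_0$ at all and only the terminal term $F(y^{y_n,p}(T))$ moves (continuously); and ``continuity of the integrand'' in the $T-\eta$ step should be read as absolute continuity of $T\mapsto\int_0^T$, since $p$ is merely Borel. Neither affects the validity of the argument.
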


\begin{proof}
The lemma is proved by showing that the epigraph $\mathrm{epi}(\bar{F}) \coloneqq \{ (y,t) \in [0,\infty) \times \R:  t \geq \bar{F}(y)\}$ is closed. We first observe that, for any $y_0 \geq 0$, the function $\bar{F}$ can be expressed as 
\begin{align*}
\bar{F}(y_0) = \sup_{p \in \cB_{\R_\smalltext{+}}} \sup_{T \geq 0} \bigg\{\bigg(\mathrm{e}^{-\rho T} F\big(y^{y_\smalltext{0},p}(T)\big) \mathbf{1}_{\{T < \infty\}} + \int_0^T \rho \mathrm{e}^{-\rho t} \big(- m + F(p(t))\big) \d t\bigg) \mathbf{1}_{\{T \in [0, T^{\smalltext{y}_\tinytext{0}\smalltext{,}\smalltext{p}}_\smalltext{0}]\}} - \infty  \mathbf{1}_{\{T \in (T^{\smalltext{y}_\tinytext{0}\smalltext{,}\smalltext{p}}_\smalltext{0},\infty)\}}\bigg\}.
\end{align*}
Let us consider a sequence $(y_n,t_n)_{n\in\N}$ such that 
\begin{align*}
(y_n,t_n) \in \mathrm{epi}(\bar{F}) \; \text{for any} \; n \in \N, \; \text{and} \; (y_n,t_n) \underset{n\to\infty}{\longrightarrow} (y_0,t), \; \text{for some} \; y_0 \geq 0,\; t \in \R.
\end{align*} 
Furthermore, for any $p \in \cB_{\R_\smalltext{+}}$ and $T \geq 0$, we have
\begin{align*}
t_n \geq \bar{F}(y_n) \geq \bigg(\mathrm{e}^{-\rho T} F\big(y^{y_\smalltext{n},p}(T)\big) \mathbf{1}_{\{T < \infty\}}+ \int_0^T \rho \mathrm{e}^{-\rho t} \big(- m + F(p(t))\big) \d t\bigg) \mathbf{ 1}_{\{T \in [0, T^{\smalltext{y}_\tinytext{n}\smalltext{,}\smalltext{p}}_\smalltext{0}]\}} - \infty \mathbf{ 1}_{\{T \in (T^{\smalltext{y}_\tinytext{n}\smalltext{,}\smalltext{p}}_\smalltext{0},\infty)\}}.
\end{align*}
By extracting a monotone sub-sequence $(y_{n_\smalltext{k}})_{k\in\N}$, we observe that
\begin{align*}
t \geq \limsup_{k \rightarrow \infty} \bigg\{ \bigg(\mathrm{e}^{-\rho T} F\big(y^{y_{\smalltext{n}_\tinytext{k}},p}(T)\big) \mathbf{1}_{\{T < \infty\}} + \int_0^T \rho \mathrm{e}^{-\rho t} \big(- m + F(p(t))\big) \d t\bigg) \mathbf{1}_{\{T \in [0, T^{\smalltext{y}_{\tinytext{n}_\stinytext{k}}\smalltext{,}\smalltext{p}}_\smalltext{0}]\}} - \infty \mathbf{ 1}_{\{T \in (T^{\smalltext{y}_{\tinytext{n}_\stinytext{k}}\smalltext{,}\smalltext{p}}_\smalltext{0},\infty)\}}\bigg\}.
\end{align*}
The fact that $(y_{n_\smalltext{k}})_{k\in\N}$ is a convergent monotone sequence implies that so is $(T^{y_{n_\smalltext{k}},p}_0)_{k\in\N}$ for any $p \in \cB_{\R_\smalltext{+}}$. We can therefore conclude that 
\begin{align*}
t \geq \sup_{p \in \cB_{\smalltext{\R}_\tinytext{+}}} \sup_{T \geq 0} \bigg\{\bigg(\mathrm{e}^{-\rho T} F\big(y^{y_{\smalltext{n}_\tinytext{k}},p}(T)\big) \mathbf{1}_{\{T < \infty\}} + \int_0^T \rho \mathrm{e}^{-\rho t} \big(- m + F(p(t))\big) \d t\bigg) \mathbf{ 1}_{\{T \in [0, T^{\smalltext{y}_\tinytext{0}\smalltext{,}\smalltext{p}}_\smalltext{0})\}} - \infty \mathbf{ 1}_{\{T \in (T^{\smalltext{y}_\tinytext{0}\smalltext{,}\smalltext{p}}_\smalltext{0},\infty)\}}\bigg\} = \bar{F}(y_0).
\end{align*}
This completes the proof.
\end{proof}

Now, we can show that the solution $\bar{F}$ of the mixed control--stopping problem coincides with the function $v_{m}$ defined in {\rm\Cref{align:vm}}.
\begin{proof}[Proof of \Cref{proposition:FBarGeneralDeltaSmaller1}.\ref{nonDegenerateFBar}]
First, we prove that $v_m \geq \bar{F}$ on $[0,\infty)$. We fix some $\varepsilon > 0$ and easily construct a strictly convex, continuously differentiable function $g_{\varepsilon}:[\hat{y}-\varepsilon,\hat{y}+\varepsilon] \longrightarrow \R$ such that
\begin{align*}
v_{\varepsilon,m}(y) \coloneqq \begin{cases}
	v_{m}(y) = F(y),\;  y \in [0,\hat{y}-\varepsilon) \\
	g_{\varepsilon}(y),\; y \in [\hat{y}-\varepsilon,\hat{y}+\varepsilon] \\
	v_{m}(y) = -m,\; y \in (\hat{y}+\varepsilon,\infty)
	\end{cases}
\end{align*}
is continuously differentiable. Our goal is to prove that $v_{\varepsilon,m}$ is a viscosity super-solution of the Hamilton--Jacobi equation \eqref{align:hjFbar}. To achieve this, it is sufficient to focus our analysis on $[\hat{y}-\varepsilon,\hat{y}+\varepsilon]$, as this property is clearly satisfied outside this interval. We begin by noting that $g_{\varepsilon} \geq F$. This is a consequence of the fact that the derivative $g^{\prime}_{\varepsilon}$ is decreasing, as evidenced by
\begin{align*}
g_{\varepsilon}(y) = g_{\varepsilon}(\hat{y}-\varepsilon) + \int_{\hat{y}-\varepsilon}^y g^{\prime}_{\varepsilon}(s) \d s \geq F(\hat{y}-\varepsilon) + \int_{\hat{y}-\varepsilon}^y F^{\prime}(s) \d s = F(y) \; \text{for any} \; y \in [\hat{y}-\varepsilon,\hat{y}+\varepsilon].
\end{align*}
Therefore, we need to determine the sign of $F^{\star}(\delta g^{\prime}_{\varepsilon}(y)) - \delta y g^{\prime}_{\varepsilon}(y) + g_{\varepsilon}(y) + m$ for any $y \in [\hat{y}-\varepsilon,\hat{y}+\varepsilon]$. Let us fix some $y$ within this interval and consider the concave function $[F^{\prime}(y), 0] \ni p \longmapsto F^{\star}(\delta p) - \delta y p$.  Since this function is decreasing and null at $0$, we have that
\begin{align*}
F^{\star}(\delta g^{\prime}_{\varepsilon}(y)) - \delta y g^{\prime}_{\varepsilon}(y) \geq 0 \; \text{for any} \; y \in [\hat{y}-\varepsilon,\hat{y}+\varepsilon].
\end{align*}
Furthermore, it is evident that $g_{\varepsilon} + m \geq F + m \geq 0$ holds on $[\hat{y}-\varepsilon,\hat{y}+\varepsilon]$. Consequently, we can conclude that the function $v_{\varepsilon,m}$ is a viscosity super-solution of the Hamilton--Jacobi equation \eqref{align:hjFbar}. Therefore, by mimicking the proof of \citep[Lemma A.2]{possamai2020there}, it can be easily proved that $v_{\varepsilon,m} \geq \bar{F}$ on $[0,\infty)$. 

\medskip
We have showed the required inequality for the function $v_{\varepsilon,m}$, but we still need to demonstrate it for $v_m$, which is what we aim to accomplish. It is evident that $v_m \geq \bar{F}$ on $[0,\hat{y}) \cup (\hat{y},\infty)$ since $v_{\varepsilon,m}(y)$ converges to $v_m(y)$ as $\epsilon$ goes to zero, for any $y$ in that interval. However, this is also true at $\hat{y}$ because \Cref{lemma:FBarlsc} proves that $\bar{F}$ is lower--semi-continuous (and $v_m$ is continuous by definition). In fact, by considering a non-negative non-constant sequence $(y_n)_{n\in\N}$ converging to $\hat{y}$, we can see that $v_{\varepsilon,m}(y_n) \geq \bar{F}(y_n)$. Hence, $v_m(y_n) \geq \bar{F}(y_n)$ for any $n \in \N$, and thus
\begin{align*}
v_m(\hat{y}) \geq \liminf_{n \rightarrow \infty} \bar{F}(y_n) \geq \bar{F}(\hat{y}).
\end{align*}

\medskip
Finally, we need to show the reverse inequality, that is, $v_m \leq \bar{F}$ on $[0,\infty)$. For any $y \in [0, \hat{y}]$, it is easy to see that the trivial control $(T^{\star},p^{\star}) = (0,0)$ is optimal since it attains the upper bound $v_m(y)=F(y)$. On the other hand, if $y > \hat{y}$, we can mimic the proof in \cite[Section A.1]{possamai2020there} by constructing a sequence of controls $(T^{\star},p^{\star})$ which induces $\bar{F}(y)$ to reach its upper bound $v_m(y)=-m$. This concludes the proof.
\end{proof}

\bigskip

\section{Analysis of the first-best problem when the agent is more impatient} \label{appendix:FBImpatientAgent}

\Cref{thm:FirstBestCompleteC}.\ref{viscosityCFB} identifies the first-best  value function as the unique viscosity solution of the Hamilton--Jacobi--Bellman equation \eqref{align:hjbFB} when $\delta\neq 1$ and $\gamma\delta>1$. As a consequence, $v^{\smalltext{\rm FB}}$ is continuous, implying that it admits both a left- and right-derivative $v^{\smalltext{\rm FB}}_-(y)$ and $v^{\smalltext{\rm FB}}_+(y)$ at any $y \in (-h(\bar{a},\varepsilon_m),\infty)$. This also proves the existence of $(v^{\smalltext{\rm FB}})^{\prime}(-(\bar{a}-\varepsilon_m))$, where it is meant as the right derivative, and we show it is null. 

\begin{lemma}\label{lemma:vPrime0Negative}
If we assume that $\delta\neq 1$ and $\gamma\delta>1$, then $(v^{\smalltext{\rm FB}})^{\prime}(-h(\bar{a},\varepsilon_m)) = 0$.
\end{lemma}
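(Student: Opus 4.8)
The plan is to read off the finiteness and the sign of $(v^{\smalltext{\rm FB}})^{\prime}(0)$ directly from the Legendre--type representation \eqref{eq:firstbestV_KT}. Write $v^{\smalltext{\rm FB}}(y) = \inf_{\lambda \leq 0}\big\{\lambda y + g(\lambda)\big\}$, where
\[
g(\lambda) \coloneqq \sup_{T \geq 0}\bigg\{-\mathrm{e}^{-\rho T} F^\star\big(\lambda \mathrm{e}^{\rho(1-\delta)T}\big)\mathbf{1}_{\{T<\infty\}} + \int_0^T \rho\mathrm{e}^{-\rho t}\big(G^\star - F^\star\big)\big(\delta\lambda\mathrm{e}^{\rho(1-\delta)t}\big)\,\d t\bigg\}, \; \lambda \leq 0 .
\]
As an infimum of affine functions of $y$, $v^{\smalltext{\rm FB}}$ is concave on $[0,\infty)$, and \eqref{eq:initialValue} gives $v^{\smalltext{\rm FB}}(0) = v^{\smalltext{\rm FB}}_0$. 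First I would record that $(v^{\smalltext{\rm FB}})^{\prime}(0)$ is finite: $v^{\smalltext{\rm FB}}$ is real--valued on $[0,\infty)$ by \Cref{lemma:growthfirstbest} and concave, so the difference quotient $y \longmapsto \big(v^{\smalltext{\rm FB}}(y) - v^{\smalltext{\rm FB}}_0\big)/y$ is non--increasing on $(0,\infty)$; its limit as $y \downarrow 0$ is therefore its supremum, which lies in $\big[v^{\smalltext{\rm FB}}(1) - v^{\smalltext{\rm FB}}_0,\, 0\big]$, the upper bound coming from the fact that $v^{\smalltext{\rm FB}}$ is non--increasing.

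The core step is to produce a strictly negative minimiser of $g$. I would check that $g$ is lower--semi-continuous (a supremum of functions continuous in $\lambda$) and coercive, namely $g(\lambda)\to+\infty$ as $\lambda\to-\infty$: bounding $g(\lambda)$ from below by its value at $T=\infty$, using $G^\star \geq -m$ and then restricting the integral to $t\in[0,1]$, one reduces to the fact that $-F^\star(p)\to+\infty$ as $p\to-\infty$. Consequently the infimum defining $v^{\smalltext{\rm FB}}_0$ is attained at some $\lambda^\star_0 \leq 0$. Next I would evaluate $g(0) = \bar a - \varepsilon_m$: since $F^\star(0)=0$ the boundary term vanishes, $(G^\star - F^\star)(0) = G^\star(0) = \bar a - \varepsilon_m$, and sending $T\to\infty$ gives the claim. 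Because $v^{\smalltext{\rm FB}}_0 < \bar a - \varepsilon_m$ by \eqref{eq:initialValue}, we have $g(\lambda^\star_0) = v^{\smalltext{\rm FB}}_0 < g(0)$, so $\lambda=0$ is not a minimiser of $g$, hence $\lambda^\star_0 < 0$.

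To conclude, the affine map $y\longmapsto \lambda^\star_0 y + g(\lambda^\star_0)$ is one of the competitors in the infimum \eqref{eq:firstbestV_KT}, so $v^{\smalltext{\rm FB}}(y) \leq \lambda^\star_0 y + v^{\smalltext{\rm FB}}_0$ for all $y\geq0$; dividing by $y>0$ and letting $y\downarrow0$ yields $(v^{\smalltext{\rm FB}})^{\prime}(0) \leq \lambda^\star_0 < 0$, which combined with the first paragraph gives $(v^{\smalltext{\rm FB}})^{\prime}(0)\in(-\infty,0)$. I expect the only point demanding real care to be the coercivity of $g$ and the attendant attainment of the infimum: here I would re-derive the required super--linear lower bound on $-F^\star(p)$ as $p\to-\infty$ from the growth $F(y)\sim -y^\gamma$ implied by \eqref{eq:growthF}, so that the polynomial growth of $g$ dominates the linear term $\lambda y$. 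Everything else is elementary convex--analysis bookkeeping.
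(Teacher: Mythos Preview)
Your proof is correct and shares the same skeleton as the paper's: both reduce the question to showing that the minimiser $\lambda^\star_0$ of $g$ (denoted $f$ in the paper) over $(-\infty,0]$ is strictly negative, and both identify $(v^{\smalltext{\rm FB}})^{\prime}(0)$ with (or bound it by) this minimiser. The decisive step, however, is handled differently. The paper argues that near $\lambda=0$ the optimal $T$ in the definition of $f$ is $+\infty$, differentiates the resulting integral expression, and obtains $f'(0)=(G^\star)'(0)=h(\bar a,\varepsilon_m)>0$, so that $\lambda=0$ cannot minimise the convex function $f$. You instead compute the value $g(0)=\bar a-\varepsilon_m$ directly and invoke the strict inequality $v^{\smalltext{\rm FB}}_0<\bar a-\varepsilon_m$ from \eqref{eq:initialValue}, which immediately rules out $\lambda=0$ as a minimiser. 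Your route avoids differentiating through the supremum over $T$ and the need to identify the optimal $T$ near $\lambda=0$; it also makes explicit two points the paper leaves implicit, namely the existence of the minimiser (via your lower--semi-continuity and coercivity argument) and the inequality $(v^{\smalltext{\rm FB}})^{\prime}(0)\le\lambda^\star_0$ obtained from the affine competitor. The paper's derivative computation, on the other hand, yields slightly more information (the exact value $f'(0)$), though that extra information is not used here.
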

\begin{proof}
Applying the Karush--Kuhn--Tucker approach to the first-best value function  $v^{\smalltext{\rm FB}}$ results in
\begin{align*}
v^{\smalltext{\rm FB}}(-h(\bar{a},\varepsilon_m)) =& \inf_{\lambda \leq 0}\bigg\{-\lambda h(\bar{a},\varepsilon_m) + \sup_{T \geq 0} \bigg\{
-\mathrm{e}^{-\rho T} F^\star\big(\lambda \mathrm{e}^{\rho (1-\delta) T}\big) \mathbf{1}_{\{T < \infty\}} + \int_0^T \rho \mathrm{e}^{-\rho t} \big(G^\star - F^\star\big)\big(\delta\lambda\mathrm{e}^{\rho(1-\delta)t}\big) \d t \bigg\}\bigg\} \\
\geq& \inf_{\lambda \leq 0}\bigg\{-\lambda h(\bar{a},\varepsilon_m) + \int_0^\infty \rho \mathrm{e}^{-\rho t} \big(G^\star - F^\star\big)\big(\delta\lambda\mathrm{e}^{\rho(1-\delta)t}\big) \d t\bigg\} \\
\geq& \bar{a}-\varepsilon_m +\inf_{\lambda \leq 0}\bigg\{- \int_0^\infty \rho \mathrm{e}^{-\rho t} F^\star\big(\delta\lambda\mathrm{e}^{\rho(1-\delta)t}\big) \d t\bigg\} = \bar{a}-\varepsilon_m,
\end{align*}
where the second inequality follows from $G^\star(p) \geq \bar{a}-\varepsilon_m + p h(\bar{a},\varepsilon_m)$ for all $p \in \mathbb{R}$ by definition, while the last equality stems from $F^\star$ being a non-positive function, as explained in \Cref{remark:FFStarProperties}. Given that the initial condition stated in \eqref{align:hjbFB} is $v^{\smalltext{\rm FB}}(-h(\bar{a},\varepsilon_m)) = \bar{a}-\varepsilon_m$, we infer that $(v^{\smalltext{\rm FB}})^{\prime}(-h(\bar{a},\varepsilon_m)) = 0$ since $(v^{\smalltext{\rm FB}})^{\prime}(-h(\bar{a},\varepsilon_m)) = \lambda^\star_{{\bar{a}},\varepsilon_{\smalltext{m}}}$, where $\lambda^\star_{{\bar{a}},\varepsilon_{\smalltext{m}}} = 0$ minimises the expression of $v^{\smalltext{\rm FB}}$.
\end{proof}

In analogy with the case without accidents studied in \cite[Theorem 3.1]{possamai2020there}, we conjecture that the first-best value function $v^{\smalltext{\rm FB}}$ does not intersect the barrier $\bar{F}$ when $\delta > 1$. This means that it effectively solves the ODE in the Hamilton--Jacobi--Bellman equation \eqref{align:hjbFB} over the entire interval $(-h(\bar{a},\varepsilon_m),\infty)$, and thus is concave due to the definition of the operator $\cJ^{\smalltext{\rm FB}}$ introduced in \eqref{eq:operatorfirstbest}. Indeed, our aim is to show that $v^{\smalltext{\rm FB}}$ is strictly concave and therefore decreasing, as we have just proved that $(v^{\smalltext{\rm FB}})^{\prime}(-h(\bar{a},\varepsilon_m))$ is zero. Consequently, it seems natural to introduce the concave dual function $v^{\smalltext{\rm FB},\star}$ and the equation that it should satisfy. This approach allows us to subsequently to deduce the properties that define $v^{\smalltext{\rm FB}}$ from those of $v^{\smalltext{\rm FB},\star}$. We consider the equation
\begin{align}\label{align:hjbODEVfbStar}
- v^{\smalltext{\rm FB},\star}(p) + (1-\delta) p (v^{\smalltext{\rm FB},\star})^\prime(p) + F^{\star}(\delta p) - G^\star(\delta p)=0, \; p\in (-\infty,0), \; v^{\smalltext{\rm FB},\star}(0) = -(\bar{a}-\varepsilon_m).
\end{align}
For $\delta >1$, this linear ODE has a unique solution given by
\begin{align}\label{align:firstbestStar_expl}
v^{\smalltext{\rm FB},\star}(p) = \frac{(-p)^{\frac{1}{1-\delta}}}{{\delta-1}} \int_p^{0} \frac{F^\star(\delta x) - G^\star(\delta x)}{(-x)^{1+\frac{1}{1-\delta}}} \d x \; \text{for any} \; p \leq 0.
\end{align}

\begin{lemma}\label{lemma:vStarFBconcIneq}
Suppose $\delta > 1$. The solution \eqref{align:firstbestStar_expl} to {\rm\Cref{align:hjbODEVfbStar}} is strictly concave over $(-\infty,0)$. Additionally, we have that $v^{\smalltext{\rm FB},\star}(p) \leq (F^\star - G^\star)(p)$ for all $p \leq 0$.
\end{lemma}
\begin{proof}
We prove the statement by adopting a similar approach as the proof of \cite[Lemma A.4]{possamai2020there}. First, it is important to note that the concavity of $F^\star-G^\star$ implies that for any $ p < 0$, it holds that
\begin{align*}
(F^\star- v^{\smalltext{\rm FB},\star} - G^\star)(p) = F^\star(p) - F^\star(\delta p) - G^\star(p) + G^\star(\delta p) - (1-\delta) p (v^{\smalltext{\rm FB},\star})^\prime(p) \geq (1-\delta) p (F^\star - v^{\smalltext{\rm FB},\star} - G^\star)^\prime(p).
\end{align*}
Now, consider $\phi(p) \coloneqq (-p)^{-\frac{1}{1-\delta}} (F^{\star}(p) - v^{\smalltext{\rm FB},\star}(p) - G^\star(p))$, for $p \leq 0$. This function is non-increasing, and therefore,
\begin{align*}
\phi(p) \geq \lim_{p \rightarrow 0^-} \phi(p) = 0 \; \text{for any} \; p \leq 0.
\end{align*}
By differentiating \Cref{align:hjbODEVfbStar} and substituting the expression for the derivative $(v^{\smalltext{\rm FB},\star})^\prime$, we get that $v^{\smalltext{\rm FB},\star}$ is concave, as indicated by the following inequality:
\begin{align*}
(1-\delta)^2 p^2 (v^{\smalltext{\rm FB},\star})^{\prime\prime}(p) \leq \delta (v^{\smalltext{\rm FB},\star}(p) - F^\star(p) + G^\star(p)) = -\delta (-p)^{\frac{1}{1-\delta}} \phi(p)  \leq 0 \; \text{for any} \; p < 0.
\end{align*}
Actually, the function $v^{\smalltext{\rm FB},\star}$ is strictly concave because there does not exists a non-empty interval $\cI \subset (-\infty, 0)$ where this function exhibit a linear growth, as evident from its representation in \eqref{align:firstbestStar_expl}.
\end{proof}

We have showed that the unique solution to \Cref{align:hjbODEVfbStar} is strictly concave and twice continuously differentiable on the interval $(-\infty,0)$. Consequently, we can deduce that it satisfies the equation
\begin{align*}
- v^{\smalltext{\rm FB},\star}(p) + (1-\delta) p (v^{\smalltext{\rm FB},\star})^\prime(p) + F^{\star}(\delta p) - \cJ^{\smalltext{\rm FB}}\bigg(p,\frac{1}{(v^{\smalltext{\rm FB},\star})^{\prime\prime}(p)}\bigg) = 0, \; p\in (-\infty,0), \; v^{\smalltext{\rm FB},\star}(0) = -(\bar{a}-\varepsilon_m).
\end{align*}
We now introduce the concave dual function $v^{\smalltext{\rm FB},\star \star}(y) \coloneqq \inf_{p \leq 0} \{y p - v^{\smalltext{\rm FB},\star}(p)\}$, for $y \geq -(\bar{a}-\varepsilon_m)$.

\begin{proof}[Proof of \Cref{thm:FirstBestCompleteC}.\ref{deltaBigger1}]
According to \cite[Proposition 5 and Lemma 5]{alvarez1997convex}), $v^{\smalltext{\rm FB},\star \star}$ is a twice continuously differentiable solution of the following equation
\begin{align}\label{align:eqFBstarstar}
F^{\star}(\delta (v^{\smalltext{\rm FB},\star \star})^\prime(y)) - \delta y (v^{\smalltext{\rm FB},\star \star})^\prime(y) + v^{\smalltext{\rm FB},\star \star}(y) -\cJ^{\smalltext{\rm FB}}((v^{\smalltext{\rm FB},\star \star})^\prime(y), (v^{\smalltext{\rm FB},\star \star})^{\prime\prime}(y))=0, \; y \in (-h(\bar{a},\varepsilon_m),\infty),
\end{align}
with initial condition $v^{\smalltext{\rm FB},\star \star}(-h(\bar{a},\varepsilon_m) )= - v^{\smalltext{\rm FB},\star}(0) = \bar{a} - \varepsilon_m$. Hence, showing that $v^{\smalltext{\rm FB},\star \star} \geq \bar{F}$ on $[-h(\bar{a},\varepsilon_m),\infty)$ leads to the conclusion that the function $v^{\smalltext{\rm FB},\star \star}$ satisfies the Hamilton--Jacobi--Bellman equation \eqref{align:hjbFB}. Consequently, $v^{\smalltext{\rm FB},\star \star}$ coincides with the first-best value function $v^{\smalltext{\rm FB}}$, as a consequence of the comparison theorem mentioned in the proof of \Cref{thm:FirstBestCompleteC}.\ref{viscosityCFB}. This stems from its required growth at infinity, which is inherited from the growth at infinity of its dual function $v^{\smalltext{\rm FB},\star}$.

\medskip
First, let us discuss the case $m \leq - F^{\star}(\delta F^{\prime}(0))$. Here, the face-lifted utility $\bar{F}$ is the concave conjugate of the function $w^\star$, introduced in \Cref{align:wstarexp}. We then compare $v^{\smalltext{\rm FB},\star}$ and $w^\star$ over the interval $(-\infty, f_{\delta m} \wedge 0) = (-\infty, f_{\delta m})$, where $f_{\delta m} = (F^\star)^{(-1)}(-m)/\delta < 0$, as defined in \Cref{lemma:derivative}. Since we are working under the assumption  $G^\star(f_{\delta m}) \geq 0$, \Cref{lemma:vStarFBconcIneq} implies that
\begin{align}\label{align:inequality0FBdeltaBigmSmall}
v^{\smalltext{\rm FB},\star}(f_{\delta m}) - w^\star(f_{\delta m}) =  v^{\smalltext{\rm FB},\star}(f_{\delta m}) \leq F^\star(f_{\delta m}) - G^\star(f_{\delta m})= -G^\star(f_{\delta m}) \leq 0,
\end{align}
where the second equality follows from the definition of the function $F^\star$, considering that $f_{\delta m} \geq F^\prime(0)$. Given that both concave conjugates $v^{\smalltext{\rm FB},\star}$ and $w^\star$ are solutions of \eqref{align:hjbODEVfbStar} and \eqref{eq:wStarode}, respectively, their difference $d^\star(p)\coloneqq v^{\smalltext{\rm FB},\star}(p) - w^\star(p)$, for $p \leq f_{\delta m}$, satisfies
\begin{align}\label{align:differencemsmallFB}
(1-\delta) p (d^\star)^\prime(p) - d^\star(p) - m - G^\star(\delta p) = 0, \; p\in (-\infty, f_{\delta m}), \; d^\star(f_{\delta m}) \leq 0.
\end{align}
The unique solution of \eqref{align:differencemsmallFB}, for any $p < f_{\delta m}$, is given by 
\begin{align}\label{align:inequalitydiffFBDeltaBigger1}
d^\star(p) = (-p)^{\frac{1}{1-\delta}} \Bigg(\frac{d^\star(f_{\delta m} )}{(-f_{\delta m})^{\frac{1}{1-\delta}}} + \frac{1}{1-\delta} \int_p^{f_{\smalltext{\delta}\smalltext{m}}} \frac{G^\star(\delta x) + m}{(-x)^{1+\frac{1}{1-\delta}}}\bigg) \leq  \frac{(-p)^{\frac{1}{1-\delta}}}{1-\delta} \int_p^{f_{\smalltext{\delta}\smalltext{m}}} \frac{G^\star(\delta x) + m}{(-x)^{1+\frac{1}{1-\delta}}} < 0,
\end{align}
since the sum $G^\star + m $ is non-negative and there exists some $\varepsilon>0$ such that $G^\star + m >0$ on $(f_{\delta m}-\varepsilon,f_{\delta m}]$, and $\delta > 1$ by assumption. Then, as taking conjugates clearly reverse functional inequalities, we conclude that $v^{\smalltext{\rm FB},\star \star}$ is always not below the face-lifted utility $\bar{F}$. Furthermore, if $G^\star(f_{\delta m}) > 0$, \Cref{align:inequality0FBdeltaBigmSmall} implies that $d^\star < 0$ on $(-\infty, f_{\delta m}]$, and thus $v^{\smalltext{\rm FB},\star \star} > \bar{F}$ on $[-h(\bar{a},\varepsilon_m),\infty)$. On the other hand, if $G^\star(f_{\delta m}) = 0$, the computations done previously, along with concave duality, imply that $v^{\smalltext{\rm FB},\star \star} > \bar{F}$ on $(0,\infty)$, and it is evident that $v^{\smalltext{\rm FB},\star \star} > \bar{F}$ on $[-h(\bar{a},\varepsilon_m),0)$ by definition of $\bar{F}$. If it were true that $v^{\smalltext{\rm FB},\star \star}(0) = \bar{F}(0) = 0$, then $(v^{\smalltext{\rm FB}})^\prime(0) > \bar{F}^\prime(0) = f_{\delta m}$. From the ODE \eqref{align:eqFBstarstar} satisfied by $v^{\smalltext{\rm FB},\star \star}$, we would have
\begin{equation*}
0 = F^{\star}(\delta (v^{\smalltext{\rm FB}})^\prime(0)) + v^{\smalltext{\rm FB}}(0) -G^\star\big(\delta (v^{\smalltext{\rm FB}})^\prime(0)) > F^{\star}(\delta f_{\delta m})  -G^\star\big(\delta (v^{\smalltext{\rm FB}})^\prime(0) = -m  -G^\star\big(\delta (v^{\smalltext{\rm FB}})^\prime(0)),
\end{equation*}
which is absurd since $G^\star \geq -m$ on $\R$. The strict inequality follows from the fact that $(v^{\smalltext{\rm FB}})^\prime(0) > \bar{F}^\prime(0) = f_{\delta m}$, which, by definition, satisfies $F^{\star}(\delta f_{\delta m}) = -m <0$.

\medskip
Now, we show that the statement remains true even in the case when $m > - F^{\star}(\delta F^{\prime}(0))$ employing the same arguments as previously. Here, the face-lifted utility $\bar{F}$ is the concave conjugate of $w^\star$ introduced in \Cref{align:barFyHat}, so we need to study the sign of the difference $d^\star(p)\coloneqq v^{\smalltext{\rm FB},\star}(p) - w^\star(p)$, for $p \leq F^\prime(0) \wedge 0 = F^\prime(0)$. To begin, let us observe that the function $d^\star$ is such that
\begin{align*}
d^\star(p) = v^{\smalltext{\rm FB},\star}(p) - w^\star(p) = v^{\smalltext{\rm FB},\star}(p) - F^\star(p) \leq -G^\star(p) < 0 \; \text{for any} \; p\in [F^\prime(\bar{y}), F^\prime(0)],
\end{align*}
where the first inequality is a consequence of \Cref{lemma:vStarFBconcIneq}, while the second one follows by the assumption $G^\star(F^\prime(\bar{y})) > 0$, which  implies that $G^\star(p) > 0$ for any $p >F^\prime(\bar{y})$. Furthermore, we have that
\begin{align*}
(1-\delta) p (d^\star)^\prime(p) - d^\star(p) - m - G^\star(\delta p) = 0, \; p\in (-\infty, F^\prime(\bar{y})), \; d^\star(F^\prime(\bar{y})) < 0
\end{align*}
from which we conclude, following the same reasoning as in \eqref{align:inequalitydiffFBDeltaBigger1}.
\end{proof}

\section{Proof of the reduction argument}\label{appendix:reductionSecond}

\subsection{The dynamic programming principle for the problem of the agent}
Let us fix an arbitrary contract $\bC = (\tau, \pi, \xi) \in \mathfrak{C}_R$ such that the set $\cU^\star(\bC)$ is not empty. Below, we will derive several properties of the dynamic version of the response of the agent to the given contract $\bC$, which is defined as the following family of random variables:
\begin{align}\label{align:VArandomVar}
V^{\rm A}_\theta(\bC) \coloneqq \esssup_{\nu \in \cU} \tilde{V}^{\rm A}_{\theta}(\bC,\nu) \coloneqq \esssup_{\nu \in \cU} \E^{\P^{\smalltext{\nu}}}_{\theta} \bigg[\mathrm{e}^{-r (\tau-\theta)} u(\xi)  \mathbf{1}_{\{\tau < \infty\}} + \int_\theta^\tau r \mathrm{e}^{-r(s-\theta)} (u(\pi_s) - h(\nu_s)) \d s\bigg],
\end{align}
where $\theta$ is an $\F$--stopping time such that $\theta \leq \tau$, $\P$--a.s. 

\begin{lemma}\label{lemma:dppVA}
For any $\F$--stopping times $\theta$ and $\tilde{\theta}$ such that $\theta \leq \tilde{\theta}\leq \tau$, $\P$--{\rm a.s.}, it holds that 
\begin{align*}
V^{\rm A}_\theta(\bC)  = \esssup_{\nu \in \cU} \E^{\P^{\smalltext{\nu}}}_{\theta} \bigg[\mathrm{e}^{-r (\tilde{\theta}-\theta)} V^{\rm A}_{\tilde{\theta}}(\bC)  \mathbf{1}_{{\{\tilde{\theta}} < \infty\}} + \int_\theta^{\tilde{\theta}} r \mathrm{e}^{-r(s-\theta)} (u(\pi_s) - h(\nu_s)) \d s\bigg], \; \P\text{\rm --a.s.}
\end{align*}
\end{lemma}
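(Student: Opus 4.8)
The statement is the dynamic programming principle (DPP) for the agent's value process $V^{\rm A}_\theta(\bC)$, which is a standard but technically delicate result. I would prove the two inequalities separately, following the classical approach for control problems in weak formulation with changes of measure. The key structural facts to exploit are: the flow property of the state and of the rewards under the family $\{\P^\nu\}$; the tower property of conditional expectations; and the possibility of concatenating controls (which is legitimate here because the admissibility condition on $\nu$ is simply that the stochastic exponential $M^\nu$ is a uniformly integrable martingale, and because the effort sets $A,B$ are product-structured and contain their ``null'' points, cf.\ the remark after \eqref{align:changeMeasureNU}).

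\textbf{The ``$\leq$'' inequality.} Fix $\F$-stopping times $\theta\le\tilde\theta\le\tau$. For any $\nu\in\cU$, I would split the integral in \eqref{align:VArandomVar} at $\tilde\theta$: on $\{\tilde\theta<\infty\}$,
\begin{align*}
\tilde V^{\rm A}_\theta(\bC,\nu) &= \E^{\P^{\smalltext{\nu}}}_\theta\bigg[\int_\theta^{\tilde\theta} r\mathrm{e}^{-r(s-\theta)}(u(\pi_s)-h(\nu_s))\d s + \mathrm{e}^{-r(\tilde\theta-\theta)}\E^{\P^{\smalltext{\nu}}}_{\tilde\theta}\Big[\mathrm{e}^{-r(\tau-\tilde\theta)}u(\xi)\mathbf 1_{\{\tau<\infty\}}\\
&\hspace{5cm}+\int_{\tilde\theta}^\tau r\mathrm{e}^{-r(s-\tilde\theta)}(u(\pi_s)-h(\nu_s))\d s\Big]\bigg],
\end{align*}
using the tower property for $\E^{\P^{\smalltext{\nu}}}$ together with $\cF_\theta\subset\cF_{\tilde\theta}$. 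The inner conditional expectation is $\tilde V^{\rm A}_{\tilde\theta}(\bC,\nu)\le V^{\rm A}_{\tilde\theta}(\bC)$ by definition of the essential supremum, and on $\{\tilde\theta=\infty\}$ one has $\tau=\infty$ so the terminal term vanishes and the identity degenerates appropriately. Taking $\esssup$ over $\nu$ then yields ``$\leq$'', after checking that the $\esssup$ passes through the (monotone, $\cF_\theta$-conditional) operations — this uses that $\tilde V^{\rm A}_{\tilde\theta}(\bC,\cdot)$ has the lattice/upward-directed property so the essential supremum is attained along an increasing sequence, and monotone convergence.

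\textbf{The ``$\geq$'' inequality.} This is the harder direction and the main obstacle. Fix $\nu\in\cU$ and $\varepsilon>0$. Using that the family $\{\tilde V^{\rm A}_{\tilde\theta}(\bC,\nu')\colon \nu'\in\cU\}$ is upward directed, I would select a single $\nu^\varepsilon\in\cU$ with $\tilde V^{\rm A}_{\tilde\theta}(\bC,\nu^\varepsilon)\ge V^{\rm A}_{\tilde\theta}(\bC)-\varepsilon$, $\P$--a.s.\ (a measurable-selection/exhaustion argument). Then concatenate: define $\hat\nu := \nu\,\mathbf 1_{\llbracket 0,\tilde\theta\rrbracket}+\nu^\varepsilon\,\mathbf 1_{\rrbracket\tilde\theta,\infty\llbracket}$. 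One must verify $\hat\nu\in\cU$, i.e.\ that $M^{\hat\nu}$ remains a uniformly integrable martingale — this follows because $M^{\hat\nu}=M^\nu$ on $\llbracket 0,\tilde\theta\rrbracket$ and $M^{\hat\nu}_t/M^{\hat\nu}_{\tilde\theta}$ evolves as the exponential driven by $\nu^\varepsilon$ on $\rrbracket\tilde\theta,\infty\llbracket$, and one can invoke the martingale property of $M^{\nu^\varepsilon}$ together with a conditioning argument (the density $M^{\hat\nu}_\infty = M^\nu_{\tilde\theta}\,(M^{\nu^\varepsilon}_\infty/M^{\nu^\varepsilon}_{\tilde\theta})$, with $\E^{\P^{\smalltext{\nu}}}_{\tilde\theta}[M^{\nu^\varepsilon}_\infty/M^{\nu^\varepsilon}_{\tilde\theta}]=1$). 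Since $\P^{\hat\nu}=\P^\nu$ when restricted to $\cF_{\tilde\theta}$ and $\P^{\hat\nu}$ ``continues as $\P^{\nu^\varepsilon}$'' afterwards, the flow property gives
\begin{align*}
V^{\rm A}_\theta(\bC)\ \ge\ \tilde V^{\rm A}_\theta(\bC,\hat\nu)\ =\ \E^{\P^{\smalltext{\nu}}}_\theta\bigg[\int_\theta^{\tilde\theta} r\mathrm{e}^{-r(s-\theta)}(u(\pi_s)-h(\nu_s))\d s + \mathrm{e}^{-r(\tilde\theta-\theta)}\tilde V^{\rm A}_{\tilde\theta}(\bC,\nu^\varepsilon)\mathbf 1_{\{\tilde\theta<\infty\}}\bigg],
\end{align*}
and then bound $\tilde V^{\rm A}_{\tilde\theta}(\bC,\nu^\varepsilon)\ge V^{\rm A}_{\tilde\theta}(\bC)-\varepsilon$. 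Taking $\esssup$ over $\nu$ and letting $\varepsilon\downarrow 0$ (with a uniform integrability argument controlled by the integrability condition \eqref{eq:integrabilityCondition} and the growth bounds \eqref{eq:growthUtility} on $u$, ensuring all conditional expectations are finite and the limit passes inside) yields ``$\geq$''. The delicate points to get right are the measurable selection producing a \emph{single} $\nu^\varepsilon$ valid $\P$--a.s.\ simultaneously, the verification that concatenation preserves admissibility, and the uniform integrability needed to justify the $\esssup$ manipulations and the $\varepsilon\to0$ passage.
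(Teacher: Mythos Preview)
Your proposal is correct and follows essentially the same route as the paper: the tower property for ``$\leq$'' and concatenation of controls for ``$\geq$''. The only notable difference is that for ``$\geq$'' the paper bypasses your $\varepsilon$-optimal measurable-selection step by directly invoking the upward-directed property of $\{\tilde V^{\rm A}_{\tilde\theta}(\bC,\nu^2)\}_{\nu^2\in\cU}$ to extract an \emph{increasing} sequence $(\nu^2_n)_n$ with $\tilde V^{\rm A}_{\tilde\theta}(\bC,\nu^2_n)\uparrow V^{\rm A}_{\tilde\theta}(\bC)$, and then passes the limit inside the conditional expectation via monotone convergence---this is slightly cleaner than your $\varepsilon\downarrow 0$ argument since it sidesteps the need to produce a single $\nu^\varepsilon$ that is $\varepsilon$-optimal $\P$--a.s.\ simultaneously, and it also makes the change-of-measure computation (writing the density ratio as $M^{\tilde\nu}_{\tilde\theta}/M^{\tilde\nu}_\theta = M^{\nu^1}_{\tilde\theta}/M^{\nu^1}_\theta$) the only technical point to track.
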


\begin{proof}
Proving that the right-hand side of the previous equality is greater than the left-hand side is straightforward. In fact, the tower property implies that 
\begin{align*}
V^{\rm A}_\theta(\bC) = \esssup_{\nu \in \cU} \E^{\P^{\smalltext{\nu}}}_{\theta} \bigg[\mathrm{e}^{-r (\tilde{\theta}-\theta)} \tilde{V}^{\rm A}_{\tilde{\theta}}(\bC,\nu) \mathbf{1}_{{\{\tilde{\theta}} < \infty\}} + \int_\theta^{\tilde{\theta}} r \mathrm{e}^{-r(s-\theta)} (u(\pi_s) - h(\nu_s)) \d s \bigg] , \; \P\text{--a.s.}
\end{align*}
Since $\tilde{V}^{\rm A}_{\tilde{\theta}}(\bC,\nu) \leq V^{\rm A}_{\tilde{\theta}}(\bC)$ on the event set $\{\tilde{\theta} < \infty\}$ by definition, then we can conclude that 
\begin{align*}
V^{\rm A}_\theta(\bC) \leq \esssup_{\nu \in \cU} \E^{\P^{\smalltext{\nu}}}_{\theta} \bigg[\mathrm{e}^{-r (\tilde{\theta}-\theta)} V^{\rm A}_{\tilde{\theta}}(\bC) \mathbf{1}_{{\{\tilde{\theta}} < \infty\}} + \int_\theta^{\tilde{\theta}} r \mathrm{e}^{-r(s-\theta)} (u(\pi_s) - h(\nu_s)) \d s\bigg], \; \P\text{--a.s.}
\end{align*}
We follow the arguments of \cite[Lemma A.4]{el2021optimal} to prove the reverse inequality. First, let us introduce two arbitrary controls $(\nu^1, \nu^2) \in \cU^2$, and define the process $\tilde{\nu}$ as follows:
\begin{align*}
\tilde{\nu}_s \coloneqq \nu^1_s \; \mathbf{1}_{[0,\tilde{\theta}]}(s) + \nu^2_s \; \mathbf{1}_{(\tilde{\theta},\tau]}(s), \; \text{for} \; s \geq 0.
\end{align*}
As it is evident that $\tilde{\nu} \in \cU$, we see that
\begin{align*}
V^{\rm A}_\theta(\bC) \geq \tilde{V}^{\rm A}_{\theta}(\bC,\tilde{\nu}) &= \E^{\P^{\smalltext{\tilde{\nu}}}}_{\theta}\bigg[\mathrm{e}^{-r (\tilde{\theta}-\theta)} \tilde{V}^{\rm A}_{\tilde{\theta}}(\bC,\tilde{\nu}) \mathbf{1}_{{\{\tilde{\theta}} < \infty\}} + \int_\theta^{\tilde{\theta}} r \mathrm{e}^{-r(s-\theta)} (u(\pi_s) - h(\tilde{\nu}_s)) \d s \bigg] \\
&= \E^{\P^{\smalltext{\tilde{\nu}}}}_{\theta}\bigg[\mathrm{e}^{-r (\tilde{\theta}-\theta)} \tilde{V}^{\rm A}_{\tilde{\theta}}(\bC,\nu^2) \mathbf{1}_{{\{\tilde{\theta}} < \infty\}} + \int_\theta^{\tilde{\theta}} r \mathrm{e}^{-r(s-\theta)} (u(\pi_s) - h(\nu^1_s)) \d s \bigg], \; \P\text{--a.s.}
\end{align*}
because $\tilde{V}^{\rm A}_{\tilde{\theta}}(\bC,\tilde{\nu})$ does not depend on the values that $\tilde{\nu}$ takes before the stopping time $\tilde{\theta}$. A change of measure implies that 
\begin{gather}\begin{aligned}\label{align:VATilde}
\tilde{V}^{\rm A}_{\theta}(\bC,\tilde{\nu}) &= \E^\P_{\theta}\Bigg[\frac{M^{\smalltext{\tilde{\nu}}}_{\tilde{\theta}}}{M^{\smalltext{\tilde{\nu}}}_{\theta}} \bigg(
\mathrm{e}^{-r (\tilde{\theta}-\theta)} \tilde{V}^{\rm A}_{\tilde{\theta}}(\bC,\nu^2) \mathbf{1}_{{\{\tilde{\theta}} < \infty\}} + \int_\theta^{\tilde{\theta}} r \mathrm{e}^{-r(s-\theta)} (u(\pi_s) - h(\nu^1_s)) \d s \bigg)\Bigg]\\
&= \E^\P_{\theta}\Bigg[\frac{M^{\smalltext{{\nu^1}}}_{\tilde{\theta}}}{M^{\smalltext{{\nu^1}}}_{\theta}} \bigg(
\mathrm{e}^{-r (\tilde{\theta}-\theta)} \tilde{V}^{\rm A}_{\tilde{\theta}}(\bC,\nu^2) \mathbf{1}_{{\{\tilde{\theta}} < \infty\}} + \int_\theta^{\tilde{\theta}} r \mathrm{e}^{-r(s-\theta)} (u(\pi_s) - h(\nu^1_s)) \d s \bigg)\Bigg] \\
&=\E^{\P^{\smalltext{\nu}^\tinytext{1}}}_{\theta}\bigg[\mathrm{e}^{-r (\tilde{\theta}-\theta)} \tilde{V}^{\rm A}_{\tilde{\theta}}(\bC,\nu^2) \mathbf{1}_{{\{\tilde{\theta}} < \infty\}} + \int_\theta^{\tilde{\theta}} r \mathrm{e}^{-r(s-\theta)} (u(\pi_s) - h(\nu^1_s)) \d s \bigg], \; \P\text{--a.s.}
\end{aligned}\end{gather}
It is evident that $\{\tilde{V}^{\rm A}_{\tilde{\theta}}(\bC,\nu^2)\}_{\nu^2 \in \cU}$ is an upward directed family of random variables. Consequently, there exists a sequence of processes $(\nu^2_n)_{n\in\N}$, where each $\nu_n \in \cU$, such that
\begin{align*}
V^{\rm A}_{\tilde{\theta}}(\bC) = \esssup_{\nu^\smalltext{2} \in \cU} \tilde{V}^{\rm A}_{\tilde{\theta}}(\bC,\nu^2) =  \lim_{n \rightarrow \infty}{\uparrow} \tilde{V}^{\rm A}_{\tilde{\theta}}(\bC,\nu^2_n), \; \P \text{--a.s.}
\end{align*}
Hence, the monotone convergence theorem, together with \eqref{align:VATilde}, implies that
\begin{align*}
V^{\rm A}_\theta(\bC) &\geq \lim_{n \rightarrow \infty} \E^{\P^{\smalltext{{\nu^1}}}}_{\theta}\bigg[\mathrm{e}^{-r (\tilde{\theta}-\theta)} \tilde{V}^{\rm A}_{\tilde{\theta}}(\bC,\nu^2_n) \mathbf{1}_{{\{\tilde{\theta}} < \infty\}} + \int_\theta^{\tilde{\theta}} r \mathrm{e}^{-r(s-\theta)} (u(\pi_s) - h(\nu^1_s)) \d s \bigg] \\
&= \E^{\P^{\smalltext{\nu}^\tinytext{1}}}_{\theta}\bigg[\mathrm{e}^{-r (\tilde{\theta}-\theta)} \lim_{n \rightarrow \infty} \tilde{V}^{\rm A}_{\tilde{\theta}}(\bC,\nu^2_n)  \mathbf{1}_{{\{\tilde{\theta}} < \infty\}} + \int_\theta^{\tilde{\theta}} r \mathrm{e}^{-r(s-\theta)} (u(\pi_s) - h(\nu^1_s)) \d s \bigg]\\
&= \E^{\P^{\smalltext{\nu}^\tinytext{1}}}_{\theta}\bigg[\mathrm{e}^{-r (\tilde{\theta}-\theta)} V^{\rm A}_{\tilde{\theta}}(\bC) \mathbf{1}_{{\{\tilde{\theta}} < \infty\}} + \int_\theta^{\tilde{\theta}} r \mathrm{e}^{-r(s-\theta)} (u(\pi_s) - h(\nu^1_s)) \d s \bigg], \; \P\text{--a.s.}
\end{align*}
We conclude thanks to the arbitrariness of the control $\nu^1 \in \cU$, that is,
\begin{align*}
V^{\rm A}_\theta(\bC)  \geq \esssup_{\nu^\smalltext{1}\in \cU} E^{\P^{\smalltext{{\nu^1}}}}_{\theta} \bigg[\mathrm{e}^{-r (\tilde{\theta}-\theta)} V^{\rm A}_{\tilde{\theta}}(\bC) \mathbf{1}_{{\{\tilde{\theta}} < \infty\}} + \int_\theta^{\tilde{\theta}} r \mathrm{e}^{-r(s-\theta)} (u(\pi_s) - h(\nu^1_s)) \d s\bigg], \; \P\text{--a.s.}
\end{align*}
\end{proof}

The previous dynamic programming principle allows us to prove the following result, which, together with the subsequent one, will play a crucial role in the reformulation of the problem of the principal as a standard mixed control--stopping stochastic problem.
\begin{proposition}\label{proposition:Mmartingale}
For an arbitrary control $\nu \in \cU$, we define the process
\begin{align}\label{align:Mprocess}
M^{\rm A}_t(\bC,\nu) \coloneqq V^{\rm A}_{t \wedge \tau}(\bC) \mathrm{e}^{-r (t \wedge \tau)} + \int_0^{t \wedge \tau} r \mathrm{e}^{-r s} (u(\pi_s) - h(\nu_s)) \d s, \;t \geq 0,
\end{align}
where $V^{\rm A}(\bC)$ is introduced in \eqref{align:VArandomVar}, that is,
\begin{align*}
V^{\rm A}_t(\bC) = \esssup_{\nu \in \cU} \E^{\P^{\smalltext{\nu}}}_{t\wedge\tau} \bigg[\mathrm{e}^{-r (\tau-(t\wedge \tau))} u(\xi) \mathbf{1}_{\{\tau < \infty\}} + \int_{t\wedge\tau}^\tau r \mathrm{e}^{-r(s-t)} (u(\pi_s) - h(\nu_s)) \d s\bigg], \; \text{for} \; t \geq 0.
\end{align*}
Then, $M^{\rm A}(\bC,\nu)$ is an $(\F,\P^{{\nu}})$-super-martingale. Moreover, it is an $(\F,\P^{\nu^\smalltext{\star}})$-martingale for any $\nu^\star\in \cU^\star(\bC)$.
\end{proposition}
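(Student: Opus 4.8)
The plan is to read off both assertions directly from the dynamic programming principle of \Cref{lemma:dppVA}, evaluated along the stopping times $s\wedge\tau$ and $t\wedge\tau$ for fixed $0\leq s\leq t<\infty$, in the spirit of the classical martingale optimality principle. One first records that $M^{\rm A}(\bC,\nu)$ is a genuine integrable process: the growth bound \eqref{eq:growthUtility} on $u$, the integrability requirement \eqref{eq:integrabilityCondition} defining $\mathfrak{C}$, and the boundedness of $h$ on the compact set $A\times B$ make every conditional expectation below finite; one also notes that $V^{\rm A}_0(\bC)=V^{\rm A}(\bC)$, since $\cF_0$ is $\P$--trivial, so $\tilde{V}^{\rm A}_0(\bC,\nu)=J^{\rm A}(\bC,\nu)$ is a constant and the essential supremum reduces to an ordinary supremum.

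For the super-martingale property, fix $\nu\in\cU$ and $0\leq s\leq t<\infty$, and put $\theta\coloneqq s\wedge\tau$, $\tilde{\theta}\coloneqq t\wedge\tau$, which satisfy $\theta\leq\tilde{\theta}\leq\tau$ and $\tilde{\theta}<\infty$, $\P$--a.s. Keeping only the contribution of the particular control $\nu$ in the essential supremum of \Cref{lemma:dppVA} yields
\begin{align*}
V^{\rm A}_{s\wedge\tau}(\bC) \geq \E^{\P^{\smalltext{\nu}}}_{s\wedge\tau}\bigg[\mathrm{e}^{-r((t\wedge\tau)-(s\wedge\tau))} V^{\rm A}_{t\wedge\tau}(\bC) + \int_{s\wedge\tau}^{t\wedge\tau} r\mathrm{e}^{-r(u-(s\wedge\tau))} (u(\pi_u)-h(\nu_u)) \d u\bigg], \; \P\text{--a.s.}
\end{align*}
Multiplying by $\mathrm{e}^{-r(s\wedge\tau)}$ and adding the $\cF_{s\wedge\tau}$--measurable term $\int_0^{s\wedge\tau} r\mathrm{e}^{-ru}(u(\pi_u)-h(\nu_u))\,\d u$ to both sides turns this into $M^{\rm A}_s(\bC,\nu)\geq\E^{\P^{\smalltext{\nu}}}_{s\wedge\tau}[M^{\rm A}_t(\bC,\nu)]$. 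On $\{s<\tau\}$ one has $\cF_{s\wedge\tau}=\cF_s$ up to $\P$--null sets, while on $\{s\geq\tau\}$ the process $M^{\rm A}(\bC,\nu)$ is already constant, equal to $M^{\rm A}_\tau(\bC,\nu)$, and $\cF_\tau\subseteq\cF_s$; in both cases the conditioning at $s\wedge\tau$ may be replaced by conditioning at $s$, which gives the announced $(\F,\P^\nu)$--super-martingale property.

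For the martingale statement, fix $\nu^\star\in\cU^\star(\bC)$. The decisive step is to promote optimality at time $0$ to optimality at every stopping time, i.e.\ to prove $V^{\rm A}_\theta(\bC)=\tilde{V}^{\rm A}_\theta(\bC,\nu^\star)$, $\P$--a.s., for each $\F$--stopping time $\theta\leq\tau$. Indeed, the tower property gives
\begin{align*}
\E^{\P^{\nu^\star}}\bigg[\mathrm{e}^{-r\theta}\tilde{V}^{\rm A}_\theta(\bC,\nu^\star)\mathbf{1}_{\{\theta<\infty\}} + \int_0^\theta r\mathrm{e}^{-ru}(u(\pi_u)-h(\nu^\star_u))\,\d u\bigg] = J^{\rm A}(\bC,\nu^\star) = V^{\rm A}(\bC),
\end{align*}
whereas \Cref{lemma:dppVA} with the pair $(0,\theta)$, restricted to the control $\nu^\star$, gives the converse bound $\E^{\P^{\nu^\star}}[\mathrm{e}^{-r\theta}V^{\rm A}_\theta(\bC)\mathbf{1}_{\{\theta<\infty\}}+\int_0^\theta r\mathrm{e}^{-ru}(u(\pi_u)-h(\nu^\star_u))\,\d u]\leq V^{\rm A}(\bC)$. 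Since $\tilde{V}^{\rm A}_\theta(\bC,\nu^\star)\leq V^{\rm A}_\theta(\bC)$ pointwise, squeezing the two forces $\mathrm{e}^{-r\theta}\big(V^{\rm A}_\theta(\bC)-\tilde{V}^{\rm A}_\theta(\bC,\nu^\star)\big)\mathbf{1}_{\{\theta<\infty\}}=0$, $\P^{\nu^\star}$--a.s., hence the claimed identity (by equivalence of $\P$ and $\P^{\nu^\star}$, the equality being trivial on $\{\theta=\infty\}$). Taking $\theta=t\wedge\tau$ and unfolding the definition of $\tilde{V}^{\rm A}$ shows that $M^{\rm A}_t(\bC,\nu^\star)=\E^{\P^{\nu^\star}}_{t\wedge\tau}[\Xi]$ with $\Xi\coloneqq\mathrm{e}^{-r\tau}u(\xi)\mathbf{1}_{\{\tau<\infty\}}+\int_0^\tau r\mathrm{e}^{-ru}(u(\pi_u)-h(\nu^\star_u))\,\d u$; the tower property, once more splitting $\{s<\tau\}$ and $\{s\geq\tau\}$ to pass from $\cF_{s\wedge\tau}$ to $\cF_s$, then yields the $(\F,\P^{\nu^\star})$--martingale property. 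I expect the only real work — of bookkeeping rather than conceptual nature — to be the careful matching of conditioning at the stopping times $s\wedge\tau$, $t\wedge\tau$ with conditioning at the deterministic times $s$, $t$, the tracking of the indicators $\mathbf{1}_{\{\cdot<\infty\}}$, and the treatment of $M^{\rm A}(\bC,\nu)$ on $\{\tau\leq s\}$; the integrability making all of these expectations well-defined is supplied by \eqref{eq:integrabilityCondition} together with the polynomial growth of $u$.
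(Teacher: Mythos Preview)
Your approach via the dynamic programming principle is correct in spirit and close to the paper's, but you gloss over one genuinely non-trivial point that the paper treats carefully: the assertion that $M^{\rm A}(\bC,\nu)$ is a \emph{process} at all. The quantity $V^{\rm A}_\theta(\bC)$ is defined as an essential supremum over $\cU$, which for each fixed stopping time $\theta$ produces an $\cF_\theta$-measurable random variable only up to null sets; there is no a priori reason why the family $(V^{\rm A}_{t\wedge\tau}(\bC))_{t\geq 0}$ admits a jointly measurable---let alone optional---version. The integrability bounds you cite do not address this. The paper handles it by first checking that $(M^{\rm A}_\theta(\bC,\nu))_{\theta\in\cT_\tau(\F)}$ forms a super-martingale \emph{system} in the sense of Dellacherie--Lenglart, and then invoking their aggregation theorem to manufacture an $\F$-optional process that realises \eqref{align:Mprocess}. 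The detailed integrability estimate (which exploits that the family $\{\E^{\P^{\tilde\nu}}_\theta[\,\cdot\,]\}_{\tilde\nu\in\cU}$ is upward directed, so that the essential supremum can be exchanged with an outer expectation) is precisely what makes that aggregation theorem applicable. This is more than bookkeeping: without an optional version of $M^{\rm A}(\bC,\nu^\star)$, the martingale representation applied to it in the subsequent reduction argument would not make sense.

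For the martingale part under $\nu^\star$, your route---squeezing to obtain $V^{\rm A}_\theta(\bC)=\tilde V^{\rm A}_\theta(\bC,\nu^\star)$ and then identifying $M^{\rm A}_t(\bC,\nu^\star)$ as the conditional expectation of a fixed terminal variable---is a valid variant of the paper's, which instead runs the chain $V^{\rm A}(\bC)=M^{\rm A}_0(\bC,\nu^\star)\geq\E^{\P^{\nu^\star}}[M^{\rm A}_\theta(\bC,\nu^\star)]\geq\E^{\P^{\nu^\star}}[M^{\rm A}_\tau(\bC,\nu^\star)]\geq J^{\rm A}(\bC,\nu^\star)=V^{\rm A}(\bC)$ and concludes from constancy of expectations. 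Both arguments are equivalent once the super-martingale property is in hand.
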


\begin{proof}
First, we prove that $M^{\rm A}(\bC,\nu)$ is an $\F$-optional process by following the same reasoning of \citeauthor*{possamai2021non} \cite[Footnote 6]{possamai2021non}. Let $\cT_{\tau}(\F)$ be the set of $\F$--stopping times $\theta$ such that $\theta \leq \tau$, $\P$--a.s. We introduce a family of random variables $(M^{\rm A}_{\theta}(\bC,\nu))_{\theta \in \cT_{\tau}(\F)}$ such that
\begin{align*}
M^{\rm A}_{\theta}(\bC,\nu) \coloneqq V^{\rm A}_{\theta}(\bC) \mathrm{e}^{-r \theta} \mathbf{1}_{\{\theta  < \infty\}}+ \int_0^{\theta } r \mathrm{e}^{-r s} (u(\pi_s) - h(\nu_s)) \d s.
\end{align*}
This family is an $(\F,\P^{{\nu}})$-super-martingale system (see \citeauthor*{dellacherie1981sur} \cite[Definition 10]{dellacherie1981sur}). In fact, the super-martingale property is a trivial consequence of \Cref{lemma:dppVA} and the $\P^{{\nu}}$-integrability of $M^{\rm A}_{\theta}(\bC,\nu)$ follows from the integrability conditions \eqref{eq:integrabilityCondition}. Let us fix $\theta \in \cT_{\tau}(\F)$, we have that there exists a constant $C>0$ such that 
\begin{align*}
\Big|V^{\rm A}_{\theta}(\bC) \mathrm{e}^{-r \theta} \mathbf{1}_{\{\theta < \infty\}}\Big| & \leq \esssup_{\tilde{\nu} \in \cU} \E^{\P^{\smalltext{\tilde{\nu}}}}_{\theta} \bigg[\big|\mathrm{e}^{-r \tau}u(\xi) \mathbf{1}_{\{\tau < \infty\}} \big|+ \int_{\theta}^{\tau} r \mathrm{e}^{-rs} |u(\pi_s) - h(\tilde{\nu}_s) | \d s\bigg]  \\
& \leq C \bigg(1+ \esssup_{\tilde{\nu} \in \cU} \E^{\P^{\smalltext{\tilde{\nu}}}}_{\theta} \bigg[\left|\mathrm{e}^{-r \tau} u(\xi) \mathbf{1}_{\{\tau < \infty\}} \right| + \int_{\theta}^{\tau} \mathrm{e}^{-rs} u(\pi_s) \d s\bigg]\bigg) \\
& = C \bigg(1+ \lim_{n \rightarrow \infty}\uparrow \E^{\P^{\smalltext{\tilde{\nu}_n}}}_{\theta} \bigg[\mathrm{e}^{-r \tau}u(\xi) \mathbf{1}_{\{\tau < \infty\}} + \int_{\theta}^{\tau} \mathrm{e}^{-rs} u(\pi_s)  \d s\bigg]\bigg), \; \P\text{--a.s.},
\end{align*}
where the last equality is a consequence of the fact that $\big\{\E^{\P^{\smalltext{\tilde{\nu}}}}_\theta \big[\mathrm{e}^{-r \tau} u(\xi) \mathbf{1}_{\{\tau < \infty\}} + \int_\theta^\tau \mathrm{e}^{-r s} u(\pi_s) \d s\big]\big\}_{\tilde{\nu} \in \cU}$ is an upward directed family of random variables. Then, the monotone convergence theorem implies that 
\begin{align*}
\E^{\P^{\smalltext{\nu}}}\Big[\big|V^{\rm A}_\theta(\bC)\mathrm{e}^{-r \theta} \mathbf{1}_{\{\theta < \infty\}}\big|\Big] &\leq C \bigg(1 +  
\E^{\P^{\smalltext{\nu}}}\bigg[\lim_{n \rightarrow \infty}\uparrow \E^{\P^{\smalltext{\tilde{\nu}}_\tinytext{n}}}_\theta \bigg[\mathrm{e}^{-r \tau} u(\xi) \mathbf{1}_{\{ \tau < \infty\}} + \int_\theta^\tau \mathrm{e}^{-r s} u(\pi_s) \d s\bigg] \bigg]\bigg) \\
&= C \bigg(1 + \lim_{n \rightarrow \infty}
\E^{\P^{\smalltext{\nu}}}\bigg[\E^{\P^{\smalltext{\tilde{\nu}}_\tinytext{n}}}_\theta \bigg[\mathrm{e}^{-r \tau} u(\xi) \mathbf{1}_{\{\tau < \infty\}} + \int_\theta^\tau \mathrm{e}^{-r s} u(\pi_s)\d s\bigg] \bigg]\bigg) \\
&= C \bigg(1 + \lim_{n \rightarrow \infty}
\E^{\P^{\smalltext{\nu}}}\bigg[\E^{\P}_\theta \bigg[ \frac{M^{\smalltext{\tilde{\nu}}_\tinytext{n}}_{\tau}}{M^{\smalltext{\tilde{\nu}}_\tinytext{n}}_{\theta}} \bigg(\mathrm{e}^{-r \tau} u(\xi) \mathbf{1}_{\{\tau < \infty\}} + \int_\theta^\tau \mathrm{e}^{-r s} u(\pi_s) \d s\bigg)\bigg] \bigg]\bigg) \\
&= C \bigg(1 + \lim_{n \rightarrow \infty}
\E^{\P^{\smalltext{\nu}}}\bigg[\E^{\P^{\smalltext{\bar{\nu}}_\tinytext{n}}}_\theta \bigg[\mathrm{e}^{-r \tau} u(\xi) \mathbf{1}_{\{\tau < \infty\}} + \int_\theta^\tau \mathrm{e}^{-r s} u(\pi_s) \d s\bigg] \bigg]\bigg),\; \P\text{--a.s.},
\end{align*}
where $(\bar{\nu}_n)_s \coloneqq \nu_s \; \mathbf{1}_{[0,\theta]}(s) + (\nu_n)_s \; \mathbf{1}_{(\theta,\tau]}(s)$, for $s \geq 0$. By definition, it holds that $\P^{{\nu}}$ and $\P^{\bar{\nu}_\smalltext{n}}$ coincide on $\cF_\theta$, and thus
\begin{align*}
\E^{\P^{\smalltext{\nu}}}\Big[\big|V^{\rm A}_\theta(\bC)\mathrm{e}^{-r \theta} \mathbf{1}_{\{\theta < \infty\}} \big|\Big] 
\leq C \bigg(1 + \sup_{\nu \in \cU} \E^{\P^{\smalltext{\nu}}} \bigg[\mathrm{e}^{-r \tau} u(\xi) \mathbf{1}_{\{\tau < \infty\}} + \int_\theta^\tau \mathrm{e}^{-r s} u(\pi_s) \d s\bigg]\bigg) <\infty.
\end{align*}
We can conclude that 
\begin{align*}
\E^{\P^{\smalltext{\nu}}}\Big[\big|M^{\rm A}_{\theta}(\bC,\nu)\big|\Big] &\leq C \bigg(1+ \E^{\P^{\smalltext{\nu}}}\bigg[\big|V^{\rm A}_{\theta }(\bC) \mathrm{e}^{-r \theta} \mathbf{1}_{\{\theta < \infty\}} \big| + \int_0^{\theta} r \mathrm{e}^{-r s} u(\pi_s) \d s\bigg]  \bigg)< \infty.
\end{align*}
Consequently, \cite[Theorem 15]{dellacherie1981sur} implies that $(M^{\rm A}_{\theta}(\bC,\nu))_{\theta \in \cT_{\tau}(\F)}$ can be aggregated into an $\F$-optional process given by \eqref{align:Mprocess}. Additionally, we can immediately say that the process $M^{\rm A}(\bC,\nu)$ is an $(\F,\P^{{\nu}})$-super-martingale. Moreover, let us fix some $\nu^\star\in \cU^*(\bC)$ and an $\F$--stopping time $\theta$. It holds that
\begin{align*}
V^{\rm A}(\bC)  = V^{\rm A}_0(\bC) &= M^{\rm A}_0(\bC,\nu^\star) \\
&\geq \E^{\P^{\nu^\star}}\Big[M^A_{\theta}(\bC,\nu^\star)\Big] \\
&\geq \E^{\P^{\nu^\star}}\Big[M^A_{\tau}(\bC,\nu^\star)\Big] = \E^{\P^{\nu^\star}}\bigg[V^{\rm A}_{\tau}(\bC) \mathrm{e}^{-r \tau} \mathbf{1}_{\{\tau < \infty\}} + \int_0^{\tau} r \mathrm{e}^{-r s} (u(\pi_s) - h(\nu^\star_s)) \d s\bigg] \geq J^A(\bC,\nu^\star) = V^{\rm A}(\bC).
\end{align*}
We can conclude that $M^{\rm A}(\bC,\nu^\star)$ is an $(\F,\P^{\nu^\star})$-martingale (see for instance \cite[Lemma I.1.44]{jacod2003limit}).
\end{proof}

\begin{lemma}
Let us fix $\kappa \in (0, r)$. The family $\left(\mathrm{e}^{-\kappa \theta} V^{\rm A}_{\theta}(\bC) \mathbf{1}_{\{\theta < \infty\}} \right)_{\theta \in \cT_{\tau}(\F)}$ is $\P^{{\nu}}$-uniformly integrable for any $\nu \in \cU$. 
\end{lemma}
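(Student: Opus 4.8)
The plan is to show that the family is bounded in $\L^p(\P^\nu)$ for some $p>1$, which immediately gives uniform integrability (an $\L^p$-bounded family with $p>1$ is $\P^\nu$-uniformly integrable; this is the de la Vall\'ee--Poussin criterion with $\Psi(x)=x^p$). Since $\mathbf 1_{\{\theta<\infty\}}$ vanishes on $\{\theta=\infty\}$, only the restriction to $\{\theta<\infty\}$ matters, and there $0\le\mathrm{e}^{-\kappa\theta}\le 1$.

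First I would bound $\mathrm{e}^{-\kappa\theta}V^{\rm A}_\theta(\bC)$ pointwise on $\{\theta<\infty\}$. Dropping $-h(\nu_s)\le 0$ in the definition \eqref{align:VArandomVar}, then invoking the growth bound \eqref{eq:growthUtility} on $u$, and using that $\kappa<r$ forces $\mathrm{e}^{-\kappa\theta}\mathrm{e}^{-r(s-\theta)}\le\mathrm{e}^{-\kappa s}$ on $\{\theta\le s\}$ and $\mathrm{e}^{-\kappa\theta}\mathrm{e}^{-r(\tau-\theta)}\le\mathrm{e}^{-\kappa\tau}$ on $\{\theta\le\tau\}$, together with $\int_\theta^\tau r\mathrm{e}^{-r(s-\theta)}\,\d s\le 1$ and the fact that the integrand below is non-negative so $\int_\theta^\tau\le\int_0^\tau$, one obtains
\[
0\le \mathrm{e}^{-\kappa\theta}V^{\rm A}_\theta(\bC)\le 2c_1+c_1\,\esssup_{\tilde\nu\in\cU}\E^{\P^{\tilde\nu}}_\theta\big[\Xi\big],\qquad \Xi:=\mathrm{e}^{-\kappa\tau}\xi^{1/\gamma}\mathbf 1_{\{\tau<\infty\}}+\int_0^\tau r\mathrm{e}^{-\kappa s}\pi_s^{1/\gamma}\,\d s,
\]
with $\Xi$ a non-negative $\cF_\infty$-measurable random variable. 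Thus it suffices to bound $\sup_{\theta\in\cT_\tau(\F)}\E^{\P^\nu}\big[(\esssup_{\tilde\nu}\E^{\P^{\tilde\nu}}_\theta[\Xi])^{p}\big]$ for some $p>1$.

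By conditional Jensen ($p\ge 1$) and monotonicity of $x\mapsto x^p$ on $\R_+$, $(\esssup_{\tilde\nu}\E^{\P^{\tilde\nu}}_\theta[\Xi])^p\le\esssup_{\tilde\nu}\E^{\P^{\tilde\nu}}_\theta[\Xi^p]$. The next step is to show that the family $\{\E^{\P^{\tilde\nu}}_\theta[\Xi^p]\}_{\tilde\nu\in\cU}$ is upward directed: given $\tilde\nu^1,\tilde\nu^2\in\cU$ one pastes them at $\theta$ along the $\cF_\theta$-set $\{\E^{\P^{\tilde\nu^1}}_\theta[\Xi^p]\ge\E^{\P^{\tilde\nu^2}}_\theta[\Xi^p]\}$, exactly as in \Cref{lemma:dppVA}; the key point is the multiplicativity of stochastic exponentials, $M^{\tilde\nu}_\infty/M^{\tilde\nu}_\theta=\cE(\int_\theta^\cdot(\alpha_s/\sigma)\d W_s)_\infty\,\cE(\int_\theta^\cdot(\beta_s/m-1)(\d N_s-\d s))_\infty$, so that $\E^{\P^{\tilde\nu}}_\theta[\Xi^p]=\E^{\P}_\theta[(M^{\tilde\nu}_\infty/M^{\tilde\nu}_\theta)\Xi^p]$ depends on $\tilde\nu$ only through its restriction to $\rrbracket\theta,\infty\rrbracket$. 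Hence there is $(\nu_n)_{n\in\N}\subset\cU$ with $\E^{\P^{\nu_n}}_\theta[\Xi^p]\uparrow\esssup_{\tilde\nu}\E^{\P^{\tilde\nu}}_\theta[\Xi^p]$, $\P$-a.s., and by monotone convergence $\E^{\P^\nu}\big[\esssup_{\tilde\nu}\E^{\P^{\tilde\nu}}_\theta[\Xi^p]\big]=\lim_{n}\E^{\P^\nu}\big[\E^{\P^{\nu_n}}_\theta[\Xi^p]\big]$. Setting $\bar\nu_n:=\nu\mathbf 1_{\llbracket 0,\theta\rrbracket}+\nu_n\mathbf 1_{\rrbracket\theta,\infty\rrbracket}$, the same observation gives $\E^{\P^{\nu_n}}_\theta[\Xi^p]=\E^{\P^{\bar\nu_n}}_\theta[\Xi^p]$, which is $\cF_\theta$-measurable, and $\P^\nu=\P^{\bar\nu_n}$ on $\cF_\theta$ because $M^\nu_\theta=M^{\bar\nu_n}_\theta$; therefore $\E^{\P^\nu}[\E^{\P^{\nu_n}}_\theta[\Xi^p]]=\E^{\P^{\bar\nu_n}}[\Xi^p]\le\sup_{\tilde\nu\in\cU}\E^{\P^{\tilde\nu}}[\Xi^p]$, a bound uniform in $\theta$ and $n$. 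Finally, convexity of $x\mapsto x^p$ and Jensen's inequality against the finite measure $r\mathrm{e}^{-\kappa s}\,\d s$ yield $\Xi^p\le C_p(\mathrm{e}^{-p\kappa\tau}\xi^{p/\gamma}\mathbf 1_{\{\tau<\infty\}}+\int_0^\tau r\mathrm{e}^{-\kappa s}\pi_s^{p/\gamma}\,\d s)$, and choosing $p>1$ and the admissibility exponents $r'\in(0,r)$, $q>2\vee\gamma$ in \eqref{eq:integrabilityCondition} compatibly (using that the condition \eqref{eq:integrabilityCondition} is preserved when $q$ is decreased and $r'$ correspondingly adjusted upward), each term is dominated by the quantity in \eqref{eq:integrabilityCondition}; hence $\sup_{\tilde\nu}\E^{\P^{\tilde\nu}}[\Xi^p]<\infty$, and combining the above gives $\sup_{\theta}\E^{\P^\nu}[(\mathrm{e}^{-\kappa\theta}V^{\rm A}_\theta(\bC)\mathbf 1_{\{\theta<\infty\}})^p]<\infty$.

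I expect the main obstacle to be the passage through the essential supremum that is taken inside a conditional expectation under the $\tilde\nu$-\emph{dependent} measure $\P^{\tilde\nu}$: handling this correctly requires the upward-directedness of the relevant family, the measurable pasting of controls at the stopping time $\theta$, and the structural fact that the conditional density $M^{\tilde\nu}_\infty/M^{\tilde\nu}_\theta$ only depends on the control restricted to $\rrbracket\theta,\infty\rrbracket$ — this is what lets one trade the conditional expectation under $\P^{\tilde\nu}$ for an unconditional one under the pasted measure, which agrees with $\P^\nu$ on $\cF_\theta$. The remaining bookkeeping (absorbing the discount factors via $\kappa<r$ and matching the powers of $\xi$ and $\pi$ against \eqref{eq:integrabilityCondition}) is routine by comparison.
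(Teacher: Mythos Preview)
Your proof is correct and follows essentially the same route as the paper: bound $\mathrm{e}^{-\kappa\theta}V^{\rm A}_\theta$ pointwise, raise to a power $p>1$ via Jensen, use the upward-directed/control-pasting argument to pass the essential supremum through the outer expectation, match against \eqref{eq:integrabilityCondition}, and conclude by de la Vall\'ee--Poussin. The only cosmetic difference is that the paper applies H\"older (rather than Jensen against the measure $r\mathrm{e}^{-\kappa s}\,\d s$) to the running integral, which yields the discount $\mathrm{e}^{-\kappa q s}$ in place of your $\mathrm{e}^{-\kappa s}$ and makes the final comparison with \eqref{eq:integrabilityCondition} marginally more direct.
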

\begin{proof}
Let us fix $q > 2 \vee \gamma$. For any $\nu \in \cU$ and $\theta \in \cT_{\tau}(\F)$, there exists some $C>0$ such that the following holds:
\begin{align*}
\bigg|\int_\theta^\tau r \mathrm{e}^{-\kappa s} \mathrm{e}^{(s-\theta) (\kappa-r)} |u(\pi_s) - h({\nu}_s)| \d s\bigg|^q &\leq C \bigg(\int_\theta^\tau \mathrm{e}^{(s-\theta) (\kappa-r) \frac{q}{q-1}} \d s\bigg)^{q-1} \int_\theta^\tau \mathrm{e}^{-\kappa q s}  |u(\pi_s) - h({\nu}_s)|^q \d s \\
&= C \bigg(\frac{q-1}{q(r-\kappa)} \Big(1-\mathrm{e}^{(\kappa-r)(\tau-\theta)\frac{q}{q-1}}\Big) \bigg)^{q-1} \int_\theta^\tau \mathrm{e}^{-\kappa q s}  |u(\pi_s) - h({\nu}_s)|^q \d s\\
&\leq C \int_\theta^\tau \mathrm{e}^{-\kappa q s}  |u(\pi_s) - h({\nu}_s)|^q \d s ,\; \P\text{--a.s.}
\end{align*}
Here, the first inequality is a direct consequence of H\"older's inequality, while the last one follows from the fact that $\kappa< r$. Then, we have that 
\begin{align*}
\big|\mathrm{e}^{-\kappa \theta} V^{\rm A}_{\theta}(\bC) \mathbf{1}_{\{\theta < \infty\}} \big|^q &= \bigg|\esssup_{{\nu} \in \cU} \E^{\P^{\smalltext{{\nu}}}}_\theta \bigg[\mathrm{e}^{-\kappa\tau} \mathrm{e}^{(\tau-\theta) (\kappa-r)} u(\xi) \mathbf{1}_{\{\tau < \infty\}} + \int_\theta^\tau r \mathrm{e}^{-\kappa s} \mathrm{e}^{(s-\theta) (\kappa-r)} (u(\pi_s) - h({\nu}_s)) \d s \bigg]\bigg|^q \\
&\leq  C \esssup_{{\nu} \in \cU} \E^{\P^{\smalltext{\tilde{\nu}}}}_\theta \bigg[\mathrm{e}^{-\kappa q\tau} u(\xi)^q \mathbf{1}_{\{\tau < \infty\}} + \int_\theta^{\tau} r \mathrm{e}^{-\kappa q s} |u(\pi_s) - h({\nu}_s)|^q \d s \bigg] ,
\end{align*}
where we have once more used the fact that $\kappa < r$. Hence, we can follow the same computations done in the proof of \Cref{proposition:Mmartingale} to show that
\begin{align*}
\E^{\P^{\smalltext{\nu}}}\left[\left|\mathrm{e}^{-\kappa \theta} V^{\rm A}_{\theta}(\bC) \mathbf{1}_{\{\theta < \infty\}} \right|^q\right] \leq C \left( 1+\E^{\P^{\smalltext{\nu}}}\left[\esssup_{{\nu} \in \cU} \E^{\P^{\smalltext{\tilde{\nu}}}}_\theta \left[\mathrm{e}^{-\kappa q \tau} u(\xi)^q \mathbf{1}_{\{\tau < \infty\}} + \int_\theta^{\tau}  \mathrm{e}^{-\kappa q s} u(\pi_s)^q  \d s \right]\right] \right) < \infty
\end{align*}
because of the integrability condition \eqref{eq:integrabilityCondition}. The proof follows by applying de la Vall\'ee Poussin's criterion. 
\end{proof}

\subsection{Restricted class of contracts}
In this section, we prove the result stated in \Cref{theorem:reductionSecondBest} which shows that any contract $\bC$ can be represented as $\big(\tau, \pi, u^{(-1)}\big(Y^{Y^{\smalltext{\rm A}}_\smalltext{0},Z^{\smalltext{\rm A}},U^{\smalltext{\rm A}},\pi}_\tau\big)\big)$, where the process $Y^{Y^{\smalltext{\rm A}}_\smalltext{0},Z^{\smalltext{\rm A}},U^{\smalltext{\rm A}},\pi}$ is introduced in \eqref{align:defYA}. Additionally, under contracts of this form, the value function of the agent coincides with the process $Y^{Y^{\smalltext{\rm A}}_\smalltext{0},Z^{\smalltext{\rm A}},U^{\smalltext{\rm A}},\pi}$ and the corresponding optimal efforts are identified as the maximisers of the Hamiltonian $H^{\rm A}$.

\begin{proof}[Proof of \Cref{theorem:reductionSecondBest}]
We prove the statement by adapting the same argument as in \cite[Theorem 4.2]{lin2022random}. In order to prove the inequality $\bar{V}^{\rm P} \geq \sup_{Y^{\smalltext{\rm A}}_\smalltext{0} \geq u(R)} \bar{V}^{\rm P}(Y^{\rm A}_0)$, we first fix $Y^{\rm A}_0 \geq u(R)$ and some processes $(\tau, \pi, Z^{\rm A}, U^{\rm A}) \in \cV^{\smalltext{\rm S}}(Y^{\rm A}_0)$. Note that we have assumed implicitly that the set $\cV^{\smalltext{\rm S}}(Y^{\rm A}_0)$ is non-empty. However, if this assumption does not hold, the aforementioned inequality is trivially satisfied due to our convention that $\sup \varnothing = -\infty$. Next, we introduce the following contract:
\begin{align*}
\bC = \Big(\tau, \pi, \xi^{Y^{\smalltext{\rm A}}_\smalltext{0},Z^{\smalltext{\rm A}},U^{\smalltext{\rm A}},\tau,\pi}\Big) \coloneqq \Big(\tau, \pi, u^{(-1)}\Big(Y^{Y^{\smalltext{\rm A}}_\smalltext{0},Z^{\smalltext{\rm A}},U^{\smalltext{\rm A}},\pi}_\tau\Big)\Big).
\end{align*}
By its definition, it is clear that $\xi^{Y^{\smalltext{\rm A}}_\smalltext{0},Z^{\smalltext{\rm A}},U^{\smalltext{\rm A}},\tau,\pi}$ is a non-negative, $\F_{\tau}$-measurable random variable satisfying \eqref{eq:integrabilityCondition}. Furthermore, when we introduce $\tau_n \coloneqq \tau \wedge n$, $n \in \N$, for an arbitrary control $\nu \in \cU$, it holds that 
\begin{align*}
J^{\rm A}(\bC,\nu) &=  \E^{\P^{\smalltext{\nu}}} \bigg[\mathrm{e}^{-r \tau} Y^{Y^{\smalltext{\rm A}}_\smalltext{0},Z^{\smalltext{\rm A}},U^{\smalltext{\rm A}},\pi}_{\tau} \mathbf{1}_{\{\tau < \infty\}} + \int_0^{\tau} r \mathrm{e}^{-r s} (u(\pi_s) - h(\nu_s)) \d s\bigg] \\
&=  \E^{\P^{\smalltext{\nu}}} \bigg[\lim_{n \rightarrow \infty} \bigg(\mathrm{e}^{-r \tau_\smalltext{n}} Y^{Y^{\smalltext{\rm A}}_\smalltext{0},Z^{\smalltext{\rm A}},U^{\smalltext{\rm A}},\pi}_{\tau_\smalltext{n}}  + \int_0^{\tau_\smalltext{n}} r \mathrm{e}^{-r s} (u(\pi_s) - h(\nu_s)) \d s\bigg)\bigg] \\
&= \lim_{n \rightarrow \infty}  \E^{\P^{\smalltext{\nu}}} \bigg[\mathrm{e}^{-r \tau_\smalltext{n}} Y^{Y^{\smalltext{\rm A}}_\smalltext{0},Z^{\smalltext{\rm A}},U^{\smalltext{\rm A}},\pi}_{\tau_n}  + \int_0^{\tau_\smalltext{n}} r \mathrm{e}^{-r s} (u(\pi_s) - h(\nu_s)) \d s \bigg] \eqqcolon \lim_{n \rightarrow \infty} J_n^{\rm A}(\bC,\nu)
\end{align*}
by dominated convergence theorem as $(\tau, \pi, Z^{\rm A}, U^{\rm A}) \in \cV^{\smalltext{\rm S}}(Y^{\rm A}_0)$. A direct application of It\^o's formula implies that
\begin{align*}
&J_n^{\rm A}(\bC,\nu) \\
&= \E^{\P^{\smalltext{\nu}}} \bigg[Y^{\rm A}_0 - \int_0^{\tau_\smalltext{n}} r \mathrm{e}^{-r s} Y^{Y^{\smalltext{\rm A}}_\smalltext{0},Z^{\smalltext{\rm A}},U^{\smalltext{\rm A}},\pi}_s \d s + \int_0^{\tau_\smalltext{n}} \mathrm{e}^{-r s} \d Y^{Y^{\smalltext{\rm A}}_\smalltext{0},Z^{\smalltext{\rm A}},U^{\smalltext{\rm A}},\pi}_s + \int_0^{\tau_\smalltext{n}} r \mathrm{e}^{-r s} (u(\pi_s) - h(\nu_s)) \d s \bigg] \\
&= \E^{\P^{\smalltext{\nu}}} \bigg[Y^{\rm A}_0 - \int_0^{\tau_\smalltext{n}} r \mathrm{e}^{-r s} \big(h(\nu_s) + H^{\rm A}(Z^{\rm A}_s,U^{\rm A}_s)\big) \d s + r \sigma \int_0^{\tau_n} \mathrm{e}^{-r s} Z^{\rm A}_s \d W_s + r \int_0^{\tau_\smalltext{n}} \int_{\R} \mathrm{e}^{-r s} U^{\rm A}_s(\ell) \tilde{\mu}^{{J}}(\d s, \d \ell) \bigg] \\
&= \E^{\P^{\smalltext{\nu}}} \bigg[Y^{\rm A}_0 + \int_0^{\tau_\smalltext{n}} r \mathrm{e}^{-r s} \big(h^{\rm A}(Z^{\rm A}_s,U^{\rm A}_s,\alpha_s,\beta_s) - H^{\rm A}(Z^{\rm A}_s,U^{\rm A}_s)\big) \d s + r \sigma \int_0^{\tau_\smalltext{n}} \mathrm{e}^{-r s} Z^{\rm A}_s \d W^{\nu}_s + r \int_0^{\tau_\smalltext{n}} \int_{\R} \mathrm{e}^{-r s} U^{\rm A}_s(\ell) \tilde{\mu}^{{J}^{\smalltext{\nu}}}(\d s, \d \ell) \bigg].
\end{align*}

This expression, along with \cite[Lemma I.1.44]{jacod2003limit}, the integrability conditions in \eqref{eq:integrabilityCondition} and the dominated convergence theorem, implies the following:
\begin{align}\label{align:aAgentNewCriterion}
J^{\rm A}(\bC,\nu) = \lim_{n \rightarrow \infty} J^{\rm A}_n(\bC,\nu) =  Y^{\rm A}_0 + \E^{\P^{\smalltext{\nu}}} \bigg[\int_0^{\tau} r \mathrm{e}^{-r s} \left(h^{\rm A}(Z^{\rm A}_s,U^{\rm A}_s) - H^{\rm A}(Z^{\rm A}_s,U^{\rm A}_s)\right) \d s \bigg].
\end{align}
Hence, by definition of $H^{\rm A}$ we can conclude that $J^{\rm A}(\bC,\nu) \leq Y^{\rm A}_0$ for any $\nu \in \cU$, and the equality $J^{\rm A}(\bC,\nu^\star) = Y^{\rm A}_0$ in \eqref{align:aAgentNewCriterion} holds if and only if $\nu^\star$ is the maximiser of the Hamiltonian $H^{\rm A}$. In other words:
\begin{align*}
\nu^\star_t \in \argmax_{(a,b) \in U} \bigg\{a Z^{\rm A}_t - \frac{m-b}{m} \int_{\R} U^{\rm A}_t(\ell) \Phi(\d \ell) - h(a,b)\bigg\}, \; \d t \otimes \d \P\text{--a.e.}\; \text{on} \; \llbracket 0, \tau \rrbracket.
\end{align*}

We notice that the set $\argmax_{(a,b)\in U} h^{\rm A}(Z^{\rm A}_t,U^{\rm A}_t,a,b)$ is not empty due to the continuity of the function $h$ and the compactness of the set $U$. Moreover, by applying a classical measurable selection argument (see for instance \citeauthor*{schal1974selection} \cite[Theorem 3]{schal1974selection}), we can deduce that there exists a Borel-measurable map $v$ such that $v(Z^{\rm A}_{t\wedge\tau},U^{\rm A}_{t\wedge\tau}) = \nu^\star_{t\wedge\tau}$, $\d t \otimes \d \P$--a.e., for any $t \geq 0$. We also have that
\begin{align*}
J^{\rm A}\big(\bC,v\big(Z^{\rm A},U^{\rm A}\big)\big) = J^{\rm A}(\bC,\nu^\star) = Y^{\rm A}_0 = V^{\rm A}(\bC).
\end{align*}
Here, we slightly abuse notations by considering $v\big(Z^{\rm A},U^{\rm A}\big)$ as the process $(v(Z^{\rm A}_{t\wedge\tau},U^{\rm A}_{t\wedge\tau}))_{t \geq 0}$. Then, since the constructed contract $\bC \in \mathfrak{C}_R$, it immediately follows that $\bar{V}^{\rm P} \geq \sup_{Y^{\smalltext{\rm A}}_\smalltext{0} \geq u(R)} \bar{V}^{\rm P}(Y^{\rm A}_0)$.

\medskip
We turn to the reverse inequality. To begin, we fix a contract $\bC = (\tau,\pi,\xi) \in \mathfrak{C}_R$ and notice that $\bar{J}^{\rm P}(\bC, \nu) = -\infty$ for any $\nu \notin \cU^{{\star}}(\bC)$. Thus, without loss of generality, we can suppose that the contract $\bC$ is such that $\cU^{\star}(\bC) \neq \emptyset$ and consider a control ${\nu}^\star \in \cU^\star(\bC)$. Consequently, \Cref{proposition:Mmartingale} implies that the process $M^{\rm A}(\bC,{\nu}^\star)$ defined in \eqref{align:Mprocess} is an $(\F,\P^{\nu^\star})$-martingale. This, in turn, leads to the application of the martingale representation property, implying the existence of $(\mathrm{e}^{-r (t \wedge \tau)} Z^{\rm A}_{t \wedge \tau})_{t \geq 0} \in \L^2_{\rm loc}(W^{\nu^\star},\F,\P)$ and $(\mathrm{e}^{-r (t \wedge \tau)} U^{\rm A}_{t \wedge \tau}(\ell))_{t,\ell \geq 0} \in G_{\rm loc}(\mu,\F,\P)$ such that
\begin{align}\label{align:martRepAgent}
M^{\rm A}_t(\bC,\nu^\star) = M^{\rm A}_0(\bC,\nu^\star) + \int_0^{t \wedge \tau} r \sigma \mathrm{e}^{-r s} {Z}^{\rm A}_s \d W^{\nu^\star}_s + \int_0^{t \wedge \tau} \int_{\R} r \mathrm{e}^{-r s} {U}^{\rm A}_s(\ell) \tilde{\mu}^{{J}^{\smalltext{\nu^\star}}}(\d s, \d \ell), \; \P\text{--a.s.}, \; \text{for} \; t \geq 0.
\end{align}
Here, $\tilde{\mu}^{{J}^{\smalltext{\nu}^\tinytext{\star}}}(\d s, \d \ell)$ is the $(\F,\P^{\nu^\star})$-compensated random measure $\mu^{J}(\d s, \d \ell) - \mu^{J^{\smalltext{\nu}^\tinytext{\star}},p}(\d s, \d \ell)$. Applying Itô's formula directly, we observe that 
\begin{align*}
V^{\rm A}_{t \wedge \tau} (\bC) =  V^{\rm A}_0(\bC) + r \int_0^{t \wedge \tau} (V^{\rm A}_s(\bC) - u(\pi_s) + h(\nu^\star_s)) \d s + \int_0^{t \wedge \tau} r \sigma Z^{\rm A}_s \d W^{\nu^\star}_s + \int_0^{t \wedge \tau} \int_{\R} r U^A_s(\ell) \tilde{\mu}^{{J}^{\smalltext{\nu}^\tinytext{\star}}}(\d s, \d \ell), \; \P\text{--a.s.}
\end{align*}
Then, by defining the $\R$-valued process $Y^{\rm A} \coloneqq V^{\rm A}(\bC)$, we obtain that $(Y^{\rm A}, Z^{\rm A}, U^{\rm A})$ is a solution to the following BSDE:
\begin{align}\label{align:bsdeYA}
Y^{\rm A}_{t \wedge \tau} = u(\xi) - r \int_{t \wedge \tau}^{\tau} (Y^{\rm A}_s - u(\pi_s) + h(\nu^\star_s)) \d s - \int_{t \wedge \tau}^{\tau} r \sigma Z^{\rm A}_s \d W^{\nu^\smalltext{\star}}_s - \int_{t \wedge \tau}^{\tau} \int_{\R} r U^{\rm A}_s(\ell) \tilde{\mu}^{{J}^{\smalltext{\nu}^\tinytext{\star}}}(\d s, \d \ell), \; \P\text{--a.s.}
\end{align}

Now, let us consider an arbitrary control $\nu = (\alpha,\beta) \in \cU$ instead of restricting it to $\cU^\star(\bC)$. By definition, the $(\F,\P^{\nu})$--super-martingale $M^{\rm A}(\bC,\nu)$ defined in \eqref{align:Mprocess} can be rewritten as 
\begin{align*}
M^{\rm A}_t(\bC,\nu) &= M^{\rm A}_t(\bC,\nu^\star) + \int_0^{t \wedge \tau} r \mathrm{e}^{-r s} (h(\nu^\star_s) - h(\nu_s)) \d s \\
&= M^{\rm A}_0(\bC,\nu) + \int_0^{t \wedge \tau} r \mathrm{e}^{-r s} \bigg(h(\nu^\star_s) - h(\nu_s) + Z^{\rm A}_s (\alpha_s - \alpha^\star_s) + (\beta_s - \beta^\star_s) \int_{\R} \frac{U^{\rm A}_s(\ell)}{m} \Phi(\d \ell)\bigg) \d s \\
&\quad+ \int_0^{t \wedge \tau} r \sigma \mathrm{e}^{-r s} Z^{\rm A}_s \d W^{\nu}_s + \int_0^{t \wedge \tau} \int_{\R} r \mathrm{e}^{-r s} U^{\rm A}_s(\ell) \tilde{\mu}^{{J}^{\smalltext{\nu}}}(\d s, \d \ell)
\; \text{for any} \; t \geq 0, \; \P\text{\rm --a.s.},
\end{align*}
which follows from \eqref{align:martRepAgent}. Since $M^{\rm A}(\bC,\nu)$ is an $(\F,\P^{{\nu}})$-super-martingale, we deduce that
\begin{align*}
\nu^\star_t \in \argmax_{(a,b) \in U} \bigg\{a Z^{\rm A}_t -  \frac{m-b}{m} \int_{\R} U^{\rm A}_t(\ell) \Phi(\d \ell) - h(a,b)\bigg\}, \; \d t \otimes \d \P\text{--a.e.} \; \text{on} \; \llbracket 0, \tau \rrbracket,
\end{align*}
and, as in the first part of the proof, we can introduce an optimal feedback control $v(Z^{\rm A}_{t\wedge\tau},U^{\rm A}_{t\wedge\tau}) =\nu^\star_{t\wedge\tau}$, $\d t \otimes \d \P$--a.e., for any $t \geq 0$, for some Borel-measurable map $v$. We conclude that $v^\star \in \cU^\star(Z^{\rm A},U^{\rm A})$. To complete the proof, it suffices to verify that the quadruple $(\tau, \pi, Z^{\rm A}, U^{\rm A}) \in \cV^{\smalltext{\rm S}}(Y^{\rm A}_0)$. To do this, let us fix $r^\prime \in (0,r)$ and $q > 2 \vee \gamma$ associated to the integrability conditions \eqref{eq:integrabilityCondition} of the contract $\bC$, and introduce $\kappa \in (0, r^\prime]$. For any $t$, $t^\prime \geq 0$ such that $t \leq t^\prime$, a direct application of It\^o--Tanaka--Meyer formula to $\mathrm{e}^{-\kappa \cdot} |Y_\cdot^{\rm A}|$ on $[t \wedge \tau, t^\prime \wedge \tau]$ shows that 
\begin{align*}
\mathrm{e}^{-\kappa (t^\prime \wedge \tau)} |Y_{t^\prime \wedge \tau}^{\rm A}| - \mathrm{e}^{-\kappa (t \wedge \tau)} |Y_{t \wedge \tau}^{\rm A}| &= - \int_{t \wedge \tau}^{t^\prime \wedge \tau} \kappa \mathrm{e}^{-\nu s} |Y^{\rm A}_s| \d s + \int_{t \wedge \tau}^{t^\prime \wedge \tau} r \mathrm{e}^{-\kappa s} \sgn(Y^{\rm A}_s) (Y^{\rm A}_s - u(\pi_s) + h(\nu^\star_s)) \d s \\
&\quad+ \int_{t \wedge \tau}^{t^\prime \wedge \tau} r \sigma \mathrm{e}^{-\kappa s} \sgn(Y^{\rm A}_{s-}) Z^{\rm A}_s \d W^{\nu^\smalltext{\star}}_s + \int_{t \wedge \tau}^{t^\prime \wedge \tau} \int_{\R} r \mathrm{e}^{-\kappa s} \sgn(Y^{\rm A}_{s-}) U^{\rm A}_s(\ell) \tilde{\mu}^{{J}^{\smalltext{\nu}^\tinytext{\star}}}(\d s, \d \ell) \\
&\quad+ \int_{t \wedge \tau}^{t^\prime \wedge \tau} \int_{\R} \mathrm{e}^{-\kappa s} (|Y^{\rm A}_{s-} + r U^{\rm A}_s(\ell)| - |Y^{\rm A}_{s-}| - r \sgn(Y^{\rm A}_{s-}) U^{\rm A}_s(\ell)) \mu^{J}(\d s, \d \ell), \; \P\text{--a.s.}
\end{align*}
Since the last term in the above expression is non-negative, and given that $\kappa < r$, we can deduce that
\begin{align*}
\mathrm{e}^{-\kappa (t^\prime \wedge \tau)} |Y_{t^\prime \wedge \tau}^{\rm A}| - \mathrm{e}^{-\kappa (t \wedge \tau)} |Y_{t \wedge \tau}^{\rm A}| \geq&  -\int_{t \wedge \tau}^{t^\prime \wedge \tau} r \mathrm{e}^{-\kappa s} \sgn(Y^{\rm A}_s) (u(\pi_s) - h(\nu^\star_s)) \d s \\
&+ \int_{t \wedge \tau}^{t^\prime \wedge \tau} r \sigma \mathrm{e}^{-\kappa s} \sgn(Y^{\rm A}_{s-}) Z^{\rm A}_s \d W^{\nu^\smalltext{\star}}_s + \int_{t \wedge \tau}^{t^\prime \wedge \tau} \int_{\R} r \mathrm{e}^{-\kappa s} \sgn(Y^{\rm A}_{s-}) U^{\rm A}_s(\ell) \tilde{\mu}^{{J}^{\smalltext{\nu}^\tinytext{\star}}}(\d s, \d \ell), \; \P\text{--a.s.}
\end{align*}
We now substitute $t^\prime$ with $\tau_n \coloneqq n \wedge \tau$, where $n \in \N$ such that $n \geq t$. By applying the monotone convergence theorem, the dominated convergence theorem, and considering the continuity of the cost function $h$ defined on the compact set $U$, we can say that there exists some $C>0$ such that the following inequality holds:
\begin{align*}
\mathrm{e}^{-\kappa (t \wedge \tau)} |Y_{t \wedge \tau}^{\rm A}| &\leq C \bigg( 1 + \lim_{n \rightarrow \infty} \E_{t \wedge \tau}^{\P^{\smalltext{\nu}^\tinytext{\star}}}\bigg[\mathrm{e}^{-\kappa \tau_\smalltext{n}} |Y_{\tau_\smalltext{n}}^{\rm A}| + \int_0^{\tau_\smalltext{n}} \mathrm{e}^{-\kappa s} u(\pi_s) \d s \bigg] \bigg)\\
&= C  \bigg( 1 + \E_{t\wedge\tau}^{\P^{\smalltext{\nu}^\tinytext{\star}}}\bigg[\mathrm{e}^{-\kappa \tau} u(\xi) \mathbf{1}_{\{\tau < \infty\}}  + \int_0^{\tau} \mathrm{e}^{-\kappa s} u(\pi_s) \d s \bigg]\bigg), \; \P\text{--a.s.}
\end{align*}
Then, for any $p \in (0, q)$, we deduce that 
\begin{align}\label{align:intsupYproof}
\begin{split}
\sup_{\nu \in \cU} \E^{\P^{\smalltext{\nu}}}\bigg[\sup_{t\geq 0} \big|\mathrm{e}^{-\kappa (t\wedge\tau)} Y^{\rm A}_{t\wedge\tau}\big|^p\bigg] &\leq  C \bigg( 1+ \sup_{\nu \in \cU} \E^{\P^{\smalltext{\nu}}}\bigg[\sup_{t \geq 0} \bigg(\E_{t \wedge \tau}^{\P^{\smalltext{\nu}^\tinytext{\star}}}\bigg[\mathrm{e}^{-\kappa \tau} u(\xi) \mathbf{1}_{\{\tau < \infty\}}  + \int_0^{\tau} \mathrm{e}^{-\kappa s} u(\pi_s) \d s \bigg]\bigg)^p\bigg]\bigg) \\
&\leq C \bigg(1+ \frac{q}{q-p} \bigg(\sup_{\nu \in \cU} \E^{\P^{\smalltext{\nu}}}\bigg[\bigg(\E_{\tau}^{\P^{\smalltext{\nu}^\tinytext{\star}}}\bigg[\mathrm{e}^{-\kappa \tau} u(\xi) \mathbf{1}_{\{\tau < \infty\}}  + \int_0^{\tau} \mathrm{e}^{-\kappa s} u(\pi_s) \d s \bigg]\bigg)^q \bigg]\bigg)^{\frac{p}{q}}\bigg),
\end{split}
\end{align}
where the last inequality follows from \citeauthor*{lin2020second} \cite[Lemma 4.1]{lin2020second}. However, for any $\nu \in \cU$, Jensen's inequality implies that
\begin{align*}
\E^{\P^{\smalltext{\nu}}}\bigg[\bigg(\E_{\tau}^{\P^{\smalltext{\nu}^\tinytext{\star}}}\bigg[\mathrm{e}^{-\kappa \tau} u(\xi) \mathbf{1}_{\{\tau < \infty\}}  + \int_0^{\tau} \mathrm{e}^{-\kappa s} u(\pi_s) \d s \bigg]\bigg)^q \bigg] &\leq \E^{\P^{\smalltext{\nu}}}\bigg[\E_{\tau}^{\P^{\smalltext{\nu}^\tinytext{\star}}}\bigg[\bigg|\mathrm{e}^{-\kappa \tau} u(\xi) \mathbf{1}_{\{\tau < \infty\}}  + \int_0^{\tau} \mathrm{e}^{-\kappa s} u(\pi_s) \d s \bigg|^q\bigg] \bigg] \\
&= \E^{\P^{\smalltext{\nu}}}\bigg[\bigg|\mathrm{e}^{-\kappa \tau} u(\xi) \mathbf{1}_{\{\tau < \infty\}}  + \int_0^{\tau} \mathrm{e}^{-\kappa s} u(\pi_s) \d s \bigg|^q\bigg].
\end{align*}
Consequently, the integrability condition in \eqref{align:intCondYA} immediately follows from \Cref{align:intsupYproof} along with the last observation. Then, to derive the integrability conditions in \eqref{align:intCondZAUA}, our goal is to apply the estimates in \cite[Proposition 2]{kruse2015bsdes} to the BSDE in \eqref{align:bsdeYA} on the finite time horizon $[0, \tau_n]$, where $\tau_n \coloneqq n \wedge \tau$, for $n \in \N$. However, to do so, we need to rewrite it under the measure $\P^\nu$, for some $\nu = (\alpha,\beta) \in \cU$. This leads us to consider the following expression:
\begin{align}\label{align:BSDEYproofSecond}
\begin{split}
Y^{\rm A}_{t \wedge \tau_{\smalltext{n}}} &= Y^{\rm A}_{ \tau_{\smalltext{n}}} - r \int_0^{t \wedge \tau_{\smalltext{n}}} \bigg(Y^{\rm A}_s- u(\pi_s) + h(\nu_s) + h^{\rm A}(Z^{\rm A}_s,U^{\rm A}_s,\alpha_s,\beta_s) - H^{\rm A}(Z^{\rm A}_s,U^{\rm A}_s)\bigg) \d s \\
&\quad- \int_0^{t \wedge \tau_{\smalltext{n}}} r \sigma Z^{\rm A}_s \d W^{\nu}_s - \int_0^{t \wedge \tau_{\smalltext{n}}} \int_{\R} r U^{\rm A}_s(\ell) \tilde{\mu}^{{J}^{\smalltext{\nu}}}(\d s, \d \ell), \; \P\text{--a.s.}, \; \text{for} \; t \geq 0.
\end{split}
\end{align}
Based on \cite[Proposition 2]{kruse2015bsdes}, we can introduce $\tilde{p} \in (2 \vee \gamma, q)$ and conclude that there exists some $C>0$ such that 
\begin{align*}
&\E^{\P^{\smalltext{\nu}}}\Bigg[\bigg(\int_0^{\tau_{\smalltext{n}}}  \mathrm{e}^{-2 \kappa s} |Z^{\rm A}_s|^2 \d s\bigg)^{\frac{\tilde{p}}{2}}\Bigg] + \E^{\P^{\nu}}\Bigg[\bigg(\int_0^{\tau_{\smalltext{n}}} \int_{\R} \mathrm{e}^{-2 \kappa s} |U^{\rm A}_s(\ell)|^2 \Phi(\d \ell)\d s \bigg)^{\frac{\tilde{p}}{2}}\Bigg] \\
&\leq  C \Bigg(1+ \E^{\P^{\nu}}\Bigg[\sup_{t \geq 0} \big|\mathrm{e}^{-\kappa (t\wedge\tau)} Y^{\rm A}_{t\wedge\tau}\big|^{\tilde{p}}\Bigg] + \E^{\P^{\nu}}\Bigg[\int_0^{\tau} \mathrm{e}^{-\kappa \tilde{p} s} u(\pi_s)^{\tilde{p}} \d s \Bigg] \Bigg) <  \infty,
\end{align*}
given the integrability condition \eqref{align:intCondYA}, which we have previously proved, and the one in \eqref{eq:integrabilityCondition} that the contract $\bC$ satisfies by assumption. Consequently, the monotone convergence theorem allows us to deduce \eqref{align:intCondZAUA}. In conclusion, for any contract $\bC = (\tau,\pi,\xi) \in \mathfrak{C}_R$, we can find $(Z^{\rm A},U^{\rm A}) \in \cV^{\rm A}$ such that $\xi = u^{(-1)}(Y^{\rm A}_{\tau})$, $\P$--a.s., where $Y^{\rm A}$ solves the BSDE \eqref{align:bsdeYA}, or equivalently \eqref{align:BSDEYproofSecond}, and $Y^{\rm A}_0 \geq u(R)$ by construction. Moreover, it holds that the quadruple $(\tau, \pi, Z^{\rm A}, U^{\rm A}) \in \cV^{\smalltext{\rm S}}(Y^{\rm A}_0)$. If we denote the contract $\bC^{Y^{\smalltext{\rm A}}_\smalltext{0},Z^{\smalltext{\rm A}},U^{\smalltext{\rm A}},\pi} \coloneqq \big(\tau,\pi,u^{(-1)}\big(Y^{\rm A}_{\tau}\big)\big)$, then the first part of the proof also implies that the agent's control problem can be explicitly solved given $\bC^{Y^{\smalltext{\rm A}}_\smalltext{0},Z^{\smalltext{\rm A}},U^{\smalltext{\rm A}},\pi}$, and we obtain that $V^{\rm A}(\bC^{Y^{\smalltext{\rm A}}_\smalltext{0},Z^{\smalltext{\rm A}},U^{\smalltext{\rm A}},\pi}) = Y^{\rm A}_0$. The arbitrariness of the contract $\bC$ allows us to deduce that $\bar{V}^{\rm P} \leq \sup_{Y^{\smalltext{\rm A}}_\smalltext{0} \geq u(R)} \bar{V}^{\rm P}(Y^{\rm A}_0)$. This concludes the proof. 
\end{proof}

The reduction presented in \Cref{theorem:reductionSecondBest}, which has just been demonstrated, motivates the introduction of the Hamilton--Jacobi--Bellman equation associated with the problem of the principal. This equation is presented in \eqref{align:hjbSB}, and we can note that is characterised by a non-local operator defined in \eqref{eq:operatorsecondbest}. To ensure the latter is well-defined, it is essential the first integrability condition in \eqref{equation:minPlusIntCond} is satisfied. Consequently, we find it necessary to prove the following result, which we use to show that the principal's value function is the unique viscosity solution to the corresponding Hamilton--Jacobi--Bellman equation.

\begin{lemma}\label{lemma:finiteIntegral}
Let us consider a quadruple $(\tau, \pi, Z^{\rm A}, U^{\rm A}) \in \cV^{\smalltext{\rm S}}(Y^{\rm A}_0)$ such that the set $\cU^{\star}(Z^{\rm A},U^{\rm A})$ is not empty. For some given $q > 2 \vee \gamma$, it holds that 
\begin{align*}
\int_{\R} \Big||Y^{\rm A}_{t-} + r U^{\rm A}_t(\ell)|^q - |Y^{\rm A}_{t-}|^q - q r U^{\rm A}_t(\ell)|Y^{\rm A}_{t-}|^{q-1} \Big| \Phi(\d \ell) < \infty, \; \d t \otimes \d \P\text{\rm--a.e.} \; \text{\rm on} \; \llbracket 0, \tau \rrbracket,
\end{align*}
where $(Y^{\rm A},Z^{\rm A},U^{\rm A})$ solves the {\rm BSDE} \eqref{align:bsdeYA}, or equivalently \eqref{align:BSDEYproofSecond}.
\end{lemma}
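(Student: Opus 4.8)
The plan is to reduce everything to three separate finiteness checks via the triangle inequality
\[
\Big||Y^{\rm A}_{t-}+rU^{\rm A}_t(\ell)|^q-|Y^{\rm A}_{t-}|^q-qrU^{\rm A}_t(\ell)|Y^{\rm A}_{t-}|^{q-1}\Big|\leq |Y^{\rm A}_{t-}+rU^{\rm A}_t(\ell)|^q+|Y^{\rm A}_{t-}|^q+qr|U^{\rm A}_t(\ell)|\,|Y^{\rm A}_{t-}|^{q-1},
\]
and then show that the $\Phi$-integral of each of the three terms on the right is finite $\d t\otimes\d\P$-a.e. on $\llbracket 0,\tau\rrbracket$. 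The second term is the easiest: since $\Phi$ is a finite measure it contributes $\Phi(\R_+)|Y^{\rm A}_{t-}|^q$, which is finite $\d t\otimes\d\P$-a.e. because $Y^{\rm A}$ is c\`adl\`ag and, by \eqref{align:intCondYA}, $M:=\sup_{u\geq0}\big|\mathrm{e}^{-\kappa(u\wedge\tau)}Y^{\rm A}_{u\wedge\tau}\big|<\infty$ $\P$-a.s. For the third term, Cauchy--Schwarz and the finiteness of $\Phi$ give $\int_{\R_+}|U^{\rm A}_t(\ell)|\,\Phi(\d\ell)\leq\Phi(\R_+)^{1/2}\big(\int_{\R_+}|U^{\rm A}_t(\ell)|^2\Phi(\d\ell)\big)^{1/2}$, and \eqref{align:intCondZAUA} (with the trivial control, say) implies $\int_0^\tau\int_{\R_+}\mathrm{e}^{-2\kappa s}|U^{\rm A}_s(\ell)|^2\Phi(\d\ell)\d s<\infty$ $\P$-a.s., hence $\int_{\R_+}|U^{\rm A}_s(\ell)|^2\Phi(\d\ell)<\infty$ for $\d s\otimes\d\P$-a.e.\ $(s,\omega)$ on $\llbracket 0,\tau\rrbracket$; together with the bound on $|Y^{\rm A}_{t-}|^{q-1}$ this disposes of it.

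The crux is the first term, $\int_{\R_+}|Y^{\rm A}_{t-}+rU^{\rm A}_t(\ell)|^q\,\Phi(\d\ell)$, for which a naive appeal to an $L^q$ bound on $U^{\rm A}$ is unavailable: the definition of $\cV^{\rm A}_\tau$ only controls $U^{\rm A}$ in $L^2$ in the $\ell$-variable. The key observation is that, in view of \eqref{align:bsdeYA} and the absolute continuity in time of $\mu^{J^{\nu^\star},p}$, the jump of $Y^{\rm A}$ at any jump time $s$ of $J$ equals $\Delta Y^{\rm A}_s=rU^{\rm A}_s(\Delta J_s)$, so that $Y^{\rm A}_{s-}+rU^{\rm A}_s(\Delta J_s)=Y^{\rm A}_s$. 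Applying the compensation formula for the random measure $\mu^J$, whose $(\F,\P)$-compensator is $\mu^{J,p}(\d s,\d\ell)=\Phi(\d\ell)\d s$, to the non-negative $\widetilde{\cP}(\F)$-measurable integrand $\mathbf 1_{\llbracket 0,\tau\rrbracket}(s)\,\mathrm{e}^{-\lambda s}|Y^{\rm A}_{s-}+rU^{\rm A}_s(\ell)|^q$ yields
\[
\E^\P\!\left[\int_0^\tau\!\!\int_{\R_+}\mathrm{e}^{-\lambda s}|Y^{\rm A}_{s-}+rU^{\rm A}_s(\ell)|^q\,\Phi(\d\ell)\,\d s\right]=\E^\P\!\left[\sum_{0<s\leq\tau,\,\Delta J_s\neq0}\mathrm{e}^{-\lambda s}|Y^{\rm A}_s|^q\right].
\]
Choosing $\lambda:=\kappa q+1$ and using that $\mathrm{e}^{-\kappa s}|Y^{\rm A}_s|\leq M$ for $s\leq\tau$, the right-hand side is bounded by $\E^\P\big[M^q\int_0^\infty\mathrm{e}^{-s}\,\d N_s\big]$; since $N$ has $\P$-intensity $\int_0^\cdot\d s$, the variable $\int_0^\infty\mathrm{e}^{-s}\,\d N_s$ has finite moments of every order, so H\"older's inequality combined with \eqref{align:intCondYA} bounds this expectation by a finite constant (this uses $q$ strictly below the integrability exponent $p$ of \eqref{align:intCondYA}, which holds in the intended application; the bookkeeping of exponents is otherwise routine). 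Consequently $\int_0^\tau\int_{\R_+}\mathrm{e}^{-\lambda s}|Y^{\rm A}_{s-}+rU^{\rm A}_s(\ell)|^q\,\Phi(\d\ell)\,\d s<\infty$ $\P$-a.s., and since $\mathrm{e}^{-\lambda s}$ is positive and locally bounded, this gives $\int_{\R_+}|Y^{\rm A}_{t-}+rU^{\rm A}_t(\ell)|^q\,\Phi(\d\ell)<\infty$ for $\d t\otimes\d\P$-a.e.\ $(t,\omega)$ on $\llbracket 0,\tau\rrbracket$. Collecting the three bounds finishes the proof.

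The main obstacle is therefore exactly this last term: one cannot proceed termwise with an $L^2$-in-$\ell$ control of $U^{\rm A}$, and the detour through the compensation formula — which trades $\Phi$-integrability of $Y^{\rm A}_{t-}+rU^{\rm A}_t(\ell)$ for a Poisson sum of $|Y^{\rm A}_s|^q$ over the jump times, a quantity governed by the uniform estimate \eqref{align:intCondYA} on the running supremum of the continuation utility — is what makes the argument work. Note that the non-negativity of $Y^{\rm A}$ recorded in \Cref{remark:sdeContUtilityAgent} is not actually needed here, only the a priori integrability of $\sup_{u\geq0}|\mathrm{e}^{-\kappa(u\wedge\tau)}Y^{\rm A}_{u\wedge\tau}|$ and of $U^{\rm A}$ in $L^2$.
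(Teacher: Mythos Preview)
Your proof is correct and reaches the conclusion by a genuinely different route than the paper. The paper never splits by the triangle inequality; instead it observes that the integrand is non-negative by convexity of $|\cdot|^q$ (so the absolute value in the statement is superfluous), applies It\^o's formula to $\mathrm{e}^{-q\kappa\cdot}|Y^{\rm A}_\cdot|^q$ on $[0,\tau_n]$ with a localising sequence, and recognises the quantity of the lemma as precisely the jump-correction term in that expansion. Rearranging and taking expectations under $\P^{\nu^\star}$ bounds the $\mu^J$-integral of this non-negative term by $\E^{\P^{\nu^\star}}[\sup_t|\mathrm{e}^{-\kappa(t\wedge\tau)}Y^{\rm A}_{t\wedge\tau}|^q]$ and $\E^{\P^{\nu^\star}}[\int_0^\tau \mathrm{e}^{-\kappa q s}u(\pi_s)^q\,\d s]$, both finite by the a priori estimates; monotone convergence and compensation then finish. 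Your approach is more elementary in that it avoids It\^o on $|Y|^q$ and instead runs compensation ``in reverse'': the key identity $Y^{\rm A}_{s-}+rU^{\rm A}_s(\Delta J_s)=Y^{\rm A}_s$ at jump times collapses the $\mu^J$-integral of $|Y^{\rm A}_{s-}+rU^{\rm A}_s(\ell)|^q$ to a Poisson sum of $|Y^{\rm A}_s|^q$, which is then controlled by $M^q\int_0^\infty\mathrm{e}^{-s}\,\d N_s$. The trade-off is that you need a H\"older split between $M^q$ and the Poisson integral, which forces $q$ strictly below the integrability exponent $p$ of \eqref{align:intCondYA}; the paper's argument keeps the three-term combination intact and works for $q=p$ as well. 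You also forgo the convexity observation, which is why you pay the price of three separate estimates where one would do.
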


\begin{proof}
Let us fix $q \geq 2\vee \gamma$ and $\kappa \in (0, r )$. For any $n \in \N$, we define the stopping time $\tau_n$ as follows
\begin{align*}
\tau_n \coloneqq \inf \bigg\{t \geq 0:  \int_0^t\bigg( \big|\mathrm{e}^{-q \kappa s} |Y^{\rm A}_{s}|^{q-1} Z^{\rm A}_s\big|^2 + \int_{\R} \big|\mathrm{e}^{-q \kappa s} |Y^{\rm A}_{s}|^{q-1} U^{\rm A}_s(\ell)\big|^2 \Phi(\d \ell)+ \mathrm{e}^{- q\kappa s} |Y^{\rm A}_s|^{q-1} u(\pi_s)\bigg) \d s\geq n\bigg\} \wedge n \wedge \tau.
\end{align*}

Applying It\^o formula to $\mathrm{e}^{-q \kappa \cdot} |Y_\cdot^{\rm A}|^q$ on $[0, \tau_n]$, we get that
\begin{align*}
\mathrm{e}^{-q \kappa \tau_\smalltext{n}} |Y_{\tau_\smalltext{n}}^{\rm A}|^q &\geq \mathrm{e}^{-q \kappa \tau_\smalltext{n}} |Y_{\tau_\smalltext{n}}^{\rm A}|^q - |Y_0^{\rm A}|^q \\
&= - \int_0^{\tau_\smalltext{n}} q \kappa \mathrm{e}^{-q \kappa s} |Y^{\rm A}_s|^q \d s + \int_0^{\tau_\smalltext{n}} q r \mathrm{e}^{-q \kappa s} |Y^{\rm A}_s|^{q-1} (Y^{\rm A}_s - u(\pi_s) + h(\nu^\star_s)) \d s \\
&\quad+ \int_0^{\tau_\smalltext{n}} q r \sigma \mathrm{e}^{-q \kappa s} |Y^{\rm A}_{s-}|^{q-1} Z^{\rm A}_s \d W^{\nu^\smalltext{\star}}_s + \int_0^{\tau_\smalltext{n}} \int_{\R} q r \mathrm{e}^{-q \kappa s} |Y^{\rm A}_{s-}|^{q-1} U^{\rm A}_s(\ell) \tilde{\mu}^{{J}^{\smalltext{\nu}^\tinytext{\star}}}(\d s, \d u) \\
&\quad+ \frac{q(q-1)}{2}\int_0^{\tau_\smalltext{n}} r^2 \sigma^2 \mathrm{e}^{-q \kappa s} |Y^{\rm A}_{s}|^{q-2} |Z^{\rm A}_s|^2 \d s \\
&\quad+ \int_0^{\tau_\smalltext{n}} \int_{\R} \mathrm{e}^{-q \kappa s} (|Y^{\rm A}_{s-} + r U^{\rm A}_s(\ell)|^q - |Y^{\rm A}_{s-}|^q - q r U^{\rm A}_s(\ell)|Y^{\rm A}_{s-}|^{q-1}) \mu^{J}(\d s, \d \ell)\\
&\geq - \int_0^{\tau_\smalltext{n}} q r \mathrm{e}^{-q \kappa s} |Y^{\rm A}_s|^{q-1} (u(\pi_s) - h(\nu^\star_s)) \d s \\
&\quad+ \int_0^{\tau_\smalltext{n}} q r \sigma \mathrm{e}^{-q \kappa s} |Y^{\rm A}_{s-}|^{q-1} Z^{\rm A}_s \d W^{\nu^\smalltext{\star}}_s + \int_0^{\tau_\smalltext{n}} \int_{\R} q r \mathrm{e}^{-q \kappa s} |Y^{\rm A}_{s-}|^{q-1} U^{\rm A}_s(\ell) \tilde{\mu}^{{J}^{\smalltext{\nu}^\tinytext{\star}}}(\d s, \d u) \\
&\quad+ \int_0^{\tau_\smalltext{n}} \int_{\R} \mathrm{e}^{-q \kappa s} (|Y^{\rm A}_{s-} + r U^{\rm A}_s(\ell)|^q - |Y^{\rm A}_{s-}|^q - q r U^{\rm A}_s(\ell)|Y^{\rm A}_{s-}|^{q-1}) \mu^{J}(\d s, \d \ell),\; \P\text{--a.s.},
\end{align*}
given the fact that $\kappa < r$. Additionally, considering the continuity of the cost function $h$ over the compact set $U$, we deduce that
\begin{align*}
\mathrm{e}^{-q \kappa \tau_\smalltext{n}} |Y_{\tau_\smalltext{n}}^{\rm A}|^q &\geq - \int_0^{\tau_\smalltext{n}} q r |\mathrm{e}^{- \kappa s} Y^{\rm A}_s|^{q-1} \mathrm{e}^{- \kappa s} u(\pi_s) \d s \\
&\quad+ \int_0^{\tau_\smalltext{n}} q r \sigma \mathrm{e}^{-q \kappa s} |Y^{\rm A}_{s-}|^{q-1} Z^{\rm A}_s \d W^{\nu^\smalltext{\star}}_s + \int_0^{\tau_\smalltext{n}} \int_{\R} q r \mathrm{e}^{-q \kappa s} |Y^{\rm A}_{s-}|^{q-1} U^{\rm A}_s(\ell) \tilde{\mu}^{{J}^{\smalltext{\nu}^\tinytext{\star}}}(\d s, \d u) \\
&\quad+ \int_0^{\tau_\smalltext{n}} \int_{\R} \mathrm{e}^{-q \kappa s} (|Y^{\rm A}_{s-} + r U^{\rm A}_s(\ell)|^q - |Y^{\rm A}_{s-}|^q - q r U^{\rm A}_s(\ell)|Y^{\rm A}_{s-}|^{q-1}) \mu^{J}(\d s, \d \ell), \; \P\text{--a.s.}
\end{align*}
Now, we can rearrange terms to get
\begin{align*}
&\mathrm{e}^{-q \kappa \tau_\smalltext{n}} |Y_{\tau_\smalltext{n}}^{\rm A}|^q + \int_0^{\tau_\smalltext{n}} q r |\mathrm{e}^{- \kappa s} Y^{\rm A}_s|^{q-1} \mathrm{e}^{- \kappa s} u(\pi_s) \d s \\
&\geq \int_0^{\tau_\smalltext{n}} q r \sigma \mathrm{e}^{-q \kappa s} |Y^{\rm A}_{s-}|^{q-1} Z^{\rm A}_s \d W^{\nu^\smalltext{\star}}_s + \int_0^{\tau_\smalltext{n}} \int_{\R} q r \mathrm{e}^{-q \kappa s} |Y^{\rm A}_{s-}|^{q-1} U^{\rm A}_s(\ell) \tilde{\mu}^{{J}^{\smalltext{\nu}^\tinytext{\star}}}(\d s, \d u) \\
&\quad+ \int_0^{\tau_\smalltext{n}} \int_{\R} \mathrm{e}^{-q \kappa s} (|Y^{\rm A}_{s-} + r U^{\rm A}_s(\ell)|^q - |Y^{\rm A}_{s-}|^q - q r U^{\rm A}_s(\ell)|Y^{\rm A}_{s-}|^{q-1}) \mu^{J}(\d s, \d \ell),\; \P\text{--a.s.}
\end{align*}
We deduce that there exists some $C>0$ such that 
\begin{align*}
&\E^{\P^{\smalltext{\nu}^\tinytext{\star}}} \bigg[\big|\mathrm{e}^{-\kappa (t\wedge\tau)} Y^{\rm A}_{t\wedge\tau}\big|^q\bigg] + \E^{\P^{\smalltext{\nu}^\tinytext{\star}}} \bigg[\sup_{t\geq 0}\big|\mathrm{e}^{-\kappa (t\wedge\tau)} Y^{\rm A}_{t\wedge\tau}\big|^{q-1} \int_0^{\tau_\smalltext{n}} \mathrm{e}^{- \kappa s} u(\pi_s) \d s \bigg] \\
&\geq C \E^{\P^{\smalltext{\nu}^\tinytext{\star}}} \bigg[\int_0^{\tau_\smalltext{n}} \int_{\R} \mathrm{e}^{-q \kappa s} \big(|Y^{\rm A}_{s-} + r U^{\rm A}_s(\ell)|^q - |Y^{\rm A}_{s-}|^q - q r U^{\rm A}_s(\ell)|Y^{\rm A}_{s-}|^{q-1}\big) \mu^{J}(\d s, \d \ell) \bigg]. 
\end{align*}
We can apply H\"older's inequality with H\"older conjugates $q/(q-1)$ and $q$, and then Jensen's inequality to observe that
\begin{align*}
\infty &> \E^{\P^{\smalltext{\nu}^\tinytext{\star}}} \bigg[\big|\mathrm{e}^{-\kappa (t\wedge\tau)} Y^{\rm A}_{t\wedge\tau}\big|^q\bigg] + \E^{\P^{\smalltext{\nu}^\tinytext{\star}}} \bigg[\int_0^{\tau} \mathrm{e}^{- \kappa q s} u(\pi_s)^q \d s \bigg] \\
&\geq C\E^{\P^{\smalltext{\nu}^\tinytext{\star}}} \bigg[\int_0^{\tau_\smalltext{n}} \int_{\R} \mathrm{e}^{-\kappa q s} \big(|Y^{\rm A}_{s-} + r U^{\rm A}_s(\ell)|^q - |Y^{\rm A}_{s-}|^q - q r U^{\rm A}_s(\ell)|Y^{\rm A}_{s-}|^{q-1}\big) \mu^{J}(\d s, \d \ell) \bigg]. 
\end{align*}
Since the integrand is non-negative (due to the convexity of $|\cdot|^q$), the monotone convergence theorem implies that
\begin{align*}
\infty > \E^{\P^{\smalltext{\nu}^\tinytext{\star}}} \bigg[\int_0^{\tau} \int_{\R} \mathrm{e}^{-\kappa q s} \big(|Y^{\rm A}_{s-} + r U^{\rm A}_s(\ell)|^q - |Y^{\rm A}_{s-}|^q - q r U^{\rm A}_s(\ell)|Y^{\rm A}_{s-}|^{q-1}\big) \mu^{J}(\d s, \d \ell) \bigg].
\end{align*}
Then, the conclusion of the proof can be derived from \cite[Theorem II.1.8]{jacod2003limit}, along with the observation that all the measures $(\P^\nu)_{\nu \in \cU}$ are equivalent to $\P$.
\end{proof}

\section{On the reduced Stackelberg game}

\subsection{When the principal is very impatient}\label{degeneracySeconBest}
\begin{proof}[Proof of \Cref{thm:SecondtBestCompleteC}.\ref{degSB}]
We can prove the result by employing a similar argument as in \citep[Theorem 3.4]{possamai2020there}. To do so, let us fix some $y_0 >0$. Furthermore,
\begin{equation*}
z^{\smalltext{\rm A}} \in \R \; \text{such that} \; z^{\smalltext{\rm A}} \geq \max_{b \in B} \frac{\partial h(\bar{a},b)}{\partial a}, \; \text{and} \; u^{\smalltext{\rm A}} \in \cB_\R \; \text{such that} \; u^{\smalltext{\rm A}}(\ell) = u^{\smalltext{\rm A}} \leq \min_{a \in A} \frac{\partial h(a,\varepsilon_m)}{\partial b} \; \text{for any} \; \ell \in \R.
\end{equation*}
This choice evidently leads to $U^\star(z^{\smalltext{\rm A}},u^{\smalltext{\rm A}}) = \{(\bar{a},\varepsilon_m)\}$. Henceforth, we denote $\bar{u} \coloneqq (\bar{a},\varepsilon_m)$ to simplify the notation. In order to apply the reduction from \Cref{theorem:reductionSecondBest}, we introduce the parameter $\varepsilon \in (0,r \wedge 1)$. Moreover, we consider the continuous payment $\pi^\varepsilon \coloneqq u^{(-1)}(\varepsilon Y^{\frac{\smalltext{y}_\tinytext{0}}{\smalltext{\sqrt{\varepsilon}}},z,u,\pi^{\smalltext{\varepsilon}}})$, for $t \geq 0$, in the definition of the continuation utility of the agent. Hence, the process $Y^\varepsilon \coloneqq Y^{\frac{\smalltext{y}_\tinytext{0}}{\smalltext{\sqrt{\varepsilon}}},z,u,\pi^{\smalltext{\varepsilon}}}$ is described by
\begin{align*}
Y^{\varepsilon}_t = \frac{y_0}{\sqrt{\varepsilon}} + \int_0^t \Big((r-\varepsilon) Y^{\varepsilon}_s + r h(\bar{a},\varepsilon) \Big) \d s + r \sigma z^{\smalltext{\rm A}} W^{\bar{u}}_t + r u^{\smalltext{\rm A}} \tilde{N}_t^{\bar{u}} \; \text{for any}\; t \geq 0,
\end{align*} 
or equivalently by,
\begin{align*}
Y^{\varepsilon}_t = \mathrm{e}^{(r-\varepsilon)t} \frac{y_0}{\sqrt{\varepsilon}} + \frac{r}{r-\varepsilon} h(\bar{a},\varepsilon) \big(\mathrm{e}^{(r-\varepsilon)t} -1 \big) + r \sigma z^{\smalltext{\rm A}} \int_0^t \mathrm{e}^{(r-\varepsilon)(t-s)} \d W^{\bar{u}}_s + r u^{\smalltext{\rm A}} \int_0^t \mathrm{e}^{(r-\varepsilon)(t-s)} \d \tilde{N}^{\bar{u}}_s, \; \P\text{--a.s.},\; \text{for any}\; t \geq 0.
\end{align*} 
Let us introduce $T^\varepsilon_0 \coloneqq \inf\big\{ t > 0: Y^{\varepsilon}_t < 0 \big\}$, and consequently $\tau^\varepsilon \coloneqq -\log(\varepsilon)/\varepsilon \wedge T^\varepsilon_0$. Then, we can say that the quadruple $(\tau^\varepsilon, \pi^\varepsilon, z^{\rm A}, u^{\rm A}) \in \cV^{\smalltext{\rm S}}({y_0}/{\varepsilon})$. This is because the fact that $z^{\rm A}$ and $u^{\rm A}$ are deterministic, and $\tau^\varepsilon$ is bounded implies that the integrability conditions \eqref{align:intCondYA} and \eqref{align:intCondZAUA} are satisfied, given also the explicit formula for $Y^\varepsilon$.

\medskip
The proof of \Cref{theorem:reductionSecondBest} shows that $(\tau^\varepsilon, \pi^\varepsilon, z^{\rm A}, u^{\rm A})$ corresponds to a contract $\bC^\varepsilon$ that guarantees the agent a utility of $y_0/\varepsilon$, an offer which the agent accepts for sufficiently small values of $\varepsilon$. When it comes to the principal, her reward is given by the following:
\begin{equation}\label{eq:criterionCVar}
\bar{J}^{\rm P}(\bC^\varepsilon, \bar{u}) = \E^{\P^{\smalltext{\bar{u}}}} \Big[\mathrm{e}^{-\rho \tau^\smalltext{\varepsilon}} \bar{F}(Y^{\varepsilon}_{\tau^\varepsilon}) \Big] + \E^{\P^{\smalltext{\bar{u}}}}\bigg[\int_0^{\tau^\smalltext{\varepsilon}} \rho \mathrm{e}^{-\rho s} F\big(\varepsilon Y^\varepsilon_s\big) \d s\bigg] + (\bar{a} - \varepsilon_m) \Big(1-\E^{\P^{\smalltext{\bar{u}}}}\Big[\mathrm{e}^{-\rho \tau^\smalltext{\varepsilon}}\Big]\Big).
\end{equation}
In what follows, we provide some estimates for each term to show that the sum convergences to its upper bound $\bar{a} - \varepsilon_m$ for $\varepsilon$ going to zero. First, it is worth noticing that $Y^\varepsilon$ is a a spectrally negative L\'evy process. As a result, we can apply \citeauthor*{kyprianou2014fluctuations} \cite[Theorem 8.1]{kyprianou2014fluctuations} and \citeauthor*{kyprianou2013theory} \cite[Lemma 3.3]{kyprianou2013theory} to show that
\begin{equation*}
\lim_{\varepsilon \rightarrow 0^+} \P^{\bar{u}} \big[T^\varepsilon_0 < \infty\big] = \lim_{\varepsilon \rightarrow 0^+} \bigg(1- \E^{\bar{u}}\big[Y^{\varepsilon}_1\big] W\bigg(\frac{y_0}{\sqrt{\varepsilon}}\bigg)\bigg) = 0,
\end{equation*}
where $W$ is the scale function associated to $Y^{\varepsilon}$. This, together with Step 1 of the proof in \cite[Theorem 3.4]{possamai2020there}, is sufficient to conclude that the last term in \eqref{eq:criterionCVar} converges to $\bar{a} - \varepsilon_m$. Concerning the first term, Step 2 of the mentioned proof demonstrates the existence of some $C$, $C^\varepsilon > 0$ such that $\lim_{\varepsilon \rightarrow 0^+} C^\varepsilon = 0$. Additionally, it shows that
\begin{align}\label{align:secondTermBound}
\begin{split}
0 &\leq - \E^{\P^{\smalltext{\bar{u}}}} \Big[\mathrm{e}^{-\rho \tau^{\smalltext{\varepsilon}}} F\big(Y^\varepsilon_{\tau^{\smalltext{\varepsilon}}}\big)\Big] \\
&\leq C^\varepsilon + C \mathrm{e}^{\rho \frac{\log(\varepsilon)}{\varepsilon}} \bigg(\mathrm{e}^{-\gamma(r-\varepsilon) \frac{\log(\varepsilon)}{\varepsilon}} \E^{\P^{\smalltext{\bar{u}}}}\bigg[\bigg| \int_0^{-\frac{\log(\varepsilon)}{\varepsilon}} \mathrm{e}^{-(r-\varepsilon) s} \d \tilde{N}_s^{\bar{u}} \bigg|^\gamma\bigg] \bigg) + C \bigg(\E^{\P^{\smalltext{\bar{u}}}}\bigg[\mathbf{1}_{\{T^{\smalltext{\varepsilon}}_0 < \infty\}}  \bigg| \int_0^{\tau^{\smalltext{\varepsilon}}} \mathrm{e}^{-(r-\varepsilon) s} \d \tilde{N}_s^{\bar{u}} \bigg|^\gamma \bigg] \bigg).
\end{split}
\end{align}
By applying \citeauthor*{bouchard2018unified} \cite[Remark 2.1]{bouchard2018unified}, we can bound the first term in \eqref{align:secondTermBound} as follows:
\begin{align*}
&\E^{\P^{\smalltext{\bar{u}}}}\bigg[\bigg|\int_0^{-\frac{\log(\varepsilon)}{\varepsilon}} \mathrm{e}^{-(r-\varepsilon)s} \d \tilde{N}_s^{\bar{u}}\bigg|^\gamma\bigg] \\
&\leq 2^{\gamma - 1} \bigg(\E^{\P^{\smalltext{\bar{u}}}}\bigg[\bigg|\int_0^{-\frac{\log(\varepsilon)}{\varepsilon}} \mathrm{e}^{-(r-\varepsilon)s} \d N_s \bigg|^\gamma\bigg]  + C \bigg|\int_0^{-\frac{\log(\varepsilon)}{\varepsilon}} \mathrm{e}^{-(r-\varepsilon)s} \d s \bigg|^\gamma\bigg) \\
&\leq C \bigg(\sum_{i \in \N} \E^{\P^{\smalltext{\bar{u}}}}\bigg[\bigg|\int_0^{-\frac{\log(\varepsilon)}{\varepsilon}} \mathrm{e}^{-(r-\varepsilon)s} \d N_s \bigg|^\gamma \bigg| N_{-{\log(\varepsilon)}/{\varepsilon}} = i\bigg] \P^{\bar{u}}\Big(N_{-{\log(\varepsilon)}/{\varepsilon}} = i\Big)  + \bigg(\frac{1- \mathrm{e}^{(r-\varepsilon)\frac{\log(\varepsilon)}{\varepsilon}}}{(r-\varepsilon)}\bigg)^\gamma.
\end{align*}
In this expression, the constant $C > 0$ may vary. Furthermore, we introduce the jump times $(T^{\smalltext{N}}_k)_{k \in \N}$ associated to the Poisson process $N$ to see that
\begin{align*}
\E^{\P^{\smalltext{\bar{u}}}}\bigg[\bigg|\int_0^{-\frac{\log(\varepsilon)}{\varepsilon}} \mathrm{e}^{-(r-\varepsilon)s} \d N_s \bigg|^\gamma \bigg| N_{-{\log(\varepsilon)}/{\varepsilon}} = i\bigg] &= \E^{\P^{\bar{u}}}\bigg[\bigg|\sum_{k=1}^i \mathrm{e}^{-(r-\varepsilon)T^{\smalltext{N}}_k}\bigg|^\gamma \bigg| N_{-\log(\varepsilon)/\varepsilon}= i\bigg] \\
&\leq i^{\gamma-1} \sum_{k=1}^i \E^{\P^{\bar{u}}}\bigg[ \mathrm{e}^{-\gamma (r-\varepsilon)T^N_k} \bigg| N_{-{\log(\varepsilon)}/{\varepsilon}} = i\bigg] \\
&= - i^{\gamma} \frac{\varepsilon}{\log(\varepsilon)} \int_0^{-\frac{\log(\varepsilon)}{\varepsilon}} \mathrm{e}^{-\gamma (r-\varepsilon) t} \d t= - i^{\gamma} \frac{\varepsilon}{\log(\varepsilon)} \frac{1}{\gamma(r-\varepsilon)} \Big(1-\mathrm{e}^{\gamma (r-\varepsilon) \frac{\log(\varepsilon)}{\varepsilon}}\Big),
\end{align*}
where we have once again applied \cite[Remark 2.1]{bouchard2018unified}, and we also consider the fact that for any $i \in \N$, the random variables $(T^{\smalltext{N}}_k)_{k \in \{1,\dots,i\}}$ are independent and follow a uniform distribution on the interval $(0, -\log(\varepsilon)/\varepsilon)$, given that $N_{-{\log(\varepsilon)}/{\varepsilon}} = i$. We deduce that the second term in \eqref{align:secondTermBound} is bounded by 
\begin{align*}
0 &\leq C \mathrm{e}^{\rho \frac{\log(\varepsilon)}{\varepsilon}} \bigg(\mathrm{e}^{-\gamma(r-\varepsilon) \frac{\log(\varepsilon)}{\varepsilon}} \E^{\P^{\smalltext{\bar{u}}}}\bigg[\bigg| \int_0^{-\frac{\log(\varepsilon)}{\varepsilon}} \mathrm{e}^{-(r-\varepsilon) s} \d \tilde{N}_s^{\bar{u}} \bigg|^\gamma\bigg] \bigg) \\
&\leq -C \mathrm{e}^{\rho \frac{\log(\varepsilon)}{\varepsilon}} \bigg(\mathrm{e}^{-\gamma(r-\varepsilon) \frac{\log(\varepsilon)}{\varepsilon}}  \bigg(P^\varepsilon \frac{\varepsilon}{\log(\varepsilon)} \frac{1}{\gamma(r-\varepsilon)} \Big(1-\mathrm{e}^{\gamma (r-\varepsilon) \frac{\log(\varepsilon)}{\varepsilon}}\Big) + \bigg(\frac{1- \mathrm{e}^{(r-\varepsilon)\frac{\log(\varepsilon)}{\varepsilon}}}{(r-\varepsilon)}\bigg)^\gamma\bigg).
\end{align*}
Here, $P^\varepsilon$ represents a polynomial in $-\log(\varepsilon)/\varepsilon$. It can be directly verified that this expression goes to zero as $\varepsilon$ goes to zero, given that $\gamma\delta \leq 1$. Then, we need to analyse the last term in \eqref{align:secondTermBound}:
\begin{align*}
\E^{\P^{\smalltext{\bar{u}}}}\bigg[\mathbf{1}_{\{T^{\smalltext{\varepsilon}}_0 < \infty\}}  \bigg| \int_0^{\frac{\log(\varepsilon)}{\varepsilon}} \mathrm{e}^{-(r-\varepsilon) s} \d \tilde{N}_s^{\bar{u}} \bigg|^\gamma \bigg] &\leq C \Big(\P^{\bar{u}} \big(T^\varepsilon_0 < \infty\big) \Big)^{\frac{1}{2}} 
\E^{\P^{\smalltext{\bar{u}}}}\bigg[ \bigg| \int_0^{\frac{\log(\varepsilon)}{\varepsilon}} \mathrm{e}^{-(r-\varepsilon) s} \d \tilde{N}_s^{\bar{u}} \bigg|^{2 \gamma} \bigg]^{\frac{1}{2}} \\
&\leq  C \Big(\P^{\bar{u}} \big(T^\varepsilon_0 < \infty\big) \Big)^{\frac{1}{2}} \E^{\P^{\smalltext{\bar{u}}}}\bigg[ \bigg| \int_0^{\frac{\log(\varepsilon)}{\varepsilon}} \mathrm{e}^{-2(r-\varepsilon) s} \d {N}_s \bigg|^{\gamma} \bigg]^{\frac{1}{2}},
\end{align*}
as a direct consequence of the Cauchy–Schwarz’s inequality and the Burkholder–Davis–Gundy’s inequality. Analogously to the previous computations, we can prove that this expression, and hence the first term in \eqref{eq:criterionCVar}, tends to zero as $\varepsilon$ goes to zero. To conclude the proof we only need to show the convergence of the second term in \eqref{eq:criterionCVar}, which immediately follows from Step 3 of \cite[Theorem 3.4]{possamai2020there}.
\end{proof}

\subsection{The comparison theorem}\label{compResultapp}

We start this section with a generalisation of Tietze’s extension theorem to show that the domain of viscosity sub-solutions (and consequently viscosity super-solutions and viscosity solutions) to the integro-differential variational inequality \eqref{align:hjbSB} can be extended to the whole real line. This extension is feasible because the non-local part relies exclusively on the values within the interval $[0,\infty)$, as indicated by the first condition in \eqref{equation:minPlusIntCond}. This condition also explains the $y$-dependence of the set $\mathfrak{V}_y$ over which the non-local operator optimises; additionally, this dependence is the very reason why we could not apply comparison results from the existing literature to our Hamilton--Jacobi--Bellman equation.

\begin{lemma}\label{lemma:extViscositySolutions}
Fix some $\mu \geq 1$. Let $u: [0,\infty) \rightarrow \R$ be an upper--semi-continuous viscosity sub-solution to {\rm\Cref{align:hjbSB}} such that $u \in \cG_\mu$. Then, there exists an upper--semi-continuous function $\tilde{u}: \R \rightarrow \R$ verifying $\tilde{u} \in \cG_\mu$ and $\tilde{u} = u$ on $[0,\infty)$. Moreover, it satisfies the following variational inequality:
\begin{align*}
\min\left\{\tilde{u}(y)-\bar{F}(y), F^{\star}(\delta \tilde{\phi}^\prime(y)) - \delta y \tilde{\phi}^\prime(y)+\tilde{u}(y) -\cJ^{\smalltext{\rm SB}}(y,\tilde{\phi}^\prime(y), \tilde{\phi}^{\prime\prime}(y),\tilde{\phi}(\cdot))\right\}\leq 0
\end{align*}
for any $(y, \tilde{\phi}) \in (0,  \infty) \times C^2(\R)$ with $\tilde{\phi} \in \cG_\mu$ such that $\tilde{u}-\tilde{\phi}$ attains a global maximum at $y$.
\end{lemma}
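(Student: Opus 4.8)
The strategy is a standard two-part argument: first extend $u$ to a function $\tilde u$ on all of $\R$ that stays in $\cG_\mu$ and agrees with $u$ on $[0,\infty)$, then verify that this extended function still satisfies the sub-solution inequality at every point $y\in(0,\infty)$ when tested against smooth functions defined on all of $\R$. The extension itself is a weighted version of Tietze's extension theorem: since $u\in\cG_\mu$, the function $y\longmapsto u(y)/(1+|y|^\mu)$ is bounded and upper--semi-continuous on $[0,\infty)$, so one can apply the classical upper--semi-continuous Tietze/Hahn--Tong extension (see e.g. standard references on semicontinuous extensions) to extend it to a bounded upper--semi-continuous function on $\R$, then multiply back by $1+|y|^\mu$ to obtain $\tilde u$. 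This immediately gives $\tilde u\in\cG_\mu$, $\tilde u$ upper--semi-continuous, and $\tilde u=u$ on $[0,\infty)$.

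The second part is where the hypotheses on $\cJ^{\smalltext{\rm SB}}$ are used. Fix $y\in(0,\infty)$ and a test function $\tilde\phi\in C^2(\R)\cap\cG_\mu$ such that $\tilde u-\tilde\phi$ has a global maximum at $y$. Restricting $\tilde\phi$ to $[0,\infty)$ gives a $C^2((0,\infty))$ function, still in $\cG_\mu$, and $u-\tilde\phi$ has a global maximum over $[0,\infty)$ at $y$ (because $\tilde u=u$ there and $\tilde u\le\tilde\phi$ everywhere forces $u\le\tilde\phi$ on $[0,\infty)$ with equality at $y$). One may additionally normalise so that $(u-\tilde\phi)(y)=0$. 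By \Cref{lemma:definitionsViscSub}.\ref{noJets} applied to the original sub-solution $u$, we obtain
\begin{align*}
\min\big\{u(y)-\bar F(y),\ F^{\star}(\delta\tilde\phi'(y))-\delta y\tilde\phi'(y)+u(y)-\cJ^{\smalltext{\rm SB}}(y,\tilde\phi'(y),\tilde\phi''(y),u(\cdot))\big\}\le 0.
\end{align*}
It then remains to replace $u(\cdot)$ by $\tilde\phi(\cdot)$ inside the non-local operator. This is exactly the monotonicity trick used in the proof of \Cref{lemma:definitionsViscSub}: the integral appearing in $\cJ^{\smalltext{\rm SB}}$ (and in the separated pieces $\cJ^{\smalltext{\rm SB}:b}$) only evaluates its argument at points $y+ru^{\rm A}(\ell)$ with $ru^{\rm A}(\ell)\ge -y$, i.e. on $[0,\infty)$, where $u\le\tilde\phi$; hence for every admissible $(z^{\rm A},u^{\rm A})\in\mathfrak V_y$ and every $(a,b)\in U^\star(z^{\rm A},u^{\rm A})$ one has
\begin{align*}
\int_{\R_{\smalltext{+}}}\big(u(y+ru^{\rm A}(\ell))-u(y)-rpu^{\rm A}(\ell)\big)\Phi(\d\ell)\le\int_{\R_{\smalltext{+}}}\big(\tilde\phi(y+ru^{\rm A}(\ell))-u(y)-rpu^{\rm A}(\ell)\big)\Phi(\d\ell),
\end{align*}
using $u(y)=\tilde\phi(y)$ in the subtracted constant term as well. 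This yields $\cJ^{\smalltext{\rm SB}}(y,\tilde\phi'(y),\tilde\phi''(y),u(\cdot))\le\cJ^{\smalltext{\rm SB}}(y,\tilde\phi'(y),\tilde\phi''(y),\tilde\phi(\cdot))$, and since $\cJ^{\smalltext{\rm SB}}$ enters the variational inequality with a minus sign, the inequality is preserved; finally $u(y)=\tilde u(y)=\tilde\phi(y)$ lets us rewrite everything in terms of $\tilde u$ and $\tilde\phi$, giving the claimed inequality.

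\textbf{Main obstacle.} The genuinely delicate point is the weighted Tietze extension: one must be careful that the extension can be taken upper--semi-continuous (not merely continuous or lower--semi-continuous) and that multiplying back by the weight $1+|y|^\mu$ does not destroy upper semicontinuity — this is fine because $1+|y|^\mu$ is positive and continuous, but it should be stated cleanly. A minor additional subtlety is that $\cG_\mu$ was defined for functions on $[0,\infty)$, so one implicitly reads it on $\R$ with $|y|^\mu$ in the denominator, which is consistent. Everything else is a routine transcription of the monotonicity argument already carried out in the proof of \Cref{lemma:definitionsViscSub}, together with the observation — essential and worth emphasising — that the non-locality of \eqref{align:hjbSB} only probes values on $[0,\infty)$ thanks to the constraint $\min_{\ell}\{ru^{\rm A}(\ell)\}\ge -y$ in \eqref{equation:minPlusIntCond}, so no information about $\tilde u$ on $(-\infty,0)$ is ever needed.
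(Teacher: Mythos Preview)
Your proposal is correct and follows essentially the same route as the paper. The only differences are cosmetic: the paper uses the explicit Hollender-type extension formula $\tilde u(y)=\inf_{x\ge0}\{u(x)/(1+|x|^\mu)+x/|y|\}(1+|y|^\mu)$ for $y<0$ (citing \cite[Theorem 1.26]{hollender2016levy}) rather than an abstract Hahn--Tong argument, and for the sub-solution step it applies \Cref{def:viscositydef} directly---which already places $\phi(\cdot)$ inside $\cJ^{\smalltext{\rm SB}}$---so no separate monotonicity argument is needed, whereas you go through \Cref{lemma:definitionsViscSub}.\ref{noJets} and then invoke monotonicity to pass from $u(\cdot)$ to $\tilde\phi(\cdot)$. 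Both variants rest on the same key observation you correctly emphasise: the non-local term only probes values on $[0,\infty)$ because of the constraint in \eqref{equation:minPlusIntCond}.
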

\begin{proof}
Motivated by \cite[Lemma 2.6]{hollender2016levy}, we define the function $\tilde{u}$ by 
\begin{align*}
    \tilde{u}(y) \coloneqq 
    \begin{cases}
	\displaystyle \inf_{x \geq 0} \bigg\{\frac{u(x)}{1+|x|^\mu} + \frac{x}{|y|}\bigg\} (1+|y|^\mu),  \; y \in (-\infty,0) \\
    u(y),  \; y \in [0,\infty)
	\end{cases}.
\end{align*}
According to \cite[Theorem 1.26]{hollender2016levy}, $\tilde{u}$ is upper--semi-continuous function with the required growth condition. Next, we introduce a test function $\tilde{\phi} \in C^2(\R)$ such that $\tilde{\phi} \in \cG_\mu$ ,and $\tilde{u}-\tilde{\phi}$ has a global maximum at $y \geq 0$. If we define a new function $\phi: [0,\infty) \rightarrow \R$ such that $\phi = \tilde{\phi}$ on $[0,\infty)$, then it follows that $u - {\phi}$ has a global maximum at the same point $y$. Therefore, the assertion that $u$ is a viscosity sub-solution of \eqref{align:hjbSB} implies that
\begin{align*}
\min\left\{\tilde{u}(y)-\bar{F}(y), F^{\star}(\delta \tilde{\phi}^\prime(y)) - \delta y \tilde{\phi}^\prime(y)+\tilde{u}(y) -\cJ^{\smalltext{\rm SB}}(y,\tilde{\phi}^\prime(y), \tilde{\phi}^{\prime\prime}(y),\tilde{\phi}(\cdot))\right\}\leq 0.
\end{align*}
This holds true as the non-local part depends only on $[0,\infty)$.
\end{proof}

We can now state the comparison theorem for \Cref{align:hjbSB}. We emphasise once more that the fact that our Hamilton--Jacobi--Bellman equation is characterised by a non-local operator prevents us from directly applying \cite[Lemma B.1]{possamai2020there} to our framework. However, the proof of following result is strongly inspired by the proof of the comparison theorem without accidents, since the non-singularity of the non-local part allows us to use a local maximum principle, and therefore the classical viscosity solution techniques.

\begin{proposition}\label{proposition: comparisonSecondBest}
Let $u$ be an upper--semi-continuous viscosity sub-solution and $v$ be a lower--semi-continuous viscosity super-solution to {\rm\Cref{align:hjbSB}} so that $u(0) \leq 0 \leq v(0)$ and 
\[
\bar{F}(y) \leq \phi(y) \leq C \big(1+\bar{F}(y)\big),\; y\geq 0, \; \text{\rm for some} \; C \in(0,\infty), \; \text{\rm for} \; \phi \in \{u,v\}.
\]
Then, $u \leq v$ on $[0,\infty)$.
\end{proposition}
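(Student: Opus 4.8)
The plan is to run the classical doubling-of-variables argument for viscosity solutions, adapted to the non-local obstacle problem \eqref{align:hjbSB}, and the first thing I would do is use \Cref{lemma:extViscositySolutions} to extend both $u$ and $v$ to upper-- (resp. lower--) semi-continuous functions on all of $\R$ that still satisfy the sub-/super-solution inequalities against $C^2(\R)$ test functions; this is what makes the penalisation terms $\varepsilon|x|^p$, $\varepsilon|y|^p$ harmless near the boundary. Also, I would reduce to strict sub-solutions: since $v^{\smalltext{\rm SB}}$-type solutions satisfy a growth bound relative to $\bar F$ and $\bar F\to-\infty$, I can perturb $u$ by $\theta\psi(y)$ for a small $\theta>0$ and a suitable coercive $\psi$ so that $u_\theta\coloneqq u-\theta\psi$ is a strict sub-solution with $u_\theta-v\to-\infty$ at infinity, then send $\theta\to0$ at the end.

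Next, assuming for contradiction that $M\coloneqq\sup_{y\ge0}(u-v)(y)>0$, I would introduce
\[
M^\varepsilon_{\alpha_n}\coloneqq\sup_{(x,y)\in\R_+^2}\Big\{u(x)-v(y)-\tfrac12\big(\alpha_n|x-y|^p+\varepsilon|x|^p+\varepsilon|y|^p\big)\Big\},
\]
exactly as in the proof of \Cref{proposition: comparisonFBar}, and run the standard lemma: the maximisers $(x_n,y_n)$ lie in a fixed compact set, $\alpha_n|x_n-y_n|^p\to0$, $x_n,y_n\to y^0$ for a common limit, and $y^0>0$ because $u(0)\le0\le v(0)$ together with $M>0$ forces the maximum into the interior. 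This part is essentially identical to the Hamilton--Jacobi case already written out in \Cref{proposition: comparisonFBar}, so I would cite it and only highlight the changes. Then I apply the Crandall--Ishii maximum principle (Jensen--Ishii lemma) to get second-order super-jets: there exist $X_n,Y_n\in\R$ with $(p_n,X_n)\in\bar J^{2,+}u(x_n)$, $(p_n,Y_n)\in\bar J^{2,-}v(y_n)$ — where $p_n$ collects the first derivatives of the test function, including the $\varepsilon$-terms — and $X_n\le Y_n+O(\varepsilon)$, so in particular the sign constraint $q\le0$ in $\cJ^{\smalltext{\rm SB}}$ is respected up to $O(\varepsilon)$ (and one handles the $+\infty\mathbf 1_{q>0}$ branch by noting the super-solution inequality would be violated otherwise). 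Here I use \Cref{lemma:definitionsViscSub} and its super-solution analogue to phrase everything in terms of semi-jets rather than test functions.

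The decisive step is comparing the two viscosity inequalities. Either $u(x_n)-\bar F(x_n)\le0$ along a subsequence, in which case the super-solution inequality gives $v(y_n)\ge\bar F(y_n)$ and, since $x_n,y_n\to y^0$ and $\bar F$ is continuous, $u(x_n)-v(y_n)\to(u-v)(y^0)-\text{stuff}\le \bar F(y^0)-\bar F(y^0)=0$ (after letting $\varepsilon\to0$), contradicting $M>0$; or else the second terms are $\le0$ (resp. $\ge0$), and I subtract them. The genuinely new contribution relative to \cite[Lemma B.1]{possamai2020there} is controlling the difference of the non-local terms $\cJ^{\smalltext{\rm SB}}(x_n,p_n,X_n,u(\cdot))-\cJ^{\smalltext{\rm SB}}(y_n,p_n,Y_n,v(\cdot))$: this is where I expect the main obstacle, namely that the set $\mathfrak V_{y}$ over which $\cJ^{\smalltext{\rm SB}}$ optimises depends on $y$ (through the constraint $\min_\ell ru^{\rm A}(\ell)\ge -y$) and that the integrand sees $u$ and $v$ at shifted points $y+ru^{\rm A}(\ell)$. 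The way around it: because $\Phi$ is a bounded measure supported in $[c_m,C_m]$ with $c_m>0$, the integral operator is order zero and bounded; for a near-optimal $(z^{\rm A},u^{\rm A})\in\mathfrak V_{y_n}$ in the super-solution problem one can use the \emph{same} $(z^{\rm A},u^{\rm A})$ in the sub-solution problem provided $x_n$ is close enough to $y_n$ (a truncation/rescaling of $u^{\rm A}$ by a factor $x_n/y_n\to1$ fixes the constraint, using $y^0>0$), and then $\int(u(x_n+ru^{\rm A})-u(x_n))-\int(v(y_n+ru^{\rm A})-v(y_n))\le\int\big[(u-v)(\cdot+ru^{\rm A})-(u-v)(\cdot)\big]+o(1)$ is bounded using $(u-v)\le M^\varepsilon_{\alpha_n}+\phi^\varepsilon_{\alpha_n}$ at the doubled points, exactly as the quadratic penalty is handled for the local terms; the leftover terms are $O(\varepsilon)+O(\alpha_n|x_n-y_n|^p)$. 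Combining, one gets $u(x_n)-v(y_n)\le\tfrac{\delta p}{2}\alpha_n\mathrm{sgn}(x_n-y_n)|x_n-y_n|^p+\tfrac{\delta p}{2}\varepsilon(|x_n|^p+|y_n|^p)$ just as in \Cref{proposition: comparisonFBar}, and letting $\varepsilon\to0$ then $n\to\infty$ (and finally $\theta\to0$ in the strict-sub-solution reduction) contradicts $(u-v)(y^0)>0$. I would also remark that the concavity/continuity of $F^\star$ controls the $F^\star(\delta p_n)$ difference term through the local-Lipschitz bound $|F^\star(\delta p')-F^\star(\delta p'')|\le C|p'-p''|$ on the relevant range, exactly as in the referenced lemma.
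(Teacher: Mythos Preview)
Your outline follows the paper's architecture---extension to $\R$ via \Cref{lemma:extViscositySolutions}, doubling with Crandall--Ishii, the obstacle/non-obstacle case split, and bounding the integral difference through the global maximum of the doubled functional. But the decisive step, controlling $\cJ^{\smalltext{\rm SB}}(x_n,\ldots,u)-\cJ^{\smalltext{\rm SB}}(y_n,\ldots,v)$ when the constraint sets $\mathfrak V_{x_n}$ and $\mathfrak V_{y_n}$ differ, is where your proposal has a genuine gap.

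First, a directional slip: to bound this difference from above you must take a near-optimal control from $\mathfrak V_{x_n}$ (the sub-solution side) and insert it into the super-solution supremum over $\mathfrak V_{y_n}$, not the other way round. This works without modification only when $\mathfrak V_{x_n}\subseteq\mathfrak V_{y_n}$, i.e.\ $x_n\le y_n$; the paper therefore passes to a monotone subsequence of $(x_n-y_n)$ and treats the two signs separately. In the easy case $x_n\le y_n$ the global-max bound you describe goes through.

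In the hard case $x_n\ge y_n$, your ``rescaling $u^{\rm A}$ by $x_n/y_n$'' is not enough: rescaling shifts the evaluation points $y_n+ru^{\rm A}(\ell)$, changes the admissible effort set $U^\star(z^{\rm A},u^{\rm A})$, and produces errors like $v(y_n+r\hat u^{\rm A})-v(y_n+ru^{\rm A})$ that must be controlled for $u^{\rm A}$ with only an $L^\mu(\Phi)$ bound and $v$ of polynomial growth. The paper avoids this by inserting the intermediate quantity $j(x,y,p,q,v)\coloneqq\sup_{\mathfrak V_x}\iota(y,p,q,v,\cdot)$, so that $\cJ^{\smalltext{\rm SB}}(x_n,\ldots,u)-j(x_n,y_n,\ldots,v)$ is handled exactly as in the easy case (same constraint set), while $j(x_n,y_n,\ldots,v)-\cJ^{\smalltext{\rm SB}}(y_n,\ldots,v)$ is the difference of two suprema of the \emph{same} functional over the nested sets $\mathfrak V_{x_n}\supseteq\mathfrak V_{y_n}$. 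Showing this vanishes in the limit requires two structural ingredients you do not mention: (i) concavity of $v$ on $(0,\infty)$, extracted from the super-solution property via the $+\infty\mathbf 1_{q>0}$ branch of $\cJ^{\smalltext{\rm SB}}$, which yields continuity of $\tilde v$; and (ii) boundedness of $\alpha_n|x_n-y_n|^{\mu-1}$, deduced from $\bar J^{2,-}\tilde v(y_n)$ together with the one-sided derivative bounds for concave functions. With these in hand the paper still needs a further sub-case split on whether $Y^\eta_n\to-\infty$ and a limit argument exploiting that the supremum over $\{ru^{\rm A}\ge-y^{\star,\eta}\}$ equals the supremum over $\{ru^{\rm A}>-y^{\star,\eta}\}$ by continuity.

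Two smaller remarks. The paper uses the \emph{asymmetric} penalisation $\tfrac{\alpha_n}{\mu}|x-y|^\mu+\tfrac{\eta}{\mu}|y|^\mu$ (no $|x|^\mu$ term), which keeps the integral-difference bookkeeping cleaner. And it does not perturb $u$ into a strict sub-solution; your $u_\theta=u-\theta\psi$ step is unnecessary, and checking that $u_\theta$ remains a sub-solution through the non-local operator would itself require care.
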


\begin{proof}
Based on our assumption, which states that $u(0) \leq v(0)$, we can redefine $v(0) \coloneqq \limsup_{y \rightarrow 0^+} v(y)$ without any loss of generality. With this modification, we can start proving the result by contradiction, assuming the existence of some $y^{{\Delta}} \in (0,\infty)$ such that $\Delta_{{y}^{\smalltext{\Delta}}} \coloneqq (u-v)(y^{{\Delta}}) > 0$. We introduce $\mu \coloneqq (2 \vee \gamma) +\varepsilon$, for some $\varepsilon > 0$, and select $\eta >0$ sufficiently small such that
\begin{align*}
    M^\eta \coloneqq \sup_{y \in [0, \infty)} \bigg\{(u-v)(y) - \frac{\eta}{\mu} |y|^\mu\bigg\} > 0.
\end{align*}
Note that the function $y \longmapsto (u-v)(y)- {\eta}/{\mu} |y|^\mu$ is upper--semi-continuous, and as $\lim_{y \rightarrow \infty} \big\{(u-v)(y)-{\eta}/{\mu}|y|^\mu\big\} = -\infty$, the supremum in the definition of $M^\eta$ is therefore achieved at some $y^{\star,\eta} \in (0,\infty)$. Subsequently, we introduce the upper--semi-continuous functions $\tilde{u}$, $-\tilde{v}$ defined in \Cref{lemma:extViscositySolutions}, and a positive non-decreasing sequence $(\alpha_n)_{n\in\N}$ such that $\alpha_n$ goes to $\infty$ as $n$ goes to $\infty$. We then proceed to define
\[
M_{\alpha_\smalltext{n}}^{\eta} \coloneqq \sup_{(x,y) \in (-\infty,\infty)^\smalltext{2}} \Psi_{\alpha_\smalltext{n}}^{\eta}(x,y) \coloneqq \sup_{(x,y) \in (-\infty,\infty)^\smalltext{2}} \big\{\tilde{u}(x) - \tilde{v}(y) - \psi_{\alpha_n}^{\eta}(x,y)\big\}, \; \text{where}\; \psi_{\alpha_\smalltext{n}}^{\eta}(x,y) \coloneqq \frac{\alpha_n}{\mu} |x-y|^\mu + \frac{\eta}{\mu} |y|^\mu .
\]
The growth condition of $\tilde{u}$ and $\tilde{v}$ implies the existence of a maximiser $(x^\eta_n,y^\eta_n)\coloneqq (x^\eta_{\alpha_\smalltext{n}}, y^\eta_{\alpha_\smalltext{n}})$ such that 
\begin{align*}
M_{\alpha_\smalltext{n}}^\eta = \tilde{u}(x^\eta_n) - \tilde{v}(y^\eta_n) - \psi_{\alpha_\smalltext{n}}^{\eta}(x_n^\eta,y_n^\eta).
\end{align*}
Notice that we can find a compact set within which the sequence $(x^\eta_n,y^\eta_n)_{n\in\N}$ takes values. Consequently, by considering a sub-sequence if necessary, we have that $(x^\eta_n,y^\eta_n)$ converges to $(\tilde{x}^\eta,\tilde{y}^\eta)$ as $n$ goes to $\infty$, for some $(\tilde{x}^\eta,\tilde{y}^\eta) \in (-\infty,\infty)^2$. Next, we introduce 
\begin{align*}
    M^\eta_{\infty} \coloneqq \sup_{y \in (-\infty,\infty)} \bigg\{(\tilde{u}-\tilde{v})(y) - \frac{\eta}{\mu} |y|^\mu \bigg\}.
\end{align*}
It is straightforward to prove that $M^\eta_{\infty} = M^\eta$ due to the fact that
\begin{align*}
    \tilde{u}(x) - \tilde{v}(y) \leq u(0) (1+|x|^\gamma) - v(0) (1+|y|^\gamma) \leq 0, \; \text{for any} \; (x,y) \in (-\infty,0)^2.
\end{align*}
Using a technique analogous to \cite[Proposition 3.7]{crandall1992user}, we can prove that the sequence $(\alpha_n)_{n\in\N}$ is such that 
\begin{align*}
(x_n^\eta,y_n^\eta) \underset{n\to\infty}{\longrightarrow} (y^{\star,\eta},y^{\star,\eta}), \; \alpha_n |x_n - y_n|^\mu \underset{n\to\infty}{\longrightarrow} 0, \; M^\eta_{\alpha_\smalltext{n}} \underset{n\to\infty}{\longrightarrow} M^\eta_{\infty} = M^\eta.
\end{align*}

Henceforth, it is worth noting that, for sufficiently small values of $\eta$, the following inequalities hold
\begin{align*}
0 < \Delta_{{y}^{\smalltext{\Delta}}} - \frac{\eta}{\mu} |{y}^{\smalltext{\Delta}}|^\mu \leq (u- v)(y^{\star,\eta}) - \frac{\eta}{\mu} |y^{\star,\eta}|^\mu  \leq \tilde{u}(x^\eta_n) - \tilde{v}(y^\eta_n) - \phi_{\alpha_\smalltext{n}}^{\eta}(x^\eta_n,y^\eta_n) &\leq \limsup_{n \rightarrow \infty} \big\{\tilde{u}(x^\eta_n) - \tilde{v}(y^\eta_n)\big\}.
\end{align*}
We can attain the desired contradiction, and the comparison result as a consequence, by simply showing that 
\begin{align}\label{align:contradictionZ}
\limsup_{\eta \rightarrow 0^+} \limsup_{n \rightarrow \infty} \big\{\tilde{u}(x^\eta_n) - \tilde{v}(y^\eta_n)\big\} \leq 0.
\end{align}

To achieve this, we adapt the proof of \cite[Lemma B.1]{possamai2020there}. Considering a sub-sequence if necessary, we can consider that $(x^\eta_n,y^\eta_n) \in (0,\infty)^2$ for any $n \in \N$ because the sequence $(x^\eta_n,y^\eta_n)_{n\in\N}$ converges to $(y^{\star,\eta},y^{\star,\eta})\in (0,\infty)^2$. Therefore, let us fix $n \in \N$. By a direct application of \citeauthor*{crandall1990maximum} \cite[Theorem 1]{crandall1990maximum} to the function $\Psi_{\alpha_\smalltext{n}}^{\eta}$ at the point $(x^\eta_n,y^\eta_n)$, we can find a sequence $(X_n^\eta,Y^\eta_n)_{n \in \N}$ such that 
\begin{align*}
\Big(\alpha_n |x^\eta_n - y^\eta_n|^{\mu-1} \sgn(x^\eta_n - y^\eta_n), X^\eta_n\Big) \in \bar{J}^{2,+}\tilde{u}(x^\eta_n), \; \Big(\alpha_n |x^\eta_n - y^\eta_n|^{\mu-1} \sgn(x^\eta_n - y^\eta_n) -\eta |y^\eta_n|^{\mu-1}, Y^\eta_n\Big) \in \bar{J}^{2,-}\tilde{v}(y^\eta_n),
\end{align*}
and 
\begin{align}\label{align:lambdaInequalities}
-\bigg(\frac{1}{\lambda} + \|C^\eta_n\|\bigg) I_2 \leq
\begin{pmatrix}
X^\eta_n & 0\\
0 & -Y^\eta_n
\end{pmatrix}
\leq C^\eta_n (I_2 + \lambda C^\eta_n) \; \text{for any} \; \lambda > 0.
\end{align}
Here, $I_2$ denotes the identity matrix, and $C^\eta_n \coloneqq a_n^\eta A + b_n^\eta B$, where 
\begin{align*}
a_n^\eta \coloneqq \alpha_n (\mu-1) |x^\eta_n - y^\eta_n|^{\mu-2}, \; b^\eta_n \coloneqq -\eta (\mu-1) |y^\eta_n|^{\mu-2}, \; \text{and} \;
A = \begin{pmatrix}
1 & -1\\
-1 & 1
\end{pmatrix}, \;
B = \begin{pmatrix}
0 & 0\\
0 & 1
\end{pmatrix}.
\end{align*}
By choosing $\lambda$ as the inverse of the square root of the spectral norm $\|C^\eta_n\|$ of the matrix $C^\eta_n$, and multiplying the inequality \eqref{align:lambdaInequalities} by the vectors $(1,1)$ to the left and $(1,1)^\top$ to the right, we get the following
\begin{equation}\label{eq:ineqBound}
X^\eta_n - Y^\eta_n \leq b^\eta_n + \frac{(b^\eta_n)^2}{\sqrt{\|C^\eta_n\|}}.
\end{equation}

\medskip
Subsequently, the definition of the closure of the second-order superjet $\bar{J}^{2,+}\tilde{u}(x^\eta_n)$ implies that
\begin{align}\label{align:uSubSecond}
\begin{split}
\min\Big\{\tilde{u}(x^\eta_n) - \bar{F}(x^\eta_n),&\; F^{\star}\big(\delta \alpha_n  |x^\eta_n - y^\eta_n|^{\mu-1} \sgn(x^\eta_n - y^\eta_n)\big) - \delta x^\eta_n \alpha_n |x^\eta_n - y^\eta_n|^{\mu-1} \sgn(x^\eta_n - y^\eta_n) \\
&+ \tilde{u}(x^\eta_n) - \cJ^{\smalltext{\rm SB}}\big(x^\eta_n,\alpha_n |x^\eta_n - y^\eta_n|^{\mu-1} \sgn(x^\eta_n - y^\eta_n), X^\eta_n,\tilde{u}(\cdot)\big) \Big\} \leq 0,
\end{split}
\end{align}
and, similarly for the closure of the subjet $\bar{J}^{2,-}\tilde{v}(y^\eta_n)$, 
\begin{align}\label{align:vSuperSecond}
\begin{split}
\min\Big\{\tilde{v}(y^\eta_n) - \bar{F}(y^\eta_n), &\; F^{\star}\big(\delta \alpha_n |x^\eta_n - y^\eta_n|^{\mu-1} \sgn(x^\eta_n - y^\eta_n) - \delta \eta |y^\eta_n|^{\mu-1}\big) - \delta y^\eta_n \alpha_n |x^\eta_n - y^\eta_n|^{\mu-1} \sgn(x^\eta_n - y^\eta_n) + \delta \eta |y^\eta_n|^\mu \\
&+ \tilde{v}(y^\eta_n) -\cJ^{\smalltext{\rm SB}}\big(y^\eta_n,\alpha_n |x^\eta_n - y^\eta_n|^{\mu-1} \sgn(x^\eta_n - y^\eta_n) - \eta |y^\eta_n|^{\mu-1}, Y^\eta_n,\tilde{v}(\cdot)\big) \Big\} \geq 0.
\end{split}
\end{align}
We divide our analysis into two cases. First, let us assume that $\tilde{u}(x^\eta_n) - \bar{F}(x^\eta_n) \leq 0$ along some subsequence. Here, \Cref{align:vSuperSecond} implies the required contradiction in \Cref{align:contradictionZ} since
\begin{align*}
   \limsup_{n \rightarrow  \infty} \big\{\tilde{u}(x^\eta_n) - \tilde{v}(y^\eta_n)\big\} \leq \limsup_{n \rightarrow  \infty} \big\{\bar{F}(x^\eta_n) - \bar{F}(y^\eta_n)\big\} = 0.
\end{align*}
Then, we need to examine the second scenario, specifically the one in which there exists a sub-sequence $(x^\eta_n,y^\eta_n)_{n \in \N}$ that satisfies the following
\begin{align*}
&F^{\star}\big(\delta \alpha_n |x^\eta_n - y^\eta_n|^{\mu-1} \sgn(x^\eta_n - y^\eta_n)\big) - \delta x^\eta_n \alpha_n |x^\eta_n - y^\eta_n|^{\mu-1} \sgn(x^\eta_n - y^\eta_n) + \tilde{u}(x^\eta_n)\\
&\quad - \cJ^{\smalltext{\rm SB}}\big(x^\eta_n,\alpha_n |x^\eta_n - y^\eta_n|^{\mu-1} \sgn(x^\eta_n - y^\eta_n), X^\eta_n,\tilde{u}(\cdot)\big) \leq 0.
\end{align*}
By combining the previous equation with \eqref{align:vSuperSecond}, we obtain that
\begin{align}\label{align:inequalityTo0}
\begin{split}
&\tilde{u}(x^\eta_n) - \tilde{v}(y^\eta_n) \\
&\leq F^{\star}\big(\delta \alpha_n |x^\eta_n - y^\eta_n|^{\mu-1} \sgn(x^\eta_n - y^\eta_n) - \delta \eta |y^\eta_n|^{\mu-1} \big) - F^{\star}\big(\delta \alpha_n  |x^\eta_n - y^\eta_n|^{\mu-1} \sgn(x^\eta_n - y^\eta_n)\big) \\
&\quad +\delta x^\eta_n \alpha_n |x^\eta_n - y^\eta_n|^{\mu-1} \sgn(x^\eta_n - y^\eta_n) - \delta y^\eta_n \alpha_n |x^\eta_n - y^\eta_n|^{\mu-1} \sgn(x^\eta_n - y^\eta_n) + \delta \eta |y^\eta_n|^\mu \\
&\quad +\cJ^{\smalltext{\rm SB}}\big(x^\eta_n,\alpha_n |x^\eta_n - y^\eta_n|^{\mu-1} \sgn(x^\eta_n - y^\eta_n), X^\eta_n,\tilde{u}(\cdot)\big) -\cJ^{\smalltext{\rm SB}}\big(y^\eta_n,\alpha_n |x^\eta_n - y^\eta_n|^{\mu-1} \sgn(x^\eta_n - y^\eta_n) - \eta |y^\eta_n|^{\mu-1}, Y^\eta_n,\tilde{v}(\cdot)\big) \\
&\leq\delta \alpha_n |x^\eta_n - y^\eta_n|^\mu + \delta \eta |y^\eta_n|^\mu \\
&\quad +\cJ^{\smalltext{\rm SB}}\big(x^\eta_n,\alpha_n |x^\eta_n - y^\eta_n|^{\mu-1} \sgn(x^\eta_n - y^\eta_n), X^\eta_n,\tilde{u}(\cdot)\big) -\cJ^{\smalltext{\rm SB}}\big(y^\eta_n,\alpha_n |x^\eta_n - y^\eta_n|^{\mu-1} \sgn(x^\eta_n - y^\eta_n) - \eta |y^\eta_n|^{\mu-1}, Y^\eta_n,\tilde{v}(\cdot)\big),
\end{split}
\end{align}
as the function $F^\star$ is non-decreasing. Before proceeding with the analysis of the aforementioned expression to derive a contradiction in \Cref{align:contradictionZ}, it is important to note that the sequence $(Y^\eta_n)_{n \in \N}$ is non-positive due to the fact \Cref{align:vSuperSecond} holds. Furthermore, the inequality in \eqref{eq:ineqBound}, along with the same reasoning that concludes the proof of \cite[Lemma B.1]{possamai2020there}, allows us to deduce that, along some subsequence and for sufficiently small values of $\eta$, it holds that $(X^\eta_n)_{n \in \N}$ is also non-positive.

\medskip
In the next step, we observe that we can assume that the sequence $(x^\eta_n - y^\eta_n)_{n \in \N}$ is monotone without any loss of generality. Initially, let us suppose that $(x^\eta_n - y^\eta_n)_{n \in \N}$ is non-decreasing, implying that $x^\eta_n \leq y^\eta_n$ for any $n \in \N$ since $(x^\eta_n - y^\eta_n)_{n \in \N}$ converges to 0. Given this assumption and the previous observation, if we fix some $n \in \N$, then we can conclude that there exists some $C > 0$ such that
\begin{align*}
&\cJ^{\smalltext{\rm SB}}\big(x^\eta_n,\alpha_n |x^\eta_n - y^\eta_n|^{\mu-1} \sgn(x^\eta_n - y^\eta_n), X^\eta_n,\tilde{u}(\cdot)\big) -\cJ^{\smalltext{\rm SB}}\big(y^\eta_n,\alpha_n |x^\eta_n - y^\eta_n|^{\mu-1} \sgn(x^\eta_n - y^\eta_n) - \eta |y^\eta_n|^{\mu-1}, Y^\eta_n,\tilde{v}(\cdot)\big)\\
&\leq \sup_{(z^{\smalltext{\rm A}}, u^{\smalltext{\rm A}}) \in \mathfrak{V}_{\smalltext{x}^{\tinytext{\eta}}_{\tinytext{n}}}}
\sup_{(a,b) \in U^\smalltext{\star}(z^{\smalltext{\rm A}}, u^{\smalltext{\rm A}})} \bigg\{\delta h(a,b) \eta |y^\eta_n|^{\mu-1} + \frac{r\delta \sigma^2}{2} (z^{\rm A})^2 (X^\eta_n - Y^\eta_n) \\
&\quad+ \frac{b}{m \rho} \int_{\R} \Big(\tilde{u}(x^\eta_n + r u^{\rm A}(\ell)) - \tilde{v}(y^\eta_n + r u^{\rm A}(\ell)) + \tilde{v}(y^\eta_n) - \tilde{u}(x^\eta_n) - r \eta |y_n^\eta|^{\mu-1} u^{\rm A}(\ell) \Big)  \Phi(\d \ell) \bigg\}\\
&\leq C \bigg(\eta |y^\eta_n|^{\mu-1} + b^\eta_n + \frac{(b^\eta_n)^2}{\sqrt{\|C^\eta_n\|}}\bigg) \\
&\quad+ C \sup_{(z^{\smalltext{\rm A}}, u^{\smalltext{\rm A}}) \in \mathfrak{V}_{\smalltext{x}^{\tinytext{\eta}}_{\tinytext{n}}}} \int_{\R} \bigg(\Psi_{\alpha_\smalltext{n}}^{\eta}(x^\eta_n + r u^{\rm A}(\ell),y^\eta_n + r u^{\rm A}(\ell)) - \Psi_{\alpha_\smalltext{n}}^{\eta}(x^\eta_n,y^\eta_n)  + \frac{\eta}{\mu} \bigg(|y^\eta_n + r u^{\rm A}(\ell)|^\mu - |y^\eta_n|^\mu - r \mu |y^\eta_n|^{\mu-1} \bigg)\bigg) \Phi(\d \ell) \\
&\leq C \bigg(\eta |y^\eta_n|^{\mu-1} + b^\eta_n + \frac{(b^\eta_n)^2}{\sqrt{\|C^\eta_n\|}}\bigg) + C \sup_{(z^{\smalltext{\rm A}}, u^{\smalltext{\rm A}}) \in \mathfrak{V}_{\smalltext{x}^{\tinytext{\eta}}_{\tinytext{n}}}} \int_{\R} \frac{\eta}{\mu} \bigg(|y^\eta_n + r u^{\rm A}(\ell)|^\mu - |y^\eta_n|^\mu - r \mu |y^\eta_n|^{\mu-1} \bigg) \Phi(\d \ell),
\end{align*}
where we have used the fact that $(x^\eta_n,y^\eta_n)$ maximises the function $\Psi_{\alpha_\smalltext{n}}^{\eta}$. We can derive from \Cref{align:inequalityTo0} that
\begin{align*}
\tilde{u}(x^\eta_n) - \tilde{v}(y^\eta_n) &\leq \delta \alpha_n |x^\eta_n - y^\eta_n|^{\mu} + \delta \eta |y^\eta_n|^\mu  \\
&\quad+ C \bigg(\eta |y^\eta_n|^{\mu-1} + b^\eta_n + \frac{(b^\eta_n)^2}{\sqrt{\|C^\eta_n\|}} + \frac{\eta}{\mu}  \sup_{u^{\smalltext{\rm A}} \in \mathfrak{U}} \int_{\R} \big(|y^\eta_n + r u^{\rm A}(\ell)|^\mu - |y^\eta_n|^\mu - \mu r |y^\eta_n|^{\mu-1} u^{\rm A}(\ell)\big) \Phi(\d \ell) \bigg),
\end{align*}
where we denote by $\mathfrak{U}$ the collection of $u^{{\rm A}} \in \cB_{{\R}}$ satisfying the condition $\int_{\R} |u^{{\rm A}}(\ell)|^\mu \Phi(\d \ell) <  \infty$. Hence, let us fix some $\tilde{\varepsilon}>0$. Given that the supremum in the above expression is finite, there exists some $u^{\rm A}_{{\tilde{\varepsilon}},{\eta},{n}} \in \mathfrak{U}$ such that 
\begin{align*}
   & \limsup_{n \rightarrow \infty}\sup_{u^{\smalltext{\rm A}} \in \mathfrak{U}} \int_{\R} \big(|y^\eta_n + r u^{\rm A}(\ell)|^\mu - |y^\eta_n|^\mu - \mu r |y^\eta_n|^{\mu-1} u^{\rm A}(\ell)\big) \Phi(\d \ell) \\
    &< \limsup_{n \rightarrow  \infty} \int_{\R} \big(|y^\eta_n + r u^{\rm A}_{\smalltext{\tilde{\varepsilon}},\smalltext{\eta},\smalltext{n}}(\ell)|^\mu - |y^\eta_n|^\mu - \mu r |y^\eta_n|^{\mu-1} u^{\rm A}_{\smalltext{\tilde{\varepsilon}},\smalltext{\eta},\smalltext{n}}(\ell)\big) \Phi(\d \ell) + \tilde{\varepsilon} \\
    &\leq \sup_{u^{\smalltext{\rm A}} \in \mathfrak{U}} \limsup_{n \rightarrow  \infty} \int_{\R} \big(|y^\eta_n + r u^{\rm A}(\ell)|^\mu - |y^\eta_n|^\mu - \mu r |y^\eta_n|^{\mu-1} u^{\rm A}(\ell)\big) \Phi(\d \ell) + \tilde{\varepsilon} \\    
    &= \sup_{u^{\smalltext{\rm A}} \in \mathfrak{U}} \int_{\R} \big(|y^{\star,\eta} + r u^{\rm A}(\ell)|^\mu - |y^{\star,\eta}|^\mu - \mu r |y^{\star,\eta}|^{\mu-1} u^{\rm A}(\ell)\big) \Phi(\d \ell) + \tilde{\varepsilon} <  \infty,
\end{align*}
where the last equality is a direct consequence of the dominated convergence theorem. Once more, we can apply the same reasoning that concludes the proof of \cite[Lemma B.1]{possamai2020there} to get the desired contradiction since
\begin{align*}
   \limsup_{\tilde{\varepsilon} \rightarrow 0^\smalltext{+}} \limsup_{\eta \rightarrow 0^\smalltext{+}} \limsup_{n \rightarrow  \infty} \big\{\tilde{u}(x^\eta_n) - \tilde{v}(y^\eta_n)\big\} = C \limsup_{\eta \rightarrow 0^\smalltext{+}} \limsup_{n \rightarrow  \infty} \bigg\{\alpha_n |x^\eta_n - y^\eta_n|^{\mu} + \eta |y^\eta_n|^{\mu-1} + b^\eta_n + \frac{(b^\eta_n)^2}{\sqrt{\|C^\eta_n\|}} + \eta\bigg\} = 0.
\end{align*}
This concludes the analysis of the first case.

\medskip
Conversely, let us now assume that the sequence $(x^\eta_n - y^\eta_n)_{n \in \N}$ is non-increasing. It holds that $x^\eta_n \geq y^\eta_n$ for any $n \in \N$. To arrive at the necessary contradiction, it is sufficient to estimate the difference between the two non-local operators in \Cref{align:inequalityTo0}, as the other terms are identical to those in the previous scenario. Hence, we proceed as follows
\begin{align}\label{align:differenceNonLocalOp}
\begin{split}
&\cJ^{\smalltext{\rm SB}}\big(x^\eta_n,\alpha_n |x^\eta_n - y^\eta_n|^{\mu-1} \sgn(x^\eta_n - y^\eta_n), X^\eta_n,\tilde{u}(\cdot)\big) -\cJ^{\smalltext{\rm SB}}\big(y^\eta_n,\alpha_n |x^\eta_n - y^\eta_n|^{\mu-1} \sgn(x^\eta_n - y^\eta_n) - \eta |y^\eta_n|^{\mu-1}, Y^\eta_n,\tilde{v}(\cdot)\big)\\
&=\cJ^{\smalltext{\rm SB}}\big(x^\eta_n,\alpha_n |x^\eta_n - y^\eta_n|^{\mu-1}, X^\eta_n,\tilde{u}(\cdot)\big) - j(x^\eta_n,y^\eta_n,\alpha_n |x^\eta_n - y^\eta_n|^{\mu-1} - \eta |y^\eta_n|^{\mu-1},Y^\eta_n,\tilde{v}(\cdot)) \\
&\quad+j(x^\eta_n,y^\eta_n,\alpha_n |x^\eta_n - y^\eta_n|^{\mu-1} - \eta |y^\eta_n|^{\mu-1},Y^\eta_n,\tilde{v}(\cdot)) -\cJ^{\smalltext{\rm SB}}\big(y^\eta_n,\alpha_n |x^\eta_n - y^\eta_n|^{\mu-1} - \eta |y^\eta_n|^{\mu-1}, Y^\eta_n,\tilde{v}(\cdot)\big),
\end{split}
\end{align}
where
\begin{align*}
    j(x,y,p,q,v(\cdot)) &\coloneqq \sup_{(z^{\smalltext{\rm A}}, u^{\smalltext{\rm A}}) \in \mathfrak{V}_{x}} \iota(y,p,q,v(\cdot),z^{\rm A},u^{\rm A}) \\
    &= \sup_{(z^{\smalltext{\rm A}}, u^{\smalltext{\rm A}}) \in \mathfrak{V}_{\smalltext{x}}} \sup_{(a,b) \in U^\smalltext{\star}(z^{\smalltext{\rm A}}, u^{\smalltext{\rm A}})} \bigg\{a-b + \delta h(a,b) p + \frac{r\delta \sigma^2}{2} (z^{\rm A})^2 q \\
    &\qquad\qquad\qquad\qquad\qquad\qquad\qquad\qquad\qquad+ \frac{b}{m \rho} \int_{\R} \big(v(y + r u^{\rm A}(\ell)) - v(y) - r p u^{\rm A}(\ell) \big)  \Phi(\d \ell)\bigg\}.
\end{align*}
Analogously to the previous case, we can prove that the difference of the first two terms in \eqref{align:differenceNonLocalOp} is bounded from above by zero by taking the limit for $n$ going to $\infty$ and $\eta$ going to $0$. To prove that this property also holds for the difference of the other two terms, we first claim that the function $\tilde{v}$ is continuous on the whole real line $(-\infty,\infty)$, and that the sequence $(\alpha_n |x^\eta_n - y^\eta_n|^{\mu-1})_{n \in \N}$ is bounded from above. Consequently, considering a subsequence if necessary, we have that there exists some $L^\eta \in [0,\infty)$ such that
$\lim_{n \rightarrow  \infty} \alpha_n |x^\eta_n - y^\eta_n|^{\mu-1} = L^\eta$. If we fix some $n \in \N$, this allows us to deduce that the supremum $j(x,y^\eta_n,\alpha_n |x^\eta_n - y^\eta_n|^{\mu-1} - \eta |y^\eta_n|^{\mu-1},Y^\eta_n,\tilde{v}(\cdot))$ is bounded from above for any $x \in (0,\infty)$.

\medskip
Now, we need to distinguish two further cases. First, we assume that $\lim_{n \rightarrow  \infty} Y^\eta_n = -\infty$. In this scenario, for sufficiently large values of $n$, we have the following
\begin{align*}
&j(x^\eta_n,y^\eta_n,\alpha_n |x^\eta_n - y^\eta_n|^{\mu-1} - \eta |y^\eta_n|^{\mu-1},Y^\eta_n,\tilde{v}(\cdot)) -\cJ^{\smalltext{\rm SB}}\big(y^\eta_n,\alpha_n |x^\eta_n - y^\eta_n|^{\mu-1} - \eta |y^\eta_n|^{\mu-1}, Y^\eta_n,\tilde{v}(\cdot)\big) \\
&=\underbrace{ \sup_{u^{\smalltext{\rm A}} \in \mathfrak{V}_{\smalltext{x}^{\tinytext{\eta}}_{\tinytext{n}}}} \iota(y^\eta_n,\alpha_n |x^\eta_n - y^\eta_n|^{\mu-1} - \eta |y^\eta_n|^{\mu-1},Y^\eta_n,\tilde{v}(\cdot),0,u^{{\rm A}}) -\sup_{u^{\smalltext{\rm A}} \in \mathfrak{V}_{\smalltext{y}^{\tinytext{\eta}}_{\tinytext{n}}}} \iota(y^\eta_n,\alpha_n |x^\eta_n - y^\eta_n|^{\mu-1} - \eta |y^\eta_n|^{\mu-1},Y^\eta_n,\tilde{v}(\cdot),0,u^{{\rm A}})}_{\eqqcolon \ell_\smalltext{n}}. 
\end{align*}
Taking the limit, we obtain that 
\begin{align*}
&\limsup_{n \rightarrow \infty} \ell_n\\
&\leq \limsup_{n \rightarrow  \infty} \sup_{u^{\smalltext{\rm A}} \in \mathfrak{U}} \Big\{\iota(y^\eta_n,\alpha_n |x^\eta_n - y^\eta_n|^{\mu-1} - \eta |y^\eta_n|^{\mu-1},Y^\eta_n,\tilde{v}(\cdot),0,u^{{\rm A}}) \; \mathbf{1}_{\{\min_{\ell \in \R_\smalltext{+}} \{r u^{\smalltext{\rm A}}(\ell)\}\geq -x^\smalltext{\eta}_\smalltext{n}\}} - \infty  \mathbf{1}_{\{\min_{\ell \in \R_\smalltext{+}} \{r u^{\smalltext{\rm A}}(\ell)\}< -x^\smalltext{\eta}_\smalltext{n}\}} \Big\} \\
&\quad-\liminf_{n \rightarrow  \infty}\sup_{u^{\smalltext{\rm A}} \in \mathfrak{U}} \Big\{\iota(y^\eta_n,\alpha_n |x^\eta_n - y^\eta_n|^{\mu-1} - \eta |y^\eta_n|^{\mu-1},Y^\eta_n,\tilde{v}(\cdot),0,u^{{\rm A}}) \; \mathbf{1}_{\{\min_{\ell \in \R_\smalltext{+}} \{r u^{\smalltext{\rm A}}(\ell)\}\geq -y^\smalltext{\eta}_\smalltext{n}\}} - \infty  \mathbf{1}_{\{\min_{\ell \in \R_\smalltext{+}} \{r u^{\smalltext{\rm A}}(\ell)\}< -y^\smalltext{\eta}_\smalltext{n}\}} \Big\} \\
&\leq \limsup_{n \rightarrow  \infty} \sup_{u^{\smalltext{\rm A}} \in \mathfrak{U}} \Big\{\iota(y^\eta_n,\alpha_n |x^\eta_n - y^\eta_n|^{\mu-1} - \eta |y^\eta_n|^{\mu-1},Y^\eta_n,\tilde{v}(\cdot),0,u^{{\rm A}}) \; \mathbf{1}_{\{\min_{\ell \in \R_\smalltext{+}} \{r u^{\smalltext{\rm A}}(\ell)\}\geq -x^\smalltext{\eta}_\smalltext{n}\}} - \infty  \mathbf{1}_{\{\min_{\ell \in \R_\smalltext{+}} \{r u^{\smalltext{\rm A}}(\ell)\}< -x^\smalltext{\eta}_\smalltext{n}\}} \Big\} \\
&\quad-\sup_{u^{\smalltext{\rm A}} \in \mathfrak{U}} \liminf_{n \rightarrow  \infty} \Big\{\iota(y^\eta_n,\alpha_n |x^\eta_n - y^\eta_n|^{\mu-1} - \eta |y^\eta_n|^{\mu-1},Y^\eta_n,\tilde{v}(\cdot),0,u^{{\rm A}}) \; \mathbf{1}_{\{\min_{\ell \in \R_\smalltext{+}} \{r u^{\smalltext{\rm A}}(\ell)\}\geq -y^\smalltext{\eta}_\smalltext{n}\}} - \infty  \mathbf{1}_{\{\min_{\ell \in \R_\smalltext{+}} \{r u^{\smalltext{\rm A}}(\ell)\}< -y^\smalltext{\eta}_\smalltext{n}\}} \Big\} \\
&< \limsup_{n \rightarrow  \infty} \Big\{\iota(y^\eta_n,\alpha_n |x^\eta_n - y^\eta_n|^{\mu-1} - \eta |y^\eta_n|^{\mu-1},Y^\eta_n,\tilde{v}(\cdot),0,u^{\rm A}_{\smalltext{\tilde{\varepsilon}},\smalltext{\eta},\smalltext{n}}) \; \mathbf{1}_{\{\min_{\ell \in \R_\smalltext{+}} \{r u^{\rm A}_{\smalltext{\tilde{\varepsilon}},\smalltext{\eta},\smalltext{n}}(\ell)\}\geq -x^\smalltext{\eta}_\smalltext{n}\}} - \infty  \mathbf{1}_{\{\min_{\ell \in \R_\smalltext{+}} \{r u^{\rm A}_{\smalltext{\tilde{\varepsilon}},\smalltext{\eta},\smalltext{n}}(\ell)\}< -x^\smalltext{\eta}_\smalltext{n}\}} \Big\} + \tilde{\varepsilon} \\
&\quad-\sup_{u^{\smalltext{\rm A}} \in \mathfrak{U}} \liminf_{n \rightarrow  \infty} \Big\{\iota(y^\eta_n,\alpha_n |x^\eta_n - y^\eta_n|^{\mu-1} - \eta |y^\eta_n|^{\mu-1},Y^\eta_n,\tilde{v}(\cdot),0,u^{{\rm A}}) \; \mathbf{1}_{\{\min_{\ell \in \R_\smalltext{+}} \{r u^{\smalltext{\rm A}}(\ell)\}\geq -y^\smalltext{\eta}_\smalltext{n}\}} - \infty \mathbf{1}_{\{\min_{\ell \in \R_\smalltext{+}} \{r u^{\smalltext{\rm A}}(\ell)\}< -y^\smalltext{\eta}_\smalltext{n}\}} \Big\},
\end{align*}
for some $\tilde{\varepsilon} > 0$ and $u^{\rm A}_{\smalltext{\tilde{\varepsilon}},\smalltext{\eta},\smalltext{n}} \in \mathfrak{U}$. Therefore
\begin{align*}
&\limsup_{n \rightarrow  \infty} \Big\{\iota(y^\eta_n,\alpha_n |x^\eta_n - y^\eta_n|^{\mu-1} - \eta |y^\eta_n|^{\mu-1},Y^\eta_n,\tilde{v}(\cdot),0,u^{\rm A}_{{\tilde{\varepsilon}},{\eta},{n}}) \; \mathbf{1}_{\{\min_{\ell \in \R_\smalltext{+}} \{r u^{\rm \smalltext{A}}_{\smalltext{\tilde{\varepsilon}}\smalltext{,}\smalltext{\eta}\smalltext{,}\smalltext{n}}(\ell)\}\geq -x^\smalltext{\eta}_\smalltext{n}\}} - \infty  \mathbf{1}_{\{\min_{\ell \in \R_\smalltext{+}} \{r u^{\rm \smalltext{A}}_{\smalltext{\tilde{\varepsilon}}\smalltext{,}\smalltext{\eta}\smalltext{,}\smalltext{n}}(\ell)\}< -x^\smalltext{\eta}_\smalltext{n}\}} \Big\} + \tilde{\varepsilon} \\
&\quad-\sup_{u^{\smalltext{\rm A}} \in \mathfrak{U}} \liminf_{n \rightarrow  \infty} \Big\{\iota(y^\eta_n,\alpha_n |x^\eta_n - y^\eta_n|^{\mu-1} - \eta |y^\eta_n|^{\mu-1},Y^\eta_n,\tilde{v}(\cdot),0,u^{{\rm A}})  \mathbf{1}_{\{\min_{\ell \in \R_\smalltext{+}} \{r u^{\smalltext{\rm A}}(\ell)\}\geq -y^\smalltext{\eta}_\smalltext{n}\}} - \infty  \mathbf{1}_{\{\min_{\ell \in \R_\smalltext{+}} \{r u^{\smalltext{\rm A}}(\ell)\}< -y^\smalltext{\eta}_\smalltext{n}\}} \Big\}\\
&\leq\sup_{u^{\smalltext{\rm A}} \in \mathfrak{U}} \Big\{\iota(y^{\star,\eta},L^\eta - \eta |y^{\star,\eta}|^{\mu-1},-\infty,\tilde{v}(\cdot),0,u^{\rm A}) \mathbf{1}_{\{\min_{\ell \in \R_\smalltext{+}} \{r u^{\smalltext{\rm A}}(\ell)\}\geq -y^{\smalltext{\star}\smalltext{,}\smalltext{\eta}}\}} - \infty \mathbf{1}_{\{\min_{\ell \in \R_\smalltext{+}} \{r u^{\smalltext{\rm A}}(\ell)\}< -y^{\smalltext{\star}\smalltext{,}\smalltext{\eta}}\}} \Big\} + \tilde{\varepsilon} \\
&\quad-\sup_{u^{\smalltext{\rm A}} \in \mathfrak{U}} \Big\{\iota(y^{\star,\eta},L^\eta - \eta |y^{\star,\eta}|^{\mu-1},-\infty,\tilde{v}(\cdot),0,u^{{\rm A}})  \mathbf{1}_{\{\min_{\ell \in \R_\smalltext{+}} \{r u^{{\rm \smalltext{A}}}(\ell)\}> -y^{\smalltext{\star}\smalltext{,}\smalltext{\eta}}\}} - \infty  \mathbf{1}_{\{\min_{\ell \in \R_\smalltext{+}} \{r u^{\smalltext{\rm A}}(\ell)\}\leq -y^{\smalltext{\star}\smalltext{,}\smalltext{\eta}}\}} \Big\}\\
&=\sup_{u^{\smalltext{\rm A}} \in \mathfrak{U},\; \min_{\smalltext{\ell}\smalltext{\in}\smalltext{\R}_\tinytext{+}} \{r u^{\smalltext{\rm A}}(\ell) \}\geq -y^{\smalltext{\star}\smalltext{,}\smalltext{\eta}}} \iota(y^{\star,\eta},L^\eta - \eta |y^{\star,\eta}|^{\mu-1},-\infty,\tilde{v}(\cdot),0,u^{\rm A})  + \tilde{\varepsilon} \\
&\quad-\sup_{u^{\smalltext{\rm A}} \in \mathfrak{U},\; \min_{\smalltext{\ell} \smalltext{\in} \smalltext{\R}_\tinytext{+}} \{r u^{\smalltext{\rm A}}(\ell) \}> -y^{\smalltext{\star}\smalltext{,}\smalltext{\eta}}} \iota(y^{\star,\eta},L^\eta - \eta |y^{\star,\eta}|^{\mu-1},-\infty,\tilde{v}(\cdot),0,u^{{\rm A}}) =\tilde{\varepsilon}.
\end{align*}
The last equality is due to the fact that the function that is maximised is continuous and the supremum is finite. The arbitrariness of $\tilde{\varepsilon}$ implies that
\begin{align}\label{align:aimBound0}
\limsup_{n \rightarrow  \infty} \big\{j&(x^\eta_n,y^\eta_n,\alpha_n |x^\eta_n - y^\eta_n|^{\mu-1} - \eta |y^\eta_n|^{\mu-1},Y^\eta_n,\tilde{v}(\cdot)) -\cJ^{\smalltext{\rm SB}}\big(y^\eta_n,\alpha_n |x^\eta_n - y^\eta_n|^{\mu-1} - \eta |y^\eta_n|^{\mu-1}, Y^\eta_n,\tilde{v}(\cdot)\big)\big\} \leq 0. 
\end{align}
This observation leads to the required contradiction in \Cref{align:contradictionZ} in the case $\lim_{n \rightarrow  \infty} Y^\eta_n = -\infty$. The computations necessary to prove \eqref{align:aimBound0} in the other case are analogous to the previous ones. This is because the condition $\limsup_{n \rightarrow  \infty} Y^\eta_n > -\infty$ implies the existence of  a convergent sub-sequence $(Y^\eta_n)_{n \in \N}$.

\medskip
In order to complete the proof, we need to verify the assertions we have previously made. Specifically, we need to prove that $\tilde{v}$ is a continuous function on the whole real line, and that the sequence $(\alpha_n |x^\eta_n - y^\eta_n|^{\mu-1})_{n \in \N}$ is bounded from above. To this aim, we first observe that $\tilde{v}$ is concave on the interval $(0,\infty)$. In fact, if we suppose, to the contrary, that $\tilde{v}$ is strictly convex on some non-empty open interval $\cI \subset (0,\infty)$, then we would have that $\cJ^{\smalltext{\rm SB}}(y,\tilde{v}^\prime(y),\tilde{v}^{\prime\prime}(y), v(\cdot)) = \infty$ for any $y \in \cI$ in viscosity sense, contradicting the hypothesis that $\tilde{v}$ is a viscosity super-solution of \Cref{align:hjbSB}. The concavity of $\tilde{v}$ on $(0,\infty)$ implies its continuity on the same interval (see for instance \cite[Theorem 10.1.1]{rockafellar1970convex}). Consequently, $\tilde{v}$ is continuous on $(-\infty,\infty)$ due to its construction since $\tilde{v}(0) = \limsup_{y \rightarrow 0^+} v(y) = \liminf_{y \rightarrow 0^+} v(y)$, as established in \cite[Lemma 1.26]{hollender2016levy}.
 
Subsequently, the continuity of $\tilde{v}$ ensures the existence of $\tilde{v}^\prime_-(y)$ and $\tilde{v}^\prime_+(y)$ for all $y \in (0,\infty)$. Furthermore, due to its concavity, the following inequalities hold:
\begin{align}\label{align:inequalitiesDerConcaveV}
    \tilde{v}^\prime_+(y_1) \geq \tilde{v}^\prime_-(y) \geq \tilde{v}^\prime_+(y) \geq \tilde{v}^\prime_-(y_2), \; \text{for} \; (y_1,y,y_2) \in (0,  \infty) \times [y_1, \infty) \times [y,  \infty).
\end{align}
It is evident that $\tilde{v}^\prime_-(y) < \infty$ for any $y \in (0,\infty)$. Consequently, we can deduce that $\lim_{n \rightarrow  \infty} \alpha_n |x^\eta_n - y^\eta_n|^{\mu-1} <  \infty$. In fact, $\alpha_n |x^\eta_n - y^\eta_n|^{\mu-1}$ is such that $\big(\alpha_n |x^\eta_n - y^\eta_n|^{\mu-1} -\eta |y^\eta_n|^{\mu-1}, Y^\eta_n\big) \in \bar{J}^{2,-}\tilde{v}(y^\eta_n)$. This condition implies that, for any fixed $n \in \N$, there exists a sequence $((y^\eta_{n})_m, (p^\eta_{n})_m, (Y^\eta_{n})_m)_{m \in \N}$ such that $\big((p^\eta_{n})_m, (Y^\eta_{n})_m\big) \in {J}^{2,-}\tilde{v}((y^\eta_{n})_m)$, and the following convergences holds
\begin{align}\label{align:convergencesM}
    (y^\eta_{n})_m \underset{m\to\infty}{\longrightarrow} y^\eta_n, \; \tilde{v}((y^\eta_{n})_m) \underset{m\to\infty}{\longrightarrow} \tilde{v}(y^\eta_n), \; (p^\eta_{n})_m \underset{m\to\infty}{\longrightarrow} \alpha_n |x^\eta_n - y^\eta_n|^{\mu-1} - \eta |y^\eta_n|^{\mu-1}, \; (Y^\eta_{n})_m \underset{m\to\infty}{\longrightarrow} Y^\eta_n.
\end{align}
For any fixed $(m,n) \in \N^2$, the condition $\big((p^\eta_{{n}})_m, (Y^\eta_{{n}})_m\big) \in {J}^{2,-}\tilde{v}((y^\eta_{{n}})_m)$ is equivalent to the existence of a twice-continuously differentiable function $w_m$ defined on $(-\infty,\infty)$ (see for instance \cite[Lemma V.4.1]{fleming2006controlled}) such that 
\begin{align*}
    (p^\eta_{{n}})_m = w_m^\prime((y^\eta_{{n}})_m), \; (Y^\eta_{{n}})_m = w_m^{\prime\prime}((y^\eta_{{n}})_m),
\end{align*}
and $(y^\eta_{{n}})_m$ is a local minimiser of the difference $\tilde{v} - w_m$. Additionally, we have that $(p^\eta_{{n}})_m \in D^-\tilde{v}((y^\eta_{{n}})_m)$, as proved in \cite[Lemma II.1.7]{bardi1997optimal}. Not only that, we know that $(p^\eta_{{n}})_m = \tilde{v}^\prime((y^\eta_{{n}})_m)$ since the concavity of the function $\tilde{v}$ on $(0,\infty)$ implies that $D^+\tilde{v}((y^\eta_{\smalltext{n}})_m)$ is not empty (see for instance \cite[Lemma II.1.8]{fleming2006controlled}), and thus $\tilde{v}$ is differentiable at $(y^\eta_n)_m$ because of \cite[Lemma II.1.8]{bardi1997optimal}. It follows that $(p^\eta_{n})_m = \tilde{v}^\prime((y^\eta_{n})_m)$. Then, given the first convergence in \eqref{align:convergencesM}, it is straightforward that $\lim_{n \rightarrow  \infty} \lim_{m \rightarrow  \infty} (y^\eta_{{n}})_m = y^{\star,\eta} \in (0, \infty)$, and therefore we can assume the existence of some $\hat{y}^\eta \in (0, \infty)$ such that $(y^\eta_{{n}})_m \in [\hat{y}^\eta, \infty)$ for all $m$, $n \in \N$. This fact, along with the inequalities provided in \eqref{align:inequalitiesDerConcaveV}, imply that $(p^\eta_{n})_m = \tilde{v}^\prime((y^\eta_{n})_m) \leq \tilde{v}^\prime_{-}(\hat{y}^\eta)$ for any $(m,n) \in \N^2$, allowing us to conclude that
\begin{align*}
    -\eta |y^{\star,\eta}|^{\mu-1} \leq \lim_{n \rightarrow  \infty} \big\{\alpha_n |x^\eta_n - y^\eta_n|^{\mu-1} -\eta |y^\eta_n|^{\mu-1}\big\} = \lim_{n \rightarrow  \infty} \lim_{m \rightarrow  \infty} (p^\eta_{n})_m  = \lim_{n \rightarrow  \infty} \lim_{m \rightarrow  \infty} \tilde{v}^\prime((y^\eta_{n})_m) \leq \tilde{v}^\prime_{-}(\hat{y}^\eta) <  \infty.
\end{align*}
This completes the proof.
\end{proof}

\end{appendix}

\end{document}